\theoremstyle{plain}
\newtheorem{thm}{Theorem}[section]
\newtheorem{lem}[thm]{Lemma}
\newtheorem{defin}[thm]{Defintion}
\newtheorem{prop}[thm]{Proposition}
\newtheorem{cor}[thm]{Corollary}
\theoremstyle{definition}
\newtheorem{rem}[thm]{Remark}
\newcommand{\future}[1]{{}}  
\newcommand{\bb}{\mathbb}
\newcommand{\R}{\bb R}
\newcommand{\N}{\bb N}
\newcommand{\blas}{{BLAS} }
\newcommand{\pml}{\ensuremath{\mathcal{PML}} }
\newcommand{\pmls}{\ensuremath{\mathcal{PML}}s }
\newcommand{\bout}{\ensuremath{\overline{CV_n}}}
\newcommand{\F}{\mathcal F}
\newcommand{\dist}{{\rm dist} }
\def\<{{\langle}}
\def\>{{\rangle}}
\newcommand{\rank}{18} 
\begin{document}
\title[Connectivity of $\partial FF_n$]{Connectivity of the Gromov Boundary of the Free Factor Complex}
\author{Mladen Bestvina, Jon Chaika and Sebastian Hensel}
\begin{abstract}
  We show that 
  in large enough rank, the Gromov boundary of the free
  factor complex is path connected and locally path connected. 
\end{abstract}
\maketitle

\section{Introduction}

A prevailing theme in geometric group theory is to study groups using
actions on suitable Gromov hyperbolic spaces. One of the most
successful examples of this philosophy is the action of the mapping
class group of a surface on the curve graph, which is hyperbolic by
the seminal work of Masur and Minsky \cite{MM1}, and which has been
used to give a hierarchical description of the geometry of mapping
class groups.

One core tool when studying Gromov hyperbolic spaces is that they
admit natural boundaries at infinity. In the case of the curve graph,
this boundary can be explicitly described in terms of topological
objects. Namely, Klarreich \cite{Kla} proved that the boundary is the
space of ending laminations. That is, the boundary can be obtained
from the sphere of projective measured laminations by removing all
non-minimal laminations, and then identifying laminations with the
same support.  Although the Gromov boundaries of curve graphs are
fairly complicated topological spaces (in the case of punctured
spheres, they are N\"obeling spaces \cite{GabTop}), the connection to
laminations can be used to effectively study them.  Maybe most
relevant for our current work, Gabai \cite{GabAF, GabTop} used this
connection to show that the boundary is path-connected and locally
path-connected.

\smallskip In the setting of outer automorphism groups of free groups,
there are several possible analogs of the curve graph. In this
article, we focus on the \emph{free factor complex} $FF_n$, which is
hyperbolic by a result of Bestvina-Feighn \cite{BesFei}. Similar to
Klarreich's theorem, the Gromov boundary has been identified as the
space of arational trees modulo a suitable equivalence
\cite{BesRey}. The role of the sphere of projective measured
laminations is played by the boundary of Culler-Vogtmann's Outer space
(which has much more complicated local topology than a sphere).

In this article, we nevertheless begin a study of connectivity
properties of the boundary at infinity of $FF_n$.  More specifically,
we show
\begin{thm}\label{thm:main}
  The Gromov boundary $\partial FF_n$ of the free factor complex is
  path connected and locally path connected for $n \geq \rank$.
\end{thm}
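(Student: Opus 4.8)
The plan is to transport Gabai's argument for the ending lamination space of a surface \cite{GabAF,GabTop} to the free group setting through the identification of $\partial FF_n$ with the space of arational trees. Concretely, we use the homeomorphism $\partial FF_n\cong \mathcal{AT}/{\sim}$ of Bestvina--Reynolds \cite{BesRey} (independently due to Hamenst\"adt), where $\mathcal{AT}\subseteq\bout$ is the set of arational trees and $\sim$ identifies trees with the same underlying algebraic lamination. Under this description a point of $\partial FF_n$ is a projective class of arational $F_n$-trees, and a path in $\partial FF_n$ is represented by a continuous family of arational trees in the weak/length-function topology of $\bout$. A first subtlety, exactly as in Klarreich's and Gabai's work, is that this is the quotient of the weak topology and \emph{not} the naive ``visual'' topology on laminations, so every family we construct must be checked to converge in the correct sense; the comparison of topologies in \cite{BesRey} is what allows this bookkeeping. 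It suffices to connect any two arational trees by such a family, and -- for local path connectivity -- to do so inside an arbitrarily small neighborhood of a prescribed arational tree.

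The first reduction is to a distinguished, highly flexible family: the \emph{geometric} (surface-type) arational trees. Fixing a surface $\Sigma$ with one boundary component and $\pi_1(\Sigma)\cong F_n$, a filling measured lamination $\mu$ on $\Sigma$ has dual $F_n$-tree $T_\mu$, which is arational, and $\mu\mapsto[T_\mu]$ descends to a continuous injection of the ending lamination space $\mathcal{EL}(\Sigma)$ into $\partial FF_n$. By Gabai's theorem, $\mathcal{EL}(\Sigma)$ is path connected and locally path connected; varying the marking and patching the resulting charts along laminations that are filling for two adjacent markings then shows that the full set of geometric arational trees is path connected. It is convenient -- and this is presumably where the dynamical input enters -- to coordinatize filling laminations on $\Sigma$ by interval-exchange / zippered-rectangle data and to realize the required paths, together with their metric control, by continuously deforming that data.

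The remaining and main step is to connect an \emph{arbitrary} arational tree $T$ to the geometric locus by a path in $\mathcal{AT}$. If $T$ is geometric there is nothing to do (by Reynolds' dichotomy, arational trees are either geometric or of ``Levitt type''). If $T$ is non-geometric it is a limit of geometric trees; more usefully, $T$ is dual to a minimal mixing system of partial isometries of an $\R$-tree with trivial arc stabilizers, and such a system can be perturbed continuously to a nearby geometric one. Running the Rips machine / the Bestvina--Feighn decomposition continuously along this perturbation produces a path of very small $F_n$-trees from $T$ into the geometric locus. The crux is to show that this path never leaves $\mathcal{AT}$: one must guarantee that no intermediate tree carries a proper invariant free factor system and that minimality and density of orbits are preserved. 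This is the technical heart of the argument, and it is exactly where the hypothesis $n\geq\rank$ enters: one needs enough independent directions in the space of systems to perturb transversally to, and hence avoid, the positive-codimension but a priori intricate non-arational strata of $\bout$.

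For local path connectivity one repeats this with quantitative control: given $T\in\mathcal{AT}$ and $\varepsilon>0$, the path from $T$ into the geometric locus can be made $\varepsilon$-small in the length-function topology (the perturbation above being as small as desired), and inside the geometric locus Gabai's local path connectivity of $\mathcal{EL}(\Sigma)$, together with the fact that $\mathcal{EL}(\Sigma)\hookrightarrow\partial FF_n$ is a topological embedding, supplies small paths; concatenation yields a path-connected neighborhood of $T$. As indicated, the step we expect to be hardest is keeping the connecting families inside the arational locus: unlike the surface case, where ``arational'' is the clean combinatorial condition ``filling'' on a train track, here one must navigate the stratified structure of compactified Outer space, and controlling this is what forces the rank bound.
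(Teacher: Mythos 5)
Your overall frame (work in $\mathcal{AT}\subset\bout$, reduce to surface-type trees, then connect an arbitrary arational tree to the geometric locus) matches the paper, but the two steps that carry all the difficulty are not actually supplied, and the devices you propose for them would not work.

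First, the ``patching'' of the copies of $\mathcal{EL}(\Sigma)$ for different markings. You assert that adjacent charts can be glued ``along laminations that are filling for two adjacent markings,'' but the intersection of two adjacent copies $\pml_\sigma$ and $\pml_{\sigma'}$ of $\pml$ in $\bout$ typically consists of trees dual to laminations filling only a proper \emph{subsurface} of $\Sigma$; these are not arational, so your patched path leaves $\mathcal{AT}$ exactly at the transition points. Handling this is the technical heart of the paper: one concatenates paths of minimal laminations in different $\pml$s meeting at stable laminations of \emph{partial} pseudo-Anosovs (the \blas paths of Section~\ref{sec:blas}), and then iteratively pushes the path off the countably many closed sets $K_A$ of Proposition~\ref{prop:the K} by constructing explicit relations in $\mathrm{Out}(F_n)$ (Propositions~\ref{prop:solving-containment} and~\ref{prop:solving-overlap}) and using the contraction dynamics of the associated partial pseudo-Anosovs (Theorem~\ref{thm:avoid}, Proposition~\ref{prop:improve BLAS}). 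Nothing in your proposal replaces this mechanism, and note also that inside a single chart the paper needs Chaika--Hensel's connectivity of uniquely ergodic foliations \emph{upstairs} in $\pml$ (Theorem~\ref{thm:ch}), not Gabai's theorem about $\mathcal{EL}$, precisely because arationality must be controlled at the level of trees in $\partial CV_n$.

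Second, your route from a non-geometric arational tree $T$ into the geometric locus --- perturbing the dual system of partial isometries, running the Rips machine ``continuously,'' and avoiding the non-arational locus by transversality --- is not a viable argument. There is no continuity theory for the Rips machine along deformations of band complexes that outputs a path in $\overline{CV_n}$, and the non-arational locus $\bigcup_A K_A$ is a countable union of closed sets in a space whose topology is poorly understood (it is not even known to be locally connected), so ``positive codimension'' and transversal perturbation have no meaning here; the rank hypothesis $n\geq\rank$ in the paper comes from the explicit relation constructions and the genus bound in Theorem~\ref{thm:ch}, not from any transversality count. Moreover the paper deliberately does \emph{not} produce a path in $\partial CV_n$ ending at $T$: continuity at the endpoint upstairs cannot be guaranteed (this is raised as an open question in the introduction). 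Instead it chooses surface-type arational trees $S_i$ in a nested system of neighborhoods $U_i$ of the simplex $\Delta_T$ (Lemma~\ref{surfaces dense}, Proposition~\ref{prop:chain}, via folding paths and chains of adjacent $\pml$s), connects consecutive $S_i$ by arational paths inside $U_{i-1}$, and uses the coarse continuity of $\Phi$ so that only the image of the concatenation in $\partial FF_n$ converges to $\Phi(T)$. Without some substitute for this endpoint argument and for the $K_A$-avoidance machinery, your plan does not yield a proof.
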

As an immediate consequence we obtain the following coarse geometric
property:
\begin{cor} 
  The free factor complex $FF_n$ is one-ended for $n\geq \rank$.
\end{cor}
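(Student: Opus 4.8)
The plan is to derive this formally from Theorem~\ref{thm:main}, using only that $FF_n$ is a $\delta$-hyperbolic graph of infinite diameter (Bestvina--Feighn \cite{BesFei}) whose Gromov boundary $\partial FF_n$ is nonempty, has more than one point, and is path connected (the last being Theorem~\ref{thm:main}; nonemptiness and uncountability come from the identification of $\partial FF_n$ with arational trees up to the equivalence of \cite{BesRey}). Fix a vertex $x_0$ and read ``$FF_n$ is one-ended'' in the standard coarse sense: $FF_n$ is unbounded and for every bounded set $B$ exactly one component of $FF_n\setminus B$ is unbounded. Since any such $B$ is contained in some ball $B(x_0,R)$, it suffices to prove: for every $r>0$ there is $r'\ge r$ such that any two vertices at distance $\ge r'$ from $x_0$ can be joined by a path in $FF_n$ avoiding $B(x_0,r)$. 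Indeed, granting this with $r:=R$, all vertices at distance $\ge r'$ from $x_0$ lie in a single component $C_0$ of $FF_n\setminus B$, which is unbounded because $FF_n$ is, while every other component is contained in $B(x_0,r')$ and hence bounded; so there is exactly one end.

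To prove the displayed statement I would combine three ingredients. (1) Standard thin-triangle estimates: if $\xi,\eta\in\partial FF_n$ have $(\xi\mid\eta)_{x_0}\ge T$ then any bi-infinite geodesic $(\xi,\eta)$ stays at distance $\ge T-O(\delta)$ from $x_0$, and if a geodesic ray $[x_0,\xi)$ passes within $O(\delta)$ of a vertex $x$ then its terminal subray beyond $x$ stays at distance $\ge d(x_0,x)-O(\delta)$ from $x_0$. (2) Geodesic extendability in $FF_n$: every geodesic segment issuing from $x_0$ extends to a geodesic ray, equivalently, for $x$ far from $x_0$ there is $\xi_x\in\partial FF_n$ with $(\xi_x\mid x)_{x_0}\ge d(x_0,x)-O(\delta)$; this kind of completeness should be available in $FF_n$ via loxodromic axes of fully irreducible automorphisms passing near a prescribed segment, or can be cited. (3) Theorem~\ref{thm:main}: a path $\gamma$ in $\partial FF_n$ from $\xi_x$ to $\xi_y$. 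Now, given vertices $x,y$ with $d(x_0,x),d(x_0,y)\ge r'$ for suitably large $r'$, choose $\xi_x,\xi_y$ as in (2), take $\gamma$ as in (3), and subdivide $\gamma$ into finitely many short arcs with endpoints $\xi_x=\zeta_0,\zeta_1,\dots,\zeta_N=\xi_y$ so close together that $(\zeta_i\mid\zeta_{i+1})_{x_0}\ge r+O(\delta)$ for every $i$; this is possible because the sets $\{\zeta:(\zeta\mid\zeta_i)_{x_0}>r+O(\delta)\}$ are open in $\partial FF_n$ and, along a fine enough subdivision, cover the compact arc $\gamma$. Concatenating the terminal subray of $[x_0,\xi_x)$ beyond $x$, then finite truncations of the bi-infinite geodesics $(\zeta_i,\zeta_{i+1})$ joined up near their shared ideal endpoints, then the terminal subray of $[x_0,\xi_y)$ beyond $y$ reversed, produces by (1) a path from $x$ to $y$ entirely outside $B(x_0,r)$, as required.

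The main obstacle is the non-properness of $FF_n$: one cannot extract limits of geodesics, so ingredient (2) must be established by hand or by citation rather than by a compactness argument, and, more importantly, turning the connected set $\gamma\subseteq\partial FF_n$ into an actual path in $FF_n$ that stays uniformly far from $x_0$ is precisely where one needs $\partial FF_n$ to be \emph{path} connected rather than merely connected; this is the whole point of invoking Theorem~\ref{thm:main} (local path connectivity is not needed for this corollary). The remaining points are routine: checking that the chosen notion of ``one-ended'' is the intended coarse one and is a quasi-isometry invariant, so that passing between $FF_n$ and its $1$-skeleton and the choice of basepoint $x_0$ are immaterial, and bookkeeping the additive $O(\delta)$ errors when selecting $r'$ in terms of $r$.
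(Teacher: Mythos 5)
The paper offers no argument for this corollary beyond calling it an immediate consequence of Theorem~\ref{thm:main}, and your write-up is precisely the standard argument being invoked (escape to infinity through cobounded translates of a loxodromic axis, subdivide a boundary path into consecutive points with large Gromov product, concatenate geodesics avoiding the ball) -- indeed the same escape-to-infinity device appears in the paper's proof of Proposition~\ref{1-ended} -- so you have essentially reproduced the intended proof. One small repair: since $FF_n$ is not proper, genuine geodesic rays $[x_0,\xi)$ and bi-infinite geodesics $(\zeta_i,\zeta_{i+1})$ need not exist, so replace the former by the quasi-geodesic axis translates you already propose and the latter by long finite geodesic segments joining points deep in the shadows of $\zeta_i$ and $\zeta_{i+1}$ (and note the shadow sets are only neighborhoods rather than open sets); with your Gromov-product estimates these segments still avoid $B(x_0,r)$, and the argument goes through unchanged.
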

From this, one-endedness of various other combinatorial complexes (the
free splitting complex, the cyclic splitting complex, and the maximal
cyclic splitting complex) can also be concluded
(Corollary~\ref{cor:other-complexes}). We also emphasise that we do 
not claim that the constant $\rank$ is optimal, and in fact expect that 
the result holds for much lower $n$ (see below).

\subsection{Outline of proof}
Our strategy is motivated by the case of $\pml$ and ending lamination
space, though it requires new ideas. To link the case of $\pml$ to
$ \partial CV_n$, there is a dense connected set of copies of $\pml$
in $\partial CV_n$ coming from identifying the fundamental group of a
surface with one boundary component with $F_n$.  For simplicity, lets
start `upstairs' in $\partial CV_n$ and assuming that two arational
trees, $T,T'$ lie on two different $\pml$s.

We have a chain of copies of $\pml$ that connects the $\pml$
containing $T$ to the one containing $T'$ and where each consecutive
pair in the chain intersects in the $\pml$ of a subsurface. %
Using work from surface case, in particular \cite{CH}, which builds on
\cite{LS}, we build a path, $p_0$ across this chain of $\pml$s so that
every tree in it is arational except at the intersection of the
consecutive $\pml$s and at these it is the stable lamination of a
(partial) Pseudo-Anosov supported on the subsurface. We wish to have a
path entirely of arational trees, so we find a countable sequence
of compact sets $K_1,K_2,...$ so that the arational trees are the
complement of $\cup K_j$. 
We iteratively improve our path $p_j$ which
by inductive hypothesis
\begin{itemize}
\item avoids $K_1,...,K_j$ entirely
\item has that every foliation on it is minimal, except for a
  finite number of points
\item these points are $\lambda_\psi$, the stable laminations of
  (partial) Pseudo-Anosovs, $\psi$, supported on a subsurface.
\end{itemize} 
to a path $p_{j+1}$ which avoids $K_1,...,K_j,K_{j+1}$ entirely and
has the same structure as above. Most of the work of this paper is in
avoiding $K_{j+1}$. (The fact that $p_{j+1}$ avoids $K_1,...,K_j$ is a
consequence of the fact that we can make $p_{j+1}$ as close as we want
to $p_j$.)

We now discuss avoiding the $K_{j+1}$. First off, what are the
$K_{j+1}$? They are the unions of the trees in $\partial CV_n$ where a
fixed proper free factor $E_{j+1}$ is not both free and discrete. To rule
this out, it suffices to show that our stable laminations of the
partial Pseudo-Anosov is supported on a subsurface whose fundamental
group is not contained in $E_{j+1}$ and that $E_{j+1}$ trivially intersects
the fundamental group of the complement of the support of partial
Pseudo-Anosov. See Lemma \ref{lem:leaving the K with pAs}. (It is
automatic that the rest of our path avoids $K_j$ because these points
are already arational.) In Section \ref{sec:avoid} we describe how if
one of our stable laminations $\lambda_\psi$ is in $K_{j+1}$ we can
find a chain of $\pml$s that build a detour around it and moreover,
the image of this path under a large power of $\psi$ also avoids
$K_{j+1}$. This involves explicitly constructing relations in
$\mathrm{Out}(F_n)$, and is technically the most involved part of the
paper.

In this way we can improve our path $p_j$ on a small
segment around $\lambda_\psi$ to obtain $p_{j+1}$, which now avoids
$K_{j+1}$ and where the contraction properties of $\psi$ guarantee
that it still avoids $K_1,...,K_j$.  Our sequence of paths
$p_0,p_1,...$ is a Cauchy sequence and so we obtain in the limit
$p_\infty$ which is in $(\bigcup K_j)^c$.  This describes an argument
that the projection of the set of arational surface type trees to the
$\partial FF_n$ is path connected. To upgrade this to showing that
$\partial FF_n$ is path connected, we show (basically via Proposition
\ref{prop:chain} which uses folding paths) that we can choose
sequences of surface type arational trees converging to our fixed (not
necessarily surface type) tree so that the paths stay in a $\delta$
neighborhood of our arational tree.  In the outline we ignored some
subtleties, most notably how to construct the required relations in $\mathrm{Out}(F_n)$,
and that our ``paths" in $\partial CV_n$ are not necessarily
continuous at the end points, though the projection of these ``paths"
to $\partial FF_n$ will be. This last point exploits contraction dynamics 
on the boundary of the hyperbolic space  $\partial FF_n$.

We now briefly state the structure of the paper. Section 2 collects
known results about $\pml$ and $\mathrm{Out}(F_n)$ and modifies them for our
purposes. Section 3 relates $\partial CV_n$ and $\mathrm{Out}(F_n)$. Section 4
is the technical heart of our paper, describing how to locally avoid a
fixed $K_j$. Section 5 proves the main theorem. Section 6 uses the
main theorem to establish one-endedness of some other combinatorial
complexes. There are three appendices that treat issues related to
non-orientable surfaces, which we use to address $\partial FF_n$ when
$n$ is odd.

\subsection{Previous work}
Our work is a direct anaglogue of Gabai's work \cite{GabAF} on the
connectivity of the ending lamination space $\mathcal{EL}$, which is
the Gromov boundary of the curve complex. He obtains optimal path
connectivity results and establishes higher connectivity, where
appropriate.  He goes further and in \cite{GabTop} identifies
$\mathcal{EL}$ of punctured spheres with so called N\"obling
spaces. This had previously been done in the case of the 5-times
punctured sphere by Hensel and Przytycki \cite{HenPrz}. 

However, there is one crucial difference in approaches: 
throughout his
arguments, Gabai homotopes paths in $\pml$, which is a
sphere.  It is
unclear how to do this in our setting, because the topology of
$\partial CV_n$ is still poorly understood, and it is not even know if
it is locally connected. In particular, our successive improvements of paths 
do not proceed by homotopy.

\smallskip Perhaps a better analogy for our work is Leininger
and Schleimer's proof that $\mathcal{EL}$ is connected \cite{LS}. They
do this by using ``point pushing" to find a dense path connected set
of arational laminations upstairs in $\pml$. Building on this, the
second and third named authors use contraction properties of the
mapping class group on the curve complex to show that the subsets of
uniquely ergodic and of cobounded foliations in $\pml$ are both path
connected \cite{CH}. This motivates our approach and especially
Proposition \ref{prop:chain}. 
However, there is again an important difference between the paths 
built in \cite{LS} or \cite{CH}, and the ones we construct here: in the 
former sources, the paths are often obtained from lower complexity surfaces
by lifting along (branched) covers. Here, we do not have this option, and
instead need to construct the paths directly.

Finally, in our setting, this previous work is of little help in getting 
between adjacent $\pml$s, where the new ideas of this article are needed.


\subsection{Questions}
We end this introduction with a short list of further questions which
this work suggests.
\begin{enumerate}
\item Is the boundary $\partial FF_N$ already path-connected for $N\geq 3$? 
	The bound $\rank$ used here certainly carries no special significance, and
	is an artifact of the proof.
	
\item Do the boundaries $\partial FF_N$ satisfy (for large enough rank
  $N$) higher connectivity properties?
	
\item Are there topological models for the boundaries $\partial FF_N$?
  Most likely, this would involve showing that they satisfy other
  universal properties (dimension, locally finite $n$--disk
  properties)?
	
\item Is the set of arational trees in $\partial CV_n$ path-connected?
  We remark that the paths constructed between boundary points of the
  free factor complex do not yield paths in $\partial CV_n$, as
  continuity at the endpoints cannot be guaranteed. The corresponding
  question for $\pml$ is an open question of Gabai.
  
 \item Is the set of points in $\partial FF_n$ which are not of surface type path-connected? The paths we construct contain subpaths each point of which is of surface type. Avoiding this seems to require new ideas.
 We were made aware of this question by Camille Horbez.
\end{enumerate}

\subsection*{Acknowledgments}
Bestvina gratefully acknowledges the support by the National Science
Foundation, grant number DMS-1905720. Chaika gratefully acknowledges the support of National Science
Foundation, grant numbers DMS-135500 and DMS- 1452762, the Sloan foundation,  and a Warnock chair.
Hensel gratefully acknowledges the support of the Special Priority Programme SPP 2026 ``Geometry at Infinity'' funded by the DFG. Finally, Chaika and Hensel would like to thank the Hausdorff Research Institute for Mathematics in Bonn, where some of the work has been carried out.

\section{$\mathrm{Out}(F_n)$ preliminaries}
This section collects the neccessary facts about (compactified)
Culler-Vogtmann Outer space, related spaces where $\mathrm{Out}(F_n)$
acts, and geodesic laminations on surfaces.

\subsection{Outer Space}
Throughout this article, any \emph{tree} is understood to be a tree
together with an isometric action of $F_n$. Recall that a tree is {\it
  minimal} if does not contain a proper invariant subtree, and it is
{\it nontrivial} if it does not have a global fixed point. Unless
stated otherwise, all trees will be minimal and nontrivial. When $T$
is an $F_n$-tree and $a$ is an element or a conjugacy class in $F_n$,
we write $\<T,a\>$ for the translation length of $a$ in $T$.

We denote by $cv_n$ \emph{(unprojectivized) Outer space} in rank $n$,
and we denote by $CV_n = cv_n / (0,\infty)$ projectivised Outer
space. Points in these spaces correspond to (projectivised) free,
simplicial, minimal $F_n$--trees. Compare~\cite{CullerVogtmann,Vog08}
for details.  One can think of the space $CV_n$ as a free group analog
of Teichm\"uller space.  The function $T\mapsto (a\mapsto \<T,a\>)$
defines an embedding of $cv_n$ into the {\it space of length
  functions} $[0,\infty)^{F_n}$.  We denote by $\overline{cv}_n$ the
closure of ${cv}_n$ in this space and by $\overline{CV}_n$ its
projectivization. A point in $\overline{cv_n}$ determines a tree,
unique up to equivariant isometry. Both $\overline{cv_n}$ and
$\overline{CV_n}$ are metrizable ($\overline{cv}_n$ is a subspace of
the metrizable space $[0,\infty)^{F_n}$ and $\overline{CV_n}$ embeds
in $\overline{cv}_n$; see below). Moreover, $\overline{CV_n}$ is
compact and $\partial CV_n=\overline{CV_n}\smallsetminus CV_n$ plays
the role of $\pml$. For our later arguments we choose a
distance \dist on $CV_n$. As usual, this distance defines a
(Hausdorff) distance on the set of compact subsets, and we keep the
same notation for this distance. An element of $\overline{CV_n}$ is
represented by a projective class of trees, but we follow the custom
and talk about trees as points in $\overline{CV_n}$.

Recall the following characterisation of this compactification.
\begin{defin}
  A nontrivial minimal tree $T$ is {\it very small}, if arc
  stabilizers are trivial or maximal cyclic subgroups, and the fixed
  set of a nontrivial element does not contain a tripod.
\end{defin}
\begin{prop}[{\cite{bf:outerlimits,horbez:boundaryouterspace}}]
  A nontrivial minimal tree $T$ is contained in $\overline{CV_n}$ if
  and only if it is {\it very small}.
\end{prop}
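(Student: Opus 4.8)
The plan is to prove the two inclusions separately: that every point of $\overline{CV_n}$ defines a very small tree, and conversely that every very small tree is a limit of free simplicial trees. The forward inclusion is comparatively soft once one has structural input on degenerations of free actions, while the reverse inclusion carries the real content and is the reason two references are invoked. For $\overline{CV_n}\subseteq\{\text{very small}\}$, I would fix $T\in\overline{cv}_n$ together with a sequence $T_k\in cv_n$ whose length functions $\langle T_k,\cdot\rangle$ converge pointwise on $F_n$ to $\langle T,\cdot\rangle$. Each $T_k$ is free and simplicial, hence has trivial arc stabilizers and no nontrivial element fixing a tripod, and the idea is that every clause of the definition of "very small" is detected by relations among translation lengths that survive pointwise limits: an element $a$ is elliptic in $T$ exactly when $\langle T,a\rangle=0$; for elliptic $a,b$, whether their fixed subtrees are disjoint, meet in a point, or share an arc is read off by comparing $\langle T,ab\rangle$ and $\langle T,a^{-1}b\rangle$ with $\langle T,a\rangle$ and $\langle T,b\rangle$; and an element fixing a tripod forces a prescribed vanishing pattern among a suitable triple of such lengths. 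The one genuinely nontrivial point here is that ``arc stabilizer trivial or maximal cyclic'' is not by itself a closed condition: one first needs that arc stabilizers of a limit tree are at most cyclic (the analysis of degenerations of free simplicial actions in the Cohen--Lustig / Gaboriau--Levitt--Paulin tradition), and granting that, non-maximality — some $z^m$ with $m\geq 2$ fixing an arc not fixed by $z$ — as well as the presence of two independent elements fixing a common arc each translate into length identities that hold on the closed set $\overline{cv}_n\smallsetminus cv_n$. This is the content of \cite{bf:outerlimits}.

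For the reverse inclusion $\{\text{very small}\}\subseteq\overline{CV_n}$ I would argue in two stages, and first handle the simplicial case. Given a very small simplicial $F_n$--tree $T$, the plan is to approximate it by free simplicial trees via an equivariant blow-up and unfolding procedure: blow up each vertex carrying a nontrivial (finitely generated free) stabilizer into a free action on a rose scaled by a parameter $\varepsilon$, reattach the incident edges along fundamental domains of the relevant axes, and unfold each edge with maximal cyclic stabilizer by an $\varepsilon$--amount so that its generator becomes hyperbolic with tiny translation length. The resulting trees are free, simplicial, of the same rank as $T$, and their length functions converge to that of $T$ as $\varepsilon\to 0$; this is the simplicial approximation step of \cite{bf:outerlimits}. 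One should check along the way that the blow-ups can be chosen of rank compatible with a Grushko-type count so that the approximating trees genuinely lie in $cv_n$ rather than in Outer space of a larger free group.

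The second stage, and the place where \cite{horbez:boundaryouterspace} is needed on top of \cite{bf:outerlimits}, is the general, possibly non-simplicial, very small tree. Here the plan is to decompose an arbitrary very small $T$ as a graph of actions whose edges are simplicial and whose vertex actions are either trivial or have dense orbits (Levitt's decomposition of a very small tree), then use the structure theory of stable actions on $\R$--trees (Guirardel's version of the Rips machine, band complexes) to exhibit each dense-orbit piece as a limit of geometric trees dual to transverse-measured foliations, approximate those simplicially, and finally reassemble the approximations inside $cv_n$ using the simplicial case above while tracking all translation length functions. The hard part, and what I expect to be the main obstacle, is precisely this simultaneous approximation: one must realize each dense-orbit piece as a limit of interval-exchange/foliation type trees, glue these limits back to the simplicial part, and then perturb the whole object to a free simplicial action, all without raising the rank above $n$ and without destroying freeness — the bookkeeping that keeps every approximating tree inside rank $n$ while controlling the $\R$--tree geometry of the non-geometric pieces is the delicate technical core.
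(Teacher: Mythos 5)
The paper does not prove this proposition at all: it is quoted as background, with the two directions attributed to \cite{bf:outerlimits} and \cite{horbez:boundaryouterspace}, so there is no internal argument to compare yours against. Your outline does reconstruct, in broad strokes, the route taken in those references: the containment $\overline{CV_n}\subseteq\{\text{very small}\}$ comes from closedness of the very small condition under limits of length functions (the Cohen--Lustig/Bestvina--Feighn part), and the reverse containment is an approximation theorem, handled first for simplicial very small trees by blowing up vertex stabilizers and unfolding edges with cyclic stabilizers, and then in general via a decomposition into simplicial and dense-orbit pieces, geometric (foliation-dual) approximation, and reassembly with the rank bookkeeping you mention.

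Read as a proof rather than a plan, however, the proposal has gaps exactly where you flag them, and those gaps are the entire content of the cited papers. In the forward direction, ellipticity and disjointness of fixed sets are indeed detected by Culler--Morgan type identities among translation lengths, but the conditions ``no nontrivial element fixes a tripod'' and ``if $g^m$ fixes an arc then so does $g$'' are quantified over all group elements and all arcs, and the claim that they ``translate into length identities that survive pointwise limits'' is precisely the theorem that the set of very small minimal actions is closed in $[0,\infty)^{F_n}$; this requires a real argument (including the prior fact that limit arc stabilizers cannot contain a free group of rank two), not a formal passage to the limit. In the reverse direction your simplicial step is plausible as described, but for trees with dense orbits everything you defer --- exhibiting the dense-orbit pieces as limits of geometric trees, gluing those approximations back, and keeping every approximating free simplicial action at rank exactly $n$ --- is the technical core of \cite{bf:outerlimits} and of the published treatment in \cite{horbez:boundaryouterspace}, and nothing in the sketch substitutes for it. So your proposal is an accurate roadmap of the external proof, consistent with how the paper uses the result, but it is a citation-shaped outline rather than a proof.
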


\subsection{Arational Trees}\label{3.1}
To describe the boundary of the free factor complex, we need the
following notion.
\begin{defin}
  A tree $T\in \partial CV_n$ is {\it arational} if for every proper
  free factor $A<F_n$ the induced action of $A$ on $T$ is free and
  discrete.
\end{defin}

Two arational trees are {\it topologically equivalent} if there is an
equivariant homeomorphism in the observers' topology between them (we
elaborate on observers' topology below).  Equivalence classes of
arational trees in $\overline{CV_n}$ can be naturally identified with
simplices, analogously to simplices of projectivized transverse
measures on geodesic laminations. Denote by
$\mathcal{AT}\subset\partial CV_n$ the space of arational trees, and
by $\mathcal{AT}/\sim$ the quotient space obtained by collapsing each
equivalence class to a point.

\begin{thm}[{\cite{BesRey,Ham}}] The Gromov boundary $\partial FF_n$
  of the free factor complex is homeomorphic to $\mathcal{AT}/\sim$.
\end{thm}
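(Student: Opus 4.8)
The plan is to realize the homeomorphism through a coarsely defined, $\mathrm{Out}(F_n)$--equivariant projection $\pi\colon CV_n \to FF_n$: to a marked graph (equivalently, a minimal free simplicial $F_n$--tree) one associates a proper free factor carried by the underlying graph --- for instance the $\pi_1$--image of a noncontractible proper subgraph --- which is well defined up to uniformly bounded error in $FF_n$, and one checks that $\pi$ is coarsely Lipschitz for the Lipschitz metric on $CV_n$. Since $FF_n$ is hyperbolic by Bestvina--Feighn \cite{BesFei}, it has a well-behaved Gromov boundary, and the goal is to extract from $\pi$ a boundary map $\partial\pi\colon \mathcal{AT}\to\partial FF_n$, show that it is surjective and constant on $\sim$--classes, and that the induced map $\mathcal{AT}/\!\sim\,\to\partial FF_n$ is a homeomorphism.

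The analytic heart is the following dichotomy for a sequence $T_i\to T$ in $\overline{CV_n}$: the projections $\pi(T_i)$ leave every bounded subset of $FF_n$ \emph{if and only if} $T$ is arational, and when $T$ is arational the sequence $\pi(T_i)$ converges in $\overline{FF_n}$ to a point depending only on $T$. For the easy direction, if some proper free factor $A$ acts on $T$ freely and simplicially (so $T$ is not arational), then $A$, or a conjugate, stays within bounded distance of all $\pi(T_i)$, so the projection is bounded. The converse, together with well-definedness of the limit, is where one must work: using folding (Stallings) paths between the approximating graphs and the fact that no proper free factor reduces $T$, one shows the projections cannot accumulate, and that any two approximating sequences fellow-travel in $FF_n$, so by thinness of triangles they determine the same point of $\partial FF_n$. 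This relies on reduction theory for very small trees and a bounded-backtracking estimate along folding paths. I expect this step to be the main obstacle; it is precisely the substance of \cite{BesRey}, with \cite{Ham} giving an independent argument and clarifying the well-definedness.

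Granting the dichotomy, the remainder is more formal. Two topologically equivalent arational trees differ only by the transverse measure on their common dual lamination (their identification is an equivariant homeomorphism for the observers' topology, which forgets the metric), whereas $\pi$ is defined purely combinatorially from simplicial approximants and hence does not see transverse measures; thus $\partial\pi$ is constant on $\sim$--classes and descends to $\overline{\partial\pi}\colon \mathcal{AT}/\!\sim\,\to\partial FF_n$. Surjectivity: a quasigeodesic ray in $FF_n$ pulls back to a folding ray in $CV_n$ mapping to within bounded distance of it; by compactness of $\overline{CV_n}$ it accumulates on some $T$, which must be arational (otherwise the ray would be bounded), and $\overline{\partial\pi}([T])$ is the prescribed endpoint. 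Injectivity: distinct arational classes are separated by a proper free factor, which forces the corresponding folding rays to diverge in $FF_n$, hence gives distinct endpoints. Finally, continuity of $\overline{\partial\pi}$ follows from the convergence statement above together with the fact that convergence in $\mathcal{AT}/\!\sim$ is witnessed by approximating sequences in $\overline{CV_n}$; since both spaces are separable metrizable and the same machinery controls the inverse map, the continuous bijection $\overline{\partial\pi}$ is a homeomorphism.
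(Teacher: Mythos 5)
First, a structural remark: the paper does not prove this statement at all --- it is imported from \cite{BesRey,Ham}, and the paper merely records afterwards (in the bulleted list about $\Phi=\pi\cup\partial\pi$) the finer properties of the boundary map that it needs later. Your outline follows the same route as \cite{BesRey} (coarse projection $\pi\colon CV_n\to FF_n$, the arationality dichotomy for projections of converging sequences, descent to $\mathcal{AT}/\sim$), and you yourself defer the analytic core --- the dichotomy and the well-definedness of the boundary map --- to that reference. So as a standalone argument this is essentially a restatement of the citation rather than a proof.

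Within the parts you do argue, two steps are not correct as stated. For injectivity you claim that distinct arational classes ``are separated by a proper free factor, which forces the corresponding folding rays to diverge.'' But arational trees are by definition not reduced by any proper free factor: if $T$ and $T'$ are inequivalent arational trees, every proper free factor acts freely and discretely on both, so this separation mechanism is vacuous. In \cite{BesRey} the fact that point inverses are exactly the equivalence classes (their Proposition 8.4) is proved through dual laminations and the characterization of when sequences in $\overline{CV_n}$ project to a converging sequence in $\overline{FF_n}$, not through a reducing factor. Second, a continuous bijection between separable metrizable spaces need not be a homeomorphism; what makes the induced bijection $\mathcal{AT}/\sim\,\to\partial FF_n$ a homeomorphism in \cite{BesRey} is that the restriction of $\Phi$ to $\mathcal{AT}$ is a closed map (their Lemma 8.6), so the induced map of the quotient has continuous inverse. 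Both of these points would need genuine arguments (or, as the paper does, to be outsourced wholesale to \cite{BesRey,Ham}).
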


We will need a more precise version of this theorem. There is a
function $$\Phi:\overline {CV_n}\to \overline {FF_n}$$ (in
\cite{BesRey} this is the map $\pi\cup\partial\pi$) with the following
properties:
\begin{itemize}
\item The restriction of $\Phi$ to the space $\mathcal{AT}$ of
  arational trees maps it continuously onto $\partial FF_n$
  \cite[Proposition 7.5]{BesRey}, it is a closed map \cite[Lemma
  8.6]{BesRey}, and the point inverses are exactly the simplices of
  equivalence classes of arational trees \cite[Proposition
  8.4]{BesRey}.
\item The restriction of $\Phi$ to the complement of $\mathcal{AT}$
  has $FF_n$ as its range and it is defined coarsely; it maps
  $T\in\partial CV_n\smallsetminus \mathcal{AT}$ to a free factor $A$
  such that the $A$-minimal subtree of $T$ has dense orbits (or $A$ is
  elliptic), and it maps $T\in CV_n$ to a free factor $A$ realized as
  a subgraph of $T/F_n$. See \cite[Lemma 5.1 and Corollary
  5.3]{BesRey}.
\item $\Phi$ is coarsely continuous: if $T_i$ is a sequence in
  $\overline{CV_n}$ and $\Delta$ is an equivalence class of arational
  trees, then $\Phi(T_i)\to \Phi(\Delta)$ if and only if the
  accumulation set of $T_i$ is contained in $\Delta$. See
  \cite[Proposition 8.3 and Proposition 8.5]{BesRey}.
\end{itemize}

We will have to regularly construct arational trees, and this
subsection collects some tools to do so.

\smallskip We let $cv_n^+$ be the union of $cv_n$ together with a
point $0$ representing the trivial action. The space $cv_n^+$ is
naturally a cone with $[0,\infty)$ acting by rescaling.  In a similar
way we define the space $\overline{cv_n}^+$ of very small trees,
together with a point representing the trivial action.

There are many ways of realizing $\overline{CV_n}$ as a section of the
cone $\overline{cv_n}^+$ (by which we mean a section of
$\overline{cv_n}^+\smallsetminus \{0\}\to\overline{CV_n}$).  We choose
the following: by Serre's lemma \cite{SerreTrees}, an action of $F_n$
on a tree so that each element of length $\leq 2$ acts elliptically,
in fact has a global fixed point. Thus, the sum of translation lengths
of all elements of length $\leq 2$ is positive in each tree of
$\overline{cv_n}$. We identify $\overline{CV_n}$ with the subset of
$\overline{cv_n}$ where the sum of translation lengths for all
elements of length $\leq 2$ is equal to $1$.

\begin{lem}
  For every proper free factor $A<F_n$, restricting the action of
  $F_n$ on a tree to a minimal subtree for the $A$--action yields a
  continous map
  $$r_A : \overline{cv_n}^+\to \overline{cv(A)}^+.$$
\end{lem}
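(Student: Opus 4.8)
The plan is to verify continuity of $r_A$ directly using the length-function description of $\overline{cv_n}^+$ and $\overline{cv(A)}^+$. Recall that a point of $\overline{cv_n}^+$ is determined by its translation length function $a\mapsto\langle T,a\rangle$ on $F_n$ (together with the convention that $0$ is the length function identically zero), and similarly for $\overline{cv(A)}^+$ on $A$. The key observation is that restricting the action of $F_n$ on $T$ to the minimal $A$--subtree $T_A\subseteq T$ does not change translation lengths: for every $a\in A$ one has $\langle T_A,a\rangle=\langle T,a\rangle$. Indeed, if $a$ is hyperbolic in $T$ then its axis is contained in $T_A$ and the translation length is intrinsic; if $a$ is elliptic in $T$ then it is elliptic in $T_A$ and both lengths are $0$ (and when the whole $A$--action is elliptic, $T_A$ is a point and we land at $0\in\overline{cv(A)}^+$). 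Hence $r_A$ is nothing but the restriction map on length functions, $\ell\mapsto\ell|_A$, which is visibly continuous from $[0,\infty)^{F_n}$ to $[0,\infty)^{A}$ in the product (pointwise convergence) topologies: if $T_i\to T$ in $\overline{cv_n}^+$ then $\langle T_i,a\rangle\to\langle T,a\rangle$ for every $a\in F_n$, in particular for every $a\in A$, so $r_A(T_i)\to r_A(T)$.

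What remains is to check that $r_A$ actually lands in $\overline{cv(A)}^+$, i.e.\ that the restricted length function really is the length function of a very small (or trivial) $A$--tree. This is where I expect the only genuine content to lie. First, $T_A$ is a minimal nontrivial $A$--tree unless the $A$--action on $T$ is elliptic, in which case we are at $0$ by definition. Next, one must see $T_A$ is very small as an $A$--tree: arc stabilizers in $T_A$ are arc stabilizers in $T$ intersected with $A$, hence trivial or cyclic (a subgroup of a maximal cyclic subgroup of $F_n$ intersected with $A$ is still at worst cyclic, and it is its own maximal cyclic subgroup in $A$ since $A$ is a free factor, hence malnormal-enough for this purpose — more precisely, maximal cyclic subgroups of $F_n$ meet $A$ in maximal cyclic subgroups of $A$); and the fixed set in $T_A$ of a nontrivial element of $A$ is contained in its fixed set in $T$, so contains no tripod. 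Therefore $T_A\in\overline{cv(A)}^+$. Finally, since $T\in\overline{CV_n}$ was identified with the slice where the sum of translation lengths of elements of $F_n$ of length $\le 2$ equals $1$, we should note that after restriction this normalization need not persist; but the target is the cone $\overline{cv(A)}^+$, not a normalized slice, so no rescaling is needed and $r_A$ is well defined into the cone.

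The main obstacle is thus the \emph{very small} verification for $T_A$, and in particular making sure the ``$0$'' case (the $A$--action being elliptic, equivalently having a global fixed point) glues continuously to the rest: one has to observe that $T_i\to T$ with the $A$--action on $T$ elliptic forces $\langle T_i,a\rangle\to 0$ for all $a\in A$, which is exactly pointwise convergence to the zero length function, i.e.\ to the cone point $0\in\overline{cv(A)}^+$ — so continuity at these points is automatic from the product-topology argument above, since we built the cone point into $\overline{cv(A)}^+$ precisely to absorb it. With that in hand, continuity of $r_A$ is immediate and the lemma follows.
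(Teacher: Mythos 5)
Your proposal is correct and follows essentially the same route as the paper: identify $r_A$ with restriction of length functions (which gives continuity immediately in the pointwise topology, with the elliptic case absorbed by the cone point), and verify the restricted action is very small by noting arc stabilizers stay trivial or maximal cyclic because the free factor $A$ is root-closed, and the tripod condition is inherited from $T$. No substantive differences to report.
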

\begin{proof}
  First, observe that if $A$ acts elliptically on $T$, then $r_A(T)$
  is the cone point in $\overline{cv(A)}^+$.

  Otherwise, we claim that the restriction is very small. Namely,
  consider any arc $a \subset T$. If its stabiliser is trivial, the
  same is obviously true for the restricted action. If the stabiliser
  is a maximal cyclic subgroup, then the same is true for the
  restricted action: since $A$ is a free factor, if $1\neq g\in A$
  then the maximal cyclic subgroup in $F_n$ containing $g$ is
  contained in $A$.  Finally, suppose that $g$ acts nontrivially on
  the restricted tree.  If it would fix a tripod, the same would be
  true in $T$, violating that $T$ is very small.

  Continuity is clear, since translation lengths for the restriction
  are the same as translation lengths in $T$.
\end{proof}
We define
\[ \rho_A:\partial CV_n\to \overline{cv(A)}^+ \] as the restriction of
the maps $r_A$ to the subset $\partial CV_n\subset \overline{cv_n}^+$
via the above normalization.

\bigskip We now make the following definition, which will be crucial
for our construction:
\begin{defin}\label{def:the K}
  Let $\F$ be the countable set of conjugacy classes of proper
  nontrivial free factors of $F_n$. For any $A\in \F$ we define
  \[ K_A = \partial CV_n \setminus \rho_A^{-1}(cv(A)) \]
\end{defin}
Thus $K_A$ is the set of trees in $\partial CV_n$ where $A$ does not
act freely and simplicially.  The following is now clear from the
above:
\begin{prop}\label{prop:the K}
  The collection $\{ K_A, A \in \F \}$ is a countable collection of
  closed subsets whose complement is the set of arational trees:
  \[ \mathcal{AT} = \partial CV_n \setminus \left(\bigcup_{A\in \F} K_A\right) \]
\end{prop}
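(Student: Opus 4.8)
The plan is to verify the three assertions---countability of $\F$, closedness of each $K_A$, and the identification of $\bigcup_{A\in\F}K_A$ with the complement of $\mathcal{AT}$---essentially by unwinding the definitions, the only nontrivial inputs being the continuity statement in the preceding lemma and the standard fact that $cv(A)$ is open in $\overline{cv(A)}^+$.

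For countability I would argue as follows. Since $F_n$ is finitely generated it is a countable group, hence it has only countably many finitely generated subgroups (each such subgroup is determined by a finite tuple of generators, of which there are only countably many). Every free factor of $F_n$ has rank at most $n$ and is therefore finitely generated, so there are only countably many proper nontrivial free factors, and a fortiori only countably many conjugacy classes of them; thus $\F$ is countable.

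For closedness, the preceding lemma gives that $r_A\colon\overline{cv_n}^+\to\overline{cv(A)}^+$ is continuous, hence so is its restriction $\rho_A\colon\partial CV_n\to\overline{cv(A)}^+$. I would then invoke the standard fact that $cv(A)$---the set of free, minimal, simplicial $A$--trees---is an open subset of $\overline{cv(A)}^+$, equivalently that projectivized Outer space $CV(A)$ is open in its compactification $\overline{CV(A)}$ (see \cite{CullerVogtmann,Vog08}): a free, minimal, simplicial $A$--tree has positive translation lengths bounded away from zero on a fixed basis, so it is not a limit of trees degenerating to the cone point, and it cannot be approximated by very small $A$--trees that fail to be free and simplicial. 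It follows that $\rho_A^{-1}(cv(A))$ is open in $\partial CV_n$, so $K_A=\partial CV_n\setminus\rho_A^{-1}(cv(A))$ is closed in $\partial CV_n$, and hence closed in $\overline{CV_n}$ since $\partial CV_n$ is closed there.

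For the identification I would first observe that, for $T\in\partial CV_n$, one has $\rho_A(T)\in cv(A)$ exactly when the $A$--minimal subtree of $T$ is a free, minimal, simplicial $A$--tree, i.e.\ exactly when the induced action of $A$ on $T$ is free and discrete (here ``discrete'' means that the $A$--minimal subtree is simplicial). Indeed, if $\rho_A(T)\in cv(A)$ and some $1\ne g\in A$ fixed a point of $T$, then projecting that point equivariantly onto the $A$--minimal subtree would produce a point fixed by $g$, contradicting freeness of the action there; the converse and the statements about discreteness are immediate. Consequently $T\notin\bigcup_{A\in\F}K_A$ if and only if the induced action of $A$ on $T$ is free and discrete for every $A\in\F$. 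Since this condition is invariant under replacing $A$ by a conjugate (conjugation by $g\in F_n$ intertwines the $A$--action with the $gAg^{-1}$--action, preserving freeness and discreteness) and is vacuously satisfied by the trivial subgroup, it holds for all $A\in\F$ precisely when it holds for every proper free factor $A<F_n$---which is exactly the definition of $T$ being arational. This gives $\mathcal{AT}=\partial CV_n\setminus\bigcup_{A\in\F}K_A$.

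The only step that is not pure bookkeeping is the openness of $cv(A)$ in $\overline{cv(A)}^+$, so that is the point at which I would take most care to cite or supply a precise justification; everything else follows from the definition of arationality and the continuity of $r_A$ established in the lemma.
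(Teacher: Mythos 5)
Your proposal is correct and follows essentially the same route as the paper, which treats the statement as immediate ("clear from the above") from the continuity of $r_A$, the openness of $cv(A)$ in $\overline{cv(A)}^+$, the countability of conjugacy classes of proper free factors, and the definition of arationality. Your write-up just makes these standard verifications explicit, including rightly flagging openness of $cv(A)$ as the one fact requiring a citation.
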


\subsection{Trees, currents, and the action of
  $\mathrm{Out}(F_n)$}
Any $\phi\in \mathrm{Out}(F_n)$ acts naturally (on the left) on the
set of conjugacy classes. To be consistent with our later
constructions, we also define a left action of $\mathrm{Out}(F_n)$ on
the set of trees defined by
$$\<\phi T,a\>=\<T,\phi^{-1}(a)\>$$ The length pairing can be extended from
the set of conjugacy classes to the space $\mathcal{MC}_n$ of {\it
  measured geodesic currents} that contains positive multiples of
conjugacy classes \cite{Reiner,Ilya} (see below for the
definition). It admits an action of $(0,\infty)$ by scaling and a left
action of $\mathrm{Out}(F_n)$ that commutes with scaling and extends
the action on conjugacy classes. The length pairing extends to a
continuous function $\overline{cv_n}\times \mathcal{MC}_n\to
[0,\infty)$ that commutes with scaling in each coordinate
\cite{Kapovich-Lustig-pairing}.

\subsection{Dual laminations and currents}
For more details on this section see \cite{CHL0,CHL1,CHL2,CHL3}.
Denote by $\partial F_n$ the Cantor set of ends of $F_n$. A {\it
  lamination} $L$ is a closed subset of $\partial^2F_n:=\partial
F_n\times \partial F_n\smallsetminus \Delta$ invariant under
$(x,y)\mapsto (y,x)$ and under the left action of $F_n$, where
$\Delta$ is the diagonal. To every $T\in \overline{cv_n}$ one
associates the {\it dual lamination} $L(T)$, defined as
$$L(T)=\cap_{\epsilon>0}L_\epsilon(T)$$ where $L_\epsilon(T)$ is the
closure of set of pairs $(x,y)\in \partial^2F_n$ which are endpoints
in the Cayley graph
of axes of elements with translation length $<\epsilon$ in $T$. It
turns out that $L(T)$ is always {\it diagonally closed}
i.e. $(a,b),(b,c)\in L(T)$ implies $(a,c)\in L(T)$ (if $a\neq
c$), so it determines an equivalence relation on $\partial F_n$, and
the equivalence classes form an upper semi-continuous decomposition of
$\partial F_n$.

The precise definition of a measured geodesic current is that it is an
$F_n$-invariant and $(x,y)\mapsto (y,x)$ invariant Radon measure on
$\partial^2 F_n$ (i.e. a Borel measure which is finite on compact
sets). For example, a conjugacy class in $F_n$ determines a counting
measure on $\partial^2 F_n$ and can be viewed as a current.  The
topology on the space $\mathcal{MC}_n$ of all currents is the weak$^*$
topology.  The support $Supp(\mu)$ of a current is the smallest closed
set such that $\mu$ is 0 in the complement; the support of a current
is always a lamination. An important theorem relating currents and
laminations is the following.

\begin{thm}[\cite{Kapovich-Lustig-GAFA}]\label{KL}
  Let $T\in\overline{cv_n}$ and $\mu\in \mathcal{MC}_n$. Then
  $\<T,\mu\>=0$ if and only if $Supp(\mu)\subseteq L(T)$.
\end{thm}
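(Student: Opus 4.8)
The plan is to establish the two implications of the equivalence separately. Throughout I fix a free basis of $F_n$, the associated Cayley tree $T_0$, and an $F_n$--equivariant Lipschitz map $f\colon T_0\to T$ that is linear on edges, and I will use Cooper's bounded cancellation lemma: there is a constant $C$ such that the $f$--image of any finite $T_0$--geodesic segment stays within Hausdorff distance $C$ of the geodesic joining its endpoints in the metric completion $\overline T$ of $T$. I will also use freely that the length pairing $\overline{cv_n}\times\mathcal{MC}_n\to[0,\infty)$ is continuous and homogeneous, that $L(T)=\bigcap_{\epsilon>0}L_\epsilon(T)$, and that for $S\in cv_n$ realised as a metric graph $\Gamma=S/F_n$ one has $\langle S,\mu\rangle=\sum_e\ell_\Gamma(e)\,w_\mu(e)$, where $w_\mu(e)$ is the $\mu$--mass of the cylinder of biinfinite geodesics whose $S$--straightening crosses a fixed lift of $e$ (the extension by linearity and continuity of the identity ``translation length $=$ weighted edge count'' for counting currents of conjugacy classes). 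For the harder implication I will additionally invoke the Coulbois--Hilion--Lustig theory of the dual lamination \cite{CHL0,CHL1,CHL2,CHL3}: the map $Q\colon\partial F_n\to\overline T$ and its continuity, and the identification of $L(T)$ with the diagonal closure of $\{(x,y)\in\partial^2F_n:Q(x)=Q(y)\}$.

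The implication $Supp(\mu)\subseteq L(T)\Rightarrow\langle T,\mu\rangle=0$ I would prove by a direct estimate. Fix $\epsilon>0$, so $Supp(\mu)\subseteq L_\epsilon(T)$: every $(x,y)\in Supp(\mu)$ is a limit of axis endpoints of elements $g\in F_n$ with $\langle T,g\rangle<\epsilon$. The point is that a $T_0$--geodesic $\rho$ with endpoints in $L_\epsilon(T)$ moves slowly in $T$: given a length $L$, choose $g$ with $\langle T,g\rangle<\epsilon$ whose axis agrees with $\rho$ on $[-L,L]$; since $\langle T_0,g\rangle\ge1$ the segment $\rho|_{[-L,L]}$ ``wraps around'' $g$ at most $L$ times, and by bounded cancellation the $f$--image of the $T_0$--axis of $g$ lies within bounded distance of its $T$--axis (a point if $g$ is $T$--elliptic), so $d_T\bigl(f(\rho(-L)),f(\rho(L))\bigr)\le L\epsilon+2C$. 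Covering the compact quotient $Supp(\mu)/F_n$ by finitely many cylinders on each of which this holds with a common $L$, and unravelling the definition of the pairing as a $\mu$--average of net $T$--displacement per combinatorial unit, one gets $\langle T,\mu\rangle\le\epsilon+O(1/L)$; letting first $L\to\infty$ and then $\epsilon\to0$ gives $\langle T,\mu\rangle=0$.

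For the converse, $\langle T,\mu\rangle=0\Rightarrow Supp(\mu)\subseteq L(T)$, I would argue by contraposition. Suppose $(x,y)\in Supp(\mu)$ with $(x,y)\notin L(T)$; then $Q(x)\ne Q(y)$, so the geodesic of $\overline T$ from $Q(x)$ to $Q(y)$ is nondegenerate. Using continuity of $Q$ and bounded cancellation, one finds a small cylinder $D$ around $(x,y)$, cut out by a finite $T_0$--segment $\sigma$, and an arc $J\subset\overline T$ of definite length $\delta>0$, such that the $T$--straightening of every leaf in $D$ crosses $J$; here $\mu(D)>0$ because $(x,y)\in Supp(\mu)$. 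Passing to an ergodic component of $\mu$ and applying Birkhoff's theorem to the natural geodesic shift on $\partial^2F_n/F_n$, $\mu$--almost every leaf passes through translates of $\sigma$ with positive asymptotic frequency; since each such passage forces its $T$--straightening to cross the corresponding translate of $J$, and a geodesic of the tree $T$ crosses each arc at most once, the straightenings accumulate length linearly in the combinatorial window — so $\langle T,\mu\rangle>0$, a contradiction. As $L(T)$ is closed, it follows that $Supp(\mu)\subseteq L(T)$.

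The main obstacle lies in the second implication, and it has two linked technical cores. First, one must make precise, for non-simplicial $T$, that $\langle T,\mu\rangle$ really registers the translates of $J$ crossed by the straightenings: the equivariant map $f$ folds, so separation in $T_0$ need not persist in $T$, and one must control this folding to get the linear lower bound on window contributions. Second, and dually, one needs the ``closing'' statement that a leaf along which $f$ makes only sublinear $T$--progress — all one extracts from $\langle T,\mu\rangle=0$ via subadditivity and Kingman's theorem — is in fact $T$--approximated by axes of uniformly short elements, hence lies in every $L_\epsilon(T)$; the subtlety is precisely that sublinear displacement does not obviously upgrade to the required boundedness. Both points are exactly the content of the Coulbois--Hilion--Lustig machinery (existence and continuity of $Q$, upper semicontinuity of its fibres, behaviour of $f$--images near a pair with $Q(x)\ne Q(y)$), which I would import; alternatively one can first reduce, via the Levitt decomposition of $T$ into a simplicial part and finitely many geometric pieces, to the case of dense orbits, where $T$ is dual to a measured foliation and the claim becomes a statement about transverse measures. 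Granting this input, the two implications combine to give the theorem.
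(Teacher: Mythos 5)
First, a point of order: the paper does not prove this statement at all --- it is quoted verbatim from Kapovich--Lustig \cite{Kapovich-Lustig-GAFA} and used as a black box --- so there is no in-paper argument to compare yours against; I can only assess your sketch on its own terms. As a sketch it identifies the right ingredients (bounded cancellation, the $Q$-map of \cite{Levitt-Lustig,CHL0}, reduction to the dense-orbits case), but two of its load-bearing steps are not merely ``technical cores to be imported'': they are gaps that the cited CHL machinery does not fill, and filling them is essentially the content of the Kapovich--Lustig paper itself. The first is your repeated use of ``the pairing as a $\mu$-average of net $T$-displacement per combinatorial unit.'' That formula is a theorem for simplicial $T\in cv_n$ (the edge--cylinder formula), but for $T\in\partial CV_n$ the pairing is \emph{defined} only through the continuity theorem of \cite{Kapovich-Lustig-pairing}; an ergodic-averaging or cylinder formula for boundary trees, with uniform control under an approximation $T_i\to T$ in $cv_n$, is not available a priori, and proving one is close to equivalent to the theorem you are trying to prove. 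This affects both implications: in the first one your pointwise ``slow progress'' estimate has nothing to be averaged against, and in the second one positivity of a crossing measure does not translate into positivity of $\langle T,\mu\rangle$ without such a formula.

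The second, more decisive, flaw is the mechanism in the contrapositive direction. From positive Birkhoff frequency of occurrences of $\sigma$ you conclude that the leaf's straightening crosses the translates $g_iJ$ and hence ``accumulates length linearly,'' because a geodesic in a tree crosses each arc at most once. But the arcs $g_iJ$ for distinct $g_i$ need not be disjoint, or even distinct: when $T$ has dense orbits the action on $T$ is highly non-discrete, and arbitrarily many group elements move $J$ to arcs overlapping $J$, so crossing many translates gives no length lower bound. Moreover, if ``straightening'' means the geodesic of $\overline{T}$ from $Q(x)$ to $Q(y)$, that geodesic has \emph{finite} length (any two points of an $\R$-tree are at finite distance), so it certainly cannot accumulate length linearly in the window; while if it means the image path $f\circ\rho$, then its length growth is irrelevant to the pairing, since bounded cancellation controls how far the path strays from the geodesic between its endpoints but not how much it backtracks, and the pairing is not the growth rate of the image path. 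So the step ``the straightenings accumulate length linearly, hence $\langle T,\mu\rangle>0$'' fails as stated. Bridging from ``a cylinder of leaves with $Q(x)\neq Q(y)$ has positive $\mu$-mass'' to ``$\langle T,\mu\rangle>0$'' is exactly the hard analytic core of \cite{Kapovich-Lustig-GAFA} (bounded back-tracking estimates combined with approximation of $\mu$ by weighted rational currents and surgery on the corresponding cyclic words), and it is not supplied by the CHL facts (existence and continuity of $Q$, identification of $L(T)$) that you propose to import; importing Kapovich--Lustig itself would of course be circular.
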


\subsection{Observers' topology}
Let $T\in \overline{cv_n}$. Then $T$ is a metric space, but it also
admits a weaker topology, called {\it observers' topology}. A subbasis
for this topology consists of complementary components of individual
points in $T$. One can also form the metric completion of $T$ and add
the space of ends (i.e. the Gromov boundary) to form $\hat T$. The
space $\hat T$ also has observers' topology, defined in the same
way. In this topology $\hat T$ is always compact, and in fact it is a
{\it dendrite} (uniquely arcwise connected Peano continuum).  The
following is a theorem of Coulbois-Hilion-Lustig.  There is an
alternative description of $L(T)$ in terms of the $Q$-map $Q:\partial
F_n\to \hat T$ (see Levitt-Lustig \cite{Levitt-Lustig}): $(a,b)\in
L(T)$ if and only if $Q(a)=Q(b)$.

\begin{thm}{\cite{CHL0}}\label{CHL}
  Suppose $T$ has dense orbits. Then $\hat T$ with observers' topology
  is equivariantly homeomorphic to $\partial F_n/L(T)$.
\end{thm}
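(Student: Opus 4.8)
The plan is to realise the homeomorphism as the map induced on $\partial F_n/L(T)$ by the Levitt--Lustig $Q$--map $Q:\partial F_n\to\hat T$; the whole point is that, for $T$ with dense orbits, this map is continuous \emph{for the observers' topology} and collapses exactly the pairs in $L(T)$. So first I would recall the construction of $Q$ from \cite{Levitt-Lustig}: fix a basepoint $p\in T$, and for $X\in\partial F_n$ choose a geodesic ray $1=g_0,g_1,g_2,\dots$ in $F_n$ converging to $X$; one shows that when $T$ has dense orbits the sequence $g_i p\in T$ converges in the observers' topology to a point $Q(X)\in\hat T$ that does not depend on the chosen ray, and that the resulting map $Q$ is continuous for the observers' topology on $\hat T$ (it is in general wildly discontinuous for the metric topology, which is exactly why the observers' topology is forced on us). This is the technical heart of the statement, and is the content of Levitt--Lustig together with the Coulbois--Hilion--Lustig picture of $\hat T$ as a dendrite. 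Note that $Q$ is $F_n$--equivariant directly from the construction, since $g\cdot(g_i p)=(gg_i)p$ and $gg_i\to gX$.

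Next I would check \emph{surjectivity} of $Q$, and this is where the dense-orbits hypothesis is really used. Since $\partial F_n$ is compact and $Q$ continuous, $Q(\partial F_n)$ is a closed, $F_n$--invariant subset of $\hat T$. For a hyperbolic element $g\in F_n$ the points $Q(g^{+\infty})$ and $Q(g^{-\infty})$ are the two endpoints of the axis of $g$ in $\hat T$, so the axis of $g$ is contained in the convex hull of $Q(\partial F_n)$; but $T$ is minimal (dense orbits implies minimal), so $T$ is the closure of the union of axes of its hyperbolic elements, and hence $Q(\partial F_n)$ is dense in $\hat T$. A dense closed set is everything, so $Q$ is onto $\hat T$.

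Now I would pass to the quotient. Because $L(T)$ is diagonally closed (as recalled in the excerpt), $R:=L(T)\cup\Delta$ is an equivalence relation on $\partial F_n$, and it is a \emph{closed} subset of $\partial F_n\times\partial F_n$: a limit of pairs in $L(T)$ either lands on $\Delta$, or stays off $\Delta$, where $L(T)$ is closed by definition. By the Levitt--Lustig characterisation $(a,b)\in L(T)\iff Q(a)=Q(b)$ quoted in the excerpt, $Q$ is constant on $R$--classes and separates distinct $R$--classes, so by the universal property of quotients it factors as a continuous, equivariant bijection $\overline Q:\partial F_n/L(T)\to\hat T$. Finally, $\partial F_n/L(T)$ is the quotient of the compact metrizable space $\partial F_n$ by a closed equivalence relation, hence compact Hausdorff, while $\hat T$ with the observers' topology is compact Hausdorff (it is a dendrite), as recalled above; a continuous bijection from a compact space onto a Hausdorff space is a homeomorphism, so $\overline Q$ is the desired equivariant homeomorphism.

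In summary, the only genuinely hard step is the first one: constructing $Q$ and proving that the orbit $(g_i p)$ of a basepoint along a geodesic ray has a well-defined limit in $\hat T$ in the observers' topology, and that $Q$ is continuous there — the remaining surjectivity and point-set arguments are then straightforward, with surjectivity the one place that visibly needs the dense-orbits assumption (without it $\hat T$ may carry simplicial pieces outside the image of $Q$).
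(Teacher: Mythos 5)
The paper itself offers no proof of this statement: it is imported wholesale from Coulbois--Hilion--Lustig \cite{CHL0} (with the added remark that the $Q$-map realizes the homeomorphism), so there is no internal argument to compare yours against. Your outline is the standard route from the literature: take the Levitt--Lustig map $Q:\partial F_n\to\hat T$, use its continuity for the observers' topology and the fiber characterization $Q(a)=Q(b)\Leftrightarrow(a,b)\in L(T)$, and then run the soft quotient argument. That soft part of your write-up is correct: $L(T)\cup\Delta$ is a closed, diagonally closed equivalence relation, so $\partial F_n/L(T)$ is compact Hausdorff, and a continuous equivariant bijection onto the compact Hausdorff dendrite $\hat T$ is a homeomorphism. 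Be aware, though, that the continuity of $Q$ in the observers' topology (and the fiber characterization) is itself the analytic core of the Coulbois--Hilion--Lustig result you are asked to prove, so what you have written is a reduction of the theorem to the cited technology rather than an independent argument --- which is consistent with how the paper treats the statement, but worth saying explicitly.

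The one step you do argue yourself, surjectivity of $Q$, has a genuine gap. You establish that the closed, $F_n$-invariant set $Q(\partial F_n)$ contains the two ends of the axis of every hyperbolic element, and then pass from ``each axis lies in the convex hull of the image'' together with ``the union of axes is dense in $T$'' to ``the image is dense in $\hat T$.'' That inference is a non sequitur: containing the endpoints of an arc of a dendrite tells you nothing about containing, or even accumulating on, its interior points, and the convex hull of a closed set is in general much larger than the set. A priori your argument allows $Q(\partial F_n)$ to be a closed invariant set consisting entirely of boundary/completion points and missing $T$ altogether; nothing you have quoted rules this out, because it is not true in general that every nonempty closed invariant subset of $\hat T$ (observers' topology) is all of $\hat T$ --- that kind of minimality statement would itself require proof. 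Surjectivity of $Q$ really does use the dense-orbits hypothesis through the Levitt--Lustig construction (one produces, for a given $x\in\hat T$, group elements moving the basepoint toward $x$ and extracts a boundary point mapping to $x$); it is part of the package in \cite{Levitt-Lustig} and \cite{CHL0}, so the honest fix is either to cite it alongside continuity or to reproduce that construction, not to deduce it from minimality of $T$ as you attempt.
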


In fact, the $Q$-map realizes this homeomorphism.

\subsection{Surfaces, Laminations and $\bout$}\label{sec:arationality surface}
Suppose that $\Sigma$ is a compact oriented surface with one boundary
component. Then the fundamental group $\pi_1(\Sigma)$ is the free
group $F_{2g}$. If $\Sigma$ is a nonorientable surface with a single
boundary component, the fundamental group $\pi_1(\Sigma)$ is also
free, of even or odd rank (depending on the parity of the Euler
characteristic).

A lamination $\lambda$ on $\Sigma$ will always mean a \emph{measured
  geodesic lamination}.  For any surface $\Sigma$, we denote by
$\pml(\Sigma)$ the sphere of projective measured laminations.  See
\cite{FLP} for a thorough treatment. We will also need measured
laminations of non-orientable surfaces, but the only specific result
we rely on is that for a nonorientable $\Sigma$, the lifting map
\[ \pml(\Sigma) \to \pml(\Sigma') \] is a topological embedding, where
$\Sigma'$ denotes the orientation double cover.

\smallskip In our construction we will need paths in $\pml$ consisting
only of minimal laminations.  In the orientable case these will be
provided by the following theorem.
\begin{thm}[{Chaika and Hensel \cite{CH}}]\label{thm:ch} For a surface of genus at
  least 5 (with any number of marked points) the set of uniquely ergodic ergodic foliations is
  path-connected and locally path-connected in $\pml$. Furthermore,
  given any finite set $B$ of laminations, the complement of $B$ in
  $\pml$ is still path-connected.
\end{thm}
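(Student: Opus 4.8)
The plan is to connect any two uniquely ergodic foliations by a path in $\pml$ all of whose foliations are again uniquely ergodic, and then to localize this construction to obtain local path connectedness and the statement about complements of finite sets. Since for a surface of the given complexity $\pml$ is a sphere, it is path connected and locally path connected; the entire content is to arrange that the connecting paths avoid the ``bad'' set $\mathcal B\subset\pml$ of non--uniquely--ergodic foliations.

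The key structural input is \emph{Masur's criterion}: if the vertical Teichm\"uller geodesic ray associated to $\mathcal F$ returns to a fixed compact part of moduli space for arbitrarily large times, then $\mathcal F$ is uniquely ergodic. Contrapositively, every $\mathcal F\in\mathcal B$ --- whether non-minimal, non-filling, or minimal and filling but not uniquely ergodic --- has a Teichm\"uller ray that eventually leaves every compact set. Combining this with an enumeration of the countably many curve systems, I would write $\mathcal B=\bigcup_j C_j$, where each $C_j$ is a closed --- hence, by compactness of $\pml$, compact --- subset, defined so that foliations in $C_j$ are ``$j$--bad'' in a uniform quantitative way (their rays stay in the $1/j$--thin part for all times $\ge j$, or they are carried by the $j$--th curve system). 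The point of passing to \emph{closed} pieces is robustness: a path avoiding $C_j$ continues to avoid it after a sufficiently small $C^0$--perturbation.

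Next I would produce a starting ``path'' $p_0$ from $\mu_0$ to $\mu_1$ that is already minimal and filling at all but finitely many points, and at those finitely many points is the stable foliation $\lambda_\psi$ of a pseudo-Anosov $\psi$ supported on a proper subsurface; this is exactly the Leininger--Schleimer point-pushing construction \cite{LS}, which by its nature is carried out on branched covers and in subsurfaces. I would then improve $p_0$ inductively to paths $p_1,p_2,\dots$, where $p_j$ avoids $C_1,\dots,C_j$, retains the same structure (minimal filling off finitely many subsurface pseudo-Anosov stable foliations), and satisfies $\|p_{j+1}-p_j\|_\infty\to 0$ fast enough to be $C^0$--Cauchy. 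To pass from $p_j$ to $p_{j+1}$: at each place where $p_j$ meets $C_{j+1}$ the offending point is (after the earlier steps) such a stable foliation $\lambda_\psi$; since $\psi$ is a subsurface pseudo-Anosov, $\lambda_\psi$ is cobounded, hence uniquely ergodic, and $\psi$ acts on $\pml$ with North--South dynamics, with $\psi^N$ uniformly contracting on a fixed ball around $\lambda_\psi$. I would take a short subpath of $p_j$ through a uniquely ergodic point near $\lambda_\psi$, replace it by a small detour inside a ball $V$, and apply a high power $\psi^N$: the Masur--Minsky contraction of Teichm\"uller geodesics in the curve complex \cite{MM1}, together with coboundedness of $\lambda_\psi$, forces the \emph{entire} image $\psi^N(V\cap\text{detour})$ to consist of uniquely ergodic foliations lying in a tiny neighborhood of $\lambda_\psi$, and in particular to avoid $C_{j+1}$. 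Concatenating $p_j$ up to $V$ with this image detour (and back) gives $p_{j+1}$; taking $N$ large makes the change $C^0$--small, so $p_{j+1}$ still avoids $C_1,\dots,C_j$, and we may steer the detours to also avoid the finite forbidden set $B$.

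Finally, the uniform limit $p_\infty=\lim_j p_j$ is a genuine path from $\mu_0$ to $\mu_1$ missing every $C_j$, hence lying in the uniquely ergodic locus, which gives path connectedness; the complement-of-$B$ statement is built in. Local path connectedness follows by localizing: start in a small metric ball of $\pml$ (a manifold) and perform all improvements keeping each $\|p_{j+1}-p_j\|_\infty$ small enough that $p_\infty$ stays in a prescribed neighborhood. I expect the main obstacle to be the push-off step --- showing that the \emph{whole} image detour $\psi^N(V)$, not merely its center $\lambda_\psi$, lands in the uniquely ergodic locus and misses $C_{j+1}$. This needs genuinely quantitative estimates (uniform contraction of $\psi^N$ on $\pml$ matched against the ``$j$--badness'' scale defining $C_{j+1}$), and it is presumably where the hypothesis $\text{genus}\ge 5$ enters, both to run the point-pushing base case and to have enough room for the auxiliary subsurface pseudo-Anosovs.
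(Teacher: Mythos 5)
First, a point of logistics: this statement is not proved in the present paper at all --- it is quoted from \cite{CH}, so there is no internal proof to compare against; I can only judge your sketch on its own and against the known argument. Your ingredients (Masur's criterion, the point-pushing paths of \cite{LS}, contraction under high powers of pseudo-Anosovs, a Cauchy limit of paths) are the right ones, but the architecture you propose --- a countable family of closed sets $C_j$ covering the non--uniquely--ergodic locus, plus iterated surgery at finitely many exceptional points --- is transplanted from the free-factor argument of this paper, and it breaks exactly where the two settings differ. In the free-factor setting the complement of the sets $K_A$ is precisely the target set (arational trees), so a path that is arational off finitely many controlled points can only meet a given $K_A$ at those points. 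In $\pml$ your target is unique ergodicity, which is strictly stronger than ``minimal and filling'': a path that is minimal and filling away from finitely many points can still meet $C_{j+1}$ at ordinary points (minimal filling non--uniquely--ergodic foliations, or uniquely ergodic ones with divergent rays, cf.\ Cheung--Masur), so your inductive claim that the only offending intersections with $C_{j+1}$ are the finitely many stable foliations $\lambda_\psi$ is unjustified --- and without it the detour-and-contract surgery has no finite list of places to operate. This is the crux, and it is what the actual argument of \cite{CH} is designed to avoid: there the paths are built so that \emph{every} point is quantitatively cobounded (uniformly bounded projections, coming from contraction of pseudo-Anosov axes in the curve complex and from lifting point-pushing constructions along covers), and unique ergodicity then holds pointwise by Masur's criterion, with no a posteriori avoidance of a countable exhaustion.

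Second, the dynamical justification of your surgery step is wrong as stated. A pseudo-Anosov supported on a proper subsurface does not act on $\pml$ of the whole surface with north--south dynamics (it fixes pointwise every lamination carried by the complementary subsurface), and its stable foliation is not filling, hence neither cobounded nor uniquely ergodic as a point of $\pml(S)$; only a weaker attraction statement in the spirit of Proposition~\ref{NS} (for compact sets of laminations crossing the support) is available. In the surface setting the exceptional points should instead be stable foliations of genuine point-pushing pseudo-Anosovs (Kra's theorem), as in \cite{LS,CH}, for which your coboundedness and north--south claims are correct. Smaller repairs: Masur's criterion gives only one implication, so your sets must be doubly indexed (thinness $1/j$ after time $k$) to cover the non--uniquely--ergodic locus, and their union strictly contains it; the local path-connectedness and finite-set statements are only gestured at; and note that, as literally quoted, the last sentence of the theorem (path-connectedness of the complement of a finite set $B$ in $\pml$) is trivial for a sphere of this dimension --- what is actually needed later (Proposition~\ref{prop:BLAS}) is that the connecting paths of uniquely ergodic laminations can be chosen to avoid $B$, and your construction should be aimed at that statement.
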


For non-orientable surfaces, we require a similar result. As we do not
need the full strength of Theorem~\ref{thm:ch} for our argument, we
will prove the following in the appendix (which follows quickly from
methods developed in \cite{LS}):
\begin{thm}\label{thm:weak-nonorientable-connectivity}
  Suppose that $\Sigma$ is a nonorientable surface with a single
  marked point $p$.
	
  Let $\mathcal{P} \subset \pml(\Sigma)$ be the set of
  minimal foliations which either
  \begin{enumerate}
  \item do not have an angle--$\pi$ singularity at $p$, or
  \item are stable foliations of point-pushing pseudo-Anosovs.
  \end{enumerate}
  Then $\mathcal{P}$ is path-connected, and invariant under the
  mapping class group of $\Sigma$. In addition, if $F$ is any finite
  set of laminations, the set $\mathcal{P}\setminus F$ is still
  path-connected.
\end{thm}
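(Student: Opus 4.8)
The plan is to adapt the point-pushing methods of \cite{LS} to the nonorientable setting. Write $\Sigma_0$ for the closed nonorientable surface obtained from $\Sigma$ by forgetting the marked point $p$, and recall the Birman exact sequence
\[ 1 \longrightarrow \pi_1(\Sigma_0,p) \xrightarrow{\ \mathrm{Push}\ } \mathrm{MCG}(\Sigma) \longrightarrow \mathrm{MCG}(\Sigma_0) \longrightarrow 1, \]
whose kernel $\mathbb{P}$ is the (normal) point-pushing subgroup. Pseudo-Anosov mapping classes on the nonorientable surface $\Sigma$, and their invariant foliations, are defined by lifting to the orientation double cover $\Sigma'$; this is exactly where the embedding $\pml(\Sigma)\to\pml(\Sigma')$ enters, since it allows one to descend the (deck--invariant) stable foliation upstairs to a well-defined point of $\pml(\Sigma)$. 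By the nonorientable analogue of Kra's theorem, again obtained by lifting, $\mathrm{Push}(\gamma)$ is pseudo-Anosov precisely when the free homotopy class of $\gamma$ fills $\Sigma_0$, and in that case its stable foliation $\lambda_\gamma$ is minimal with a single angle--$\pi$ prong at $p$. Hence the type-(2) foliations form exactly the set $\mathcal{Q}=\{\lambda_\gamma:\gamma\text{ fills }\Sigma_0\}$. Invariance of $\mathcal{P}$ under $\mathrm{MCG}(\Sigma)$ is then immediate: every mapping class fixes $p$ and preserves both minimality and the angle data at $p$, hence preserves type (1); and it normalises $\mathbb{P}$ and preserves the filling property, hence permutes $\mathcal{Q}$ via $\psi\cdot\lambda_\gamma=\lambda_{\psi_*\gamma}$.

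I would next prove that $\mathcal{Q}$ is path-connected in $\mathcal{P}$. Following the point-pushing construction of \cite{LS}, the assignment $\gamma\mapsto\lambda_\gamma$ is continuous once the filling loop $\gamma$ is made to vary through the relevant parameter space of (filling) geodesics on $\Sigma_0$; since that parameter space is connected (this is the step where the methods of \cite{LS} are re-run in the nonorientable setting, passing through the orientation double cover of $\Sigma_0$ whenever an orientability hypothesis is needed), moving $\gamma$ along a path produces a path in $\mathcal{Q}\subset\mathcal{P}$. One also records here that $\mathcal{Q}$ is dense in $\mathcal{P}$: stable foliations of filling point-pushes accumulate onto every minimal foliation of $\Sigma$.

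It then remains to connect an arbitrary $\lambda\in\mathcal{P}$ to $\mathcal{Q}$ by a path in $\mathcal{P}$. If $\lambda$ is of type (2) it already lies in $\mathcal{Q}$. If $\lambda$ is of type (1), realise it as a minimal foliation $\nu$ of $\Sigma_0$ with $p$ marked at a regular point, and choose a filling loop $\gamma$ based at $p$ and disjoint from the finitely many singular leaves of $\nu$. Dragging $p$ once around $\gamma$ yields a path $P_0$ in $\mathcal{P}$ from $\nu$ to $\mathrm{Push}(\gamma)\cdot\nu$, each member of which is again minimal with $p$ a regular point (in particular of type (1)). Concatenating the iterated paths $\mathrm{Push}(\gamma)^k\cdot P_0$ for $k\geq0$, suitably reparametrised, gives a path starting at $\nu$; since $\mathrm{Push}(\gamma)$ is pseudo-Anosov with attracting fixed point $\lambda_\gamma$ on $\pml(\Sigma)$ while its repelling fixed point (having an angle--$\pi$ prong at $p$) does not lie on the type-(1) arc $P_0$, the iterates contract $P_0$ towards $\lambda_\gamma$, so this path converges to $\lambda_\gamma\in\mathcal{Q}$ and stays inside $\mathcal{P}$ throughout. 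Combined with the previous paragraph, this shows $\mathcal{P}$ is path-connected. For the last assertion one runs the same argument with $\mathcal{Q}\setminus F$ in place of $\mathcal{Q}$: the parameter space of filling geodesics on $\Sigma_0$ stays connected after removing the finitely many geodesics corresponding to members of $F$, so $\mathcal{Q}\setminus F$ is path-connected, while the loops $\gamma$ and the dragging paths can be chosen generically so that the constructed paths avoid the finite (hence empty-interior) set $F$, the density of $\mathcal{Q}$ providing the room.

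I expect the main obstacle to be the precise construction and continuity of the point-pushing paths: verifying that $\lambda_\gamma$ varies continuously with the filling loop, that the dragging operation genuinely produces a path in $\pml(\Sigma)$ lying entirely in $\mathcal{P}$ (so that no intermediate foliation acquires an angle--$\pi$ prong at $p$ other than those coming from point-pushes), and that the relevant family of filling geodesics on $\Sigma_0$ is connected. This is the nonorientable counterpart of the technical core of \cite{LS}, the delicate configurations being those in which the dragged point crosses a leaf of the ambient foliation; by contrast, the foundational ingredients --- the nonorientable Kra criterion and the transfer of pseudo-Anosov theory through the orientation double cover --- are comparatively routine.
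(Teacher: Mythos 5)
Your third paragraph (drag the marked point along $\gamma$, then iterate $\mathrm{Push}(\gamma)$ and use north--south dynamics plus the fact that the repelling fixed point has an angle--$\pi$ prong while the dragged arc does not) is exactly the engine of the paper's proof. But the overall architecture has a genuine gap: your second paragraph, where you claim that $\mathcal{Q}$ (the set of stable foliations of filling point-pushes) is path-connected because ``$\gamma\mapsto\lambda_\gamma$ is continuous on a connected parameter space of filling geodesics, re-running the methods of \cite{LS} in the nonorientable setting,'' and moreover that $\mathcal{Q}$ is dense in $\mathcal{P}$. That continuity statement is not a routine transfer through the orientation double cover -- it is essentially the main theorem of \cite{LS}, and redoing it for nonorientable surfaces is precisely the kind of work the appendix is designed to avoid (the paper explicitly says it does not need the full strength of such results). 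Since you only prove in paragraph three that a type-(1) foliation $\nu$ connects to the particular $\lambda_\gamma$ for a loop $\gamma$ adapted to $\nu$, your assembly of the theorem, and also your treatment of the finite set $F$ (via density of $\mathcal{Q}$ and connectivity of the space of filling geodesics minus finitely many), genuinely depends on this unproven LS-type input. As written, the proposal therefore leaves the technical core unestablished.

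The fix is to decouple $\gamma$ from $\nu$: given an arbitrary type-(2) target, it is by definition the stable foliation of some point-push $\Psi_\gamma$, and you can drag $\nu$ along that same $\gamma$ and iterate $\Psi_\gamma$. Every foliation on the dragged arc is induced from the unmarked surface, so it has no angle--$\pi$ singularity at $p$ regardless of how $\gamma$ meets $\nu$; hence any type-(1) point connects to any type-(2) point, two type-(1) points connect through a common type-(2) point, two type-(2) points through a common type-(1) point, and the finite set $F$ is avoided simply by the freedom in choosing which point-pushes and which auxiliary type-(1) foliations to use. This is the paper's route, and it makes the connectivity of $\mathcal{Q}$ (and its density) unnecessary. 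Relatedly, your requirement that $\gamma$ be ``disjoint from the finitely many singular leaves of $\nu$'' cannot be met: the singular leaves of a minimal foliation are dense, so every essential loop crosses them. Presumably you meant the finitely many singular \emph{points}; but even that condition is not needed, since a singularity of the unmarked foliation landing on $p$ has cone angle at least $3\pi$ and so does not leave the set $\mathcal{P}$.
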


Given a lamination on $\Sigma$, we can lift it to a lamination
$\widetilde{\lambda}$ of the universal cover
$\widetilde{\Sigma}$. Since $\partial_\infty \widetilde{\Sigma}
= \partial_\infty \pi_1(\Sigma) = \partial_\infty F_n$, this allows us
to interpret $\lambda$ as a lamination on the free group. It is dual
(in the sense above) to the dual tree of the lamination
$\widetilde{\lambda}$ (in the geometric sense).

The following theorem, due to Skora, describes exactly which trees
appear in this way.

\begin{thm}\cite{skora}
  Suppose a closed surface group acts on a minimal $\R$-tree $T$ with
  cyclic arc stabilizers. Then $T$ is dual to a measured geodesic
  lamination on the surface. The same holds for surfaces with boundary
  if the fundamental group of each boundary component acts
  elliptically in $T$.
\end{thm}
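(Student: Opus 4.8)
The plan is to invoke the structure theory of $\R$--trees with small arc stabilisers — the Rips machine, in the form due to Bestvina--Feighn and Gaboriau--Levitt--Paulin — together with an Euler characteristic argument that pins the resulting foliated complex down to be the surface. First I would dispose of the bounded case by doubling: if $\Sigma$ has nonempty boundary and each peripheral subgroup fixes a point of $T$, glue two copies of $\Sigma$ along $\partial\Sigma$ to form a closed surface $D\Sigma$, so that $\pi_1(D\Sigma)$ is the amalgam of two copies of $\pi_1(\Sigma)$ over the cyclic peripheral subgroups; the peripheral fixed points then allow one to build a tree $T_D$ on which $\pi_1(D\Sigma)$ acts minimally with cyclic arc stabilisers, by gluing two copies of $T$ along appropriate fixed points following the Bass--Serre tree of the amalgam. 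A measured lamination on $D\Sigma$ dual to $T_D$ is disjoint from the separating multicurve $\partial\Sigma$ (which is elliptic in $T_D$), hence restricts to a measured lamination on $\Sigma$ dual to $T$. So I may assume $\Sigma$ is closed and $G=\pi_1(\Sigma)$ acts minimally on $T$ with cyclic arc stabilisers.

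Next I would resolve the action. Fix a one--vertex CW structure on $\Sigma$, lift the $1$--skeleton to $\widetilde\Sigma$, and choose for each oriented edge a point and an arc of $T$; this gives a $G$--equivariant resolution of the $G$--action on $T$ by a finite band complex. Running the Rips machine on it presents $T$ as the dual tree of a graph of actions over a graph of groups decomposition of $G$, with all edge and arc stabilisers cyclic and with vertex actions that are simplicial, axial (a finitely generated group acting on a line with dense orbits), thin (of Levitt type), or of surface type (dual to an arational filling measured foliation on a compact subsurface). Axial pieces with dense orbits require a $\mathbb{Z}^2$ subgroup, which a surface group does not contain, while the discrete ones are simplicial; so only simplicial, thin, and surface--type pieces survive.

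Excluding the thin (Levitt) pieces is the step I expect to be the main obstacle. The mechanism is an index / Euler characteristic count of the Gaboriau--Levitt type: the number of $G$--orbits of branch points of $T$ (equivalently, the geometric index of $T$) receives a nonnegative contribution from each piece, a strictly positive one from each thin piece, and the total is constrained by $-\chi(\Sigma)$; intuitively a thin piece would force the resolving foliated complex to have strictly more negative Euler characteristic than $\Sigma$, which is incompatible with its fundamental group being $\pi_1(\Sigma)$. The cyclic--arc--stabiliser hypothesis is exactly what keeps all these quantities finite and makes the bookkeeping valid. (Alternatively one may follow Skora's original route, which analyses the resolving map and its associated interval exchanges directly rather than passing through the full machine.) Hence no thin pieces occur.

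Finally I would reassemble. The simplicial edges of the decomposition are carried by an essential weighted multicurve $\gamma$ on $\Sigma$ — a splitting of a surface group with cyclic edge groups and vertex groups that are cyclic or carried by essential subsurfaces is geometric, i.e.\ realised by a subsurface decomposition — and on each component of $\Sigma\setminus\gamma$ the associated vertex action is dual to an arational measured foliation. Gluing these foliations across $\gamma$, with the appropriate weights along the complementary annuli, produces a single measured geodesic lamination $\lambda$ on $\Sigma$, and by construction its dual tree has length function $a\mapsto i(\lambda,a)=\langle T,a\rangle$. Since a minimal $\R$--tree with cyclic arc stabilisers is determined up to equivariant isometry by its length function, $T$ is dual to $\lambda$, as claimed.
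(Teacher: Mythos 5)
The paper does not actually prove this statement (it is quoted from Skora), so the only comparison available is with the known proofs; your sketch follows the Rips-machine alternative rather than Skora's original resolution/unfolding argument, but as written it has a genuine gap at its center. The Rips machine does not ``present $T$ as the dual tree of a graph of actions'': it analyzes the band complex you constructed, i.e.\ it describes the \emph{geometric} tree $T_{\mathcal F}$ dual to that foliated complex, which comes only with an equivariant morphism $T_{\mathcal F}\to T$ that may fold. Asserting that the output of the machine is a decomposition of $T$ itself silently assumes the resolution is exact, and exactness (equivalently, that every small action of a surface group with elliptic boundary is geometric and dual to a lamination on that surface) is precisely the content of Skora's theorem. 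This is also where the cyclic-arc-stabilizer hypothesis does its real work --- it is what rules out folding, via Skora's unfolding and strong-convergence argument (or, in other treatments, a careful analysis of the resolving map on the foliated surface); in your proposal that hypothesis is only invoked as ``bookkeeping,'' so the heart of the proof is missing. Without this step your argument shows at best that some geometric tree resolving $T$ is dual to a measured lamination, not that $T$ is.

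Secondly, the exclusion of thin (Levitt) components is only a heuristic as stated, and the mechanism you name cannot work by itself: the Gaboriau--Levitt index/branch-point count is bounded by the analogue of $-\chi$ for free groups as well, and free groups of the same rank do admit thin small actions, so no pure index or Euler-characteristic inequality distinguishes the surface-group case. What one actually uses is special to surfaces --- for instance that, thanks to boundary ellipticity, the resolving band complex can be taken to be the foliated surface $\Sigma$ itself and the Rips moves preserve this structure, or a separate argument exploiting that $\pi_1(\Sigma)$ is a one-ended $PD(2)$ group when a thin component would force an inadmissible splitting. The doubling reduction in your first step is fine in outline (glue copies of $T$ along peripheral fixed points via a graph of actions, check arc stabilizers remain cyclic, restrict the resulting lamination to $\Sigma$), modulo verifying that the minimal $\pi_1(\Sigma)$-subtree is the original copy of $T$ and that length functions match; but the decisive missing ingredients are exactness of the resolution and a genuine argument excluding thin pieces.
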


For our purposes, we will need to be a bit more careful about how we
identify surface laminations and free group laminations. Namely, for
any identification $\sigma:\pi_1(\Sigma) \to F_n$ we obtain the
corresponding map
\[ \iota_\sigma : \pml(\Sigma) \to \overline{CV_n} \] 
mapping a lamination to its dual tree. We denote by
\[ \pml_\sigma = \iota_\sigma(\pml(\Sigma)) \]
the image of $\iota_\sigma$. In other words, $\pml_\sigma$ consists of those
trees which are realisable as duals to geodesic laminations on $\Sigma$,
given the identification of $F_n = \pi_1(\Sigma)$ via $\sigma$.

If $\phi$ is any outer automorphism, then the images of $\iota_\sigma$
and $\iota_{\sigma\circ\phi^{-1}}$ differ by applying the outer
automorphism $\phi$ (acting on $\overline{CV_n}$).  Core to our
argument will be to use the intersection of ``adjacent'' such copies
of $\pml$; see Section~\ref{sec:moves}. A typical lamination contained
in the intersection is one that fills a suitable subsurface of
$\Sigma$.

\subsection{Density of surface type arational trees}
Arational trees come in two flavors: ones dual to a filling measured
lamination on a compact surface with one boundary component (we will
call them arational trees of {\it surface type}), and the ``others''
-- these are free as $F_n$-trees (see
\cite{reynolds:reducingSystems}). Recall that every arational tree $T$
belongs to a canonical simplex $\Delta_T\subset \partial CV_n$
consisting of arational trees with the same dual lamination.  We will
need the following lemma.

\begin{lem}\label{surfaces dense}
  Let $T$ be an arational tree and let $U$ be a neighborhood of the
  simplex $\Delta_T$ in $\partial CV_n$. Then $U$ contains an
  arational tree $S$ of surface type.
\end{lem}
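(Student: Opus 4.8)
The plan is to approximate the simplex $\Delta_T$ by surface-type arational trees using the density of surface subgroups and a limiting argument. First I would recall that arational trees of surface type are dual to filling laminations on a once-punctured (possibly nonorientable) surface $\Sigma$ with $\pi_1(\Sigma)\cong F_n$, and that such trees are parametrized (up to the simplex of transverse measures) by $\pml_\sigma = \iota_\sigma(\pml(\Sigma))$ for the various identifications $\sigma:\pi_1(\Sigma)\to F_n$. The key structural input is that the union $\bigcup_\sigma \pml_\sigma$ over all surface identifications is dense in $\partial CV_n$ — this should either be quoted from \cite{BesRey} (where density of surface-type points, equivalently of $FF_n$-vertices coming from surface subgroups, is used) or deduced from the fact that the $\mathrm{Out}(F_n)$-orbit of a single $\pml_\sigma$ accumulates everywhere on $\partial CV_n$ together with the $\mathrm{Out}(F_n)$-equivariance $\phi\cdot\pml_\sigma = \pml_{\sigma\circ\phi^{-1}}$ noted above.

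Next I would argue as follows. Since $\Delta_T$ is a compact set contained in the open set $U$, pick a smaller neighborhood $U'$ with $\overline{U'}\subset U$. By density there is a sequence $S_i \to T$ (or converging into $\Delta_T$) with each $S_i$ lying in some $\pml_{\sigma_i}$; for large $i$ we have $S_i \in U'$. The point $S_i$ is a tree dual to a measured lamination $\mu_i$ on $\Sigma$ via $\sigma_i$. The issue is that $\mu_i$ need not be \emph{filling}, so $S_i$ need not be arational. To fix this I would use the relative position of $S_i$ and $\Delta_T$: because $T$ is arational, no proper free factor acts freely and discretely on it; by coarse continuity of $\Phi$ (the third bullet in the discussion of $\Phi$), $\Phi(S_i)\to\Phi(\Delta_T)\in\partial FF_n$, which forces the $\sigma_i$-image of any reducing curve of $\mu_i$ to have large $FF_n$-distance from a fixed basepoint. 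Then I would either (a) observe directly that for $i$ large enough $\mu_i$ must already be filling on $\Sigma$ — since a non-filling $\mu_i$ would be dual to a tree lying in some $K_A$ with $A$ the factor carried by the complementary subsurface, and one can control which $A$ occurs — or (b) perturb $\mu_i$ within $\pml(\Sigma)$ to a filling (indeed uniquely ergodic) lamination $\mu_i'$ using Theorem~\ref{thm:ch} / Theorem~\ref{thm:weak-nonorientable-connectivity}: the uniquely ergodic points are dense in $\pml(\Sigma)$, and $\iota_{\sigma_i}$ is continuous, so $\iota_{\sigma_i}(\mu_i')$ can be taken as close to $S_i$ as desired, hence still in $U$. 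A filling measured lamination on a once-punctured surface whose boundary is identified with a primitive element of $F_n$ is dual to an arational tree of surface type (arationality of surface-type trees is equivalent to the lamination filling, by \cite{reynolds:reducingSystems} and the discussion in Section~\ref{sec:arationality surface}). Setting $S = \iota_{\sigma_i}(\mu_i')$ for such an $i$ finishes the proof.

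The main obstacle I expect is establishing that one can take the approximating surface copies $\pml_{\sigma_i}$ so that they genuinely approach $\Delta_T$ and not merely some other part of $\partial CV_n$ — i.e. the density statement "$\bigcup_\sigma \pml_\sigma$ is dense" must be upgraded to "dense near every arational simplex", which is exactly the kind of statement that needs the identification of $\partial FF_n$ with $\mathcal{AT}/\!\sim$ and the surjectivity/closedness of $\Phi|_{\mathcal{AT}}$: any boundary point of $FF_n$ is a limit of surface-type vertices, and pulling this back through $\Phi$ gives surface-type arational trees accumulating on $\Delta_T$. A secondary technical point is ensuring the perturbation $\mu_i\rightsquigarrow\mu_i'$ keeps us inside $U$; this is automatic from continuity of $\iota_{\sigma_i}$ once $S_i$ is interior to $U$, so it is routine. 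I would organize the write-up so that the density-near-$\Delta_T$ claim is isolated as the one substantive step, with everything else being continuity and the already-quoted characterization of surface-type arationality.
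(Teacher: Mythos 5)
Your proposal has a genuine gap at exactly the point you yourself flag as ``the one substantive step''. Everything hinges on the assertion that $\bigcup_\sigma \pml_\sigma$ is dense in $\partial CV_n$, or at least accumulates on every arational simplex, and none of the justifications you offer delivers this. It is not a statement of \cite{BesRey}; the closure of the $\mathrm{Out}(F_n)$-orbit of a single copy $\pml_\sigma$ is a closed invariant set, but $\partial CV_n$ is not a minimal $\mathrm{Out}(F_n)$-space, so ``accumulates everywhere'' does not follow; and surjectivity/closedness of $\Phi|_{\mathcal{AT}}$ only says that every point of $\partial FF_n$ is the image of \emph{some} arational tree, not that surface-type arational trees have dense $\Phi$-image or accumulate on $\Delta_T$ (``any boundary point of $FF_n$ is a limit of surface-type vertices'' is precisely what needs proof, and vertices of $FF_n$ are free factors, so the phrase does not even parse as stated). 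A secondary error: your fix (a) is false --- trees dual to non-filling laminations do accumulate on arational simplices (this is why the sets $K_A$ must be actively avoided throughout the paper), so proximity to $\Delta_T$ does not force $\mu_i$ to be filling. Your fix (b), perturbing to a uniquely ergodic lamination inside $\pml(\Sigma)$ and using continuity of $\iota_{\sigma_i}$, is fine, but it only repairs the minor issue, not the main one.

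The paper closes the gap by a different mechanism, which you never invoke: Guirardel's theorem \cite{guirardel:thesis} that for $n\geq 3$ there is a unique minimal closed $\mathrm{Out}(F_n)$-invariant subset $\mathcal M_n\subset\partial CV_n$, together with his result that any tree with dense orbits and ergodic length measure --- in particular each vertex of $\Delta_T$ --- lies in $\mathcal M_n$. Since by uniqueness the orbit closure of \emph{any} point of $\partial CV_n$ contains $\mathcal M_n$, the orbit of one fixed surface-type arational tree (itself in $\mathcal M_n$) accumulates on the vertices of $\Delta_T$, and every tree in that orbit is surface-type arational; this immediately produces $S\in U$. If you prefer to stay ``downstairs'', your $\Phi$-route could be made rigorous, but it needs an extra genuine ingredient: e.g.\ the action of $\mathrm{Out}(F_n)$ on the hyperbolic complex $FF_n$ is cobounded and nonelementary, so the limit set is all of $\partial FF_n$ and is the unique minimal closed invariant set, whence the orbit of $\Phi(S_0)$ (for a fixed surface-type arational $S_0$) is dense in $\partial FF_n$; then the coarse continuity property of $\Phi$ (accumulation set contained in $\Delta_T$) upgrades $\Phi(\phi_i S_0)\to\Phi(\Delta_T)$ to $\phi_i S_0\in U$ for large $i$. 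Without an argument of this kind, the proposal does not prove the lemma.
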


\begin{proof}
  All arational trees in rank 2 are of surface type (and all
  associated simplices are points) so we may assume $n\geq 3$.
  Vincent Guirardel showed in \cite{guirardel:thesis} that for $n\geq
  3$ the boundary $\partial CV_n$ contains a unique minimal
  $\mathrm{Out}(F_n)$-invariant closed set $\mathcal M_n$. In
  particular, $\mathrm{Out}(F_n)$ acts on $\mathcal M_n$ with dense
  orbits. He further showed that any arational tree (or indeed any
  tree with dense orbits) with ergodic Lebesgue measure belongs to
  $\mathcal M_n$. In our situation this means that the vertices of
  $\Delta_T$ belong to $\mathcal M_n$ and they can be approximated by
  points in the orbit of a fixed surface type arational tree that also
  belongs to $\mathcal M_n$.
\end{proof}

\subsection{Dynamics of partial pseudo-Anosovs}
In this section we will assemble the dynamical properties of partial
pseudo-Anosov homeomorphism as they act on Outer space. The proofs are
standard but we couldn't find the statements we need in the
literature.

The basic theorem is that of Levitt and Lustig. An outer automorphism
of $F_n$ is {\it fully irreducible} if all of its nontrivial powers
are irreducible, i.e. don't fix any proper free factors up to
conjugation.

\begin{thm}[\cite{Levitt-Lustig}]
  Let $f\in \mathrm{Out}(F_n)$ be a fully irreducible
  automorphism. Then $f$ acts with north-south dynamics on
  $\overline{CV_n}$.
\end{thm}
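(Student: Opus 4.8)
The plan is to follow Levitt and Lustig: produce two fixed trees from train track representatives of $f$ and $f^{-1}$, and then establish a uniform attraction estimate using the trees--currents pairing and Theorem~\ref{KL}. First I would construct the fixed points. By Bestvina--Handel, $f$ has a train track representative $g\colon\Gamma\to\Gamma$ with Perron--Frobenius stretch factor $\lambda=\lambda(f)>1$; metrizing $\Gamma$ by the corresponding eigenvector makes $g$ multiply the lengths of legal (tightened) paths by $\lambda$. Lifting to the universal cover and passing to a limit of the rescaled pulled-back metrics produces a very small $F_n$--tree $T_-$ carrying a homothety $H\colon T_-\to T_-$ of ratio $\lambda$ that intertwines the $F_n$--action with $f$, i.e.\ $Hw=f(w)H$ for $w\in F_n$; in translation lengths this reads $\langle T_-,f(a)\rangle=\lambda\langle T_-,a\rangle$, so $\langle fT_-,a\rangle=\langle T_-,f^{-1}(a)\rangle=\lambda^{-1}\langle T_-,a\rangle$ and $[T_-]\in\overline{CV_n}$ is fixed by $f$. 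Full irreducibility forces $T_-$ to have dense orbits, since a non-degenerate simplicial part would yield an $f$--periodic conjugacy class of a proper free factor, contradicting the hypothesis. The same construction for $f^{-1}$ gives another fixed tree $T_+$ with dense orbits. From the homothety equations one reads off that the conjugacy classes short in $T_-$ (resp.\ $T_+$) are exactly those of the form $f^{-m}(w)$ (resp.\ $f^{m}(w)$) for large $m$, so the dual laminations are $L(T_-)=\Lambda^+(f^{-1})$ and $L(T_+)=\Lambda^+(f)$, the attracting laminations of $f^{-1}$ and of $f$; these are distinct minimal filling laminations, which is the one place irreducibility of \emph{all} powers is genuinely used.

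Next I would prove the attraction statement. Fix a current $\nu$ fully supported on $\Lambda^+(f^{-1})=L(T_-)$; by Theorem~\ref{KL}, $\langle T_-,\nu\rangle=0$ while $\langle T_+,\nu\rangle>0$, and $D:=\{[T]\in\overline{CV_n}:\langle T,\nu\rangle=0\}$ is a closed $f$--invariant set with $[T_-]\in D$ and $[T_+]\notin D$. The technical heart is the estimate
\[
  \frac{\langle T,\,f^{-n}(a)\rangle}{\lambda(f^{-1})^{\,n}}
  \;\xrightarrow[\,n\to\infty\,]{}\;
  \frac{\langle T,\nu\rangle}{\langle T_+,\nu\rangle}\,\langle T_+,a\rangle,
\]
valid for every very small tree $T$ and conjugacy class $a$, and \emph{uniformly} for $[T]$ in compact subsets of $\overline{CV_n}\setminus D$ and $a$ of bounded length. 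Granting it, since $\langle f^nT,a\rangle=\langle T,f^{-n}(a)\rangle$, dividing the estimate for $a$ by that for a fixed reference class gives $[f^nT]\to[T_+]$ whenever $[T]\notin D$, uniformly on compacta avoiding $D$; the limit is pinned down by testing against $T=T_+$, where the left-hand side is identically $\langle T_+,a\rangle$. To obtain the estimate I would realize $f^{-1}$ by a train track map, use bounded cancellation to express $f^{-n}(a)$ as a controlled concatenation of long legal segments converging onto leaves of $\Lambda^+(f^{-1})$, estimate $\langle T,\cdot\rangle$ segment by segment, and let the homothety on $T_+$ together with Theorem~\ref{KL} identify the limit. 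Symmetrically $[f^{-n}T]\to[T_-]$ off the analogous set $D'$ built from $L(T_+)=\Lambda^+(f)$.

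Finally I would assemble north--south dynamics. Since $\Lambda^+(f)$ and $\Lambda^+(f^{-1})$ are distinct filling laminations, no very small tree can have both contained in its dual lamination, so $D\cap D'=\emptyset$; hence every $[T]$ is attracted to $[T_+]$ under $f$ or to $[T_-]$ under $f^{-1}$, which already forces $\mathrm{Fix}(f)=\{[T_+],[T_-]\}$. The full statement additionally needs $D=\{[T_-]\}$ and $D'=\{[T_+]\}$, i.e.\ that these fixed trees are not contained in positive-dimensional simplices of arational trees --- equivalently that $\Lambda^+(f^{\pm1})$ are uniquely ergodic, the free-group analogue of unique ergodicity of stable laminations of pseudo-Anosovs --- which I would extract from the structure theory of dual laminations of very small trees together with the Perron--Frobenius property of the train track transition matrix. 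Granting this, every $[T]\neq[T_-]$ lies off $D$, so $f^nT\to T_+$, and the uniformity in the displayed estimate together with compactness of $\overline{CV_n}$ upgrades pointwise convergence to the genuine north--south statement: for all neighborhoods $U_\pm$ of $[T_\pm]$ there is $N$ with $f^{\pm n}(\overline{CV_n}\setminus U_\mp)\subseteq U_\pm$ for $n\geq N$. Pointwise convergence is soft; the real work is the uniformity in $T$ in the train track estimate and ruling out the degenerate possibilities on $D$ and $D'$.
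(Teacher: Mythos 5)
First, a point of comparison: the paper does not prove this theorem at all --- it is quoted from \cite{Levitt-Lustig}, and the dynamical statements the paper actually proves (Proposition~\ref{NS1}, Proposition~\ref{NS} and the surrounding lemmas) concern the surface and partial pseudo-Anosov cases, where the authors deliberately avoid the Levitt--Lustig machinery and argue via Skora's theorem and the length pairing with currents. So your proposal can only be judged as a reconstruction of the cited result. Its architecture is the right one (fixed trees from train tracks of $f$ and $f^{-1}$, a Thurston-type convergence estimate for $\ell_T(f^{-n}a)/\lambda^n$ with locally uniform constants, then identification of the exceptional sets), and your bookkeeping of which tree is attracting/repelling and what their dual laminations are is consistent.

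However, as written this is an outline rather than a proof, because the two steps that carry the entire content are only announced. (i) The displayed uniform convergence estimate is the technical heart of Levitt--Lustig; the sentence ``use bounded cancellation to express $f^{-n}(a)$ as a controlled concatenation of long legal segments \dots and let the homothety on $T_+$ together with Theorem~\ref{KL} identify the limit'' is a plan, not an argument, and the uniformity in $T$ (which you correctly flag as essential) is exactly what is hard. (ii) More seriously, the rigidity statements $D=\{[T_-]\}$ and $D'=\{[T_+]\}$ are deferred to ``unique ergodicity \dots which I would extract from the structure theory of dual laminations \dots together with the Perron--Frobenius property.'' That is not a proof, and it is essentially equivalent in strength to the theorem itself. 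What is actually needed is the chain: $\langle T,\nu\rangle=0$ gives $L(T_-)\subseteq L(T)$ (Theorem~\ref{KL}, which postdates Levitt--Lustig, so it must be cited as an input); this forces $L(T)=L(T_-)$ only because $T_-$ is \emph{indecomposable} (this is how the paper's own Remark after the surface-case proposition argues, via \cite[Prop.\ 3.2]{BesRey}), and indecomposability of $T_\pm$ for fully irreducibles is itself a nontrivial theorem; finally one needs the simplex of length measures on $T_-$ to be a point. None of these is established or correctly attributed in your sketch. The same issue affects your claim that $D\cap D'=\emptyset$ ``since no very small tree can have both laminations in its dual lamination,'' which is asserted without justification. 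Without (i) and (ii) your argument yields only that $[T_\pm]$ are fixed and that orbits off the closed invariant set $D$ converge to $[T_+]$, which is strictly weaker than north--south dynamics on all of $\overline{CV_n}$.
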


We start with a pseudo-Anosov homeomorphism $f:S\to S$ of a compact surface
$S$ (with one or more boundary components) with $\pi_1(S)=F_n$. By $\lambda>1$
denote the dilatation and by $\Lambda^{\pm}$ the stable and the
unstable measured geodesic laminations, so $f(\Lambda^{\pm})=\frac
1{\lambda^{\pm 1}}\Lambda^{\pm}$. Let $T^{\pm}$ be the trees dual to
$\Lambda^{\pm}$, and let $\mu^{\pm}$ be the measured currents
corresponding to $\Lambda^{\pm}$. Thus
$$f_*T^{\pm}=\lambda^{\pm 1}T^{\pm}$$
and
$$f_*(\mu^{\pm})=\lambda\mu^{\pm}$$
This implies $\<T^\pm,\mu^\pm\>=0$ and $\<T^\pm,\mu^\mp\> >0$.

\begin{prop}\label{NS1}
	With notation as above, let $K\subset \overline{CV_n}$ be a compact set of trees such that
  $\<T,\mu^+\>\neq 0$ for every $T\in K$. Then $f_*^m K$ converges to
  $T^+$ as $m\to\infty$.
\end{prop}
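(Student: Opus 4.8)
The plan is to leverage the north-south dynamics of $f$ on $\overline{CV_n}$, which holds because a pseudo-Anosov homeomorphism of $S$ with $\pi_1(S) = F_n$ induces a fully irreducible outer automorphism (its powers cannot fix a proper free factor, since such a factor would have to be carried by an $f$-invariant subsurface, contradicting that $f$ is pseudo-Anosov on all of $S$). Hence by the Levitt--Lustig theorem quoted above, $f_*$ acts on $\overline{CV_n}$ with north-south dynamics, with attracting point $T^+$ and repelling point $T^-$: for every neighborhood $\mathcal{U}$ of $T^+$ and every compact set $\mathcal{C} \subset \overline{CV_n} \setminus \{T^-\}$, we have $f_*^m \mathcal{C} \subset \mathcal{U}$ for all large $m$. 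So the entire content of the proposition is to verify that the hypothesis $\langle T, \mu^+\rangle \neq 0$ for all $T \in K$ forces $T^- \notin K$, after which north-south dynamics applied to $\mathcal{C} = K$ finishes the argument.

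The key step is therefore: $\langle T^-, \mu^+ \rangle = 0$. This is recorded in the text just before the proposition as $\langle T^\pm, \mu^\mp \rangle > 0$ and $\langle T^\pm, \mu^\pm\rangle = 0$; the relevant one here is $\langle T^-, \mu^+ \rangle$. To see it cleanly: $T^-$ is dual to the unstable lamination $\Lambda^-$, and $\mu^+$ is the current carried by the stable lamination $\Lambda^+$, whose support is $L(\Lambda^+) = \widetilde{\Lambda^+}$ viewed in $\partial^2 F_n$. Since $\Lambda^+$ is precisely a sublamination of the dual lamination $L(T^-)$ — a pseudo-Anosov's stable and unstable laminations have the same underlying geodesic lamination in the sense that leaves of one are not transverse-measured by the other, and more to the point $\mathrm{Supp}(\mu^+) = L(\Lambda^+) \subseteq L(T^-)$ — Theorem~\ref{KL} (Kapovich--Lustig) gives $\langle T^-, \mu^+\rangle = 0$. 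Alternatively, and perhaps more robustly, one can argue by the scaling relations: $f_*^m \mu^+ = \lambda^m \mu^+$ while $f_*^{-m} T^- = \lambda^{m} T^-$ up to the projective normalization, so $\langle T^-, \mu^+\rangle = \langle f_*^{-m} T^-, f_*^{-m}\mu^+\rangle = \lambda^{m}\cdot\lambda^{-m}\langle T^-,\mu^+\rangle$ is consistent for all $m$, whereas if it were nonzero, combining with north-south dynamics (generic trees converge projectively to $T^+$ under $f_*^m$, and $\langle T^+,\mu^+\rangle = 0$) would be contradictory in the limit — but the direct appeal to Theorem~\ref{KL} is cleanest and I would use that.

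With $\langle T^-,\mu^+\rangle = 0$ established and $\langle T,\mu^+\rangle \neq 0$ for all $T \in K$, we conclude $T^- \notin K$. Since $K$ is compact and $\overline{CV_n}$ is compact metrizable, $K$ is a compact subset of $\overline{CV_n}\setminus\{T^-\}$, so north-south dynamics for the fully irreducible $f$ gives $f_*^m K \to T^+$ in the Hausdorff topology as $m \to \infty$, which is the claim. I expect the only subtlety — and the one place worth writing carefully — to be the identification $\mathrm{Supp}(\mu^+)\subseteq L(T^-)$ justifying the use of Theorem~\ref{KL}; the pseudo-Anosov bookkeeping relating $\Lambda^+$, $\mu^+$, $\Lambda^-$, and $T^-$ via Skora's theorem is standard but needs to be stated precisely so that the hypothesis $\langle T,\mu^+\rangle\ne 0$ is seen to exclude exactly the repelling fixed point and nothing more.
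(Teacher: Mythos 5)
Your proposal has a genuine gap at its foundation: it rests on the claim that $f_*$ is fully irreducible, so that the Levitt--Lustig north--south dynamics theorem can be quoted for $\overline{CV_n}$. But Proposition~\ref{NS1} is stated for a pseudo-Anosov $f$ on a compact surface $S$ with \emph{one or more} boundary components, and when $S$ has at least two boundary components $f_*$ is not fully irreducible (indeed no power is irreducible): a boundary curve other than a chosen one is a basis element of $\pi_1(S)$, hence generates a proper free factor whose conjugacy class is fixed by a power of $f_*$. Your justification (``a proper invariant free factor would be carried by an $f$-invariant subsurface'') overlooks exactly these boundary factors. This is not a peripheral case --- the proposition is later applied (in Lemma~\ref{LL}/Proposition~\ref{NS}) to the restriction of $f$ to a subsurface $S\subset\Sigma$, which typically has several boundary components --- and it is precisely why the paper does not quote north--south dynamics but instead gives a direct argument: lift $K$ to $\overline{cv_n}$, rescale by $\lambda^m$, show the lengths of $\mu^+$ stay bounded below and the lengths of $\mu^-$ tend to $0$ on any accumulation tree, show boundary classes become elliptic, invoke Skora's theorem to see that every accumulation tree is dual to a lamination on $S$, and then identify that lamination as $\Lambda^+$. (North--south dynamics for such $f_*$ is in fact only obtained as a \emph{consequence} of this analysis, as noted in the remark following the next proposition.)

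A secondary problem is your key pairing computation. You assert $\langle T^-,\mu^+\rangle=0$ because ``a pseudo-Anosov's stable and unstable laminations have the same underlying geodesic lamination,'' so that $\mathrm{Supp}(\mu^+)\subseteq L(T^-)$. That is false: $\Lambda^+$ and $\Lambda^-$ are transverse filling laminations, and the leaves of one are not contained in the dual lamination of the tree dual to the other. The statement you need --- that the hypothesis $\langle T,\mu^+\rangle\neq 0$ excludes exactly the repelling tree --- is true only because, in the convention actually used in the paper's proofs, $\mathrm{Supp}(\mu^+)$ is the \emph{unstable} lamination (so its diagonal closure is $L(T^-)$ and Theorem~\ref{KL} applies); with the identification $\mathrm{Supp}(\mu^+)=\Lambda^+$ that you use, one would instead get $\langle T^+,\mu^+\rangle=0$ and $\langle T^-,\mu^+\rangle>0$, and your argument would exclude the attracting rather than the repelling fixed point. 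So even in the one-boundary-component case, where full irreducibility does hold and your overall strategy could be made to work, the step identifying which fixed point the hypothesis rules out needs to be redone correctly.
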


When $S$ has more than one boundary component $f_*$ is not fully
irreducible. It is irreducible if it permutes the boundary components
cyclically. The Levitt-Lustig argument can be used to prove the
north-south dynamics for irreducible automorphisms, and this would
suffice for our purposes since we could arrange that pseudo-Anosov
homeomorphisms we use in our construction later have roots that
cyclically permute the boundary components. However, we will give a
direct argument.

\begin{proof}
  We will view $K\subset \overline{cv_n}$ as a compact set of
  unprojectivized trees as in Section \ref{3.1}.  Let $Y$ be the
  accumulation set of the scaled forward iterates $f_*^m K/\lambda^m$
  of $K$. Note that if $T\in K$ then $\<\frac 1{\lambda^m}f_*^m
  T,\mu^+\>=\<T,\frac 1{\lambda^m}f_*^{-m}(\mu^+)\>= \<T,\mu^+\>$, so
  the length of $\mu^+$, being bounded below by some $\epsilon>0$ over
  $K$, is also bounded below by $\epsilon$ on $Y$. Similarly, $\<\frac
  1{\lambda^m}f_*^m T,\mu^-\>=\<T,\frac 1{\lambda^m}f_*^{-m}(\mu^-)\>=
  \<T,\frac 1{\lambda^{2m}}\mu^-\>\to 0$, so the length of $\mu^-$ is
  0 in all trees in $Y$. If $a$ is any conjugacy class other than a
  power of a boundary component, then we have by surface theory $\frac
  1{\lambda^m}f_*^m(a)\to C_a\mu^+$ for some $C_a>0$, and a similar
  argument as above shows that $\<T,a\>=C_a\<T,\mu^+\> >0$ for every
  $T\in Y$. However, when $a$ represents a boundary component then
  $\<T,a\>=0$ since then $f_*^m(a)$ is a boundary component for all
  $m$, and thus $\frac 1{\lambda^m}f_*^m(a) \to 0$.  In particular,
  all iterates $f_*^m K$ are contained in a compact subset of
  $\overline{cv_n}$ and so the accumulation set in $\overline{CV_n}$
  is the projectivization of $Y$. It follows from Skora's theorem that
  $Y$ consists of trees dual to measured geodesic laminations on
  $S$. The only lamination where $\mu^-$ has length 0 in the dual tree
  is $\Lambda^+$ and hence $Y=\{T^+\}$ (projectively).
\end{proof}

The next result gives a criterion to check the condition required in
the previous proposition.
\begin{prop}
	With notation as in the paragraph before Proposition \ref{NS1},
	suppose that $R$ is a compact surface with
  marking induced by a homotopy equivalence $\phi:S\to R$ (which may
  not send boundary to boundary). Let $\Lambda$ be a measured geodesic
  lamination on $R$ and let $T$ be the dual tree. If $\<T,\mu^+\>=0$
  then $T=T^-$ projectively.
\end{prop}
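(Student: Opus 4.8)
The plan is to mirror the argument of Proposition \ref{NS1}, but now working with the single tree $T$ dual to $\Lambda$ on the auxiliary surface $R$ instead of a compact family of trees. The starting observation is that $\<T,\mu^+\>=0$ means, by Theorem \ref{KL}, that $\mathrm{Supp}(\mu^+)\subseteq L(T)$; equivalently, the unstable current $\mu^+$ of $f$ has zero length in $T$. I want to feed $T$ into the forward dynamics of $f_*$ and show it must converge to $T^-$, and then argue it was already equal to $T^-$.

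First I would consider the backward iterates $f_*^{-m}T$, suitably rescaled. Since $f_*^{-1}(\mu^+)=\lambda^{-1}\mu^+$ and $f_*^{-1}(\mu^-)=\lambda\mu^-$ (as $f_*\mu^\pm=\lambda\mu^\pm$ does \emph{not} hold — rather $f_*\mu^\pm=\lambda\mu^\pm$; care is needed with which current scales which way), the pairing with $\mu^+$ stays zero under all iterates: $\<f_*^{-m}T,\mu^+\>=\<T,f_*^m\mu^+\>$, and since $\mu^+$ scales by a positive factor under $f_*$, this remains $0$. Meanwhile, for any conjugacy class $a$ which is not a power of a boundary component of $S$, surface theory gives $\lambda^{-m}f_*^{-m}(a)\to C_a\mu^-$ with $C_a>0$, so after rescaling the $f_*^{-m}T$ so that, say, the sum of lengths of elements of $F_n$ of length $\le 2$ equals $1$, any accumulation point $T'$ satisfies $\<T',\mu^+\>=0$. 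One then checks that $T'$ lies in $\overline{cv_n}$ (it is a limit of very small trees, hence very small) and is acted on by the surface group $\pi_1(S)$ with the boundary curves elliptic — the latter because boundary curves have length $0$ in $T$ forces length $0$ in $T'$ after the rescaling, using that $f_*$ permutes boundary classes. Hence by Skora's theorem $T'$ is dual to a measured lamination on $S$, and the only such lamination annihilated by $\mu^+$ is $\Lambda^-$, so $T'=T^-$ projectively, independent of the accumulation point; thus $\lambda^{-m}f_*^{-m}T$ (in $\overline{cv_n}$, with the normalization) converges projectively to $T^-$.

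Now to upgrade convergence to equality. The cleanest route: $T^-$ is a fixed point of $f_*$ (projectively), and $f_*$ acts with north–south dynamics on $\overline{CV_n}$ when restricted appropriately — or, more directly, apply the same reasoning to the \emph{forward} iterates. If $T\ne T^-$ projectively, then $\<T,\mu^-\>>0$ (since $T^-$ is characterized among laminations on… but $T$ need not be a surface tree on $S$). Instead I would argue as follows: consider $f_*^{m}T$ rescaled. We have $\<f_*^m T,\mu^+\>=\<T,\mu^+\>=0$ for all $m$ (again because $f_*$ scales $\mu^+$ by a positive constant and the pairing is bilinear under scaling in each coordinate, per \cite{Kapovich-Lustig-pairing}). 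So every forward iterate also annihilates $\mu^+$. Meanwhile $\<f_*^mT,\mu^-\>=\lambda^{-m}\<T,\mu^-\>\cdot(\text{rescaling})$: if $\<T,\mu^-\>=0$ too, then $T$ annihilates both currents of a pseudo-Anosov, which are jointly filling, forcing $T$ to be the single common zero — and indeed $T^-$ is characterized by $L(T^-)\supseteq\mathrm{Supp}(\mu^+)$ while $\<T^-,\mu^-\>=0$; I would verify that a very small $F_n$-tree annihilating both $\mu^+$ and $\mu^-$ must be $T^-$ (or $T^+$), and the condition $\<T,\mu^+\>=0$ with $\<T,\mu^-\>=0$ then pins it down — but this needs the input that $\mathrm{Supp}(\mu^+)$ and $\mathrm{Supp}(\mu^-)$ together determine $T^\pm$ among dual laminations. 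If instead $\<T,\mu^-\>>0$, then rescaling $f_*^mT$ by $\lambda^m$ and using $\<T,a\>=C_a\<T,\mu^+\>+(\text{terms})$… — the honest statement is that $\lambda^m f_*^{-m}(a)/\|\cdot\|\to\mu^-$, so the forward-normalized iterates of $T$ converge to a tree $T''$ with $\<T'',\mu^-\>=0$, namely $T''=T^+$ or $T''=T^-$; combined with $\<T,\mu^+\>=0$ persisting, one eliminates $T^+$ (as $\<T^+,\mu^+\>=0$ but $\<T^+,\mu^-\>>0$, contradiction) and concludes the forward limit is $T^-$. Since a north–south map fixing $T^-$ whose forward \emph{and} backward orbits of $T$ both converge to $T^-$ forces $T=T^-$, we are done.

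\textbf{Main obstacle.} The technically delicate point is the same one as in Proposition \ref{NS1}: controlling the boundary behavior of $S$ when $S$ has several boundary components, so that accumulation trees are genuinely dual to laminations on $S$ via Skora's theorem. Here it is slightly worse because $T$ is a priori a lamination-dual tree on the \emph{other} surface $R$, with a marking $\phi:S\to R$ that need not respect boundary; one must check that the boundary curves of $S$ nevertheless act elliptically (or with controlled length) in the relevant limit. I expect the resolution to be exactly the computation in the proof of Proposition \ref{NS1} — boundary classes of $S$ map under $\lambda^{-m}f_*^{\mp m}$ to currents of length $0$ in the limit because $f_*$ fixes the (finite) set of boundary conjugacy classes of $S$ and their lengths in $T$ are either bounded or, after the correct rescaling, go to $0$ — together with the uniqueness statement that $\Lambda^\pm$ are the only laminations on $S$ whose dual trees are annihilated by $\mu^\pm$ respectively. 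Identifying $T^-$ among \emph{all} very small trees (not just $S$-laminational ones) annihilating $\mu^+$ will use Theorem \ref{KL} plus the fact that $L(T^-)$ is the diagonal closure of $\mathrm{Supp}(\mu^+)$, so any very small $T$ with $\mathrm{Supp}(\mu^+)\subseteq L(T)$ and the appropriate minimality/boundary condition has $L(T)=L(T^-)$, hence lies in the simplex $\Delta_{T^-}$; the extra normalization from the forward-limit argument then selects $T^-$ itself.
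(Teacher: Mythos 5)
Your proposal takes a genuinely different, dynamical route (iterate $T$ under $f_*^{\pm m}$ and try to trap it by north--south dynamics), whereas the paper argues statically with dual laminations: $\<T,\mu^+\>=0$ gives $Supp(\mu^+)\subseteq L(T)$ by Theorem~\ref{KL}, hence $L(T^-)\subseteq L(T)$ (using that $L(T^-)$ is the diagonal closure of $Supp(\mu^+)$, via the crown-region analysis); one then shows $\Lambda$ must fill $R$, so $L(T)=L(T^-)$, and Theorem~\ref{CHL} together with unique ergodicity of $T^-$ gives projective equality. Unfortunately your route has a genuine gap at its decisive step. The backward statement you prove is information-free: rescaled backward iterates converging to $T^-$ holds for \emph{every} tree with $\<T,\mu^-\>>0$ (it is just Proposition~\ref{NS1} applied to $f^{-1}$), so it cannot single out $T$. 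What would force $T=T^-$ is \emph{forward} convergence of $f_*^mT$ to the repelling fixed point $T^-$, or equivalently the statement that every tree other than $T^-$ is attracted forward to $T^+$; but the hypothesis $\<T,\mu^+\>=0$ removes exactly the growth term that drives the forward analysis (Proposition~\ref{NS1} needs $\<T,\mu^+\>\neq 0$), and your sketch of the forward limit does not establish it. Invoking north--south dynamics of $f_*$ on $\overline{CV_n}$ is not legitimate here either: when $S$ has more than one boundary component $f_*$ is not fully irreducible, and the paper obtains north--south dynamics on all of $\overline{CV_n}$ only as a \emph{consequence} of this proposition (see the remark following it), so as used your argument is circular.

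Two further points. The degenerate case $\<T,\mu^+\>=\<T,\mu^-\>=0$ is left unresolved; your proposed fix ("a very small tree annihilating both currents must be $T^\pm$") is unproven and is essentially as strong as the proposition itself. And the claim that the boundary curves of $S$ have length $0$ in $T$ is false in general, precisely because the marking $\phi:S\to R$ need not send boundary to boundary; this particular point is repairable (the lengths of the $f_*$-invariant boundary classes are constant along the orbit, hence go to $0$ after the $\lambda^{m}$-rescaling, as in the proof of Proposition~\ref{NS1}), but as written the justification is wrong. Finally, note that your argument never really uses the hypothesis that $T$ is dual to a lamination on $R$, which is exactly what the paper exploits (to control $L(T)$ and run the filling argument); without some substitute for that input (e.g.\ indecomposability of $T^-$ as in the remark), the dynamical scheme does not close.
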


\begin{proof}
  The support $Supp(\mu^+)$ of $\mu^+$ is $\Lambda^-$. The
  complementary component of $\Lambda^-$ in $S$ that contains a
  boundary component is a ``crown region'' and adding diagonals
  amounts to adding infinitely many (non-embedded) lines that start
  and end in a cusp of the crown region and wind around the boundary
  component any number of times. These accumulate on the boundary, so
  each boundary component is in the diagonal closure of
  $Supp(\mu^+)$. Any other line will intersect the leaves of
  $\Lambda^-$ transversally, so it follows that the lamination
  $L(T^-)$ dual to the tree $T^-$ coincides with the diagonal closure
  of $Supp(\mu^+)$ (cf. \cite[Proposition 4.2(ii)]{BesRey}).

  Similarly, the lamination $L(T)$ dual to $T$ consists of leaves of
  $\Lambda$ together with all lines not crossing $\Lambda$
  transversally. Since $\<T,\mu^+\>=0$ we have $Supp(\mu^+)\subseteq
  L(T)$ by the Kapovich-Lustig Theorem \ref{KL}, and hence
  $L(T^-)\subseteq L(T)$. We now argue that
  $\Lambda$ must be a filling lamination. First, $\Lambda$ cannot
  contain closed leaves, for otherwise $L(T)$ would be carried
  by an infinite index finitely generated subgroup, contradicting the
  fact that $\Lambda^-$ isn't. Now suppose that $\Lambda$ contains a
  minimal component $\Lambda_0$ carried by a proper subsurface
  $R_0\subset R$. If there is a leaf of $L(T^-)$ asymptotic to a
  leaf of $\Lambda_0$, then $L(T^-)$ contains $\Lambda_0$ as well
  as boundary components of $R_0$ and the diagonal leaves within
  $R_0$. It then follows that $L(T^-)$ doesn't contain any other
  leaves since it equals the diagonal closure of any non-closed leaf,
  and hence again it is carried by an infinite index finitely
  generated subgroup. Thus $\Lambda$ must be filling and
  $L(T^-)=L(T)$. It now follows from the
  Coulbois-Hilion-Lustig Theorem \ref{CHL} that $T$ and
  $T^-$ are equivariantly homeomorphic in observers' topology. But
  since $T^-$ is uniquely ergodic, projectively $T=T^-$.
\end{proof}

\begin{rem}
  In fact, $\<T,\mu^+\>=0$ forces $T=T^-$ even without assuming that
  $T$ is dual to a lamination on a surface. This can be proved by
  noting that $L(T^-)\subseteq L(T)$ (which is proved above for all
  $T$ with $\<T,\mu^+\>=0$) forces $L(T)=L(T^-)$ by \cite[Proposition
    3.2]{BesRey} since $T^-$ is an indecomposable tree. It follows
  that $f_*$ satisfies the north-south dynamics on all of
  $\overline{CV_n}$, not just on compact sets dual to surface
  laminations.
\end{rem}

We now generalize this to partial pseudo-Anosov homeomorphisms.
\begin{prop}\label{NS}
  Let $f$ be a homeomorphism of $\Sigma$, which restricts to a
  pseudo-Anosov on a $\pi_1$-injective subsurface $S\subset\Sigma$,
  and to the identity in the complement. Suppose that $K\subset
  \overline{CV_n}$ is compact such that every $T\in K$ satisfies
  $\<T,\mu^+\>\neq 0$, where $\mu^+$ is a current supported in
  $\pi_1(S)$ corresponding to the stable lamination $\Lambda^+$ of
  $f$.
	
  Then the sequence $f_*^i K$, $i\to\infty$, converges to the tree
  $T_f\in\partial CV_n$ dual to $\Lambda^+$ on $\Sigma$.
\end{prop}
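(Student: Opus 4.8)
The plan is to mirror the proof of Proposition~\ref{NS1}, feeding in the extra hypothesis that $f$ is the identity on the complementary subsurface $\Sigma^c=\overline{\Sigma\setminus S}$. First I would view $K\subset\overline{cv_n}$ as a compact set of unprojectivized trees via the identification of Section~\ref{3.1}, let $\lambda>1$ be the dilatation of $f|_S$, and let $Y$ be the accumulation set of the scaled forward iterates $\lambda^{-m}f_*^m K$ inside $[0,\infty)^{F_n}$. The same computations as in the proof of Proposition~\ref{NS1} then show that $\<\,\cdot\,,\mu^+\>$ is constant along each scaled orbit, hence bounded between some $\epsilon>0$ and a constant over $Y$; that the pairing with the partner current $\mu^-$ vanishes identically on $Y$; and that $\lambda^{-m}f_*^{-m}(a)$ converges in $\mathcal{MC}_n$ for every conjugacy class $a$, so that $\lambda^{-m}\<f_*^m T,a\>=\<T,\lambda^{-m}f_*^{-m}(a)\>$ is bounded uniformly over $T\in K$ and $m\geq0$. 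Thus $\{\lambda^{-m}f_*^m T\}$ is precompact in $\overline{cv_n}$, the set $Y$ is a nonempty compact subset of $\overline{cv_n}$ (the lower bound on $\<\,\cdot\,,\mu^+\>$ keeping it off the cone point), and it suffices to prove $[Y]=\{T_f\}$; a routine subsequence argument, using $[f_*^mT]=[\lambda^{-m}f_*^mT]$ and the fact that every subsequential limit of $\lambda^{-m}f_*^mT$ lies in $Y$, then upgrades this to the statement that the Hausdorff distance from $f_*^m K$ to $T_f$ tends to $0$.

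The new ingredient is to locate $Y$, and here I would exploit that $f$ fixes $\Sigma^c$. Fix $z\in Y$, say $z=\lim_j\lambda^{-m_j}f_*^{m_j}T_j$ with $T_j\in K$. For any conjugacy class $c$ carried by $\Sigma^c$ --- in particular for the curves $\partial\Sigma$ and $\partial S$, which $f$ fixes --- the classes $f_*^{-m}(c)$ range over a fixed finite set, so $\<z,c\>=\lim_j\lambda^{-m_j}\<T_j,f_*^{-m_j}(c)\>=0$. In particular $\pi_1(\partial\Sigma)$ acts elliptically on the very small tree $z$, so Skora's theorem produces a measured geodesic lamination $\Lambda_z$ on $\Sigma$ dual to $z$, and since $i(c,\Lambda_z)=\<z,c\>=0$ for every simple closed curve $c$ in $\Sigma^c$, the lamination $\Lambda_z$ is carried by $S$. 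Restricting to $\pi_1(S)$, the minimal subtree $r_S(z)$ is a very small $\pi_1(S)$-tree dual to $\Lambda_z|_S$, it is nontrivial because $\<r_S(z),\mu^+\>=\<z,\mu^+\>\geq\epsilon$, and $\<r_S(z),\mu^-\>=\<z,\mu^-\>=0$ since $\mu^-$ is supported in $\pi_1(S)$. Applying the criterion of the preceding proposition on the surface $S$ to $f^{-1}|_S$ (which interchanges the roles of the stable and unstable laminations, and of $\mu^+$ and $\mu^-$) then identifies $r_S(z)$ projectively with the tree dual to the filling lamination $\Lambda^+$ on $S$; hence $\Lambda_z|_S=\Lambda^+$ up to scale, so $\Lambda_z=\Lambda^+$ as a lamination on $\Sigma$, and therefore $z=T_f$ projectively. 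This gives $[Y]=\{T_f\}$, as required.

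I expect the delicate point to be the middle step: converting ``$\pi_1(\Sigma^c)$ acts with vanishing translation length on $z$'' into ``$\Lambda_z$ is carried by $S$'' --- one must rule out boundary-parallel closed leaves and treat a possibly disconnected $\Sigma^c$ --- and then checking that the hypotheses of Skora's theorem and of the preceding proposition hold verbatim after restriction to $\pi_1(S)$ (in particular that $r_S(z)$ is genuinely the dual tree of $\Lambda_z|_S$, and that all peripheral curves of $S$ act elliptically on $z$). Keeping straight which of $\mu^{\pm}$, $\Lambda^{\pm}$, $T^{\pm}$ enters where --- above all, remembering to apply the preceding proposition to $f^{-1}|_S$ rather than $f|_S$ --- is another place where it would be easy to slip.
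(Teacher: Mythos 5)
Up to the point where you restrict to $\pi_1(S)$, your argument runs parallel to the paper's: its Lemma~\ref{LL} establishes exactly the two facts you derive (conjugacy classes carried by the complement of $S$ are elliptic in every accumulation point, and the minimal $\pi_1(S)$-subtree is dual to $\Lambda^+$). The gap is the final inference ``$\Lambda_z|_S=\Lambda^+$ up to scale, so $\Lambda_z=\Lambda^+$ on $\Sigma$, hence $z=T_f$.'' Nothing you have proved rules out $\Lambda_z=\Lambda^++\sum w_i c_i$, where the $c_i$ are the components of $\partial S\smallsetminus\partial\Sigma$ carrying atomic weights $w_i>0$. Such atoms are invisible to every test you apply: a conjugacy class carried by $\Sigma^c$ has zero geometric intersection with the $c_i$ (they are boundary curves of $\Sigma^c$), a class carried by $S$ likewise, and the currents $\mu^\pm$ are supported in $\pi_1(S)$, so neither the pairings $\langle z,\mu^\pm\rangle$ nor the minimal $\pi_1(S)$-subtree $r_S(z)$ sees the $w_i$ at all. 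So your pointwise analysis only places $z$ in the simplex of $\pml(\Sigma)$ spanned by $\Lambda^+$ and the curves $c_i$, off the face opposite $\Lambda^+$ --- not at the vertex. The dual tree of $\Lambda^+ +\sum w_i c_i$ is a genuinely different point of $\partial CV_n$, since conjugacy classes crossing $\partial S$ acquire extra length, so ``$z=T_f$ projectively'' does not follow.

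This is precisely where the paper's proof does extra work: after Lemma~\ref{LL} and Skora's theorem it notes that the accumulation set is a compact, $f$-invariant subset of this simplex disjoint from the opposite face, and that the only such set is $\{\Lambda^+\}$ because $f$ attracts all such compact sets to that vertex; it is the invariance of $Y$ under $f_*$ (which you never invoke) that kills the boundary atoms. Alternatively, you could finish within your own scheme by actually proving and exploiting the convergence you assert only in passing, namely that $\lambda^{-m}f_*^{-m}(a)$ converges in $\mathcal{MC}_n$ to $c_a\mu$ with $c_a$ proportional to $i(a,\Lambda^+)$ for \emph{every} conjugacy class $a$, including those crossing $\partial S$ (one must check that the connector arcs and the linearly growing winding about $\partial S$ die after dividing by $\lambda^m$); together with continuity of the length pairing over the compact set $K$ this computes the full length function of every $z\in Y$ and gives $z=T_f$ directly, with no atoms. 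As written, however, that convergence is used only for a boundedness estimate, and the passage from the restricted tree to the full tree is a real gap. (A smaller point: when you apply the criterion proposition to $f^{-1}|_S$, make sure your $\mu^\pm$ conventions match the ones its proof actually uses, where $\mathrm{Supp}(\mu^+)=\Lambda^-$; the substance of that step is fine, but the signs are easy to scramble, as you note yourself.)
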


Note that the condition implies that $\pi_1(S)$ is not elliptic in any
$T\in K$.  To begin the proof of Proposition \ref{NS}, denote by
$Y\subset \overline{CV_n}$ the set of accumulation points of $f^i K$
as $i\to\infty$. We need to show that $Y=\{T_f\}$.

\begin{lem}\label{LL}
  Let $T\in Y$. Every conjugacy class represented by a (not
  necessarily simple) curve in $\Sigma\smallsetminus S$ is elliptic in
  $T$. The minimal $\pi_1(S)$-subtree of $T$ is dual to $\Lambda^+$.
\end{lem}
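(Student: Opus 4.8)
The plan is to mimic the proof of Proposition~\ref{NS1}, but now keeping careful track of the two ``pieces'' of $\Sigma$ separately. First I would work in $\overline{cv_n}$ with unprojectivised trees, writing $K\subset\overline{cv_n}$ for the compact set as normalised in Section~\ref{3.1}, and consider the scaled forward iterates $f_*^iK/\lambda^i$, where $\lambda>1$ is the dilatation of the pseudo-Anosov on $S$. Let $\mu^+$ be the current supported in $\pi_1(S)$ dual to $\Lambda^+$, and $\mu^-$ the one dual to the unstable lamination $\Lambda^-$ of $f|_S$. Exactly as in Proposition~\ref{NS1}, the identity $\langle\tfrac1{\lambda^i}f_*^iT,\mu^+\rangle=\langle T,\mu^+\rangle$ shows the length of $\mu^+$ stays bounded below by $\epsilon>0$ on the accumulation set $Y'$ of the scaled iterates, while $\langle\tfrac1{\lambda^i}f_*^iT,\mu^-\rangle=\langle T,\tfrac1{\lambda^{2i}}\mu^-\rangle\to 0$, so $\mu^-$ has length $0$ in every tree of $Y'$.

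Next I would control conjugacy classes supported away from $S$. If $a$ is represented by a curve in $\Sigma\smallsetminus S$, then $f^i(a)=a$ for all $i$, so $\langle\tfrac1{\lambda^i}f_*^iT,a\rangle=\tfrac1{\lambda^i}\langle T,a\rangle\to 0$; hence every such $a$ is elliptic in every tree of $Y'$, which is the first assertion of Lemma~\ref{LL}. If $a$ is a curve that crosses $S$ essentially, surface theory on $S$ gives $\tfrac1{\lambda^i}(f|_S)^i_*(a)\to C_a\mu^+$ with $C_a>0$ (one uses that the part of $a$ in $\Sigma\smallsetminus S$ contributes a bounded amount), so $\langle T,a\rangle=C_a\langle T,\mu^+\rangle>0$ on $Y'$; together with the elliptic classes this pins down a bounded region of $\overline{cv_n}$, so all iterates $f_*^iK$ lie in a compact subset of $\overline{cv_n}$ and $Y$ is the projectivisation of $Y'$. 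Restricting a tree $T\in Y'$ to the minimal $\pi_1(S)$-subtree via $r_{\pi_1(S)}$ (the restriction map, defined for the free factor $\pi_1(S)$), I get a very small $\pi_1(S)$-tree in which $\mu^-$ has zero length and $\mu^+$ positive length; by Skora's theorem this subtree is dual to a measured lamination on $S$, and the only such lamination pairing to zero with $\mu^-$ is $\Lambda^+$ (using unique ergodicity of $\Lambda^+$ and the argument of the second Proposition above, applied on $S$), so the $\pi_1(S)$-minimal subtree of $T$ is dual to $\Lambda^+$ — the second assertion of Lemma~\ref{LL}.

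I expect the main obstacle to be the last step of identifying the whole tree $T$ (not just its $\pi_1(S)$-subtree) as $T_f$, i.e.\ upgrading Lemma~\ref{LL} to $Y=\{T_f\}$: one must see that the elliptic behaviour of all curves in $\Sigma\smallsetminus S$, together with the $\pi_1(S)$-subtree being dual to $\Lambda^+$, forces $T$ to be (projectively) the single tree $T_f$ dual to $\Lambda^+$ on $\Sigma$ — this is where the $\pi_1$-injectivity of $S\subset\Sigma$ and the structure of how $T_f$ is assembled from the collapse of $\Sigma\smallsetminus S$ must be used, and care is needed because $T$ need not itself be a surface tree for $\Sigma$. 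A clean way is to observe that $L(T_f)$ is the diagonal closure of $\mathrm{Supp}(\mu^+)$ together with all lines living in (the subgroups carried by) $\Sigma\smallsetminus S$, and that the information just obtained forces $L(T)=L(T_f)$ via the indecomposability/duality results of \cite{BesRey}; then unique ergodicity of $T_f$ (or of $\Lambda^+$) gives $T=T_f$ projectively. The bookkeeping of which currents detect which part of $\Sigma$, and making sure ``curves crossing $S$'' really do converge to a multiple of $\mu^+$ under scaling, is the technical heart.
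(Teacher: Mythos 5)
Your proposal is correct in substance and follows the same basic strategy as the paper's proof: pass to scaled iterates in $\overline{cv_n}$, use $f$-invariance of curves in $\Sigma\smallsetminus S$ to see their lengths stay bounded while lengths in the $S$-direction grow like $\lambda^i$, conclude ellipticity projectively, and then identify the $\pi_1(S)$-minimal subtree of a limit tree with the dual tree of $\Lambda^+$. Where you genuinely diverge is in the second claim and in how you obtain compactness. The paper gets the second claim by applying Proposition~\ref{NS1} directly to the restrictions of the trees in $K$ to $\pi_1(S)$, so the identification of the limiting subtree is inherited wholesale from the already-proved pseudo-Anosov case on $S$; you instead re-run that argument at the level of the limit, applying Skora's theorem to the restricted subtree and then classifying its dual lamination via the pairings with $\mu^{\pm}$, Kapovich--Lustig (Theorem~\ref{KL}) and unique ergodicity. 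That works, but note two points you leave implicit: Skora's theorem for $S$ requires the components of $\partial S$ to be elliptic in the restricted subtree, which you must extract from your first claim; and you should not describe $\pi_1(S)$ as a free factor -- it need not be one for a general $\pi_1$-injective $S$ (it is for the good subsurfaces used later), but restriction is still legitimate because subsurface groups are root-closed, so the restricted action is very small. Second, your route to boundedness of the scaled iterates uses that for a curve $a$ crossing $S$ the scaled images converge, as currents on $\Sigma$, to a positive multiple of $\mu^+$; this is believable and you rightly flag it as the technical heart, but it is an extra statement about the partial pseudo-Anosov acting on currents of the big surface which the paper's proof avoids: the paper only needs boundedness of complement curves together with the $\lambda^i$-growth of curves in $S$, obtained from Proposition~\ref{NS1} applied to the restricted trees. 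Your $\pm$ bookkeeping simply mirrors the conventions of Proposition~\ref{NS1}, and your final paragraph about upgrading to $Y=\{T_f\}$ concerns Proposition~\ref{NS} rather than Lemma~\ref{LL}, so it is not needed for the statement at hand.
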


\begin{proof}
  The set $K$ is defined as a subset of $\overline{CV_n}$, but we can
  lift it to unprojectivized space $\overline{cv_n}$ as in Section
  \ref{3.1}. Now let $\gamma$ be a curve in the complement of
  $S$. Since $K$ is compact, the set $\{\ell_R(\gamma)\mid R\in Y\}$
  is bounded. Since $f(\gamma)=\gamma$ we deduce that the set
  $\{\<R,f_*^{-i}(\gamma)\> \mid R\in Y, i\in\mathbb{Z}\}$ is bounded
  as well. But $\<f_*^i R,\gamma\> =\<R,f_*^{-i}(\gamma)\>$, so the
  length of $\gamma$ in $f_*^i(X)$ stays uniformly bounded for all
  $i$.
	
  On the other hand, let $\delta$ be a curve in $S$. Applying
  Proposition \ref{NS1} to the trees obtained from the trees in $K$ by
  restricting to $\pi_1(S)$ we see that these restrictions converge to
  the tree dual to $\Lambda^+$. Moreover, the length of $\delta$ along
  $f_*^i K$ goes to infinity as $i\to\infty$ (in fact, the length
  grows like $(const)\lambda^i$ where $\lambda$ is the dilatation).
  Thus projectively, the length of $\gamma$ will go to 0 as
  $i\to\infty$.
\end{proof}

\begin{proof}[Proof of Proposition \ref{NS}]
  Let $T\in Y$. By Lemma \ref{LL} conjugacy classes of boundary
  components of $\Sigma$ are all elliptic, so by Skora's theorem we
  deduce that $T$ is dual to a measured geodesic lamination on
  $\Sigma$. Again by Lemma \ref{LL} this lamination is $\Lambda^+$ on
  $S$ and possibly curves in $\partial S\smallsetminus \partial\Sigma$
  with nonzero weight. Thus the accumulation set is contained in the
  simplex in $\pml(\Sigma)$ whose vertices are $\Lambda^+$ and these
  curves, it is compact, and disjoint from the face opposite the
  vertex $\Lambda^+$ (in particular, no point in the accumulation set
  is a curve in $\partial S\smallsetminus \partial\Sigma$). The only
  $f$-invariant compact set with this property is $\{\Lambda^+\}$
  since $f$ acts by attracting towards $\Lambda^+$ all compact sets
  disjoint from the opposite face.
\end{proof}

\begin{cor}
  In the setting of Proposition \ref{NS}, let $A<F_n$ be a free factor
  such that the action of $A$ on $T^+$ is free and discrete. Then for
  large $k>0$ the action of $f^k_*(A)$ on every $T\in K$ is free and
  discrete.
\end{cor}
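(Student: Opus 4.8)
The plan is to move the problem, via the automorphism $f_*^k$, from the free factor $f_*^k(A)$ acting on a fixed tree in $K$ to the fixed free factor $A$ acting on a translated tree, and then to exploit the attracting dynamics of Proposition~\ref{NS}.

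The first step is a formal reduction. For $\phi\in\mathrm{Out}(F_n)$ with a chosen representative and any subgroup $B\le F_n$, the translation length of $\phi(b)$ in a tree $T$ equals $\langle T,\phi(b)\rangle=\langle\phi^{-1}T,b\rangle$, the translation length of $b$ in $\phi^{-1}T$; hence the action of $\phi(B)$ on $T$ is isometrically conjugate, via $\phi$, to the action of $B$ on $\phi^{-1}T$, and in particular $\phi(B)$ acts freely and discretely on $T$ exactly when $\phi^{-1}T\notin K_B$, so that $K_{\phi(B)}=\phi(K_B)$. Taking $\phi=f_*^k$ and $B=A$, the corollary becomes: $f_*^{-k}(T)\notin K_A$ for every $T\in K$ and every sufficiently large $k$. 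So everything reduces to controlling the backward orbit of $K$. (If $K$ meets $CV_n$ those points cause no difficulty, as $f_*^{-k}$ preserves $CV_n$ and every free factor acts freely and discretely there; so assume $K\subseteq\partial CV_n$.)

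Now $f^{-1}$ again restricts to a pseudo-Anosov on the same subsurface $S$ and to the identity off $S$, with stable and unstable laminations interchanged, so Proposition~\ref{NS} applies to $f^{-1}$ and its current $\mu^-$: when $\langle T,\mu^-\rangle\ne 0$ throughout $K$ it gives $f_*^{-k}(K)\to T^-$, the dual tree of $\Lambda^-$ on $\Sigma$, and in general one checks (via Skora's theorem and Lemma~\ref{LL}) that the accumulation set $Y$ of $f_*^{-k}(K)$ consists of trees of surface type whose laminated part is supported on $S$. It then remains to see that $A$ acts freely and discretely on every tree of $Y$. Here the geometry of $S$ enters: the trees of $Y$ — like $T^+$ and $T^-$ — are dual to measured laminations filling $S$ with the complementary pieces of $\Sigma$ collapsed, hence have the same nontrivial point stabilisers up to conjugacy (the fundamental groups of the components of $\overline{\Sigma\smallsetminus S}$ and the cyclic subgroups carried by $\partial S$); and acting freely and discretely on $T^+$ already forces $A$ to avoid all of these subgroups and to meet $\pi_1(S)$ only in cyclic subgroups crossing $S$, which is exactly the condition to act freely and discretely on any such tree. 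Therefore $Y\subseteq\partial CV_n\smallsetminus K_A$, which is open by Proposition~\ref{prop:the K}; since $\overline{CV_n}$ is compact, every open set containing $Y$ contains $f_*^{-k}(K)$ for all large $k$, so taking that open set to be $\partial CV_n\smallsetminus K_A$ yields a single $k_0$ with $f_*^{-k}(T)\notin K_A$ for all $T\in K$ and $k\ge k_0$.

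The step I expect to be the real obstacle is pinning down the accumulation set $Y$ and checking that ``$A$ acts freely and discretely on $T^+$'' forces the same on all of $Y$; this is genuinely about how the subsurface $S$ sits in $\Sigma$. The surrounding argument — the conjugation identity, Proposition~\ref{NS} for $f^{-1}$, closedness of the sets $K_A$, and compactness of $\overline{CV_n}$ — is formal.
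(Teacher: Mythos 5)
Your endgame is the paper's endgame: the change-of-variables identity for translation lengths, the fact that the complement of $K_A$ is open (Proposition~\ref{prop:the K}), and compactness. Indeed the paper's whole proof is two lines: $A$ acts freely and discretely on an open neighbourhood of $T^+$; by Proposition~\ref{NS} the translates of $K$ that are relevant after the change of variables (written $Kf_*^k$ there, i.e.\ the trees with length functions $a\mapsto\langle T,f_*^k(a)\rangle$) converge to $T^+$, hence lie in that neighbourhood for large $k$; and this is exactly the statement. You orient the conjugation the other way, so that the sets you must control are the opposite-direction iterates $f_*^{-k}(K)$ — and these are \emph{not} what Proposition~\ref{NS} controls under the hypothesis imposed on $K$. (The paper's $\pm$ and left/right bookkeeping in this section is admittedly loose, but however one fixes it, the corollary is set up so that Proposition~\ref{NS} applies verbatim to the translated sets and their limit is the very tree on which $A$ is assumed to act freely and discretely; no new dynamical statement is needed.)

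That forced detour is where your argument has a genuine hole, at exactly the step you flag. You assert, but do not prove, (i) that the accumulation set $Y$ of $f_*^{-k}(K)$ consists of trees dual to laminations filling $S$ with the complementary pieces collapsed, and (ii) that free and discrete on $T^+$ transfers to every tree of $Y$. For (i), Lemma~\ref{LL} is proved for the direction governed by the pairing hypothesis; its analogue for your direction gives at best that limits lie in the simplex spanned by the other fixed lamination and the curves of $\partial S\smallsetminus\partial\Sigma$, and one must then rerun the Proposition~\ref{NS1}-type computation with the pairing hypothesis to exclude the face spanned by those boundary curves. This exclusion is not cosmetic: the dual tree of a weighted multiple of $\partial S$ is the Bass--Serre tree of the splitting along $\partial S$, on which every element of $A$ conjugate into $\pi_1(S)$ is elliptic, and freeness of $A$ on $T^+$ does not exclude such elements (by Lemma~\ref{lem:leaving the K with pAs} it only excludes $\pi_1(S)\subset A$ and classes carried by $\Sigma\smallsetminus S$ or $\partial S$; e.g.\ a cyclic factor generated by a basis letter homotopic into $S$ is fine on $T^+$ but elliptic on that Bass--Serre tree). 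So ``same nontrivial point stabilisers'' would simply be false on that face, and your transfer step collapses unless $Y$ is pinned away from it. For (ii) itself, once $Y$ is pinned down the correct tool is again Lemma~\ref{lem:leaving the K with pAs} — its criterion depends only on $S$, not on which filling lamination — but that has to be invoked; matching point stabilisers alone is not an argument about free and discrete actions. In short: the formal frame is right and matches the paper, but the key dynamical input is missing in your version, whereas the paper's orientation of the translation makes it an immediate citation of Proposition~\ref{NS}.
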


\begin{proof}
  The subgroup $A$ will act freely and discretely in a neighborhood of
  $T^+$, and this includes $Kf^k_*$ for large $k$. This is equivalent
  to the statement.
\end{proof}

There is a simple criterion for deciding if the action of $A$ on $T^+$
is free and discrete.

\begin{lem}\label{lem:leaving the K with pAs}
  Let $A<F_n$ be a free factor and $T^+$ the tree dual to the stable
  lamination $\Lambda_f$ of a partial pseudo-Anosov homeomorphism $f$
  supported on a subsurface $S\subset\Sigma$. The action of $A$ on
  $T^+$ is free and discrete if and only if $\pi_1(S)$ is not
  conjugate into $A$ and no nontrivial conjugacy class in $A$ is
  represented by an immersed curve in $\Sigma\smallsetminus
  \Lambda^+$.
\end{lem}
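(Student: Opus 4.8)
The plan is to pin down two features of $T^{+}$ and then read off the equivalence. Write $\Lambda^{+}$ for the stable lamination of $f$, so $T^{+}$ is dual to $\Lambda^{+}$ viewed as a lamination on $\Sigma$ supported on $S$. First I identify the elliptic conjugacy classes of $T^{+}$: since $L(T^{+})$ consists of the leaves of $\Lambda^{+}$ together with all lines not crossing $\Lambda^{+}$ transversally, Theorem~\ref{KL} gives that $\<T^{+},g\>=0$ precisely when the geodesic representative of $g$ does not cross $\Lambda^{+}$, and -- because $\Lambda^{+}$ is minimal with no closed leaves -- this happens precisely when $g$ is represented by an immersed curve in $\Sigma\smallsetminus\Lambda^{+}$. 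Thus the elliptic conjugacy classes of $T^{+}$ are exactly those represented by immersed curves in $\Sigma\smallsetminus\Lambda^{+}$. Second, as in Lemma~\ref{LL}, the minimal $\pi_{1}(S)$--subtree $\tau_{S}\subset T^{+}$ is the tree dual to $\Lambda^{+}$ on $S$; since $\Lambda^{+}$ fills $S$ this $\pi_{1}(S)$--tree has dense orbits, and it is indecomposable, being dual to a minimal filling lamination.

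Given these, the forward implication is immediate. If $A$ acts freely and discretely on $T^{+}$, then no nontrivial element of $A$ is elliptic, which by the first point rules out the immersed curves; and if $\pi_{1}(S)$ were conjugate into $A$, then (a subgroup of a group acting freely and simplicially on a tree does so as well) $\pi_{1}(S)$ would act freely and simplicially on $T^{+}$, contradicting that $\tau_{S}$ has dense orbits.

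For the reverse implication, assume $\pi_{1}(S)$ is not conjugate into $A$ and no nontrivial conjugacy class of $A$ is represented by an immersed curve in $\Sigma\smallsetminus\Lambda^{+}$. The first point shows the $A$--action is free, so it remains to see that the $A$--minimal subtree $T^{+}_{A}$ is simplicial. In the canonical Guirardel--Levitt decomposition of $T^{+}$, the non-simplicial locus is exactly the union of the $F_{n}$--translates of $\tau_{S}$; hence $T^{+}_{A}$ fails to be simplicial only if, for some $g\in F_{n}$, the subgroup $H:=A\cap g\pi_{1}(S)g^{-1}$ acts on $g\tau_{S}$ with a minimal subtree that is not simplicial. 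I would first show $H$ has infinite index in $g\pi_{1}(S)g^{-1}$: otherwise pick $\gamma\in g\pi_{1}(S)g^{-1}\smallsetminus A$ (possible since $\pi_{1}(S)$ is not conjugate into $A$); cycling the finitely many cosets of $H$ gives $\gamma^{k}\in H\le A$ for some $k\ge 1$, whence the maximal cyclic subgroup of $F_{n}$ containing $\gamma^{k}$ lies in $A$ (as $A$ is a free factor), so $\gamma\in A$, a contradiction. Conjugating back, $H':=g^{-1}Hg$ is then a finitely generated (Howson) subgroup of $\pi_{1}(S)$ of infinite index, and one is reduced to showing that such an $H'$ acts on $\tau_{S}$ with simplicial minimal subtree. (An alternative packaging of the whole lemma is to invoke Reynolds' analysis of reducing systems for very small trees \cite{reynolds:reducingSystems}, which characterizes the free factors $A$ with $r_{A}(T)$ free and discrete in terms of $L(T)$; the computation of $L(T^{+})$ above converts that characterization into the two stated conditions.)

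The main obstacle is that last reduction: a finitely generated, infinite-index subgroup $H'$ of $\pi_{1}(S)$ must act on the dual tree $\tau_{S}$ of the minimal filling lamination $\Lambda^{+}$ with simplicial minimal subtree. I plan to prove this by passing to the cover $\widehat S\to S$ with $\pi_{1}(\widehat S)=H'$, which has infinite degree and a compact core $\widehat S_{0}$, so that the $H'$--minimal subtree of $\tau_{S}$ is the tree dual to the restriction to $\widehat S_{0}$ of the lift $\widehat\Lambda$ of $\Lambda^{+}$. No leaf of $\widehat\Lambda$ can be contained in the compact set $\widehat S_{0}$: such a leaf would project to a leaf of $\Lambda^{+}$ confined to a compact subsurface of $S$, and minimality of $\Lambda^{+}$ would then force that subsurface to be all of $S$ (since $\Lambda^{+}$ fills $S$), making $\widehat S_{0}\to S$ a finite covering and $H'$ finite index -- a contradiction. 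Hence $\widehat\Lambda$ meets $\widehat S_{0}$ in a disjoint union of embedded geodesic arcs, i.e.\ in a weighted system of arcs, whose dual tree is a finite simplicial tree; so $T^{+}_{A}$ is simplicial and $A$ acts freely and discretely on $T^{+}$, as desired.
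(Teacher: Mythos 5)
Your characterization of the elliptic elements of $T^+$ via Theorem~\ref{KL} and your ``only if'' direction are fine, and your covering-space computation for infinite-index subgroups is in the same spirit as the paper's argument. The genuine gap is the reduction you treat as immediate: the step ``hence $T^{+}_{A}$ fails to be simplicial only if some $H=A\cap g\pi_{1}(S)g^{-1}$ acts on $g\tau_{S}$ with non-simplicial minimal subtree.'' The Levitt/Guirardel-type decomposition of $T^{+}$ as an $F_{n}$-tree does not by itself yield this statement about the $A$-minimal subtree. Knowing that each $H_g$ acts discretely on \emph{its} minimal subtree does not obviously control $T^{+}_{A}$: the intersections $T^{+}_{A}\cap g\tau_{S}$ can be strictly larger than the $H_g$-minimal subtrees (they can be nondegenerate even when $H_g$ is trivial, coming from axes of elements of $A$ passing through), and non-discreteness could a priori arise from accumulation of such pieces across infinitely many translates. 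To make the reduction rigorous you would need, e.g., Levitt's decomposition applied to the $A$-tree $T^{+}_{A}$ together with the transverse covering of $T^{+}$ by the translates $g\tau_{S}$, plus an argument that every dense-orbit vertex piece of $T^{+}_{A}$ lies inside a single translate with stabilizer inside the corresponding conjugate of $\pi_{1}(S)$ (for instance because the ``transition points'' between translates form an $F_n$-invariant set that is locally finite along segments, hence cannot meet a dense-orbits piece in its interior). None of this is supplied; in fact you locate the ``main obstacle'' in the infinite-index covering computation, which is the routine half. The paper sidesteps the whole issue by working directly with the cover of $\Sigma$ corresponding to $A$ itself: its convex core is compact (condition on curves in $\Sigma\smallsetminus\Lambda^{+}$ excludes peripheral classes), each lifted leaf meets the core in arcs (otherwise $A$ carries $\Lambda^{+}$, hence contains a finite-index subgroup of a conjugate of $\pi_{1}(S)$ and, being root-closed, the whole conjugate), the arcs fill the core by the second hypothesis, and the $A$-minimal subtree is dual to this finite filling arc system, hence visibly free and simplicial -- no graph-of-actions theory needed.

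A secondary problem: your justification that no leaf of $\widehat\Lambda$ lies in $\widehat S_{0}$ does not work as written. From a trapped leaf you cannot conclude that ``$\widehat S_{0}\to S$ is a finite covering'' the way you state it (the image in $S$ of a compact core of an infinite cover can perfectly well be all of $S$). The correct argument is: if a leaf, or even a half-leaf, stays in $\widehat S_{0}$, its closure is a sublamination carried by $\widehat S_{0}$, and minimality of $\Lambda^{+}$ forces all of $\Lambda^{+}$ to be carried by $H'$; since $\Lambda^{+}$ fills $S$, the endpoints of its leaves are dense in $\partial\pi_{1}(S)$ and lie in the limit set of $H'$, which forces $H'$ to have finite index, a contradiction. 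Note also that you must rule out trapped \emph{half}-leaves (not just whole leaves) before concluding that $\widehat\Lambda\cap\widehat S_{0}$ is a finite system of arcs. These points are repairable, but together with the unproven reduction your proposal does not yet establish the ``if'' direction.
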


\begin{proof}
  It is clear that these conditions are necessary. Assuming they hold,
  equip $\Sigma$ with a complete hyperbolic metric of finite area and
  let $\tilde\Sigma\to\Sigma$ be the covering space with
  $\pi_1(\tilde\Sigma)=A$. Lift the hyperbolic metric to
  $\tilde\Sigma$ and let $\tilde\Sigma_C\subset\tilde\Sigma$ be the
  convex core. Then $\tilde\Sigma_C$ is compact, since any cusp would
  represent a boundary component of $\Sigma$ whose conjugacy class is
  in $A$. Lift the lamination $\Lambda^+$ to $\tilde\Lambda\subset
  \tilde\Sigma$. Each leaf of $\tilde\Lambda$ intersects
  $\tilde\Sigma_C$ in an arc (or not at all) for otherwise $A$ would
  carry $\Lambda_f$ and would have to contain a finite index subgroup
  of $\pi_1(S)$. But $A$ is root-closed, so it would contain
  $\pi_1(S)$, which we excluded. So the intersection of
  $\tilde\Lambda$ with $\tilde\Sigma_C$ consists of finitely many
  isotopy classes of arcs. These arcs are filling, for otherwise we
  would have a loop in the complement that would represent a
  nontrivial element of $A$ whose image in $\Sigma$ is disjoint from
  $\Lambda^+$. The minimal $A$-subtree of $T_f$ is dual to this
  collection of arcs, and so is free and discrete.
\end{proof}

Finally, we need the following, which describes the dynamics of
partial pseudo-Anosovs on free factors. We use the terminology of
good and bad subsurfaces, which will be motivated and introduced
in Section~\ref{sec:moves}.
\begin{prop}\label{prop:good-and-bad}
  Identify $\pi_1(\Sigma) = F_n$ and assume that the rank $n$ of the
  free group is at least $\rank$.  
  Suppose that $\psi$ is a partial pseudo-Anosov, supported on a
  ``good'' subsurface $S^g$ of $\Sigma$, and so that the ``bad subgroup'' 
  $\pi_1(\Sigma-S^g)$ has rank at most $5$.
  
  Let $E$ be any free factor, and $B'$ a subgroup which does not contain $\pi_1(S^g)$ up
  to conjugacy. 

  Then one of the following holds:
  \begin{enumerate}
  \item $\pi_1(S^g)\subset E$ , or
  \item there is some $k>0$ so that for all large enough $N$ the only
    conjugacy classes belonging to $\psi^{kN}E$ and $B'$ are contained
    in $\pi_1(\Sigma-S^g)$.
  \end{enumerate}
\end{prop}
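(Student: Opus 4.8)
The plan is to combine the north–south dynamics of Proposition~\ref{NS} with the structural criterion of Lemma~\ref{lem:leaving the K with pAs}, applied to the free factor $E$ and the subgroup $B'$. Assume we are not in case (1), so $\pi_1(S^g)$ is not conjugate into $E$. First I would observe that since $\psi$ is supported on $S^g$, any current $\mu^+$ corresponding to the stable lamination $\Lambda^+$ of $\psi$ is supported in $\pi_1(S^g)$, and the tree $T^+ = T_\psi$ dual to $\Lambda^+$ on $\Sigma$ is the north–south attractor of Proposition~\ref{NS}. The key dichotomy is whether $\langle T^+, \mu_E\rangle = 0$, where $\mu_E$ ranges over currents carried by $E$ — equivalently, by Theorem~\ref{KL}, whether $\mathrm{Supp}(\mu_E) \subseteq L(T^+)$. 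Concretely: either $E$ acts freely and discretely on $T^+$, or it does not.

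In the first sub-case, apply the Corollary following Proposition~\ref{NS}: if $A$ acts freely and discretely on $T^+$, then for large $k$ the action of $\psi^k_*(A)$ on every $T$ in a fixed compact $K$ is free and discrete. But here I want something slightly different — I want to control which conjugacy classes can simultaneously lie in $\psi^{kN}E$ and in $B'$. For this I would argue as follows. Since $\pi_1(S^g)$ is not conjugate into $E$, Lemma~\ref{lem:leaving the K with pAs} (applied in the contrapositive, noting $E$ might fail to act freely and discretely on $T^+$) tells us the obstruction is exactly that some nontrivial conjugacy class in $E$ is represented by an immersed curve in $\Sigma \setminus \Lambda^+$, i.e. in (a neighborhood of) the complementary subsurface, which is contained up to isotopy in $\Sigma - S^g$. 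The crucial dynamical input is that $\psi$ fixes $\Sigma - S^g$ pointwise, so any conjugacy class in $\pi_1(\Sigma - S^g)$ is $\psi$-invariant, while by Lemma~\ref{LL} (the length-growth estimate in its proof) any conjugacy class \emph{not} carried by $\pi_1(\Sigma - S^g)$ has length growing like $\lambda^{kN}$ along $\psi^{kN}$. Therefore, for large $N$, a conjugacy class $c$ lying in both $\psi^{kN}E$ and $B'$ must have the property that $\psi^{-kN}c \in E$ has bounded length in, say, a fixed point of Outer space — which, because $B'$ does not contain $\pi_1(S^g)$ up to conjugacy (so $c$ itself is constrained) and because of the growth dichotomy, forces $c \in \pi_1(\Sigma - S^g)$.

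The cleanest way to organize the middle step is probably: pick a basepoint tree $X_0 \in CV_n$; then $\ell_{X_0}(\psi^{-kN}c)$ is bounded over $c \in B'$ only if the corresponding element is carried by the subsurface fixed by $\psi$, and otherwise grows exponentially. Since $c$ lies in the free factor $\psi^{kN}E$, we have $\psi^{-kN}c \in E$; and since $c \in B'$ which does not contain $\pi_1(S^g)$-conjugates, combined with the filling-arc analysis of Lemma~\ref{lem:leaving the K with pAs}, the only elements of $E$ whose $\psi^{kN}$-images can land in $B'$ are those carried by $\pi_1(\Sigma - S^g)$ — here is where the hypothesis that the bad subgroup has rank at most $5$, together with $n \geq \rank$, presumably guarantees enough room (e.g.\ that $\Sigma - S^g$ is not all of $\Sigma$ and that $\pi_1(S^g)$ is large enough to not fit inside $E$ unless case (1) holds). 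This is the step I expect to be the main obstacle: making precise that ``a conjugacy class in $\psi^{kN}E \cap B'$ must be carried by $\pi_1(\Sigma - S^g)$'' uniformly in $N$, rather than just for each fixed $c$. I would handle it by contradiction: if for arbitrarily large $N$ there were a class $c_N \in \psi^{kN}E \cap B'$ not in $\pi_1(\Sigma - S^g)$, then $\psi^{-kN}c_N \in E$ would be a class in $E$ of controlled $B'$-type (since $B'$ is fixed) but pushed by $\psi^{-kN}$ off toward $T^-$, and one extracts a limit showing $E$ carries a leaf of $\Lambda^-$ — contradicting that $\pi_1(S^g) \not\subset E$ via the root-closure argument in Lemma~\ref{lem:leaving the K with pAs}. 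The choice of $k$ is forced by possible periodic behavior of $\psi$ on the components of $\Sigma - S^g$ and on the isotopy classes of arcs, exactly as in the Corollary above.
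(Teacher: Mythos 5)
Your overall dichotomy (assume $\pi_1(S^g)\not\subset E$ and show that for large powers the classes common to $\psi^{kN}E$ and $B'$ must sit in the bad subgroup) is the right one, but the central step is missing, and your two proposed mechanisms do not supply it. The length-growth argument is a statement about a \emph{fixed} conjugacy class as $N\to\infty$; here the class $c_N\in\psi^{kN}E\cap B'$ is allowed to vary with $N$ and has no a priori length bound, so "bounded length over $c\in B'$" is not available — you flag this yourself. The proposed repair, extracting a limit to show that $E$ carries a leaf of $\Lambda^-$, presupposes exactly the missing uniformity: to get such a limit you need the geodesics of $\psi^{-kN}c_N$ to fellow-travel $\Lambda^-$ along longer and longer segments, and that is only true if you first know that geodesics representing classes of $B'$ cannot fellow-travel $\Lambda^+$ for arbitrarily long stretches. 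That last fact is precisely where the hypothesis that $B'$ does not contain $\pi_1(S^g)$ up to conjugacy must enter, and your sketch never actually uses it beyond the phrase "of controlled $B'$-type"; without it the adversarial choice $c_N=\psi^{kN}\rho_N$ defeats the limit argument. North–south dynamics on $\overline{CV_n}$ (Proposition~\ref{NS}), Theorem~\ref{KL}, and Lemma~\ref{lem:leaving the K with pAs} are statements about a fixed tree or a fixed compact set and do not by themselves produce this uniform statement over all elements of $E$ and $B'$.

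The paper's proof gets the uniformity from hyperbolic geometry rather than from tree dynamics: by Scott's theorem $E=\pi_1(X_E)$ for a compact subsurface $X_E$ of a finite cover $X\to\Sigma$, and $k$ is chosen so that $\psi^k$ lifts to $X$ (not, as you suggest, because of periodicity on $\Sigma-S^g$). Since $\pi_1(S^g)\not\subset E$, no leaf of the lifted laminations lies in $X_E$; compactness of $X_E$ then gives a definite lower bound on the angle that any closed geodesic in $X_E$ meeting the lamination makes with $\hat{\lambda}^-$, hence a number $N(L,\epsilon)$, \emph{independent of the geodesic}, after which $\hat{\psi}^n$ of it $\epsilon$--fellow-travels $\hat{\lambda}^+$ for length $L$. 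On the other side, a limit-set argument shows that geodesics representing classes of $B'$ admit no $L$-long $\epsilon$-fellow-traveling with $\hat{\lambda}^+$ (else the endpoints of a leaf of $\lambda^+$ would lie in $\partial_\infty B'$, forcing $\pi_1(S^g)\subset B'$). Combining the two, $\psi^{kn}\rho^r\notin B'$ for every $\rho\in E$ not conjugate into $\pi_1(\Sigma-S^g)$, uniformly in $\rho$. If you want to salvage your symmetric version (pushing backwards toward $\Lambda^-$ and contradicting $\pi_1(S^g)\not\subset E$), you would still need both of these ingredients — a compact-core/angle bound for $B'$ against $\Lambda^+$ and a limit-set argument for $E$ against $\Lambda^-$ — so the genuinely new content of the paper's proof (the LERF/compact-core uniformity) cannot be avoided and is absent from your proposal.
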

\begin{proof}
  Assume that (1) fails, i.e. $\pi_1(S^g)$ is not conjugate into $E$.
  Choose a hyperbolic metric on $\Sigma$. From now on, we assume that all
  curves and laminations are geodesic. By Scott's theorem, there is a
  finite cover $X \to \Sigma$ so that $E = \pi_1(X_E)$ where $X_E \subset
  X$ is a subsurface. We can choose a power $k$ so that 
  $\psi^k$ lifts to a partial pseudo-Anosov map $\hat{\psi}$ of $X$.
	
  Let $\hat{\lambda}^\pm$ be the lifts of the stable/unstable
  lamination of $\psi$ to $X$; in other words, these are the stable/unstable
  laminations of $\hat{\psi}$. Since both of these laminations fill
  $\pi_1(S^g)$, and we assume that $\pi_1(S^g)$ is not conjugate into
  $E$, no leaf of these laminations is completely contained in $X_E$.
	
  The intersection $\hat{\lambda}^\pm \cap X_E$ is supported in a
  subsurface $Y \subset X_E$, and we let $Y' = X_E - Y$ be its
  complement. We emphasise that the complement could be empty. Also
  observe that any curve in $Y'$ maps (under the covering map) into
  $\Sigma-S^g$.
	
  By compactness and the fact that no leaf of $\hat{\lambda}^-$ is
  supported in $X_E$ there is a number $\alpha$ with the following
  property: any geodesic starting in a point $p \in \hat{\lambda}^-$
  and making angle $<\alpha$ with $\hat{\lambda}^-$ leaves $X_E$.  In
  particular we conclude that any closed geodesic in $X_E$ which
  intersects $Y$ (and hence $\hat{\lambda}^-$) makes angle $\geq
  \alpha$ with $\hat{\lambda}^-$.
	
  As a consequence, we have the following: for any $L$ and $\epsilon$
  there is a number $N(L,\epsilon)$ with the following property. If
  $\gamma\subset X_E$ is any geodesic intersecting $Y$, then
  $\hat{\psi}^{n}\gamma$ contains a segment of length $\geq L$ which
  $\epsilon$--fellow-travels a leaf of $\hat{\lambda}^+$, for all
  $n>N(L,\epsilon)$.
	
  We now claim that there are $L, \epsilon>0$ with the following property: no geodesic
  $\gamma$ in $X$, which represents a conjugacy class of $B'$, contains
  a geodesic segment of length $\geq L$ which $\epsilon$--fellow-travels a leaf of $\hat{\lambda}^+$.
  
  To see this, we argue by contradiction. Namely, if not, then we could find a sequence of geodesics
  $\gamma_n \subset X$ which represent conjugacy classes in $B'$, and which limit to a leaf of $\hat{\lambda}^+$.
  In particular, the endpoints at infinity of a leaf of $\lambda^+$ would be contained in the boundary
  at infinity $\partial_\infty B'$ of the subgroup $B'$. Since $\lambda^+$ is the stable lamination
  of a partial pseudo-Anosov supported on $S^g$, this would imply that $B'$ contains $\pi_1(S^g)$ up to 
  conjugacy -- which contradicts our assumption. 
   
%
  
  We let $N=N(L,\epsilon)$ be the corresponding constant.
  Now, let $r>0$ be a number so that $\rho^r$ lifts to $X$ for any
  loop $\rho$ in $\Sigma$ (this exists, since $X$ is a finite cover). Let
  $\rho$ be any element of $E$, which is not conjugate into
  $\pi_1(\Sigma-S^g)$.  Since fundamental groups of subsurfaces are
  root-closed, the element $\rho^r$ is then also not conjugate into
  $\pi_1(\Sigma-S^g)$.  Then, $\rho^r$ lifts to a geodesic $\gamma$ in
  $X_E$, which intersects $Y$. Thus, $\hat{\psi}^n\gamma$ contains a segment
  with the property of the previous paragraph, showing that
  $\psi^{kn}\rho^r$ is not contained in $B'$ for all $n>0$.
	
  In other words, if $\psi^{kn}\rho$ (and thus $\psi^{kn}\rho^r$) is
  contained in $B'$, then $\rho$ is contained in $\pi_1(\Sigma-S^g)$,
  showing (2).
\end{proof}

\section{Basic Moves, Good and Bad Subsurfaces}
\label{sec:moves}

In this section, we will study how to relate different identifications
of a free group with the fundamental group of a surface. The basic
situation will be to relate two identifications which differ by
applying (certain) generators of $\mathrm{Out}(F_n)$.

\subsection{Standard Geometric Bases}\label{sgb}
Let $\Sigma = S_{g,1}$ be a compact oriented surface of genus $g$ with
one boundary component.  We pick a basepoint $p$, contained in the
interior of the surface $\Sigma$.  A collection of simple closed
curves $a_i, \hat{a}_i, 1\leq i\leq g$ is called a \emph{standard
  geometric basis for $S_{g,1}$} if the following hold:
\begin{enumerate}
\item The $a_i, \hat{a}_j$ generate $F_{2g} = \pi_1(\Sigma, p)$.
\item The $a_i, \hat{a}_j$ intersect only in $p$.
\item The cyclic order of incoming and outgoing arcs at $p$ is
  \[ \hat{a}_1^+, a_1^+, \hat{a}_1^-, a_1^-, \hat{a}_2^+, \ldots,
  a_g^-. \]
\end{enumerate}
\begin{figure}[h]
  \includegraphics[width=0.9\textwidth]{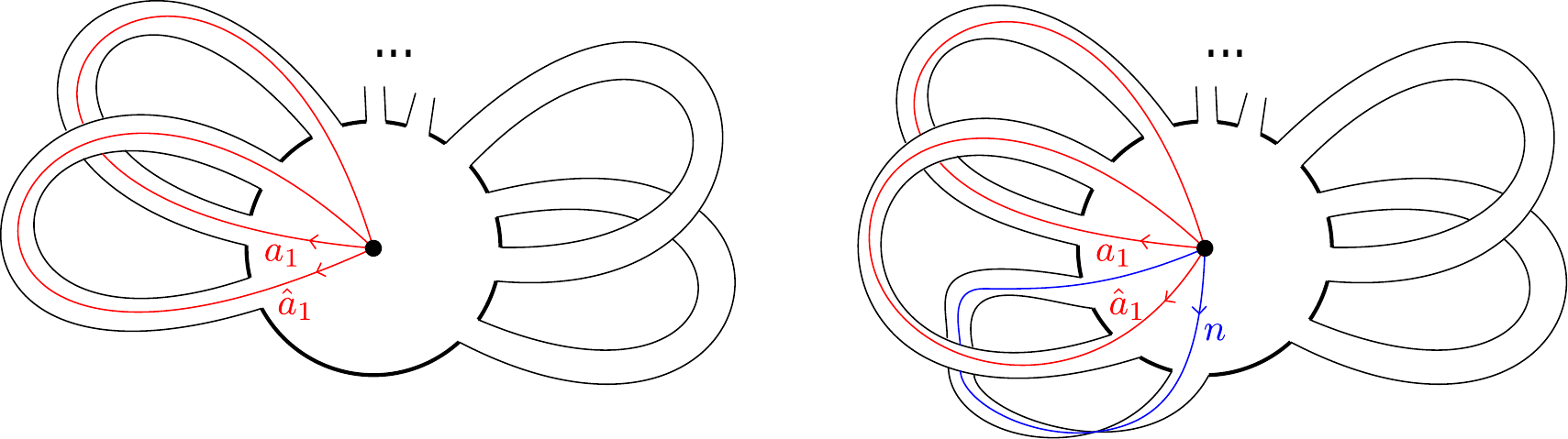}
  \caption{A basis for $\Sigma$ of the type we use in this section.}
  \label{fig:basis}
\end{figure}
Compare Figure~\ref{fig:basis} for an example of such a basis.  Given
a standard geometric basis, we say that $a_i, \hat{a}_i$ are an
\emph{intersecting pair} (of that basis). For ease of notation we
define $\hat{\hat{a}}_i = a_i$. Observe that up to the action of the
mapping class group of $\Sigma$ there is a unique standard geometric
basis.

\bigskip To deal with free groups of odd rank, we need to also
consider certain nonorientable surfaces.  Namely, let $\Sigma =
N_{2g+1}$ be the surface obtained from $S_g$ by attaching a single
twisted band, at the two sides of an initial segment of $a_1^+$.

As before, we pick a basepoint $p$ in the interior of $\Sigma$. A
collection of simple closed curves $n,a_i,\hat{a}_i 1\leq i\leq g$ is
called a \emph{standard geometric basis for $N_{2g+1}$} if the
following hold:
\begin{enumerate}
\item The curve $n$ is one-sided,
\item all $a_i, \hat{a}_i$ are two-sided,
\item the elements $n,a_i,\hat{a}_i$ generate $F_{2g+1} =
  \pi_1(\Sigma, p)$,
\item The $n,a_i, \hat{a}_i$ intersect only in $p$.
\item The cyclic order of incoming and outgoing arcs at $p$ is
  \[ n^+, \hat{a}_1^+, n^-, a_1^+, \hat{a}_1^-, a_1^-, \hat{a}_2^+,
  a_2^+, \hat{a}_2^-, a_2^-\ldots \]
\end{enumerate}
As above, we say that $a_i, \hat{a}_i$ are an \emph{intersecting
  pair}. In addition, we also say that $n, \hat{a}_1$ and $n,a_1$ are
\emph{intersecting pairs}.  We call $a_1, \hat{a}_1$ the
\emph{nonorientable-linked letters}.
\begin{rem}
  The reason for the somewhat asymmetric setup in the nonorientable
  setting is as follows. For later arguments, we will need to find
  two-sided curves which intersect the one-sided curve given by the
  basis letter in a single point. This forces at least one of the
  two-sided bands to be linked with the one-sided band. However, since
  we also need to be able to have an odd total number of bands, the
  described setup emerges.
\end{rem}
\begin{defin}
  Suppose that $x_1, \ldots x_n$ is a free basis for $F_n$.  For any
  $x=x_i$, $y=x_j$ with $i\neq j$, we call an outer automorphism
  defined by the automorphism
 
  \[\rho_{x,y}(z) = \begin{cases}
    xy & z = x \\
    z & z = x_k, k \neq i
  \end{cases}\] or
  \[\lambda_{x,y}(z) = \begin{cases}
    yx & z = x \\
    z & z = x_k, k \neq i
  \end{cases}\] a \emph{basic (Nielsen) move}. For $x=x_i$, we call an
  outer automorphism defined by the automorphism
  \[\iota_x(z) = \begin{cases}
    x^{-1} & z = x \\
    z & z = x_k, k \neq i
  \end{cases}\] a \emph{basic (invert) move}.
\end{defin}

\begin{lem}
  Given any standard geometric basis, $\mathrm{Out}(F_n)$ is generated
  by the basic invert moves, and Nielsen moves $\phi_{x,y}$ for $x,y$
  not an intersecting pair.
\end{lem}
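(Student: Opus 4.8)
The plan is to show that the standard Nielsen generators of $\mathrm{Out}(F_n)$ of the form $\rho_{x,y}$ or $\lambda_{x,y}$ with $x,y$ an intersecting pair can be rewritten in terms of basic invert moves and Nielsen moves $\phi_{u,v}$ where $u,v$ is \emph{not} an intersecting pair. Recall the classical fact (Nielsen) that $\mathrm{Out}(F_n)$ is generated by the invert moves together with all the Nielsen moves $\rho_{x_i,x_j}$, $\lambda_{x_i,x_j}$, $i\neq j$; the content of the lemma is purely that we do not need the ``diagonal'' Nielsen moves along intersecting pairs. So the whole argument reduces to a finite bookkeeping exercise: express each $\rho_{a_i,\hat a_i}$ (and its siblings $\rho_{\hat a_i, a_i}$, $\lambda$-versions, inverses) as a word in the allowed generators.

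The key observation is that the surface has genus $\geq 2$ (for $n\geq\rank$), so given an intersecting pair $a_i,\hat a_i$ there is always some other basis letter $c$ (say $a_j$ or $\hat a_j$ with $j\neq i$) such that neither $a_i,c$ nor $\hat a_i,c$ is an intersecting pair, and moreover $c,\hat a_i$ and $c,a_i$ are not intersecting pairs. First I would set up notation: work with automorphisms rather than outer automorphisms, and use the commutator identities among the elementary matrices/Nielsen transformations. The model is the relation in $\mathrm{SL}_n(\mathbb Z)$ that $E_{ij}$ for $i\neq j$ is a commutator $[E_{ik},E_{kj}]$ whenever $k\neq i,j$; the analogous statement for Nielsen transformations on a free group is that $\rho_{x,y}$ can be obtained from $\rho_{x,c}$ and $\rho_{c,y}$ by a commutator-type identity (with some invert moves to fix signs), for any third letter $c\notin\{x,y\}$. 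Concretely, one checks that composing $\rho_{x,c}$, then $\rho_{c,y}$, then $\rho_{x,c}^{-1}$, then $\rho_{c,y}^{-1}$ sends $x\mapsto xy$ and fixes every other letter up to conjugacy, and hence equals $\rho_{x,y}$ in $\mathrm{Out}(F_n)$. I would carry this out once carefully, then note that $\lambda$-moves and inverse moves follow by conjugating with invert moves and taking inverses, all of which are in the allowed set.

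So the steps, in order, are: (1) recall the classical generating set (invert moves plus all Nielsen moves along the given basis); (2) for each intersecting pair, pick a third basis letter $c$ that forms no intersecting pair with either member — this requires checking the cyclic-order conditions in Section~\ref{sgb}, including the special nonorientable-linked letters $a_1,\hat a_1$ near the one-sided curve $n$, which is where one must be slightly careful about which letters are declared ``intersecting''; (3) verify the commutator identity $\rho_{x,y}=[\rho_{x,c},\rho_{c,y}]$ (up to conjugation by invert moves) in $\mathrm{Aut}(F_n)$ by a direct computation on basis letters, passing to $\mathrm{Out}(F_n)$ at the end; (4) handle the $\lambda$- and inverse-variants by the symmetries of the elementary moves. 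The only genuine obstacle is step (2) in the nonorientable case: since $n,a_1$ and $n,\hat a_1$ are \emph{both} intersecting pairs in addition to $a_1,\hat a_1$, one has fewer ``free'' letters to use as the intermediate $c$; but as soon as the genus $g\geq 2$ (equivalently $n$ large enough, which is assumed throughout), there is some pair $a_j,\hat a_j$ with $j\geq 2$ entirely unlinked from the letter being eliminated, and that suffices. Everything else is a routine, if slightly tedious, verification with elementary automorphisms.
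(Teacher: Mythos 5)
Your proposal is correct and follows essentially the same route as the paper: cite Nielsen's classical result that the basic moves along any basis generate $\mathrm{Out}(F_n)$, then eliminate the intersecting-pair Nielsen moves via the commutator identity with a third, unrelated basis letter (the paper writes exactly $\phi_{a_i,\hat a_i}=\phi_{z,\hat a_i}^{-1}\phi_{a_i,z}^{-1}\phi_{z,\hat a_i}\phi_{a_i,z}$). The only cosmetic differences are that the identity in fact fixes all other letters exactly (so no passage ``up to conjugacy'' or auxiliary invert moves is needed), and that your explicit check of the nonorientable linked letters is a detail the paper leaves implicit.
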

\begin{proof}
  It is well-known that for any basis (in particular, standard
  geometric bases) all basic moves of the form above generate
  $\mathrm{Out}(F_n)$ \cite{Nielsen24}.  So, to prove the lemma, we
  just need to show that a Nielsen move for an intersecting pair can
  be written in terms of nonintersecting pairs. This is clear,
  e.g. for an unrelated letter $z$ we have:
  \[ \phi_{a_i, \hat{a}_i} = \phi_{z, \hat{a}_i}^{-1}\phi^{-1}_{a_i,
    z}\phi_{z, \hat{a}_i}\phi_{a_i, z}.\]
\end{proof}

\begin{defin}
  Suppose we have chosen an identification $\sigma:\pi_1(\Sigma) \to
  F_n$, and basic move $\phi$. We say a subsurface $S \subset \Sigma$
  \emph{good for $\phi$} if there is a pseudo-Anosov $\psi$ supported
  on $S$ which commutes with $\phi$ on the level of fundamental
  groups, under the identification $\sigma$, i.e.
  \[ \phi\sigma\psi_\ast\sigma^{-1} = \sigma\psi_\ast\sigma^{-1}\phi \]
  We call such a $\psi$ an \emph{associated partial
    pA}.  We call the complement of a chosen good subsurface a
  \emph{bad subsurface}.
\end{defin}

We need a bit more flexibility than basic moves, given by the
following definition.
\begin{defin}
  Suppose $\sigma:\pi_1(\Sigma) \to F_n$ is an identification, and
  $\mathcal{B}$ is a standard geometric basis for $\Sigma$.  We then
  call a conjugate of a basic move (of $\mathcal{B}$) by a mapping
  class of $\Sigma$ an \emph{adjusted move}.
\end{defin}
We observe that the good and bad subsurfaces of an adjusted move are
obtained from the corresponding subsurfaces of the basic move by
applying the mapping class.

Observe that, strictly speaking, neither good nor bad subsurfaces are
unique, but we will explain below which ones we choose.

\smallskip The key reason why we are interested in good and bad
subsurfaces is that we will try to apply Lemma~\ref{lem:leaving the K
  with pAs} to the partial pseudo-Anosovs guaranteed to exist on the
good subsurface.  In order to do this, we will need to find relations
in $\mathrm{Out}(F_n)$ avoiding the following two ``problems''
(corresponding to the two conditions in Lemma~\ref{lem:leaving the K
  with pAs}):

\begin{defin}
  Let $\phi$ be a basic or adjusted move, and $E < F_n$ a free factor.
  \begin{enumerate}
  \item We say that $E$ \emph{is an overlap problem for $\phi$ (and a
      choice of good and bad subsurface)} if some nontrivial conjugacy
    class $w \in E$ is contained (up to conjugacy) in the fundamental
    group of the bad subsurface.
  \item We say that $E$ \emph{is a containment problem for $\phi$ (and
      a choice of good and bad subsurface)} if the fundamental group
    of the good subsurface is contained (up to conjugacy) in $E$.
  \end{enumerate}
\end{defin}

Finally, recall that an identification $\sigma:\pi_1(\Sigma) \to F_n$
defines a copy $\pml_\sigma$ of $\pml(\Sigma)$ inside $\bout$. The
following notion is central for our construction.
\begin{defin}
  Given any identification $\sigma:\pi_1(\Sigma) \to F_n$, and
  adjusted move $\phi$ with respect to a standard geometric basis of
  $\Sigma$, we say that the copies
  \[ \pml_\sigma \quad\mbox{and}\quad \phi\pml_\sigma =
  \pml_{\phi\sigma} \] are \emph{adjacent}.
\end{defin}

\subsection{Good Subsurfaces}
To find good subsurfaces, we use the following two lemmas. The first
constructs an ``obvious'' good subsurface (which is not large enough
for our purposes). The second one will construct curves that yield
additional commuting Dehn twists, which extend the good subsurface.

Throughout this section, we fix a standard geometric basis
$\mathcal{B}$, based at a point $p$.
\begin{lem}[''Obvious'' good
  subsurfaces]\label{lem:obvious-good-subsurface}
  Let $x,y$ be two basis elements of $\mathcal{B}$ which are not an
  intersecting pair. Then there is a subsurface $S_0$ with the
  following properties.
  \begin{enumerate}
  \item If $x$ is two-sided and not linked with the one-sided loop,
    then $x, \hat{x}, y, \hat{y}$ are disjoint from $S_0$. If $x=n$ is
    one-sided and linked with $a$, or $x=a,\hat{a}$ is linked with the
    one-sided letter $n$, then $n, a, \hat{a}, y, \hat{y}$ are
    disjoint from $S_0$,
  \item Any other basis loop in $\mathcal{B}$ is freely homotopic into
    $S_0$, and intersects $\partial S_0$ in two points.
  \item Any mapping class supported in $S_0$ commutes with $\iota_x,
    \lambda_{x,y}$ and $\rho_{x,y}$.
  \end{enumerate}
  We call the letters as in (1) the \emph{active} letters of the
  basis, and all others the \emph{inactive}.
\end{lem}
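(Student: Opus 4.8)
The plan is to construct $S_0$ by hand as a regular neighborhood of an appropriate sub-collection of the basis curves, and then verify the three listed properties directly from the combinatorics of the standard geometric basis. First I would set up notation: let $\mathcal B = \{a_i,\hat a_i\}$ (or $\{n,a_i,\hat a_i\}$ in the nonorientable case), based at $p$, with the prescribed cyclic order of the arc-ends at $p$. The key observation is that the basic moves $\iota_x$, $\lambda_{x,y}$, $\rho_{x,y}$ only involve the letters $x$ and $y$; so if we can find a subsurface that contains (up to free homotopy) all the \emph{other} basis loops and is disjoint from the curves representing $x,y$ (and their intersecting partners, in the linked case), then any mapping class supported on it leaves those other generators untouched up to conjugacy and commutes with $\phi$ on $\pi_1$. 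Concretely, I would let $S_0$ be a regular neighborhood of the union of all inactive basis loops, pushed slightly off $p$, and cap off any resulting trivial boundary circles. One then checks, by looking at the cyclic order of the arcs at $p$, that this neighborhood can be taken disjoint from $x,\hat x, y,\hat y$ (or $n,a,\hat a,y,\hat y$): since the arc-ends of the inactive letters occupy a contiguous block — or two blocks — in the cyclic order that is separated from the arc-ends of the active letters, one can route the boundary of the neighborhood through $p$ avoiding the active arcs.

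The second step is property (2): each inactive basis loop $c$ should be freely homotopic into $S_0$ and cross $\partial S_0$ in exactly two points. This is essentially immediate from taking $S_0$ to be the neighborhood described above — $c$ runs into and out of $p$ once, so it pierces the small boundary arc near $p$ exactly twice, and away from $p$ it lies in the interior. I would draw the local picture at $p$ (as in Figure~\ref{fig:basis}) and note that because the inactive arc-ends are consecutive in the cyclic order, a single boundary arc of $S_0$ near $p$ suffices and each inactive loop meets it twice. The main content here is bookkeeping with the cyclic orders in the two list items of Section~\ref{sgb}, and handling the nonorientable-linked letters $a_1,\hat a_1$ as a special case, which is why the statement splits into the two cases of (1).

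Property (3) — that any mapping class supported in $S_0$ commutes with $\iota_x,\lambda_{x,y},\rho_{x,y}$ on the level of $\mathrm{Out}(F_n)$ — is then a formal consequence: such a mapping class $\psi$ fixes $x$ (and $y$) up to conjugacy (since $x,y$, and the relevant partners, are disjoint from $S_0$, their free homotopy classes are supported in $\Sigma - S_0$ where $\psi$ is the identity), and $\phi \in \{\iota_x,\lambda_{x,y},\rho_{x,y}\}$ only modifies $x$ in terms of $x$ and $y$; so $\phi$ and $\psi_\ast$ act on disjoint parts of a free basis adapted to the decomposition $\Sigma = S_0 \cup (\Sigma - S_0)$, hence commute in $\mathrm{Out}(F_n)$. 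I expect the main obstacle to be purely topological/combinatorial: making sure that in \emph{every} case — $x$ two-sided and unlinked, $x = n$ one-sided, $x \in \{a_1,\hat a_1\}$ linked with $n$ — the neighborhood $S_0$ really can be chosen disjoint from all the active curves simultaneously while still engulfing all inactive loops, which amounts to carefully reading off the cyclic-order lists. Once the picture is set up correctly, all three properties follow without serious computation.
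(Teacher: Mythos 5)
There is a genuine gap in your construction of $S_0$. You take ``a regular neighborhood of the union of all inactive basis loops, pushed slightly off $p$.'' But whenever the inactive letters form \emph{two} blocks (the generic case, which you acknowledge), these blocks are separated in the cyclic order at $p$ by the arc-ends of $x,\hat{x}$ on one side and of $y,\hat{y}$ on the other. Inside a small disk around $p$, the active arcs cut the disk into sectors, and the two blocks of inactive arc-ends lie in \emph{different} sectors; hence any connection between the two blocks near $p$ must cross an active arc. Consequently, when you push the regular neighborhood off $p$ so as to be disjoint from $x,\hat{x},y,\hat{y}$, it necessarily falls apart into two pieces $Y_1\sqcup Y_2$ -- your claim that one can route a single boundary arc of the (connected) neighborhood past $p$ avoiding the active arcs is exactly what fails. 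The one nontrivial step of the paper's proof, which your proposal omits, is to reconnect $Y_1$ and $Y_2$ by a band sum along an arc $\alpha$ that is disjoint from all basis curves and homotopic to $\hat{y}y\hat{y}^{-1}y^{-1}$, yielding a \emph{connected} $S_0$ (Figure~\ref{fig:finding-good-subsurface-ab}).

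This is not cosmetic. A disconnected $Y_1\sqcup Y_2$ would formally satisfy (1)--(3) as literally worded, but $S_0$ is used later as a good subsurface: Lemma~\ref{lem:other-moves} takes $S_0$ itself as the good subsurface of an invert move, and a good subsurface must support a (partial) pseudo-Anosov, which forces connectedness; likewise the explicit computations of $\pi_1(\Sigma-S^g)$ in Lemma~\ref{lem:explicit-good-subsurfaces-case1} (and the small-rank bad subgroups fed into Proposition~\ref{prop:good-and-bad}) presuppose the connected, band-summed $S_0$ -- the complement of either $Y_i$ alone contains the other piece and has large rank. Note also that the connecting band must be chosen with care: routing $\alpha$ around the $y$-handles ensures every loop in $S_0$ is a word not involving the letter $x$, which is precisely the property your (otherwise essentially correct) verification of (3) needs for $\rho_{x,y},\lambda_{x,y},\iota_x$; a band crossing the $x$-band would destroy it. So the missing idea is the band sum and the specific choice of $\alpha$; your treatment of (2) and of the commutation in (3) otherwise matches the paper's argument.
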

\begin{proof}
  \begin{figure}
    \includegraphics[width=0.7\textwidth]{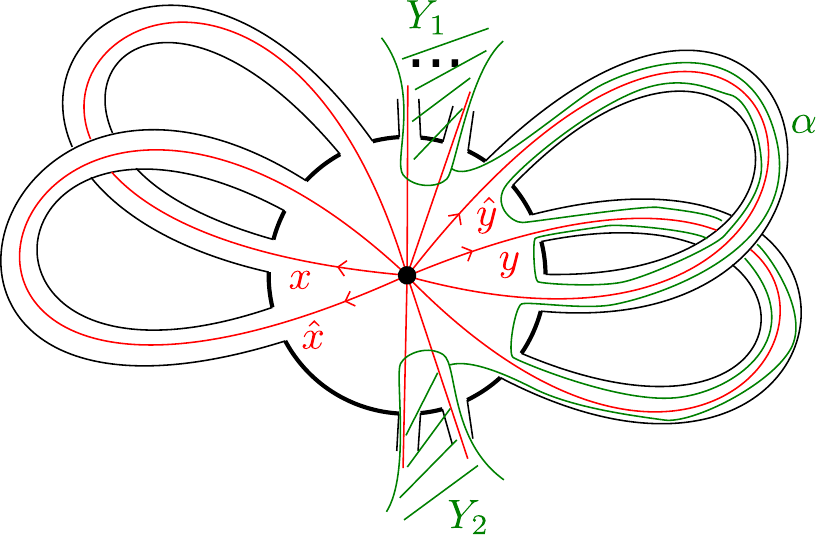}
			
    \caption{Standard geometric bases, and ``obvious'' good subsurfaces}
    \label{fig:finding-good-subsurface-ab}
  \end{figure}

  Let $Y_1, Y_2$ be the subsurfaces filled by the elements of the
  given standard geometric basis $\mathcal{B}^0$ which are between
  $x,\hat{x}$ and $y, \hat{y}$ in the cyclic ordering induced by the
  orientation of the surface. We homotope $Y_1, Y_2$ slightly off the
  basepoint so that they are disjoint from $x,\hat{x},y,\hat{y}$.
  Depending on the configuration, one of the $Y_i$ may be empty. If
  both $Y_i$ are nonempty, choose an arc $\alpha$ connecting $Y_1$ to
  $Y_2$ disjoint from all $a_i, \hat{a}_i$, homotopic to the product
  $\hat{y}y\hat{y}^{-1}y^{-1}$.  Compare
  Figure~\ref{fig:finding-good-subsurface-ab} for this setup.  We let
  $S_0$ be the subsurface obtained as a band sum of $Y_1, Y_2$ along
  $\alpha$, homotoped slightly so the basepoint is outside $S_0$. 

  Now observe that if $F$ is any mapping class supported in $S_0$,
  then $F$ acts trivially on all of $x, \hat{x},y,
  \hat{y}$. Furthermore, by construction, any loop in $S_0$ can be
  written in the basis $\mathcal{B}$ without $x$.  Together, these
  imply that $F_*$ commutes with $\rho_{x,y}, \lambda_{x,y}$ and
  $\iota_x$.
	
  \smallskip The argument in the nonorientable case is very similar,
  with the three letters $n, \hat{a}_1, a_1$ playing the role of
  $x,\hat{x}$.
\end{proof}
	
\begin{lem}\label{lem:additional-twist}
  In the setting of Lemma~\ref{lem:obvious-good-subsurface}, denote by 
  $x,\hat{x},y,\hat{y}$ the active letters, and let $S_0$ be the subsurface 
  guaranteed by that lemma.
		
  Then there are two-sided curves $\delta^+, \delta^-$ with the
  following properties:
  \begin{enumerate}
  \item $\delta^+$ (or $\delta^-$) intersect $x$ in a single point on
    $x^+$ (or $x^-$),
  \item the curves $\delta^+, \delta^-$ do not cross the band
    corresponding to $x$.
  \item $\delta^+, \delta^-$ are disjoint from $y$.
  \item $\delta^+, \delta^-$ intersect $S_0$ essentially.
  \end{enumerate}
  
  If one of $y, \hat{y}$ is either one-sided or linked
  with the one-sided letter, then there is additionally a curve
  $\delta^0$ intersecting $\partial S_0$ essentially, and which
  satisfies (2) and (3), but is disjoint from $x$.
\end{lem}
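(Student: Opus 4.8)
The plan is to exhibit the curves explicitly in the band model of $\Sigma$ and read off the required properties from the picture. Write $\Sigma = D\cup\bigcup_i B_i$, where $D$ is a disk containing the basepoint $p$ and the $B_i$ are bands attached to $\partial D$, one for each basis loop, so that each basis loop is the union of a chord of $D$ through $p$ with the core of its band, and the attaching arcs of the bands occur along $\partial D$ in the prescribed cyclic order (with $B_n$ a twisted band glued at an initial segment of the $a_1^+$-arc in the nonorientable case). By Lemma~\ref{lem:obvious-good-subsurface} the bands of the active letters $x,\hat x,y,\hat y$ (and $B_n$ when $n$ is active) are disjoint from $S_0$, every other band is isotopic into $S_0$, and $S_0$ is a band sum of the inactive bands pushed slightly off $p$. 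Since $n$ is large there are many inactive bands, so we may fix a non-separating, non-peripheral, two-sided simple closed curve $z$ contained in $S_0$.

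First I would build $\delta^+$ and $\delta^-$. Let $\xi$ be a parallel copy of the intersecting partner $\hat x$ of $x$ — when $x=n$, take $\xi$ parallel to $a_1$ instead — isotoped so that its single intersection point with $x$ lies on the $x^+$-half of the chord of $x$ in $D$ and so that $\xi$ does not meet $B_x$; then let $\delta^+$ be the simple closed curve obtained by connect-summing $\xi$ with $z$ along an embedded arc lying in $D$, on the side of the chord of $x$ that contains $S_0$. Define $\delta^-$ identically, with the intersection point of $\xi$ with $x$ pushed instead onto the $x^-$-half. Choosing the connect-sum coherently with co-orientations makes $\delta^\pm$ two-sided. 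Property (1) holds because $\hat x$ meets $x$ exactly once (by definition of an intersecting pair) and $z$ and the connecting arc are disjoint from $x$; (2) holds because none of $\xi$, $z$, the connecting arc enters $B_x$; (3) holds because $z$ and the arc avoid the active letter $y$, and $\hat x$ avoids $y$ as well — indeed $\{x,\hat x\}$ is an intersecting pair, $\{x,y\}$ is not, and inspection of the list of intersecting pairs then shows $\{\hat x,y\}$ is not one either (in the nonorientable case one uses that the move is defined only when $\{x,y\}$ is not an intersecting pair, which rules out $y\in\{n,a_1,\hat a_1\}$ when $x$ is linked with $n$); finally (4) holds because $\delta^\pm\cap S_0$ is an essential, non-peripheral arc (isotopic to $z$ cut at a point) while $\delta^\pm\cap(\Sigma\setminus S_0)$ is an arc parallel to $\hat x$, hence essential in the complement, so $\delta^\pm$ is isotopic neither into $S_0$ nor into its complement and meets $\partial S_0$ in two essential points.

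For $\delta^0$, note that the hypothesis forces $x\notin\{n,a_1,\hat a_1\}$, so the one-sided band $B_n$ lies inside $S_0$, while the $y$-side of the configuration carries the one-sided letter or a letter linked with it. I would then take $\delta^0$ to be the connect-sum of $z$ with a parallel copy of the loop $y$ along an embedded arc in $D$ disjoint from $x$; this curve may be one-sided (exactly when $y=n$), which is permitted. It is disjoint from $y$ (built from a parallel copy of $y$, from $z$, and an arc, all disjoint from $y$), disjoint from $x$, does not enter $B_x$, and meets $\partial S_0$ essentially because its $S_0$-part is isotopic to $z$ and its complementary part is isotopic to the core of $B_y$, which is essential and non-peripheral in $\Sigma\setminus S_0$ — the room to route the complementary arc essentially while staying disjoint from both $x$ and $y$ is exactly what the presence of the one-sided band in $S_0$ provides.

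The main obstacle is the planar bookkeeping required to secure all four properties simultaneously: the delicate point is keeping $\delta^\pm$ (resp. $\delta^0$) in essential position with respect to $\partial S_0$ — neither isotopic into $S_0$ nor into its complement — while at the same time keeping it disjoint from $y$, crossing $x$ exactly once on the prescribed side, and avoiding $B_x$. The position of $x$ and $y$ in the cyclic order, together with the orientable/nonorientable dichotomy (and, within the latter, whether $x$ or $y$ equals $n$ or is linked with $n$), yields a finite list of configurations, each treated by the same recipe but requiring its own picture; the nonorientable linked configurations, where $\delta^0$ is needed, are the most delicate, which is why the extra hypothesis is imposed there.
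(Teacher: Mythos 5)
Your construction of $\delta^+$ and $\delta^-$ is correct and is essentially the paper's construction: the paper takes an embedded arc in $\Sigma-S_0$ meeting $x^+$ (or $x^-$) once, disjoint from $y,\hat y$ and avoiding the band of $x$, and concatenates it with a nonseparating arc in $S_0$; your band-sum of a parallel copy of $\hat x$ (or of $a_1$ when $x=n$) with a curve $z\subset S_0$ is exactly such a curve with a concrete choice of the outside arc, and your verification of properties (1)--(4), including the essentiality argument via the single essential crossing with $x$ and the nonseparating arc in $S_0$, is sound.

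There is, however, a genuine gap in your construction of $\delta^0$, in the sub-case where $y$ is the one-sided letter $n$ (which the hypothesis ``one of $y,\hat y$ is one-sided or linked'' explicitly includes). You build $\delta^0$ from ``a parallel copy of the loop $y$'' and assert it is disjoint from $y$; but a one-sided simple closed curve has $\mathbb{Z}/2$ self-intersection number $1$, so every curve isotopic to $n$ meets $n$ in an odd number of points and no disjoint pushoff exists. Your own parenthetical remark that the copy ``may be one-sided (exactly when $y=n$)'' shows you intend a genuine pushoff, which is precisely what fails, so property (3) is violated for your $\delta^0$ in this case. Relatedly, your claim that ``the one-sided band $B_n$ lies inside $S_0$'' when the $y$-side carries the nonorientable letter is false: if $y=n$ then $n$ is an active letter excluded from $S_0$, and if $y\in\{a_1,\hat a_1\}$ then $n$ has essential intersection number $1$ with $y$, so it cannot be homotoped into $S_0$ (which is disjoint from $y$); since you lean on this to justify that the complementary arc of $\delta^0$ is essential, that part of the argument needs to be redone as well. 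A repair in the $y=n$ case is to replace the pushoff of $n$ by the two-sided boundary of a M\"obius-band neighbourhood of $n$ (freely homotopic to $n^2$, disjoint from $n$ and from $x$ since $x$ is unlinked with $n$ here), or to route the outside part of $\delta^0$ through the bands of the linked letters $a_1,\hat a_1$ while avoiding $n$ and $x$; this is what the explicit curves in the paper's appendix figures do, and one must then recheck that the resulting curve still meets $\partial S_0$ essentially.
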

\begin{proof}
  We construct the curves case-by-case, beginning with the orientable
  case.
  \begin{figure}
    \includegraphics[width=0.95\textwidth]{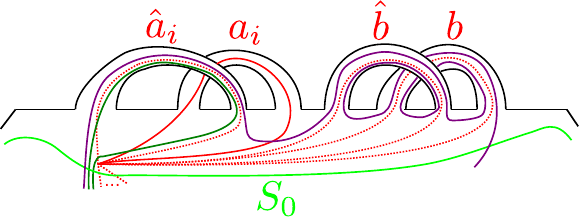}
			
    \caption{Constructing ``extra twists'' in Lemma~\ref{lem:additional-twist}.}
    \label{fig:finding-extratwists-case1}
  \end{figure}
		
  Here, we simply take $\gamma$ to be an embedded arc in $\Sigma-S_0$
  which intersects $x^+$ (or $x^-$) in a single point and is disjoint
  from $y, \hat{y}$, and does not intersect the interior of the band
  corresponding to $x$ (compare
  Figure~\ref{fig:finding-good-subsurface-ab}
  and~\ref{fig:finding-extratwists-case1}). The desired curve is then
  obtained by concatenating $\gamma$ with any nonseparating arc in
  $S_0$.
		
  In the nonorientable case, we do exactly the same, making sure that
  the arc $\gamma$ (and the arc in $S_0$) are two-sided.
\end{proof}

For some of the arguments in the sequel, we will need explicit
descriptions of the curves produced by
Lemma~\ref{lem:additional-twist} and the fundamental groups of the
resulting bad subsurfaces. As this is a somewhat tedious exercise in
constructing and analyzing explicit curves (and the proof follows the
exact same strategy in all cases), we only discuss the orientable case
here, and defer all further cases to
Lemma~\ref{lem:explicit-good-subsurfaces} in
Appendix~\ref{sec:pictures-cases}.
\begin{lem}\label{lem:explicit-good-subsurfaces-case1}
  Fix a standard geometric basis $\mathcal{B}$ of an orientable
  surface $\Sigma=\Sigma_{g,1}$, and use it to identify
  $\pi_1(\Sigma)$ with $F_{2g}$.  We denote by $\partial$ the word
  representing the boundary of the surface, i.e.
  \[ \partial = \prod_{i=1}^g [\hat{a}_i, a^{-1}_i], \]
  and by $\partial_w$ the cyclic permutation of $\partial$ starting with the element $w$. 
  
  Let $x,y$ be two elements of $\mathcal{B}$ which are not linked.
  \begin{itemize}
  \item If $x=a_i$, then the bad subsurface for the right
    multiplication move $\rho_{x,y}$ has fundamental group
    \[ \pi_1(\Sigma-S^g) = \langle y, \hat{y},
    x^{-1}\hat{x}x,\partial_{\hat{a}_{i+1}}\rangle. \] The bad
    subsurface for the left multiplication move $\lambda_{x,y}$ has
    fundamental group
    \[ \pi_1(\Sigma-S^g) = \langle y, \hat{y},
    \hat{x},\partial_{a_i}\rangle \]
  
  \item If $x=\hat{a}_i$, then the bad subsurface for the right
    multiplication move $\rho_{x,y}$ has fundamental group
    \[ \pi_1(\Sigma-S^g) = \langle y, \hat{y},
    \hat{x},\partial_{a^{-1}_i}\rangle \] The bad subsurface for the
    left multiplication move $\lambda_{x,y}$ has fundamental group
    \[ \pi_1(\Sigma-S^g) = \langle y, \hat{y},
    x\hat{x}x^{-1},\partial_{\hat{a}_i}\rangle \]
  \end{itemize}
  In both cases, every loop corresponding to a basis letter except $x,
  \hat{x}, y, \hat{y}$ is freely homotopic into the good subsurface.
\end{lem}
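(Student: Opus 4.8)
The plan is to compute the fundamental group of the bad subsurface directly from the explicit curves constructed in Lemma~\ref{lem:obvious-good-subsurface} and Lemma~\ref{lem:additional-twist}, using the standard geometric basis $\mathcal{B}$ as a bookkeeping device. First I would recall the picture: the "obvious" good subsurface $S_0$ from Lemma~\ref{lem:obvious-good-subsurface} is a band sum of the two blocks $Y_1, Y_2$ of inactive letters lying between $x,\hat{x}$ and $y,\hat{y}$ in the cyclic order at $p$, pushed off the basepoint. Its complement $\Sigma - S_0$ is a subsurface carrying the active letters $x,\hat{x},y,\hat{y}$ together with the boundary $\partial$ of $\Sigma$. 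Lemma~\ref{lem:additional-twist} enlarges $S_0$ to the actual good subsurface $S^g$ by attaching (a neighborhood of) the curves $\delta^\pm$, which cross $x^{\pm}$ once; attaching such a band to $S_0$ correspondingly shrinks the complement. The upshot is that $\pi_1(\Sigma - S^g)$ is generated by: the letters $y,\hat{y}$ (which lie entirely in the complement and are untouched), the boundary word $\partial$ (read off in an appropriate cyclic order so that the reading starts at the correct point on $\partial S_0$), and one remaining conjugate of $\hat{x}$ — the conjugate being determined by which side of $x$ the enlarging band $\delta^+$ or $\delta^-$ was attached to, and by the direction of the Nielsen move.

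The core computation is then a careful reading of which conjugate of $\hat x$ and which cyclic permutation $\partial_w$ appear in each of the four cases ($x = a_i$ or $x=\hat{a}_i$; move $\rho_{x,y}$ or $\lambda_{x,y}$). I would do this by drawing the arcs in the standard picture (Figure~\ref{fig:basis}, \ref{fig:finding-good-subsurface-ab}, \ref{fig:finding-extratwists-case1}) and tracing a based loop around the complementary subsurface. For $x = a_i$ and the right-multiplication move $\rho_{x,y}$ (where $x\mapsto xy$), the relevant band forces the generator $x^{-1}\hat{x}x$ and the boundary reading to begin at $\hat{a}_{i+1}$, i.e.\ $\partial_{\hat{a}_{i+1}} = \prod_{j=i+1}^{g}[\hat{a}_j, a_j^{-1}]\cdot\prod_{j=1}^{i}[\hat{a}_j, a_j^{-1}]$; for the left move $\lambda_{x,y}$ (where $x\mapsto yx$) no conjugation is needed and the reading starts at $a_i$, giving $\hat x$ and $\partial_{a_i}$. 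The two cases with $x = \hat{a}_i$ are mirror images: $\rho$ gives $\hat x$ and $\partial_{a_i^{-1}}$, while $\lambda$ gives $x\hat{x}x^{-1}$ and $\partial_{\hat{a}_i}$. The last sentence of the lemma — that every basis letter except $x,\hat{x},y,\hat{y}$ is freely homotopic into $S^g$ — is already contained in clause (2) of Lemma~\ref{lem:obvious-good-subsurface} (the inactive letters are carried by $S_0 \subset S^g$), so it requires no new argument.

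The main obstacle I expect is not any single deep idea but the sign/orientation bookkeeping: getting the conjugating element and the cyclic permutation of $\partial$ exactly right in all four cases, since an error of one in the starting index of $\partial_w$, or conjugating by $x$ rather than $x^{-1}$, would be easy to make. To control this I would fix once and for all the convention that $\partial = \prod_{i=1}^{g}[\hat{a}_i, a_i^{-1}]$ is read in the cyclic order dictated by condition (3) of the definition of a standard geometric basis, verify the base case $g=1$ (or the first nontrivial $i$) explicitly by hand, and then note that changing $i$ only cyclically rotates the relevant portion of the picture, so the formula propagates. I would also sanity-check each answer by confirming that the claimed generators of $\pi_1(\Sigma - S^g)$, together with $\pi_1(S^g)$ and the amalgamating data, recover $\pi_1(\Sigma) = F_{2g}$ of the right rank, and that in each case $\pi_1(S^g)$ has large enough rank (for the quantitative bounds needed later, cf.\ Proposition~\ref{prop:good-and-bad}) — this is the only place an actual inequality enters, and it follows since the bad subsurface has the bounded presentation above.
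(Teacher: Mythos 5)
Your proposal concentrates almost entirely on reading off $\pi_1(\Sigma-S^g)$ from the pictures, but that is the easy half of the lemma; the real content is that the enlarged subsurface $S^g$ (a regular neighbourhood of $S_0\cup\delta^{\pm}$) is still \emph{good} for the move, i.e.\ that it supports a partial pseudo-Anosov commuting with $\rho_{x,y}$ (resp.\ $\lambda_{x,y}$). This is what the paper's proof actually establishes: using properties (1)--(3) of Lemma~\ref{lem:additional-twist} it verifies letter-by-letter that the Dehn twist $T_{\delta^+}$ commutes with $\rho_{x,y}$ (the key point being $T_{\delta^+}(x)=wx$ with $w$ not involving $x$, so $\rho_{x,y}T_{\delta^+}(x)=wxy=T_{\delta^+}(xy)$, while $T_{\delta^+}$ fixes $y$ and sends every other basis letter to a word without $x$), and then invokes property~(4) to see that $\delta^+$ together with curves in $S_0$ fills the strictly larger subsurface $S^g$, so a suitable product of commuting twists is a pseudo-Anosov supported on $S^g$. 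Your proposal never performs (or even mentions) this commutation check, and it is exactly this check that dictates the asymmetry you treat as mere bookkeeping: $\delta^+$ (hitting $x$ on $x^+$) works for the right move but not the left, and $\delta^-$ for the left move, because $\rho$ appends $y$ on the right of $x$ while $\lambda$ prepends it. Without this step you have not shown that the listed group is the fundamental group of \emph{the} bad subsurface -- only of the complement of some subsurface whose goodness is unverified -- and the enlargement from $S_0$ to $S^g$ is essential downstream, since the later arguments (e.g.\ Proposition~\ref{prop:good-and-bad} and the overlap analysis) need the bad subgroup to have the small, bounded rank that only the enlarged good subsurface provides.

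The remaining parts of your plan are fine as far as they go: the last sentence of the lemma does follow from clause~(2) of Lemma~\ref{lem:obvious-good-subsurface}, and the identification of the conjugate of $\hat x$ and the cyclic permutation $\partial_w$ in each of the four cases is indeed a figure-tracing exercise, which is also all the paper does for that part. But as written, your argument assumes precisely the statement whose proof is the point of the lemma, so it has a genuine gap rather than being a different route to the same result.
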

\begin{proof}
  We suppose that $x = a_i$, and begin with the subsurface $S_0$ from
  Lemma~\ref{lem:obvious-good-subsurface}. Part~(3) of that lemma
  shows that it is indeed good, and part~(2) shows that it has the
  property claimed in the last sentence of the lemma.
  
  We now use curves from Lemma~\ref{lem:additional-twist} to find
  additional Dehn twists which commute with the basic moves. We begin
  with the case of $\phi = \rho_{x,y}$. Here, we use the curve
  $\delta^+$ guaranteed by that lemma (shown in dark green in
  Figure~\ref{fig:finding-extratwists-case1}). The action of the twist
  $T_{\delta^+}$ on $\mathcal{B}$ depends on the type of
  letter. However, the relevant properties for us are the following:
  \begin{enumerate}
  \item $T_{\delta^+}(x) = wx$, where $w$ is a word not involving
    $x$. Namely, by property (1) of Lemma~\ref{lem:additional-twist},
    the twisted curve $T_{\delta^+}(x)$ is obtained by following $x^+$
    until the intersection point, following around $\delta^+$, and
    then continuing along $x$. By property (2), the curve $\delta^+$
    does not cross the band corresponding to $x$, yielding the desired
    property of $w$.
  \item $T_{\delta^+}(y) = y$. This follows since by (3) of
    Lemma~\ref{lem:additional-twist}, $\delta^+$ and $y$ are disjoint.
  \item For any other basis element $z$, the image $T_{\delta^+}(z)$
    is a word in $\mathcal{B}$ not involving $x$. Again, this follows
    from Property~(2), since the curve $\delta^+$ does not cross the
    band corresponding to $x$.
  \end{enumerate}
  These imply that $T_{\delta^+}$ commute with $\rho_{x,y}$:
  \begin{enumerate}
  \item Since $w$ does not involve $x$, we have
    \[ \rho_{x,y}T_{\delta^+}(x) = \rho_{x,y}(wx) = wxy. \]
    Since, by (2) above, the twist fixes $y$, we also have:
    \[ T_{\delta^+}(\rho_{x,y}(x)) = T_{\delta^+}(xy) = wxy \]
  \item Since both $T_{\delta^+}$ and $\rho_{x,y}$ fix $y$, we have
    \[ \rho_{x,y}T_{\delta^+}(y) = y = T_{\delta^+}(\rho_{x,y}(y)) \]
  \item Finally, since for any other basis element $z$, the image
    $T_{\delta^+}(z)$ is a word in $\mathcal{B}$ not involving $x$ by
    (3) above, we have
    \[ \rho_{x,y}T_{\delta^+}(z) = T_{\delta^+}(z) =
    T_{\delta^+}(\rho_{x,y}z). \]
  \end{enumerate}
  Now, let $S^g$ be a regular neighbourhood of $S_0 \cup
  \delta^+$. By property (4) of Lemma~\ref{lem:additional-twist}, this is 
  strictly bigger than $S_0$. 
  Observe that it is filled by $\delta^+$ and curves
  contained in $S_0$. Since we have shown that such twists commute
  with $\rho_{x,y}$, and a suitable product of such twists is a
  pseudo-Anosov map of $S^g$, it is indeed a good subsurface for
  $\rho_{x,y}$. It remains to compute the fundamental group, which can
  be read off from Figure~\ref{fig:finding-extratwists-case1}.

  \smallskip For the basic move $\lambda_{x,y}$ the proof is
  analogous, using that $\delta^-$ intersects $x$ only in $x^-$,
  proving that $T_{\delta^-}(x) = xw'$.
  
  \smallskip The strategy for the case $x=\hat{a}_i$ is analogous.
\end{proof}				

We also need the analog for invert moves.
\begin{lem}\label{lem:other-moves}
  With notation as in the previous lemma, consider a basic invert move
  $\phi=\iota_x$.  The bad subsurface has fundamental group
  \[ \pi_1(\Sigma-S^g) = \langle x, \hat{x},\partial_{\hat{a}_{i+1}}
  \rangle \] if $x=a_i$ or $x=\hat{a}_i$.
\end{lem}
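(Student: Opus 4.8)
The plan is to mimic exactly the proof of Lemma~\ref{lem:explicit-good-subsurfaces-case1}, but with the invert move $\iota_x$ in place of the Nielsen moves $\rho_{x,y}, \lambda_{x,y}$. First I would start from the ``obvious'' good subsurface $S_0$ produced by Lemma~\ref{lem:obvious-good-subsurface}: part~(3) of that lemma already guarantees that any mapping class supported in $S_0$ commutes with $\iota_x$, so $S_0$ is a good subsurface for $\iota_x$, and part~(2) shows every basis loop other than $x, \hat{x}, y, \hat{y}$ is freely homotopic into $S_0$. (Note that for the invert move the roles of the letter $y$ are vestigial — $\iota_x$ only touches $x$ — but we keep $y,\hat y$ among the active letters to make the statement uniform with the Nielsen case, since in the intended application the same surface decomposition is reused.)

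Next I would enlarge $S_0$ using an ``extra twist'' curve. The key point is that for the invert move we do \emph{not} need $\delta^+$ to be disjoint from any particular band, because $\iota_x$ fixes every basis letter except $x$, and sends $x$ to $x^{-1}$. So it suffices to find a two-sided curve $\delta$ disjoint from $x$ (and from $\hat x, y, \hat y$ is not even required — only from $x$), intersecting $\partial S_0$ essentially, obtained by concatenating a suitable arc in $\Sigma - S_0$ missing $x$ with a nonseparating arc in $S_0$; the curve $\delta^0$ branch of Lemma~\ref{lem:additional-twist} (or a straightforward variant of the $\delta^\pm$ construction, dropping the constraint involving $x$) provides it. Since $\delta$ is disjoint from $x$, the twist $T_\delta$ fixes $x$, hence trivially commutes with $\iota_x$; it also fixes or permutes the remaining letters among themselves without introducing $x$, so it commutes with $\iota_x$ there as well. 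Letting $S^g$ be a regular neighbourhood of $S_0 \cup \delta$, it is filled by $\delta$ together with curves in $S_0$, all of whose twists commute with $\iota_x$, and a suitable product of such twists is a pseudo-Anosov of $S^g$; hence $S^g$ is good for $\iota_x$ and strictly larger than $S_0$.

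Finally I would compute the fundamental group of the bad subsurface $\Sigma - S^g$ by reading it off from the explicit picture, exactly as in Lemma~\ref{lem:explicit-good-subsurfaces-case1}: the complement is cut out by $x, \hat{x}$ and a cyclic permutation $\partial_{\hat a_{i+1}}$ of the boundary word, giving $\pi_1(\Sigma - S^g) = \langle x, \hat{x}, \partial_{\hat{a}_{i+1}}\rangle$; the asymmetry in the boundary-word permutation disappears relative to the Nielsen case precisely because $\iota_x$ does not pre- or post-multiply $x$ by anything, so no conjugate $x^{-1}\hat x x$ or $x\hat x x^{-1}$ is forced. The main obstacle — such as it is — is the bookkeeping in drawing the complementary subsurface and verifying that $x, \hat x$ and the appropriate cyclic permutation of $\partial$ really do generate its fundamental group; this is the same tedious-but-routine picture analysis as in the previous lemma, and I would either include the figure or defer it to Appendix~\ref{sec:pictures-cases} alongside the other cases of Lemma~\ref{lem:explicit-good-subsurfaces}.
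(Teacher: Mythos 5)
There is a genuine gap, and it sits exactly where your argument diverges from the paper's. The paper's proof is the degenerate case of Lemma~\ref{lem:explicit-good-subsurfaces-case1}: for an invert move there is no letter $y$ at all, so the only handles that must be left outside the good subsurface are those of $x$ and $\hat{x}$. The good subsurface is just the ``obvious'' one carried by all the remaining basis loops (any mapping class supported there fixes $x$ and sends every other letter to a word not involving $x$, hence commutes with $\iota_x$); no extra-twist curve is needed, and the complement is a neighbourhood of the $x$- and $\hat{x}$-handles together with a boundary-parallel strip, which visibly has fundamental group $\langle x,\hat{x},\partial_{\hat{a}_{i+1}}\rangle$. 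This is also what the later application demands: the step ``either $[x]\notin H_1(E)$ or $[\hat{x}]\notin H_1(E)$'' in the proof of Proposition~\ref{prop:solving-containment} uses that every basis letter other than $x,\hat{x}$ is carried by the good subsurface of $\iota_x$.

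Your construction cannot produce the stated answer. By deliberately keeping $y,\hat{y}$ among the active letters, your $S_0$ omits the $y$- and $\hat{y}$-handles, and a regular neighbourhood of $S_0\cup\delta$ for a single curve $\delta$ still does not contain them. An Euler characteristic count makes this concrete: $\pi_1(\Sigma-S_0)$ has rank $5$, and each arc of $\delta\setminus S_0$ cuts the complement once, lowering the rank by one; with $\delta$ as you describe (one arc outside $S_0$ concatenated with an arc inside $S_0$) the bad subsurface has rank $4$, and its fundamental group still involves the $y$- or $\hat{y}$-handle, so it is not the rank-$3$ group $\langle x,\hat{x},\partial_{\hat{a}_{i+1}}\rangle$ you claim to read off. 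The rank-$3$ statement of the lemma is precisely the statement that for $\iota_x$ the $y$-handles belong to the good subsurface, which contradicts your normalisation ``keep $y,\hat{y}$ active for uniformity''. Two smaller points: the curve $\delta^0$ of Lemma~\ref{lem:additional-twist} you invoke only exists when one of $y,\hat{y}$ is one-sided or linked with the one-sided letter, so it is not available in the orientable case; and for commutation with $\iota_x$ you need that no image $T_\delta(z)$ involves the letter $x$, i.e.\ that $\delta$ misses the whole $x$-band and not merely the core curve --- this does follow after isotopy, but it is the relevant condition and should be stated.
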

\begin{proof}
  Here, only the first part of the proof of
  Lemma~\ref{lem:explicit-good-subsurfaces-case1} is necessary; the
  desired good subsurface is the subsurface $S_0$ constructed in
  Lemma~\ref{lem:obvious-good-subsurface}.
\end{proof}

\subsection{\blas paths}\label{sec:blas}

In this section we begin to discuss the paths we use as the basis for
all of our constructions.  The picture to have in mind is that we
build paths by concatenating arational paths in \emph{different}
copies of $\pml$, joining them at points which fail to be arational in
a very controlled way.

More formally, we say a path $p:[0,1] \to \partial CV_n$ is a \blas
path if there is a finite set $B_p \subset p([0,1])$ so that
\begin{itemize}
\item Every point on $p([0,1])\setminus B_p$ is an arational tree. 

\item If $T \in B_p$, then there is an identification $F_n =
  \pi_1(\Sigma)$ of the free group with a surface with one boundary
  component (where $\Sigma = \Sigma_g$ if $n=2g$ is even, and $\Sigma
  = N_{2g+1}$ otherwise), and the following holds: $T$ is the dual
  tree to the stable foliation of a pseudo-Anosov mapping class
  $\psi_T$ supported on a subsurface $S$.

\item For any $T\in B_p$ there is a neighborhood $\mathcal{N}(T)$ of
  $T$ in $\partial CV_n$, so that $\mathcal{N}(T) \cap
  p([0,1])\setminus T$ has two connected components, $\gamma_1,\,
  \gamma_2$.

  For each $\gamma_i$ there is a path $\xi_i$ so that $\xi_i\cup
  \psi_T \xi_i$ is a path and $\gamma_i=\cup_{k=0}^\infty \psi_T ^k
  \xi_i$.
\end{itemize}

Using Theorem~\ref{thm:ch} and
Theorem~\ref{thm:weak-nonorientable-connectivity}, we show:
\begin{prop}\label{prop:BLAS}
  Let $T_s,T_e$ in $\bout$ be two surface type arational trees, dual
  to uniquely ergodic laminations (or, in the nonorientable case, in
  the set $\mathcal{P}$ from
  Theorem~\ref{thm:weak-nonorientable-connectivity}).

  Then $T_s, T_e$ can be connected by a \blas path. Moreover, if one
  prescribes a chain of adjacent $\pml$s connecting the $\pml$ on
  which $T_s$ lies with the one on which $T_e$ lies, we may assume the
  \blas path travels exactly through that chain of $\pml$s in exactly
  the same order.
	
  In addition, for any finite set $F$ of arationial trees not
  containing $T_s, T_e$, the path may be chosen to be disjoint from
  $F$.
\end{prop}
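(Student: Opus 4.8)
The plan is to build the \blas path as a concatenation of arational paths, one for each link in the prescribed chain of adjacent $\pml$s, joined at the overlap laminations between consecutive copies. Write the chain as $\pml_{\sigma_0}, \pml_{\sigma_1}, \ldots, \pml_{\sigma_m}$ where $T_s \in \pml_{\sigma_0}$, $T_e \in \pml_{\sigma_m}$, and each $\sigma_{j+1} = \phi_j \sigma_j$ for an adjusted move $\phi_j$. By definition of adjacency, $\pml_{\sigma_j}$ and $\pml_{\sigma_{j+1}}$ share a sublocus coming from the mapping classes supported on the ``good subsurface'' $S^g_j$ of $\phi_j$: under the identification $\sigma_j$, the good subsurface carries a partial pseudo-Anosov $\psi_j$, and the dual tree $T_j^+$ to its stable lamination (viewed on $\Sigma$, filling $S^g_j$ and possibly collapsing curves in the bad complement) lies in both copies. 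These $T_j^+$ will be exactly the points of $B_p$. The first step is to check that each $T_j^+$ is an arational tree of the ``surface type, stable-lamination'' flavor required in the third bullet of the definition of a \blas path; this is essentially by construction since $\psi_j$ is a pseudo-Anosov on $S^g_j$.

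The second step is to produce, inside each $\pml_{\sigma_j}$, a path $p_j$ from the incoming overlap point to the outgoing overlap point (for $j=0$ from $T_s$ to the overlap with $\pml_{\sigma_1}$; for $j=m$ from the overlap with $\pml_{\sigma_{m-1}}$ to $T_e$), such that every interior point is uniquely ergodic and arational on $\Sigma$, and avoiding a prescribed finite bad set. This is where Theorem~\ref{thm:ch} (orientable case) and Theorem~\ref{thm:weak-nonorientable-connectivity} (nonorientable case) enter: the overlap points $T_j^+$ and $T_{j-1}^+$ are minimal filling foliations (and in the nonorientable case they lie in $\mathcal{P}$ because they are stable foliations of point-pushing pseudo-Anosovs after an appropriate choice — this is the reason for the asymmetric nonorientable setup), so each lies in the connected locus of those theorems, and the ``complement of a finite set is still path-connected'' clause lets us avoid the image of $F$ under the relevant $\iota_{\sigma_j}$. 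Since $\iota_{\sigma_j}$ is a topological embedding of $\pml(\Sigma)$ into $\bout$, pushing these paths forward gives paths in $\partial CV_n$ with the right arationality and uniqueness properties along the interior.

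The third step is to arrange the local structure at each joining point $T_j^+$ demanded by the fourth bullet: that in a neighborhood $\mathcal{N}(T_j^+)$ the path has two components, each a nested union $\bigcup_{k\geq 0}\psi_{T_j^+}^k \xi_i$ of iterates of an initial arc $\xi_i$. The idea is to choose the pieces of $p_{j-1}$ and $p_j$ near $T_j^+$ to be $\psi_j$-invariant up to truncation: pick short arcs $\xi_1, \xi_2$ in $\pml_{\sigma_{j-1}}$ and $\pml_{\sigma_j}$ ending at $T_j^+$, and replace the terminal portion of each side by $\bigcup_k \psi_j^k \xi_i$. Since $\psi_j$ acts with north-south dynamics fixing $T_j^+$ (by the Levitt--Lustig/Skora circle of results in the section on dynamics of partial pseudo-Anosovs, via Proposition~\ref{NS}), these unions converge to $T_j^+$ and patch up into honest paths; we use that $\psi_j$ commutes with the pseudo-Anosov data used to build the adjacent copies, so it preserves both $\pml_{\sigma_{j-1}}$ and $\pml_{\sigma_j}$, and hence moves arational trees to arational trees, keeping the interior of the modified path arational. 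Concatenating $p_0 * p_1 * \cdots * p_m$ then gives the desired \blas path through the prescribed chain; the finitely many interior points of the concatenation are precisely $\{T_0^+, \ldots, T_{m-1}^+\}$, so $B_p$ is finite as required, and disjointness from $F$ is inherited from the individual pieces (noting $T_s, T_e \notin F$ guarantees no conflict at the very ends).

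The main obstacle I expect is the third step --- making the local model near each $T_j^+$ simultaneously (a) an $\bigcup_k \psi_j^k \xi$ union on both sides, (b) genuinely continuous as a path in $\bout$ at the joining point in the sense needed, and (c) compatible with having avoided the finite set $F$. Concretely: the paths from Theorem~\ref{thm:ch} are not a priori eventually $\psi_j$-invariant, so one has to do surgery near $T_j^+$, and one must verify that the surgered arc still lies in the uniquely ergodic locus and still misses $F$ (only finitely many iterates $\psi_j^k\xi$ could hit $F$, and one can slide $\xi$ to dodge them, but this needs the ``complement of a finite set is connected'' clause applied carefully on the surface level). Everything else --- the chain combinatorics, the arationality of the $T_j^+$, the embedding property of $\iota_\sigma$ --- is comparatively routine given the earlier sections.
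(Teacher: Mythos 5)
Your overall strategy is the same as the paper's: reduce to (or work link-by-link along) the prescribed chain, join consecutive copies of \pml at the dual tree of the stable lamination $\lambda^+$ of a partial pseudo-Anosov $\psi$ on the good subsurface of the adjusted move, use Theorem~\ref{thm:ch} (resp.\ Theorem~\ref{thm:weak-nonorientable-connectivity}) for the uniquely ergodic interior while avoiding $F$ via the finite-set clause, and obtain the required local structure at the joining points from the attracting dynamics of $\psi$ applied to a fundamental-domain arc inside a $\psi$-invariant neighborhood missing $F$.

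There is, however, one justification in your step 2 that would fail as written, and it is exactly the point where the paper organizes the construction differently. The joining laminations $\lambda_j^+$ are \emph{not} minimal filling foliations of $\Sigma$ (they fill only the good subsurface, and their dual trees are precisely the non-arational points $B_p$ of the \blas path, so calling them arational also contradicts the definition); in particular they are not uniquely ergodic points of $\pml(\Sigma)$ and need not lie in $\mathcal{P}$. Hence Theorem~\ref{thm:ch} and Theorem~\ref{thm:weak-nonorientable-connectivity} cannot be invoked to produce a path of uniquely ergodic laminations \emph{ending at} an overlap point, so the path $p_j$ of your step 2 does not exist as stated, and step 3 cannot then be a surgery on its terminal portion. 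The paper avoids this by never asking those theorems to reach $\lambda^+$: it chooses a $\psi$-invariant neighborhood $U$ of $\lambda^+$ disjoint from the duals of $F$, takes a path $\gamma$ of uniquely ergodic laminations from a point $\lambda_0$ to $\psi(\lambda_0)$ (a fundamental domain, which is also what the arcs $\xi_i$ in the definition of a \blas path must be -- arcs from $x$ to $\psi(x)$, not arcs ending at the fixed tree), defines the tail as $\psi^K(\gamma)\ast\psi^{K+1}(\gamma)\ast\cdots$ inside $U$, and only then applies Theorem~\ref{thm:ch} to connect $\lambda_0$ to $\psi^K(\lambda_0)$ avoiding $F$. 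Your ``main obstacle'' paragraph shows you see the needed iterate construction, but you frame it as making a pre-existing path eventually $\psi$-equivariant rather than as the more basic fact that the endpoint lies outside the locus where the connectivity theorems apply; reorganizing the argument in the paper's order closes this gap.
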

\begin{proof}
  It suffices to show the proposition in the case where $T_s, T_e$ lie
  in adjacent copies of $\pml$, say $\pml(\Sigma), \phi\pml(\Sigma)$.
  Furthermore, choose a partial pseudo-Anosov $\psi$ on the good
  subsurface of $\phi$, and observe that its stable lamination
  $\lambda^+$ is not dual to any tree contained in the finite set $F$
  (since $\psi$ is a partial pseudo-Anosov).
	
  Observe that there is a neighbourhood $U$ of $\lambda^+$ in
  $\pml(\Sigma)$ so that $\psi(U) \subset U$.  We may assume further
  that no lamination in $U$ is dual to any tree in $F$.  Pick any
  uniquely ergodic lamination $\lambda_0$, and a path $\gamma$ of
  uniquely ergodic laminations joining $\lambda_0$ to
  $\psi(\lambda_0)$. In particular, for $k\to \infty$, the paths
  $\psi^k(\gamma)$ converge to $\lambda^+$ (as no point on it is a
  lamination disjoint from the boundary of the active subsurface of
  $\psi$). Thus, we can choose an number $K>0$ so that $\psi^k(\gamma)
  \subset U$ for all $k \geq K$. In particular,
  \[ \psi^K(\gamma)\ast\psi^{K+1}(\gamma)\ast\cdots \] can be
  completed to a path $p$ joining $\psi^K(\lambda_0)$ to $\lambda_+$
  contained in $U$. In particular, no point on $p$ is dual to a tree
  in $F$.
	 
  Now, by Theorem~\ref{thm:ch} or
  Theorem~\ref{thm:weak-nonorientable-connectivity}, there is a path
  joining $\lambda_0$ to $\psi^K(\lambda_0)$, and so that no point on
  it is dual to a tree in $F$. The concatenation of these two paths is
  the desired path.
\end{proof}

\section{Avoiding problems}\label{sec:avoid}
Recall from Section~\ref{3.1} that $K_E$ is the set of trees in
$\partial CV_n$ where a free factor $E$ does not act freely and
simplicially.  The main point of this section is to prove the
following local result:
\begin{thm}\label{thm:avoid} 
  Suppose that the rank of the free group $F_n$ is at least
  $\rank$. Let $E \in \F$ be any proper free factor. Assume that
\begin{itemize}
\item $\pml_{\sigma_1}$, $\pml_{\sigma_2}$ are adjacent copies of
  $\pml$ (i.e. differ by applying an adjusted move $\phi$),
\item $\lambda_1\in \pml_{\sigma_1}$, $\lambda_2\in \pml_{\sigma_2}$
  are uniquely ergodic (respectively, in the set $\mathcal{P}$ from
  Theorem~\ref{thm:weak-nonorientable-connectivity}).
\item $\psi$ is a partial pseudo-Anosov mapping class on $\Sigma$
  supported on the good subsurface $S\subset\Sigma_1$ of the adjusted
  move $\phi$ as in Section~\ref{sec:moves}.
\end{itemize}
Then there exists a \blas path $q:[0,1] \to \partial CV_N$ joining the
dual trees $T_1, T_2$ of $\lambda_1, \lambda_2$, so that 
\begin{enumerate}
\item \label{C:avoid} There is a number $m>0$ so that for all $k=mr \geq 0$ we have $\psi^k(q[0,1])\cap K_E=\emptyset$.

\item\label{C:contract} $\psi^k(q[0,1])$ converges to the stable
  lamination of $\psi$ as $k$ goes to infinity.
\end{enumerate}
Also observe that $\psi^k(q[0,1])$ still connect $\pml_{\sigma_1}$,
$\pml_{\sigma_2}$, since $\psi$ commutes with the adjusted move.
\end{thm}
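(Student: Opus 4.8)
The plan is to prove Theorem~\ref{thm:avoid} by combining the \blas path produced by Proposition~\ref{prop:BLAS} with a ``detour'' construction around the stable lamination $\lambda^+$ of $\psi$, then iterating $\psi$ to push the detour into $K_E^c$ while exploiting the contraction properties of $\psi$ on $\overline{CV_n}$. The underlying point is this: the only reason $q$ could fail to avoid $K_E$ is that one of its finitely many non-arational ``corner'' points $T\in B_q$ (each of which is the dual tree of the stable foliation of a partial pseudo-Anosov supported on some subsurface) lies in $K_E$. By Lemma~\ref{lem:leaving the K with pAs}, $T\notin K_E$ precisely when the support subsurface's fundamental group is not conjugate into $E$ and no nontrivial conjugacy class of $E$ is an immersed curve in the complement of that stable lamination. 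So the strategy is to arrange, after replacing $E$ by a large power $\psi^{kN}E$, that every corner point satisfies these two conditions.

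First I would apply Proposition~\ref{prop:BLAS} to the two surface-type arational trees $T_1, T_2$ (using the chain of adjacent $\pml$s that is just $\pml_{\sigma_1}, \pml_{\sigma_2}$) to get a \blas path $q_0$ through exactly those two copies of $\pml$, avoiding the finite set of trees in $F$ one wants to avoid. Its unique corner point $T$ is the dual tree of the stable foliation of the chosen partial pseudo-Anosov $\psi$ on the good subsurface $S$. Now I would invoke Proposition~\ref{prop:good-and-bad} with this $\psi$ (this is exactly why we need $n\geq \rank$ and why the bad subgroup $\pi_1(\Sigma-S^g)$ has small rank), taking $E$ to be the given free factor and $B'$ to be the ``bad subgroup'' $\pi_1(\Sigma-S^g)$ (which does not contain $\pi_1(S^g)$ up to conjugacy since $S^g$ is a proper subsurface). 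This gives either (1) $\pi_1(S^g)\subset E$ — but in that case $E$ is a containment problem and we must choose a \emph{different} good subsurface, or equivalently observe that $E$ cannot contain two ``unrelated'' good subsurfaces, so by choosing $\psi$ on an appropriate good subsurface we can rule this out — or (2) there is $k>0$ so that for all large $N$, the only conjugacy classes in $\psi^{kN}E$ that lie in $\pi_1(\Sigma-S^g)$ are contained in $\pi_1(\Sigma-S^g)$ (tautologically), but more importantly the overlap problem is controlled: replacing $E$ by $\psi^{kN}E$ removes any curve of $E$ that was immersed in the complement of $\Lambda^+$. Combining with Lemma~\ref{lem:leaving the K with pAs}, for large $N$ the action of $\psi^{kN}E$ on $T$ is free and discrete, i.e. $T\notin K_{\psi^{kN}E}$; equivalently, applying $\psi^{-kN}$, the point $\psi^{kN}(T)=T$ (since $T$ is $\psi$-fixed projectively) — more precisely we track that $\psi^{k}(q_0)$ has its corner at $T$ still, and $q_0$ itself avoids $K_E$ after we replace the path by its image under a suitable power, using the Corollary after Proposition~\ref{NS} that a neighbourhood of $T^+$ lies outside $K_E$ once $E$ acts freely discretely on $T^+$.

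The second claim, that $\psi^k(q[0,1])$ converges to the stable lamination of $\psi$, I would get from Proposition~\ref{NS}: the arational (hence non-corner) part of $q$ consists of trees $T'$ with $\langle T',\mu^+\rangle\neq 0$ — indeed if $\langle T',\mu^+\rangle=0$ then by the Proposition preceding Remark after Proposition~\ref{NS1} (or the Remark itself, valid for all trees) $T'$ would equal $T^-$, which is not arational-on-all-free-factors in the required sense / not on our path — so the whole path $q$ lies in a compact set $K$ with $\langle T',\mu^+\rangle\neq0$ for all $T'\in K$ \emph{except} the single corner $T=T^+$ itself, which is already the stable lamination. Since $\psi$ fixes $T^+$ and attracts the rest, $\psi^k(q)$ converges to $T^+$ as $k\to\infty$, giving~\eqref{C:contract}; and then $K_E$ being closed and $T^+\notin K_E$, for all large $k$ the set $\psi^k(q[0,1])$ avoids $K_E$, and since these form a nested family under further powers of $\psi^m$ (with $m$ a multiple of $k$), we get~\eqref{C:avoid} with that $m$. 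Finally, that $\psi^k(q)$ is still a \blas path joining $T_1,T_2$ through $\pml_{\sigma_1},\pml_{\sigma_2}$ follows because $\psi$ commutes with the adjusted move $\phi$ (definition of good subsurface), so it preserves each copy of $\pml$ setwise, and $\psi$ maps \blas paths to \blas paths (the corner structure with the $\xi_i$ and the nested unions $\cup_k \psi^k\xi_i$ is manifestly $\psi$-equivariant).

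The main obstacle I expect is the overlap/containment dichotomy — making Proposition~\ref{prop:good-and-bad} actually applicable requires that the good subsurface be large (the bad subgroup has rank $\leq 5$) while the adjusted move genuinely commutes with a pseudo-Anosov on it; this is exactly where the explicit computations of Lemmas~\ref{lem:explicit-good-subsurfaces-case1} and~\ref{lem:other-moves} (and their appendix analogues) are needed, and where the bound $n\geq\rank$ enters. The delicate bookkeeping is that we must replace $E$ by $\psi^{kN}E$ \emph{uniformly over all corners of the final path simultaneously}; since Proposition~\ref{prop:BLAS} can be arranged to have a single corner (adjacent $\pml$s), this reduces to one application, but one must still check that the number $m$ in~\eqref{C:avoid} can be taken so that $\psi^{mr}(q)$ avoids $K_E$ for the \emph{fixed} original $E$ rather than the shifted one — which is just the observation that $\psi^{mr}(q)\cap K_E=\emptyset \iff q\cap \psi^{-mr}K_E = q\cap K_{\psi^{-mr}E}=\emptyset$, and the latter holds for all large $r$ by the free-discrete-neighbourhood argument above applied to $\psi^{-mr}E$.
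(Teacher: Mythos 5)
There is a genuine gap, and it sits exactly where the paper's main work lies. Your path from Proposition~\ref{prop:BLAS}, routed through the single adjacency $\pml_{\sigma_1},\pml_{\sigma_2}$, has its one corner point at the tree $T^+$ dual to the stable lamination $\Lambda^+$ of the given $\psi$. Whether $T^+\in K_E$ is decided by Lemma~\ref{lem:leaving the K with pAs}: it lies in $K_E$ exactly when $E$ is a containment problem ($\pi_1(S^g)$ conjugate into $E$) or an overlap problem (some nontrivial class of $E$ represented by an immersed curve in $\Sigma\smallsetminus\Lambda^+$) for $\phi$ with this choice of good subsurface. Your mechanism for fixing this -- replacing $E$ by $\psi^{kN}E$, i.e.\ applying a large power of $\psi$ to the path -- cannot work: $\psi$ fixes $T^+$ projectively, so the corner of $\psi^k(q_0)$ is still $T^+$; and both problems are invariant under this replacement, since $\psi$ is supported on $S^g$, hence preserves $\pi_1(S^g)$ and acts trivially up to conjugacy on $\pi_1(\Sigma-S^g)$ and on every curve disjoint from $\Lambda^+$. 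Thus if $w\in E$ is carried by the complement of $\Lambda^+$, then $\psi^{kN}(w)$ is conjugate to $w$ and still witnesses an overlap problem for $\psi^{kN}E$; similarly a containment problem persists. Your invocation of Proposition~\ref{prop:good-and-bad} with $B'=\pi_1(\Sigma-S^g)$ yields only the vacuous statement that classes in $\psi^{kN}E\cap\pi_1(\Sigma-S^g)$ lie in $\pi_1(\Sigma-S^g)$, so it gives no control. Note also that the theorem fixes $\psi$ (and hence the good subsurface), so ``choosing a different good subsurface'' is not available, and conclusion~(1) includes $k=0$: the path $q$ itself must miss $K_E$, so the ``for all large $k$'' fallback at the end of your argument does not meet the statement.

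The paper resolves precisely this obstruction by a detour: using Propositions~\ref{prop:solving-containment} and~\ref{prop:solving-overlap} it rewrites the adjusted move as a relation $\phi=\phi_1\cdots\phi_l$ in $\mathrm{Out}(F_n)$ such that $E$ is neither a containment nor an overlap problem at any stage, and then builds the \blas path (via Proposition~\ref{prop:BLAS}) through the longer chain of copies of $\pml$ determined by that relation; each corner is now the stable tree of a \emph{different} partial pseudo-Anosov, to which Lemma~\ref{lem:leaving the K with pAs} applies affirmatively, so $q$ itself avoids $K_E$. Property~(1) for the powers $\psi^{mr}$ then comes from the conjugated relations $\phi=\psi^{-k}\phi_1\cdots\phi_l\psi^{k}$, which by the last clause of Proposition~\ref{prop:solving-overlap} retain the no-problem property, and Property~(2) from Proposition~\ref{NS} after arranging that the path avoids the unstable lamination of $\psi$. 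Your proposal omits this relation/detour construction entirely, which is the heart of the proof (and of Section~\ref{sec:avoid}); the remaining parts of your argument (convergence to $\Lambda^+$, $\psi$ preserving the \blas structure and the two prescribed copies of $\pml$) are fine but do not substitute for it.
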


We begin by describing how to construct a relation in
$\mathrm{Out}(F_n)$ (which will serve as a ``combinatorial skeleton''
for a \blas path), so that overlap and containment problems of $A$ can
be avoided at each step. It is important for the strategy that
containment problems can be solved first.

Before we begin in earnest, we want to briefly discuss the setup for
the rest of this section.  We begin by fixing once and for all an
identification $\pi_1(\Sigma) \simeq F_n$, a corresponding ``basepoint
copy'' $\pml(\Sigma)$, and a standard geometric basis $\mathcal{B}$
(as in Section \ref{sgb}). In building \blas paths, we will always use
$\mathrm{Out}(F_n)$ to move the current copy of $\pml$ to this
basepoint copy, and work there.

Suppose we have a relation
\[ \phi = \phi_1\circ\cdots\circ\phi_l \] in
$\mathrm{Out}(F_n)$. Associated to this we have a sequence of
consecutively adjacent copies of $\pml$
\[ \pml(\Sigma), \phi_1\pml(\Sigma), \phi_1\phi_2\pml(\Sigma), \ldots,
\phi_1\circ\cdots\circ\phi_l\pml(\Sigma) = \phi\pml(\Sigma). \] The
$i$--th adjacency of this sequence, i.e. between
$\phi_1\circ\cdots\circ\phi_{i-1}\pml(\Sigma)$ and
$\phi_1\circ\cdots\circ\phi_l\pml(\Sigma)$ is the image under
$\phi_1\circ\cdots\circ\phi_{i-1}$ of the adjacency given by the
adjusted move $\phi_i$. This motivates the following
\begin{defin}
  Let \[ \phi = \phi_1\circ\cdots\circ\phi_l \] be a relation in
  $\mathrm{Out}(F_n)$. We say that \emph{$E$ is a overlap or
    containment problem at the $i$--th step of the relation}, if
  $(\phi_1\circ\cdots\circ\phi_{i-1})^{-1}(E)$ is an overlap or
  containment problem for the adjusted move $\phi_i$.
\end{defin}
Our strategy will be to replace adjusted moves $\phi$ by such
relations, so that a given factor $E$ is not an overlap or containment
problem at any stage of the relation. Recall that later, such
relations will guide the construction of \blas paths in which $E$ will
no longer be an obstruction to arationality at any point.

The details of this approach are involved and so we now state the two
main ingredients, eliminating containment problems (Proposition
\ref{prop:solving-containment}) and eliminating overlap problems
(Proposition \ref{prop:solving-overlap}), and prove Theorem
\ref{thm:avoid} conditional on these results. (We will prove
Proposition \ref{prop:solving-containment} in the next subsection,
prove Proposition \ref{prop:solving-overlap} in the orientable case in
the following subsection and prove Proposition
\ref{prop:solving-overlap} in the non-orientable case in Appendix
\ref{sec:check}.)

\begin{prop}\label{prop:solving-containment}
  Suppose that $E < F_{n}$ is a proper free factor, and $\phi$ is an
  adjusted move. If $E$ is a containment problem for $\phi$, then
  there is a relation
  \[ \phi = \phi_1\cdots\phi_l\] with the property that $E$ is not a
  containment problem at any stage of the relation. 
\end{prop}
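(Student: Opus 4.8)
The plan is to first put $\phi$ and $E$ into a normal form and then to rewrite $\phi$ as a product of basic moves, using Steinberg-type relations routed through a few ``scratch'' handles. For the normal form, note that an adjusted move is a mapping-class conjugate $m\phi_0m^{-1}$ of a basic move $\phi_0$, and that good and bad subsurfaces, active letters, and the property of being a containment problem are all equivariant under $m$; hence it suffices to treat the case that $\phi$ itself is a basic move, conjugating the resulting relation back by $m$ afterwards. We may also enlarge $E$ to a corank-one free factor $\hat E\supseteq E$ and write $F_n=\hat E*\langle c\rangle$: if $\hat E$ is not a containment problem for $\phi$ there is nothing to prove, and since $(\phi_1\circ\cdots\circ\phi_{i-1})^{-1}(E)\subseteq(\phi_1\circ\cdots\circ\phi_{i-1})^{-1}(\hat E)$ at every stage of any relation, a relation along which $\hat E$ is never a containment problem also works for $E$. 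So we assume $\hat E$ is a containment problem for the basic move $\phi$, with good subsurface $S^g$.

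Next I would locate the obstruction. From the explicit descriptions of the good and bad subsurfaces (Lemma~\ref{lem:explicit-good-subsurfaces-case1}, Lemma~\ref{lem:other-moves}, and the analogous computations in the appendix), $\pi_1(S^g)$ has corank at most $5$ and carries the conjugacy class of every basis letter of $\mathcal B$ that is inactive for $\phi$. Since $\hat E$ contains $\pi_1(S^g)$ up to conjugacy, it carries all of these inactive letters; in particular no inactive basis letter is conjugate to $c$, so the failure of $\hat E$ to be all of $F_n$ is concentrated on the (at most two) handles of the active region of $\phi$. Because $n\ge\rank$, the complement of this active region contains many further handles, which we use as scratch space. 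The heart of the argument is then to express $\phi$ as a product $\phi=\phi_1\cdots\phi_l$ of (inverses of) basic moves by repeatedly applying commutator identities of Steinberg type --- for instance $\rho_{x,y}=\lambda_{z,y}\,\rho_{x,z}\,\lambda_{z,y}^{-1}\,\rho_{x,z}^{-1}$ for a scratch letter $z$, together with the corresponding identities for the $\lambda$- and $\iota$-moves --- possibly interleaved with conjugations by Dehn twists supported on scratch handles. Writing $g_i=\phi_1\circ\cdots\circ\phi_{i-1}$, one chooses the scratch letters and the order of the commutator factors so that at every stage $i$ one can exhibit a conjugacy class realised by a curve in the good subsurface $S^g_i$ of $\phi_i$ which is \emph{not} carried by $g_i^{-1}(\hat E)$; such a witness is produced by following the complementary direction $c$ through the moves applied so far and checking, against the explicit generating sets, that the resulting class lies in the span of the inactive letters of $\phi_i$. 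This gives $\pi_1(S^g_i)\not\subseteq g_i^{-1}(\hat E)$ up to conjugacy, i.e.\ that $\hat E$ (hence $E$) is not a containment problem at step $i$. Conjugations by twists on scratch handles are supported away from the active region, so they leave $c$ unchanged and introduce no new containment problems.

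The step I expect to be the main obstacle is precisely this last piece of bookkeeping: one must guarantee that at \emph{every} intermediate stage the moving complementary direction of $\hat E$ lands inside the moving good subsurface, and verifying this forces working with the explicit --- and case-dependent --- generating sets of the bad subsurfaces. This is why the argument has to be organised move-type by move-type, with the orientable case handled directly and the nonorientable case deferred to the appendix.
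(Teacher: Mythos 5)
Your outer reductions are fine: passing from adjusted to basic moves by conjugating the relation is exactly what the paper does, and enlarging $E$ to a corank-one factor $\hat{E}$ is harmless (non-containment in $\hat{E}$ at each stage implies non-containment in $E$; note, though, that your aside ``if $\hat{E}$ is not a containment problem for $\phi$ there is nothing to prove'' is vacuous, since $E\subseteq\hat{E}$ being a containment problem forces $\hat{E}$ to be one as well). The problem is that everything after this is a plan rather than a proof. The entire content of the proposition is the production of a \emph{specific} relation together with a \emph{verifiable certificate} that at every stage the moving good subsurface is not conjugated into $E$; you write that ``one chooses the scratch letters and the order of the commutator factors so that at every stage $i$ one can exhibit a conjugacy class \ldots not carried by $g_i^{-1}(\hat{E})$'' and then acknowledge that this bookkeeping is the main obstacle --- but that bookkeeping \emph{is} the proposition. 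The Steinberg identity $\rho_{x,y}=\lambda_{z,y}\rho_{x,z}\lambda_{z,y}^{-1}\rho_{x,z}^{-1}$ by itself buys nothing: the scratch letter $z$ is inactive for $\phi$, hence automatically carried by $E$ (Lemma~\ref{lem:explicit-good-subsurfaces}), and each factor's good subsurface now contains some of the active letters $x,\hat{x},y,\hat{y}$ together with an extra twist curve, so whether it is conjugated into $E$ is exactly the original question in a new guise, and nothing in your setup decides it. Invert moves are likewise not treated.

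More seriously, your proposed detection mechanism --- ``following the complementary direction $c$ through the moves'' --- is not a workable non-membership test. The splitting $F_n=\hat{E}*\langle c\rangle$ has no reason to be compatible with the standard geometric basis, membership of a conjugacy class in a free factor is not certified by tracking a complementary generator (one needs Stallings graphs or, as the paper uses, homology), and a Dehn twist supported on ``scratch handles'' has no reason to fix $c$, so your final claim that such conjugations ``introduce no new containment problems'' is unjustified. The paper's proof hinges on a concrete, checkable certificate that your sketch lacks: since $E$ is proper there is a basis letter $z$ with $[z]\notin H_1(E)$, which (because $E$ is a containment problem) must be an active letter; one then builds the auxiliary adjusted move $\theta=T_\delta\lambda_{w,z}T_\delta^{-1}$ from the explicit curve $\delta$ of Lemma~\ref{lem:good-curve-containment} and the relation $\phi=\theta\,\phi\,\lambda_{e,z}D_w\lambda_{e,z}^{-1}\theta^{-1}$, engineered so that at every single stage the good subsurface contains a curve whose homology class demonstrably lies outside $H_1(E)$. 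Until you either carry out your commutator bookkeeping in full, stage by stage and move-type by move-type, or replace the tracking of $c$ by an effective invariant such as this homological one, the argument has a genuine gap precisely where the difficulty lies.
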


\begin{prop}\label{prop:solving-overlap}
  Suppose that $\phi$ is an adjusted move, and $E$ is a free factor
  which is not a containment problem for $\phi$. Then there is a
  relation
  \[ \phi = \phi_1\cdots\phi_l, \] where each $\phi_i$ is an adjusted
  move, and so that $E$ is neither a containment nor an overlap
  problem at any stage of the relation.
		
  If $\psi$ is a partial pseudo-Anosov supported on the good
  subsurface of the basic move $\phi$ then there is an number $k>0$ so that for
  any $n\geq 0$ the conjugated relation
  \[ \phi = \psi^{-kn}\phi_1\cdots\phi_l\psi^{kn} \] has the same
  property.
\end{prop}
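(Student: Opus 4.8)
The first assertion is the combinatorial heart of the matter, and the plan is to obtain the relation by rewriting $\phi$ through carefully chosen relations in $\mathrm{Out}(F_n)$. The mechanism is to replace a basic (or adjusted) move by a product of adjusted moves whose bad subsurfaces are conjugates of smaller, or differently placed, subsurfaces --- for instance by routing a Nielsen move through an auxiliary basis letter, or by conjugating by a mapping class --- so that the transported copy of $E$ can be arranged, step by step, to be in ``general position'' with respect to the bad subgroup at each stage. Since $E$ is by hypothesis not a containment problem for $\phi$, one first invokes Proposition~\ref{prop:solving-containment} to pass to a relation at no stage of which $E$ is a containment problem, and then takes care that the subsequent overlap-removal does not reintroduce one; this is why containment must be treated first. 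The key inputs for removing overlaps are the explicit descriptions of bad subsurfaces (Lemma~\ref{lem:explicit-good-subsurfaces-case1} and its non-orientable analogues) together with the fact that every bad subgroup that arises has rank at most $5$: since $E$ is a proper free factor, the hypothesis $n\ge\rank$ then leaves enough room to push the transported $E$ off of each bad subgroup in turn. \textbf{Carrying out this finitary construction --- verifying that at every stage no nontrivial conjugacy class of the transported $E$ is conjugate into the bad subsurface of that step --- is the main obstacle and the technical heart of the section}; it is precisely where the large-rank hypothesis is used, and is completed in the following subsections (and in Appendix~\ref{sec:check} in the non-orientable case).

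For the second assertion, recall that $\psi$, being supported on the good subsurface $S^g$ of $\phi$, commutes with $\phi$; hence conjugating each factor of the relation by $\psi^{kn}$ produces a sequence $\psi^{-kn}\phi_i\psi^{kn}$ of adjusted moves (we use here that $\psi^{kn}$ is a mapping class, so conjugation preserves adjusted moves and carries good/bad subsurfaces to their $\psi^{-kn}$--images) whose product telescopes to $\psi^{-kn}\phi\psi^{kn}=\phi$, and this is the conjugated relation $\phi=\psi^{-kn}\phi_1\cdots\phi_l\psi^{kn}$. A direct computation then shows that the ``bad subgroup at stage $i$ transported to the end'', namely $\beta_i:=(\phi_1\cdots\phi_{i-1})\bigl(\pi_1(\mathrm{bad}(\phi_i))\bigr)$, becomes $\psi^{-kn}\beta_i$ for the conjugated relation, and likewise the transported good subgroup $\gamma_i$ becomes $\psi^{-kn}\gamma_i$. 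Unwinding the definitions, ``$E$ is an overlap problem at stage $i$ of the conjugated relation'' is equivalent to ``some nontrivial conjugacy class of $\psi^{kn}(E)$ is conjugate into $\beta_i$'', and ``$E$ is a containment problem at stage $i$'' to ``$\gamma_i$ is conjugate into $\psi^{kn}(E)$''. For $n=0$ both are excluded by the first assertion, so only $n\ge1$ remains.

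This is where Proposition~\ref{prop:good-and-bad} enters. Since $E$ is not a containment problem for $\phi$, the subgroup $\pi_1(S^g)$ is not conjugate into $E$; applying that proposition to $\psi$ with $B'$ any of the finitely many $\beta_i$ (each of rank $\le5$, hence not containing $\pi_1(S^g)$ as $n\ge\rank$) we are in case~(2), which gives a power $k>0$. Inspecting its proof --- replacing $\psi$ by a sufficiently high power --- we may enlarge $k$ so that the conclusion holds for \emph{all} $n\ge1$ and all these $\beta_i$ simultaneously: every conjugacy class lying in both $\psi^{kn}(E)$ and $\beta_i$ is conjugate into $\pi_1(\Sigma-S^g)$. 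But $\psi$, being supported on $S^g$, fixes every conjugacy class conjugate into $\pi_1(\Sigma-S^g)$, so such a class would already be conjugate into both $E$ and $\beta_i$ --- impossible by the first assertion. This disposes of overlap problems. For containment: if $\gamma_i$ were conjugate into $\psi^{kn}(E)$, then either $\pi_1(S^g)$ is conjugate into $\gamma_i$, in which case (using $\psi(S^g)=S^g$) it is conjugate into $E$, contradicting the hypothesis; or it is not, in which case Proposition~\ref{prop:good-and-bad} applies with $B'=\gamma_i$ and forces every conjugacy class of $\gamma_i$ into $\pi_1(\Sigma-S^g)$, which is absurd since $\pi_1(\Sigma-S^g)$ has bounded rank while $\gamma_i$ has rank of order $n$. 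Hence no overlap or containment problem occurs at any stage of the conjugated relation for any $n\ge0$, proving the second assertion. As with the first, the genuine difficulty is the explicit construction above; granting it, the second assertion is bookkeeping together with Proposition~\ref{prop:good-and-bad}.
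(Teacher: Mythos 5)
Your treatment of the second assertion is essentially correct and matches the paper's route: you re-derive, via Proposition~\ref{prop:good-and-bad} applied to the transported bad subgroups $\beta_i$ and good subgroups $\gamma_i$ (plus the observation that $\psi$ fixes conjugacy classes in $\pi_1(\Sigma-S^g)$ and the homological rank argument), exactly what the paper packages as Corollary~\ref{cor:making-overlap-cyclic} and Lemma~\ref{lem:removing-inner-containment}, and the passage from ``for all large $N$'' to ``for all $n\geq 1$'' by enlarging $k$ uniformly over the finitely many stages is fine.

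The problem is the first assertion, which is the actual content of the proposition, and which you do not prove: you describe a strategy in general terms and then defer it (``is completed in the following subsections''), with the only quantitative justification being that bad subgroups have rank at most $5$ while $n\geq\rank$, so one can ``push the transported $E$ off of each bad subgroup.'' That is not an argument: $E$ is an arbitrary proper free factor and can perfectly well contain conjugacy classes lying in the bad subgroup of an intermediate move, and no amount of room in the ambient rank by itself prevents this. The paper's proof has a specific mechanism you never identify: first a \emph{preliminary relation} (Lemma~\ref{lem:preliminary-relation}) built from explicit auxiliary Nielsen moves, chosen so that the transported bad subgroup of each stage meets the bad subgroup $B$ of $\phi$ only in a controlled ``problematic'' group ($\langle\partial\rangle$, $\langle x_i\rangle$ or $\langle x_i,\partial\rangle$); then \emph{short relations} (Lemma~\ref{lem:short-relation}) inserted at the stages where this residual intersection could meet $E$; and finally conjugation of the inserted relations by high powers of the partial pseudo-Anosovs $\psi_i$ of the individual $\phi_i$, and of the whole relation by powers of $\psi$, so that Corollary~\ref{cor:making-overlap-cyclic} confines any surviving overlap problem to those controlled intersections while Lemma~\ref{lem:removing-inner-containment} prevents new containment problems from appearing. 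Your appeal to Proposition~\ref{prop:solving-containment} at this point is also misplaced: its hypothesis is that $E$ \emph{is} a containment problem for the move, which is false here, and intermediate containment problems are removed dynamically (conjugation by $\psi^{kN}$), not by that proposition. Finally, basic invert moves and the reduction of adjusted moves to basic ones require separate (if short) arguments that the proposal omits. As written, the proposal establishes the bookkeeping half of the statement but leaves the existence of the relation itself unproved.
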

    
\begin{proof}[Proof of Theorem \ref{thm:avoid} assuming Propositions
  \ref{prop:solving-containment} and \ref{prop:solving-overlap}]

  Let $\phi$ be the adjusted move by which the adjacent $\pml$s
  differ.  Using first Proposition~\ref{prop:solving-containment}, and
  then Proposition~\ref{prop:solving-overlap} (to each adjusted move
  appearing in that first relation), we can replace $\phi$ by a
  relation
  \[ \phi = \phi_1\cdots\phi_l \] so that in each stage $E$ is neither
  an overlap nor containment problem.

  Now we will use Proposition~\ref{prop:BLAS} to find a \blas path $q$
  going through the chain of {\pmls} defined by the relation, that
  is $$\pml(\Sigma_1), \phi_1\pml(\Sigma_1), \ldots,
  \phi_2\cdots\phi_l\pml(\Sigma_1),
  \phi\pml(\Sigma_1)=\pml(\Sigma_2)$$

  We claim that this path $q$ itself
  is disjoint from $K_E$.  Observe that it suffices to check this at
  all of the points of the
  \blas path which are not minimal, and therefore dual to the
  stable lamination of a partial pseudo-Anosov. For these finitely
  many points, Lemma~\ref{lem:leaving the K with pAs} applies (exactly
  because we have
  guaranteed that $E$ is not an overlap or containment problem at any
  stage of the relation by Proposition~\ref{prop:solving-overlap}),
  and shows that they are outside $K_E$ as well.
          
  To prove Property~(1), i.e. that $\psi^kq$ is disjoint from $K_E$,
  observe that $\psi^kq$ can be thought of as a \blas path guided by
  the conjugated relation
  \[ \phi = \psi^{-k}\phi_1\cdots\phi_l\psi^k, \] to which (by the
  last sentence of Proposition~\ref{prop:solving-overlap}) the same
  argument applies.
         
  Finally, Property~(2) is implied by Proposition~\ref{NS}, assuming
  that the \blas path is constructed to never intersect the unstable
  lamination of $\psi$ -- which can be done by avoiding a single
  lamination in the construction of the \blas path, and is therefore
  clearly possible.
\end{proof}

\subsection{Containment problems: Proof of Proposition
  \ref{prop:solving-containment}}
       
The proof of this proposition relies on the construction of a curve
with certain properties.
\begin{lem}\label{lem:good-curve-containment}
  Suppose we are given elements $z,w,a$ of our chosen standard
  geometric basis $\mathcal{B}$, all of which are two-sided, and no
  two of which are linked. Then there is a two-sided curve $\delta$
  with the following properties:
  \begin{enumerate}[i)]
  \item $\delta$ intersects $a$ in a single point,
  \item $\delta$ does not cross the band corresponding to $w$ (i.e. in
    the fundamental group, $\delta$ can be written without the letter
    $w$),
  \item There is another, unrelated two-sided letter $e$, so that $\delta$ 
  	does not cross the band corresponding to $e$ (i.e. $\delta$ can be written without $e$).
  \item $\delta$ intersects each basis loop of $\mathcal{B}$ at most
    two points, one on an initial and one on a terminal segment,
  \item in homology we have $[\delta] = \pm[\hat{a}] \pm [z]$.
  \end{enumerate}
\end{lem}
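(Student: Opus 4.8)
The plan is to construct $\delta$ explicitly by concatenating arcs, in the same spirit as Lemmas~\ref{lem:additional-twist} and~\ref{lem:explicit-good-subsurfaces-case1}. Since $a$ is two-sided and unlinked from both $w$ and $z$, its intersecting partner $\hat a$ is also available, and we may assume $a = a_i$ for some $i$ (the case $a = \hat a_i$ is symmetric, swapping the roles of the two sides of the band). First I would take an embedded arc $\gamma_1$ that crosses $a$ transversally in a single point on $a^+$ and otherwise stays in a small neighbourhood of the basepoint $p$, chosen so that it does not enter the interiors of the bands corresponding to $w$ or to a third, auxiliary unrelated two-sided letter $e$ (such an $e$ exists because the rank is at least $\rank$, so there are enough basis letters to avoid $x, \hat x, y, \hat y, w, z, a, \hat a$). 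The homology class of a small loop formed by closing up $\gamma_1$ across the band of $a$ is $\pm[\hat a]$. I would then append an arc $\gamma_2$ running once through the band corresponding to $z$ (and back near $p$), contributing $\pm[z]$ to the homology class. The concatenation $\delta = \gamma_1 \ast \gamma_2$, pushed slightly off $p$ and smoothed, is a two-sided simple closed curve.

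The verification of the listed properties is then bookkeeping. Property~(v), $[\delta] = \pm[\hat a] \pm [z]$, is immediate from the construction and additivity of homology classes along the concatenation. Properties~(ii) and~(iii) hold because $\gamma_1, \gamma_2$ were routed to avoid the bands of $w$ and $e$ entirely; in $\pi_1$ this means $\delta$ is a word in the remaining generators. Property~(i) is the single intersection with $a = a_i$ on $a^+$, built in from the start (one must check $\gamma_2$ and the closing-up arcs do not create extra intersections with $a$; this is arranged by keeping everything except the one designated crossing inside collars of the other bands). Property~(iv) — each basis loop met in at most two points, on an initial and a terminal segment — follows because $\delta$ is assembled from arcs each of which either stays near $p$ or traverses a single band once, so each band of $\mathcal B$ is entered at most once going out (initial segment) and at most once coming back (terminal segment).

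The main obstacle, as always in these constructions, is simultaneity: arranging a \emph{single} curve that meets $a$ exactly once, stays simple, avoids two prescribed bands ($w$ and the auxiliary $e$), and has the prescribed homology class $\pm[\hat a]\pm[z]$ — all at once, without incurring parasitic intersections. The key enabling observations are that the hypotheses give us genuine freedom (three two-sided, pairwise unlinked letters, plus ample extra letters to pick $e$ from since $n \geq \rank$), and that homology only constrains $\delta$ up to the span of the other basis loops, so after fixing the $[\hat a]$ and $[z]$ contributions we are free to route the rest of $\delta$ however is convenient for simplicity and for avoiding $w$ and $e$. As with Lemma~\ref{lem:explicit-good-subsurfaces-case1}, the cleanest way to present this is to exhibit the curve in a figure analogous to Figure~\ref{fig:finding-extratwists-case1} and read the five properties off the picture; I would do the orientable case in detail and note that the one- or two-sided cases go through verbatim by keeping all constituent arcs two-sided.
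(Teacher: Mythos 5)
Your construction is essentially the paper's proof: the paper simply takes $\delta$ to be an embedded representative of the concatenation of $\hat{a}$ and $z$ (Figure~\ref{fig:lemma-4-5}) and reads properties i)--v) off the picture, which is exactly your $\gamma_1\ast\gamma_2$ with the band of $w$ and an auxiliary letter $e$ avoided. One small slip that does not affect the argument: the loop formed from $\gamma_1$ closes up through the band of $\hat{a}$ (it is a parallel copy of $\hat{a}$), not ``across the band of $a$''.
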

Indeed, the desired curve can be found as the concatenation of
$\hat{a}$ and $z$ as in Figure~\ref{fig:lemma-4-5}.
\begin{figure}
	\includegraphics[width=0.7\textwidth]{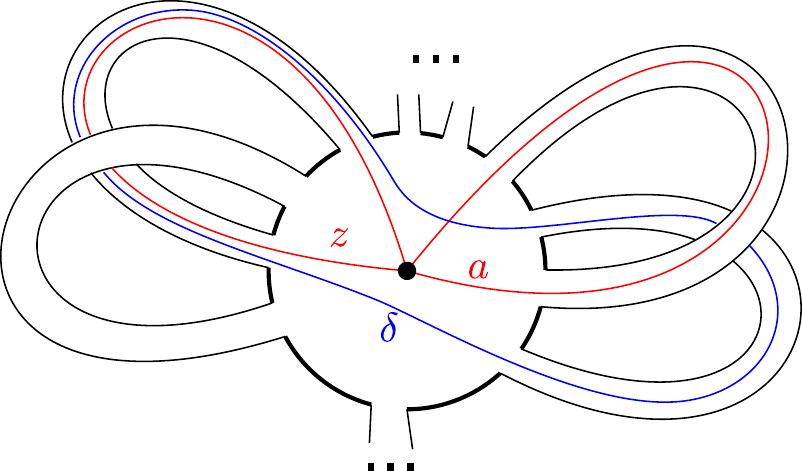}
	\caption{The curve from Lemma~\ref{lem:good-curve-containment}}
	\label{fig:lemma-4-5}
\end{figure}

\begin{proof}[{Proof of Proposition~\ref{prop:solving-containment}}]
  We begin by considering the basic move $\phi = \lambda_{x,y}$ or
  $\phi=\rho_{x,y}$, and assume that $E$ is a containment problem for
  the basic move. Let $z$ be a basis letter so that $[z] \notin
  H_1(E)$. Such a letter exists, since $E$ is a proper free
  factor. Also observe that by Lemma
  \ref{lem:explicit-good-subsurfaces} (since we assume that $E$ is a
  containment problem), $z$ is one of $x, \hat{x}, y, \hat{y}$ or the
  one-sided letter $n$. Next, choose $w$ an unrelated, two-sided
  letter, i.e. different from all of $x, \hat{x}, y, \hat{y}$, and not
  linked with $z$; in particular it is good for $\phi$ (i.e. contained
  in the good subsurface) by
  Lemma~\ref{lem:explicit-good-subsurfaces}.  Let $a$ be a two-sided
  basis element so that $a,\hat{a}$ are good for $\phi$ and
  $\lambda_{z,w}$, and are distinct from any of the previously chosen
  letters.
		
  Now, let $\delta$ be the curve guaranteed by
  Lemma~\ref{lem:good-curve-containment}. Define an auxiliary adjusted
  move
  \[ \theta = T_\delta \lambda_{w,z} T^{-1}_\delta. \]
                
  We observe that $\theta$ has the following properties:
  \begin{enumerate}
  \item $\theta$ fixes every basis element, except possibly $w$.
			
    Namely, by property~iv) of the curve $\delta$, the Dehn twist
    $T_\delta$ acts on each basis element by conjugation, left, or
    right multiplication by a word obtained by tracing $\delta$
    starting at a suitable point. By property~ii), none of these words
    involve the letter $w$. Thus, the letter $w$ appears only in the
    image of $w$ in $T_\delta(\mathcal{B})$. Since $\lambda_{w,z}$
    fixes all letters except $w$, the claim follows,
				
  \item The word $\theta(w)$ does not involve the letter $e$ from
    Lemma~\ref{lem:good-curve-containment}.
			
    This follows from the description of the action of $T_\delta$
    above, together with property~iii) of $\delta$.
			
  \item $[\theta(w)] = [w]+[z] \notin H_1(E)$.
			
    Namely, by property~v) of $\delta$, we have $[T_\delta(w)]=[w]$
    (as the algebraic intersection between $\delta$ and $w$ is
    zero). Thus, $[\lambda_{w,z}T_\delta(w)]=[w]+[z]$. Since the
    algebraic intersection number between $\delta$ and $z$ is also
    zero, we therefore have $[\theta(w)] = [w]+[z]$. Since $w$ is good
    for $\lambda_{x,y}$, and we assume that $E$ is a containment
    problem for $\phi$, we have $[w] \in H_1(E)$.  Thus, since
    $[z]\notin H_1(E)$, the claim follows.
				
  \item $E$ is not a containment problem for $\theta$.

    Since $a$ and $\hat{a}$ are good for $\phi$, and we assume that
    $E$ is a containment problem, it follows that $[a]+[\hat{a}]\in
    H_1(E)$.  On the other hand, the good subsurface of $\theta$ can
    be obtained from the good subsurface of $\lambda_{w,z}$ by
    applying $T_\delta$.  Recall that $a$ is good for $\lambda_{w,z}$,
    and thus $T_\delta(a)$ is good for $\theta$. In homology we have
    $[T_\delta(a)] = [a]+[\hat{a}]+[z]$. Since $[a]+[\hat{a}]\in
    H_1(E)$ but $[z] \notin H_1(E)$ this implies $[T_\delta(a)]\notin
    H_1(E)$.
				
  \item The automorphism $\phi^{-1}\theta^{-1}\phi\theta$ fixes all
    basis elements except possibly $w$.
			
    This is an immediate consequence of the fact that $\theta$ fixes
    all letters except $w$, which is distinct from $x,y$ (which are
    the only letters involved in $\phi$).
  \end{enumerate}
  This shows that there is a relation of the form
  \begin{equation}
    \label{eq:containment-relation1}
    \phi = \theta \phi D_w \theta^{-1}, 
  \end{equation}
  where $D_w$ is a product of basic moves of the form $\phi^\pm_{w,q}$
  acting on the letter $w$, and no $q$ is equal to $e$. Hence, $D_w$
  commutes with $\lambda_{e,z}$, and we obtain a relation
  \begin{equation}
    \label{eq:containment-relation2}
    \phi = \theta \phi \lambda_{e,z} D_w \lambda^{-1}_{e,z} \theta^{-1}.
  \end{equation}
  Observe that $D_w$ may be identity; in which case we also remove the
  $\lambda_{e,z}$--terms from this relation.
			
  We now check that this relation has no containment problems, as
  claimed. Recall that we have to check this left-to-right. 
  \begin{itemize}
  \item \emph{The initial $\theta$ move:} This is (4) above.
			
  \item \emph{The move $\phi$:} Here, we observe that $w$ is good for
    the two possibilities $\lambda_{x,y}$ and $\rho_{x,y}$ that $\phi$
    can be. On the other hand, we have that $\theta(w) \notin E$ by
    (3) above. Thus, $w \notin \theta^{-1}E$, and the claim follows.
			
  \item \emph{The auxiliary move $\lambda_{e,z}$:} This move has $w$ as
    a good letter, and $\theta\lambda_{x,y}(w) = \theta(w) \notin E$
    just like above.
			
  \item \emph{The moves in $D_w$:} These basic moves all have $e$ as a
    good letter. Now, we have $\theta\phi\lambda_{e,z}(e) = \theta(ez)
    = ez$, and $[e]+[z]\notin H_1(E)$.
				
  \item \emph{Undoing the auxiliary move $\lambda_{e,z}$:} This move
    has $w$ as a good letter, and $\theta \phi \lambda_{e,z} D_w (w) =
    \theta \phi D_w (w) = \phi\theta(w)$ by
    Equation~(\ref{eq:containment-relation1}). But, $[ \phi\theta(w) ]
    = [z] + [w] \notin H_1(E)$. Thus, $ \theta \phi \lambda_{e,z} D_w
    (w) \notin E$.
			
  \item \emph{The final $\theta^{-1}$ move:} Here, we use again (as in
    (4) above) that $T_\delta(a)$ is good for $\theta$. We have
    $[T_\delta(a)] = [a]+[\hat{a}]+[z]$, and thus
    \[ [\theta \phi \lambda_{e,z} D_w \lambda^{-1}_{e,z} T_\delta(a)]
    = [a]+[\hat{a}]+[z] \] which does not lie in $H_1(E)$. Thus, we do
    not have a containment problem.
  \end{itemize}
	
  \medskip Finally, we need to deal with a basic inversion move $\phi
  = \iota_x$. Again, assume that $E$ is a containment problem. Thus,
  either $[x]\notin H_1(E)$ or $[\hat{x}]\notin H_1(E)$. In the former
  case, we start with the relation
  \[ \iota_x = \lambda_{x, z}\iota_x\rho_{x, z}, \] and in the latter
  case, with a relation of the type
  \[ \iota_x = \rho_{\hat{x}, z} \iota_x \rho_{\hat{x}, z}^{-1} \] for
  an unrelated letter $z$. Observe that in either case,
  $\lambda_{x,z}^{-1}E$ or $\rho_{\hat{x}, z}^{-1} E$ is not a
  containment problem for $\iota_x$.
                
  Now, suppose we are in the former case (the other one is completely
  analogous). Then, by the first part of the proof, we can find a
  relation
  \[ \lambda_{x, z} = \phi_1\circ\cdots\circ\phi_i, \] so that $E$ is
  not containment problem at any step of this relation. Hence, in the
  relation
  \[ \iota_x = \phi_1\circ\cdots\circ\phi_i\iota_x\rho_{x, z}, \] the
  factor $E$ is now not a containment problem at the first $i+1$
  steps. Now, appealing to the first part of the proof again, we can
  find a relation
  \[ \rho_{x, z} = \phi'_1\circ\cdots\circ\phi'_j, \] so that
  $\lambda_{x, z}\iota_x(E)$ is not a containment problem at any
  step. Then, the relation
  \[ \iota_x =
  \phi_1\circ\cdots\circ\phi_i\iota_x\phi'_1\circ\cdots\circ\phi'_j, \]
  has the desired properties.
                
  \smallskip Finally, we discuss adjusted
  moves. Suppose $\phi = \varphi^{-1}\phi_0\varphi$ is the conjugate
  of a basic move by a mapping class group element $\varphi$ of
  $\Sigma$. We then apply the Proposition to the basic move $\phi_0$
  and the factor $\varphi^{-1}E$, and conjugate the resulting by
  $\varphi$. This resulting relation (of adjusted moves) then has the
  desired property.
\end{proof}

\subsection{Overlap Problems: Proof of Proposition
  \ref{prop:solving-overlap}}
The proof of Proposition \ref{prop:solving-overlap} is technically
very involved, and the details vary depending on the nature of the
move $\phi$. First, observe that exactly as in the last paragraph of
the proof of Proposition~\ref{prop:solving-containment}, the case of
adjusted moves can be reduced to the case of basic moves.  The rest of
this section is therefore only concerned with basic moves.
	
For basic moves, we will (again, similar to the proof of
Proposition~\ref{prop:solving-containment}), reduce the case of invert
moves to the case of Nielsen moves. For Nielsen moves the relation
claimed in the proposition will be constructed using the following two
lemmas, which construct a ``preliminary relation'', and ``short
relations'':

\begin{lem}[Preliminary Relation]\label{lem:preliminary-relation}
  Under the assumptions of Proposition~\ref{prop:solving-overlap}, if
  $\phi$ is not an invert move, there is a relation
  \[ \phi = \phi_1\cdots\phi_r, \] so that if $B_i$ is the bad
  subgroup for $\phi_i$ and $B$ is the bad subgroup of $\phi$, then
  (up to conjugation) the intersection
  \[ \phi_1\cdots\phi_{i-1}B_i \cap B \] is trivial or (up to
  conjugacy) contained in a ``problematic'' group of the form
  $\<\partial\>$,$\langle x_i\rangle$ or $\langle
  x_i, \partial\rangle$ for some word $x_i$, where $\partial$ is the
  boundary component of the surface (compare
  Section~\ref{lem:obvious-good-subsurface})\footnote{We remark that, in general, the intersection of two subgroups up to conjugacy could be a collection of conjugacy classes of subgroups. Here, the intersection always consists of at most one such conjugacy class.}.
\end{lem}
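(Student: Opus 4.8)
The plan is to peel off the ``bad'' basis letters one at a time, so that the bad subgroup $B_i$ of the $i$--th factor in the relation is, after pushing forward by $\phi_1\cdots\phi_{i-1}$, disjoint from $B$ except for the unavoidable ``problematic'' contributions coming from the boundary word $\partial$ and from individual letters that must remain untouched. Concretely, recall from Lemma~\ref{lem:explicit-good-subsurfaces-case1} and Lemma~\ref{lem:other-moves} that for a Nielsen move $\phi = \rho_{x,y}$ or $\lambda_{x,y}$ the bad subgroup $B$ is generated by $y,\hat y$, a conjugate of $\hat x$ (or $x^{-1}\hat x x$, etc.), and a single cyclic permutation $\partial_w$ of the boundary word. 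So $B$ is the $\pi_1$ of a four-holed-sphere-like piece carrying exactly the data of the four active letters $x,\hat x,y,\hat y$ together with $\partial$. The strategy is to first move $y,\hat y$ ``out of the way'' using auxiliary Nielsen moves supported on fresh inactive letters, then handle $\hat x$, so that the residual move $\phi$ acts with a much smaller bad subgroup.

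The key steps, in order, would be: \textbf{(1)} Choose an auxiliary inactive two-sided letter $z$ (available since the rank is at least $\rank$ and only boundedly many letters are active), and write $\phi = \lambda_{y,z}^{-1}\,\phi'\,\lambda_{y,z}$ where the conjugation replaces the active letter $y$ by $yz$; one checks that conjugating a Nielsen move by such a $\lambda$ is again a product of Nielsen moves whose bad subgroups, pushed forward, meet $B$ only in a subgroup generated by $z$ and by $\partial$ (the new boundary word after the substitution), i.e.\ in a group of the allowed form $\langle z,\partial\rangle$. \textbf{(2)} Repeat for $\hat y$ with another fresh letter, and then for $\hat x$, using the explicit homology/word descriptions in Lemma~\ref{lem:explicit-good-subsurfaces-case1} to track exactly which generators of the intermediate bad subgroups can survive into $B$. \textbf{(3)} After these substitutions the residual core move acts on essentially two letters, and its bad subgroup intersected with $B$ is visibly contained in $\langle x_i\rangle$ or $\langle x_i,\partial\rangle$ for an explicit word $x_i$; collect all the auxiliary moves and the core move into the final relation $\phi = \phi_1\cdots\phi_r$. \textbf{(4)} For the invert move, reduce as indicated (as in the proof of Proposition~\ref{prop:solving-containment}) to the Nielsen case via $\iota_x = \lambda_{x,z}\iota_x\rho_{x,z}$, and for adjusted moves conjugate the whole relation by the mapping class.

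The main obstacle I expect is \textbf{Step (2)}: controlling, after several successive conjugations, exactly which conjugacy classes of subgroups appear in the intersection $\phi_1\cdots\phi_{i-1}B_i\cap B$, and verifying that this intersection never acquires a component that is not of one of the three permitted forms. Since subgroup intersection ``up to conjugacy'' can in principle produce several conjugacy classes of subgroups (as flagged in the footnote), the delicate point is to arrange the auxiliary moves so that at each stage the two bad subsurfaces, drawn on $\Sigma$, overlap in at most one essential subsurface, and that subsurface is either an annulus around $\partial$, an annulus around a single curve $x_i$, or the union of the two. This is a careful surface-topology bookkeeping argument rather than a computation, and it is where the explicit curve pictures from Section~\ref{sec:moves} (Figures~\ref{fig:finding-good-subsurface-ab} and~\ref{fig:finding-extratwists-case1}) do the real work; the algebra in homology merely certifies the conclusions one reads off from those pictures.
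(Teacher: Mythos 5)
Your overall plan---insert auxiliary Nielsen moves on fresh inactive letters so that the pushed-forward bad subgroups meet $B$ only in groups of the permitted form---is the same idea as the paper, which uses an explicit relation such as $\rho_{x,y}=\rho_{\hat{y},u}^{-1}\rho_{y,z}^{-1}\rho_{x,y}\rho_{x,z}\rho_{y,z}\rho_{\hat{y},u}$. But there are two genuine gaps. First, your relation has the shape $\phi=\lambda_{y,z}^{-1}\phi'\lambda_{y,z}$ with a single ``residual core move'' $\phi'=\lambda_{y,z}\phi\lambda_{y,z}^{-1}$ in the middle. A conjugate of a basic Nielsen move by another Nielsen move is in general neither a basic move nor an adjusted move (Nielsen moves are not realized by mapping classes of $\Sigma$), so $\phi'$ has no defined good/bad subsurface and cannot serve as a factor $\phi_i$ of the relation. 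One must instead re-expand the conjugated middle as a product of basic moves (this is exactly why the paper's relation contains the extra factor $\rho_{x,z}$: the conjugate of $\rho_{x,y}$ by $\rho_{y,z}\rho_{\hat y,u}$ equals $\rho_{x,y}\rho_{x,z}$), and then the intersection condition must be verified for \emph{these} middle factors as well, not only for the conjugating moves; your step (3) asserts this is ``visible'' but it is precisely where intersections such as $\langle\hat x\rangle$ and $\langle y,\partial_{y^{-1}}\rangle$ arise and must be pinned down.

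Second, and more seriously, your proposed method for the verification---draw the two bad subsurfaces on $\Sigma$ and argue they overlap in an annulus about $\partial$ or about a single curve $x_i$, with homology ``merely certifying'' the picture---does not work, because the subgroup $\phi_1\cdots\phi_{i-1}B_i$ is the image of a subsurface group under an automorphism that is \emph{not} a mapping class of $\Sigma$; it is therefore not the fundamental group of any subsurface in the fixed marking, and its intersection with $B$ up to conjugacy cannot be read off from subsurface overlaps (this is also why the footnote about several conjugacy classes of intersections is a real issue needing an argument). Homological computations are likewise insufficient: the intersections one must identify are often nontrivial (cyclic groups $\langle\hat x\rangle$, $\langle y\rangle$, or rank-two groups such as $\langle y,\partial_{y^{-1}}\rangle$), and homology cannot distinguish, say, containment in $\langle x_i,\partial\rangle$ from larger overlap. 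The paper carries out these checks algebraically, writing explicit generating sets for each $\phi_1\cdots\phi_{i-1}B_i$ (using Lemma~\ref{lem:explicit-good-subsurfaces-case1}) and computing intersections via immersed (Stallings) graphs and the dropping-generator lemmas (Lemmas~\ref{lem:drop-turn-easy} and~\ref{lem:drop-turn-hard}); some replacement for this word-level bookkeeping is needed for your argument to close.
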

\begin{lem}[Short Relations]\label{lem:short-relation}
  For all indices $i$ in Lemma~\ref{lem:preliminary-relation} where
  the collection of problematic groups is nonempty, there is a
  relation
  \[ \phi_i = \rho_i \phi_i \rho_i^{-1}, \] with the properties
  \begin{enumerate}
  \item No conjugacy class of the problematic group
    $\<\partial\>$,$\langle x_i\rangle$ or $\langle
    x_i, \partial\rangle$ is contained in the bad subsurface of
    $\rho_i$ and in $E$,
  \item and also $E \cap \phi_1\cdots\phi_{i-1}\rho_i B_i$ is trivial.
  \end{enumerate}
\end{lem}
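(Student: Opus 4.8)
The plan is to produce $\rho_i$ as a single adjusted move realised by a Dehn twist $T_{\delta_i}$ along a carefully chosen curve $\delta_i$, in the spirit of the auxiliary moves $\theta=T_\delta\lambda_{w,z}T_\delta^{-1}$ from the proof of Proposition~\ref{prop:solving-containment}. As there, one first reduces the case of an invert move $\phi_i=\iota_x$ to the two Nielsen moves $\lambda_{x,z}$, $\rho_{x,z}$ using a relation such as $\iota_x=\lambda_{x,z}\iota_x\rho_{x,z}$, so we may assume $\phi_i$ is a Nielsen move, say $\phi_i=\rho_{x,y}$ with active letter $x$; the other letter types, and the nonorientable case (treated in Appendix~\ref{sec:check}), are handled analogously.

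First I would fix a basis letter $z$ and a sign so that $[\hat{x}]\pm[z]\notin H_1(E)$; this is possible because $H_1(E)$ is a direct summand of $H_1(F_n)$ and $E$ is proper (so some $[z]\notin H_1(E)$, whence at least one of $[\hat{x}]+[z]$, $[\hat{x}]-[z]$ avoids $H_1(E)$). Using the construction underlying Lemmas~\ref{lem:additional-twist} and~\ref{lem:good-curve-containment}, I would then build a two-sided curve $\delta_i$ which meets $x$ in a single point on $x^+$ and does not cross the band corresponding to $x$ -- so that, exactly as in Lemma~\ref{lem:additional-twist}, $\rho_i:=T_{\delta_i}$ commutes with $\phi_i=\rho_{x,y}$ and $\phi_i=\rho_i\phi_i\rho_i^{-1}$ is a genuine relation -- which meets each basis loop of $\mathcal{B}$ in at most an initial and a terminal arc, which has nonzero geometric intersection with a geodesic representative of the ``problematic'' word $x_i$, and which has homology class $[\delta_i]=\pm[\hat{x}]\pm[z]\notin H_1(E)$. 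Since a Dehn twist along a nonseparating curve is conjugate in the mapping class group of $\Sigma$ to a standard Dehn twist, i.e.\ to an intersecting-pair basic Nielsen move, $\rho_i$ is a single adjusted move, whose bad subsurface may be taken to be an annular neighbourhood of $\delta_i$, with cyclic fundamental group $\langle\delta_i\rangle$.

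Property~(1) is then almost automatic: the bad subgroup of $\rho_i$ is $\langle\delta_i\rangle$, and a power $\delta_i^k$ conjugate into $E$ would force $\delta_i$ conjugate into $E$ (free factors are root-closed), hence $[\delta_i]\in H_1(E)$, contrary to our choice; the case of the problematic group $\langle\partial\rangle$ holds a fortiori, since the boundary word of a standard geometric basis is not conjugate into any proper free factor. Property~(2), that $E\cap\phi_1\cdots\phi_{i-1}\rho_iB_i$ is trivial, is the subtle point. By Lemma~\ref{lem:preliminary-relation} the conjugated intersection $\phi_1\cdots\phi_{i-1}B_i\cap B$ consists, up to conjugacy, of at most one conjugacy class, lying in the problematic group; combined with the hypothesis that $E$ is not a containment problem for $\phi$ -- so that Proposition~\ref{prop:good-and-bad}, applied to a partial pseudo-Anosov on the good subsurface of $\phi$, constrains how conjugacy classes of $E$ (and of its $\psi$-translates) can meet the bad subgroups arising along the relation -- one reduces to controlling $E$ against this single problematic class. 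Conjugating $B_i$ by $T_{\delta_i}$ changes the homology of any class that crosses $\delta_i$ by a nonzero multiple of $[\delta_i]\notin H_1(E)$, which, using that $\delta_i$ was arranged to cross $x_i$ and that $\partial\notin E$, forces the surviving problematic class out of $E$; the same argument, applied to the $\psi$-translates of $\delta_i$, yields the additional ``same property'' clause for the $\psi$-conjugated relations needed in Proposition~\ref{prop:solving-overlap}.

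The step I expect to be the main obstacle is producing, in every case, a single curve $\delta_i$ meeting all these requirements simultaneously: the one-sided single intersection with $x$ (forced by commuting with $\phi_i$), the essential intersection with the possibly awkward word $x_i$ coming out of Lemma~\ref{lem:preliminary-relation}, the homological non-membership $[\delta_i]\notin H_1(E)$, and the standard-basis intersection normal form -- all while staying within the rank bounds (the bad subgroup has rank $\le 5$ by Proposition~\ref{prop:good-and-bad}). Running through the positions of $x$ among the active letters, the $\lambda$ versus $\rho$ dichotomy, and especially the nonorientable surfaces, where the one-sided basis letter restricts the available curves, is the bulk of the work, and is why the nonorientable case is deferred to Appendix~\ref{sec:check}.
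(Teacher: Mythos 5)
Your overall shape --- conjugate $\phi_i$ by something that commutes with it and thereby destroys the problematic intersection --- is the right idea, but your implementation departs from the paper's and has two genuine gaps. The paper simply takes $\rho_i$ to be a single auxiliary basic Nielsen move on letters unrelated to $\phi_i$ (e.g.\ $\lambda_{y,w}$ in step a), $\rho_{\hat{x},w}$ in step c) of the relation from Lemma~\ref{lem:preliminary-relation}); commutation is then immediate, property (1) is obtained \emph{uniformly in $E$} by computing (with Lemmas~\ref{lem:drop-turn-easy} and~\ref{lem:drop-turn-hard}) that the bad subgroup of $\rho_i$ meets the problematic group only in $\langle \partial \rangle$, which lies in no proper free factor by Lemma~\ref{lem:whitehead}, and (2) is checked by observing that $\rho_i$ moves the problematic generator (e.g.\ $y\mapsto wy$) out of the relevant bad subgroup. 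Your $E$-adapted Dehn twist $T_{\delta_i}$ is a different mechanism, closer to the containment argument of Proposition~\ref{prop:solving-containment}.

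The first gap: you take the bad subsurface of $\rho_i=T_{\delta_i}$ to be an annulus, so its good subsurface is the complement of an annular neighbourhood of the nonseparating curve $\delta_i$; that subsurface group has rank $n$ and infinite index, hence is \emph{not} a free factor. But the short relations are inserted into the proof of Proposition~\ref{prop:solving-overlap}, where Lemma~\ref{lem:removing-inner-containment} and Corollary~\ref{cor:making-overlap-cyclic} (via Proposition~\ref{prop:good-and-bad}) are applied at every step of the expanded relation and require the good subgroup at each step to be a free factor of rank bigger than $5$ with complement of rank at most $5$. If you shrink the good subsurface so that its group is a free factor (as all of the paper's explicit good subsurfaces are), the bad subgroup is no longer $\langle\delta_i\rangle$, and your verification of (1), which rests on the bad group being exactly $\langle\delta_i\rangle$, has to be redone --- this is precisely the case-by-case analysis you defer. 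The second gap is your argument for (2): a twist changes the homology of a loop by its \emph{algebraic} (not geometric) intersection number with $\delta_i$ times $[\delta_i]$, so classes crossing $\delta_i$ with zero algebraic intersection are homologically unchanged; worse, generators of $B_i$ disjoint from $\delta_i$ (for instance the auxiliary letters $u,\hat{u}$ in step a)) are fixed by $T_{\delta_i}$ and survive in $\rho_i B_i$, and a homological condition on $\delta_i$ cannot prevent an arbitrary proper free factor $E$ from meeting them. What the downstream argument actually needs, and what the paper checks, is that after conjugating by $\rho_i$ the problematic class leaves the relevant intersection of bad subgroups; your homological certificate does not control the classes that miss $\delta_i$. (A minor point: the $\phi_i$ produced by Lemma~\ref{lem:preliminary-relation} are all Nielsen moves, so the invert-move reduction you begin with is not needed at this stage.)
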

The proofs of these lemmas construct the desired relations fairly
explicitly, and involve lengthy checks. Before we begin with these
proofs, we explain how to use the lemmas in the proof of
Proposition~\ref{prop:solving-overlap}. We need two more tools: first,
the following immediate consequence of
Proposition~\ref{prop:good-and-bad} (this corollary is the reason why
in our strategy, containment problems need to be solved before overlap
problems). Observe that the Proposition~\ref{prop:good-and-bad} may be used, since
the fundamental group of good subsurfaces are free factors of rank $>\rank-5$, while
the fundamental groups of the complements of good subsurfaces have rank at most $5$, and
so the former can never be contained in the latter up to conjugacy.
\begin{cor}\label{cor:making-overlap-cyclic}
  Suppose $\phi$ is a basic move with bad subgroup $B=\pi_1(\Sigma-S^g)$,
  $\psi$ the commuting partial pseudo-Anosov supported on the good
  subsurface on $\phi$, and $E$ any free factor. Suppose that $E$ is
  not a containment problem for $\phi$. Then there is a number $k$
  with the following property.
    	
  If \[\phi = \phi_1\cdots \phi_l\] is a relation, and $M=km$ is large
  enough, then the conjugated relation
  \[ \phi =
  (\psi^M\phi_1\psi^{-M})(\psi^M\cdots\psi^{-M})(\psi^M\phi_l\psi^{-M}), \]
  has the following property: at every step of the relation, an
  overlap problems with $E$ occurs exactly if $E$ contains elements
  conjugate into $B \cap B_i$, the intersection of the bad subgroups
  of the original relation.
\end{cor}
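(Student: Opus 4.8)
The plan is to read the corollary off from Proposition~\ref{prop:good-and-bad} by unwinding definitions. Throughout, write $B_i$ for the bad subgroup of the move $\phi_i$ and $\widetilde{B}_i=(\phi_1\cdots\phi_{i-1})(B_i)$ for the bad subgroup of the $i$-th adjacency of the original relation (this is the subgroup the statement abbreviates as ``$B_i$''). Recall that $B=\pi_1(\Sigma-S^g)$ is the bad subgroup of $\phi$ itself, that $\psi$ commutes with $\phi$, and that, since $\psi$ is supported on $S^g$, a representative of $\psi$ can be taken to be the identity on $\Sigma-S^g$; hence $\psi$ and all its powers fix every conjugacy class carried by $\Sigma-S^g$, that is, every class conjugate into $B$.

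First I would record the effect of the conjugation on the combinatorics. Because $\psi$ commutes with $\phi$, the product $\prod_i(\psi^M\phi_i\psi^{-M})$ equals $\psi^M\phi\psi^{-M}=\phi$, so the displayed ``conjugated relation'' is indeed a relation; its accumulated automorphism before the $i$-th step is $\psi^M(\phi_1\cdots\phi_{i-1})\psi^{-M}$, and, since conjugating a move by the mapping class $\psi^M$ conjugates its bad subsurface by $\psi^M$, the $i$-th move $\psi^M\phi_i\psi^{-M}$ has bad subgroup $\psi^M(B_i)$. Unwinding the definition of an overlap problem at the $i$-th step, and then applying $\psi^{-M}$ followed by $\phi_1\cdots\phi_{i-1}$, one sees that $E$ is an overlap problem at the $i$-th step of the conjugated relation if and only if $\psi^{-M}(E)$ contains a nontrivial conjugacy class that is conjugate into $\widetilde{B}_i$.

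Next I would apply Proposition~\ref{prop:good-and-bad} to the partial pseudo-Anosov $\psi^{-1}$ (which is again supported on $S^g$, with the same good and bad subgroups), the free factor $E$, and the subgroup $B'=\widetilde{B}_i$. Alternative (1) of that proposition is ruled out by hypothesis: $E$ is not a containment problem for $\phi$, i.e. $\pi_1(S^g)$ is not conjugate into $E$. The hypothesis on $B'$ holds because $\widetilde{B}_i$ is a free group of the same rank as $B_i$, which is at most $5$, while $\pi_1(S^g)$ is a free factor of rank $>\rank-5$ and so cannot be conjugate into a subgroup of rank $\le 5$---exactly as noted just before the statement. Alternative (2) therefore applies, and yields a number $k>0$; inspecting its proof, $k$ is merely the power for which $\psi^k$ lifts to the finite cover of $\Sigma$ associated to $E$, so it depends only on $\psi$ and $E$, and in particular neither on the relation nor on $i$. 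Its conclusion is that for all sufficiently large $m$, any conjugacy class lying in both $\psi^{-km}(E)$ and $\widetilde{B}_i$ is in fact carried by $B$. Since a fixed relation has only finitely many steps, $m$ may be chosen large enough to serve all of them at once; set $M=km$.

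Finally I would put the two implications together. If $E$ is an overlap problem at the $i$-th step, choose the nontrivial class $c$ of $\psi^{-M}(E)$ conjugate into $\widetilde{B}_i$ produced above; then $c$ is carried by $B$ as well, and, since $\psi$ fixes every class carried by $\Sigma-S^g$, we have $c=\psi^M(c)\in E$, so $c$ is a nontrivial class of $E$ conjugate into $B\cap\widetilde{B}_i$. Conversely, if $E$ has a nontrivial class $c$ conjugate into $B\cap\widetilde{B}_i$, then $\psi^{-M}(c)=c$ (again because $c$ is carried by $\Sigma-S^g$), so $\psi^{-M}(E)$ contains the nontrivial class $c$ conjugate into $\widetilde{B}_i$, which means $E$ is an overlap problem at the $i$-th step. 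I expect the only real content to be the single application of Proposition~\ref{prop:good-and-bad}, together with the free-factor rank bound that excludes its first alternative; everything else is definitional bookkeeping, the one delicate point being to track the direction of conjugation consistently, which is why the proposition must be applied to $\psi^{-1}$ rather than to $\psi$.
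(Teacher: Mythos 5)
Your argument is correct and follows exactly the route the paper intends: the paper offers no separate proof of this corollary beyond the sentence noting that Proposition~\ref{prop:good-and-bad} applies (ruling out its first alternative via the no-containment hypothesis and the rank comparison between good and bad subgroups), and your write-up is precisely that deduction made explicit, including the correct bookkeeping that the bad subgroup at step $i$ of the conjugated relation is $\psi^M$ applied to $(\phi_1\cdots\phi_{i-1})B_i$, the application to $\psi^{-1}$, the observation that $k$ depends only on $\psi$ and $E$ while the threshold on $m$ may depend on the finitely many steps, and the invariance under $\psi$ of classes carried by $\Sigma-S^g$.
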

Second, we need the following lemma, guaranteeing that within a
relation which replaces a move without containment problem, no new
containment problems are created.
\begin{lem}\label{lem:removing-inner-containment}
  Assume that $E$ is not a containment problem for $\phi$, and that
  \[ \phi = \phi'_1\circ \cdots \circ \phi'_R \] is a relation. Then,
  there is a number $k>0$ so that conjugating the relation by any
  large power $kN$ of a partial pseudo-Anosov $\psi$ supported on the
  good subsurface of $\phi$, we can guarantee that $E$ is not a
  containment problem at any stage of the resulting relation
  \[ \phi = \psi^{kN}\circ \phi'_1\circ \cdots \circ \phi'_R
  \circ\psi^{-kN}. \]
\end{lem}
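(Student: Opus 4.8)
The plan is to use Proposition~\ref{prop:good-and-bad} to show that, after conjugating by a sufficiently large power of $\psi$, any free factor that appears as a containment problem at some stage of the relation would have to ``contain'' $\pi_1(S^g)$, which is incompatible with the hypothesis that $E$ is not a containment problem for $\phi$ itself. More precisely, fix the relation $\phi = \phi'_1 \circ \cdots \circ \phi'_R$ and let $S^g_j$ be the good subsurface of $\phi'_j$; a containment problem at stage $j$ of the conjugated relation
\[ \phi = \psi^{kN}\circ \phi'_1\circ \cdots \circ \phi'_R \circ\psi^{-kN} \]
means that the free factor
\[ E_j := \bigl(\psi^{kN}\phi'_1\cdots\phi'_{j-1}\bigr)^{-1}(E) \]
contains $\pi_1(S^g_j)$ up to conjugacy. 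Rewriting, $(\phi'_1\cdots\phi'_{j-1})^{-1}\psi^{-kN}(E)$ contains $\pi_1(S^g_j)$, i.e.\ $\psi^{-kN}(E)$ contains $\phi'_1\cdots\phi'_{j-1}(\pi_1(S^g_j))$ up to conjugacy. Equivalently, $E$ contains $\psi^{kN}\bigl(\phi'_1\cdots\phi'_{j-1}(\pi_1(S^g_j))\bigr)$ up to conjugacy.

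Now apply Proposition~\ref{prop:good-and-bad} with the good subsurface $S^g$ of $\phi$, the associated partial pseudo-Anosov $\psi$, the free factor $E$, and the subgroup $B' := \phi'_1\cdots\phi'_{j-1}(\pi_1(S^g_j))$. There are two cases in the dichotomy. If $\pi_1(S^g)\subset E$, this contradicts the hypothesis that $E$ is not a containment problem for $\phi$; so we may assume conclusion (2) holds: there is $k_j>0$ so that for all large $N$, the only conjugacy classes belonging to both $\psi^{k_j N}E$ and $B'$ lie in $\pi_1(\Sigma - S^g)$ -- wait, this is the wrong direction. Let me instead apply the proposition so as to control $\psi^{kN}(B')$: I would apply Proposition~\ref{prop:good-and-bad} with ``$E$'' of that proposition taken to be the (non-containment) factor $E$ here, and observe that if $E$ were to contain $\psi^{kN}(B')$ then in particular $\psi^{-kN}(E)$ would contain $B'$; but $B'$ contains a conjugate of $\pi_1(S^g_j)$, so combining with the fact that fundamental groups of good subsurfaces have rank $>\rank - 5$ while bad subsurfaces have rank $\le 5$, one sees via the argument in Corollary~\ref{cor:making-overlap-cyclic} that this forces a containment relation incompatible with the hypothesis, once $N$ is large. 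The key point I need is precisely the contrapositive form of Proposition~\ref{prop:good-and-bad}: if $E$ does not contain $\pi_1(S^g)$, then large powers of $\psi$ ``push'' any test subgroup ($B'$, here built from the good subsurfaces of the $\phi'_j$) so that its $\psi^{kN}$--image meets $E$ only inside the bad subgroup $\pi_1(\Sigma - S^g)$ -- and since $\pi_1(S^g_j)$ cannot be conjugated into the rank-$\le 5$ group $\pi_1(\Sigma - S^g)$ when $n\ge\rank$, this rules out the containment problem.

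The main obstacle is bookkeeping: the relation has $R$ stages, and the requisite power $k_j$ from Proposition~\ref{prop:good-and-bad} may depend on $j$ (and on the subgroup $B'_j = \phi'_1\cdots\phi'_{j-1}(\pi_1(S^g_j))$). Since $R$ is finite, I would take $k$ to be a common multiple of the finitely many $k_j$, and then choose $N$ large enough to work simultaneously for all stages -- exactly the same packaging used in the proof of Corollary~\ref{cor:making-overlap-cyclic}. A secondary subtlety is that ``containment problem at stage $j$'' is stated in terms of the good subsurface, which is only well-defined up to isotopy/choice; but since we fixed these choices at the outset (and the good subsurface of a conjugated move is the image of the good subsurface), this causes no difficulty. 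I expect the write-up to be short, essentially a one-paragraph reduction to Proposition~\ref{prop:good-and-bad} plus the finite common-multiple trick.
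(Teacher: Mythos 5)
Your argument is essentially the paper's own proof: you identify a containment problem at stage $j$ of the conjugated relation with $E$ containing (up to conjugacy) the $\psi^{\pm kN}$--image of the pushed-forward good factor $\phi'_1\cdots\phi'_{j-1}(\pi_1(S^g_j))$, apply Proposition~\ref{prop:good-and-bad} with that factor playing the role of $B'$ (conclusion~(1) being excluded since $E$ is not a containment problem for $\phi$), and conclude via the rank comparison that a good factor of rank $>\rank-5$ cannot have all of its conjugacy classes carried by the rank-$\leq 5$ bad subgroup -- exactly the paper's argument, which phrases this last step by exhibiting a single element $g_i\in G_i$ not carried by the bad subsurface. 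Your mid-proof direction worry resolves the way you indicate (equivalently, apply the proposition to $\psi^{-1}$, which is also a partial pseudo-Anosov supported on $S^g$), and your common-multiple choice of $k$ over the finitely many stages is just an explicit version of the uniformity the paper leaves implicit.
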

\begin{proof}
  Let $G_i$ be the fundamental group of the good subsurface at the
  $i$--th step of the relation, and note that it is a free
  factor. After conjugating the relation by $\psi^{-n}$, this good
  free factor becomes $\psi^{-n}G_i$.

  Now, recall that the intersection of the free factor $G_i$ with the
  bad subgroup $B$ of $\phi$ is a free factor of $B$.  In particular,
  since the rank of the bad subgroups is at most $5$, but the good
  free factor $G_i$ has rank strictly larger than $5$, it cannot be
  completely contained in $B$.  In other words, there is some element
  $g_i \in G_i$ which intersects the subsurface in which $\psi$ is
  supported.

  Now, apply Proposition~\ref{prop:good-and-bad} for the factor $E$,
  and $B'=G_i$. Since Conclusion~(1) of that proposition is impossible
  here (as $E$ is not a containment problem for $\phi$), we see that
  for large $n=kN$, the only classes contained in $\psi^nE$ and $G_i$
  are contained in the bad subsurface fundamental group $B$. Since
  $g_i$ is not contained in $B$, this shows that $E$ is not a
  containment problem.
\end{proof}
We are now ready for the proof of the central result of the section.
\begin{proof}[{Proof of Proposition~\ref{prop:solving-overlap}}]
  First, we prove the proposition for basic Nielsen moves.  We first
  apply Lemma~\ref{lem:preliminary-relation} to obtain the preliminary
  relation, and then Lemma~\ref{lem:short-relation} to each index it
  applies to. We then have a relation
  \[ \phi = \phi_1\cdots\phi_{i-1}(\rho_i \phi_i
  \rho_i^{-1})\phi_{i+1}\cdots\phi_r, \] which still may have overlap
  problems (in particular, since $\rho_i$ may have other, ``new''
  overlap problems, but at least these will be guaranteed to be
  outside the intersection $E \cap \phi_1\cdots\phi_{i-1}B_i$).
		
  Now, for all $N>0$, Lemma~\ref{lem:removing-inner-containment} shows
  that for conjugated relation
  \[ \phi = \psi^{kN}\circ\phi_1\cdots\phi_{i-1}(\rho_i \phi_i
  \rho_i^{-1})\phi_{i+1}\cdots\phi_r\circ\psi^{-kN} \] the factor $E$
  is not a containment problem at any stage.
		
  Hence, we can apply Corollary~\ref{cor:making-overlap-cyclic} to the
  inserted ``small relations'', further replacing them by conjugates
  of suitable powers of the associated pseudo-Anosov of $\phi_i$,
  yielding a relation of the form
  \[ \phi =
  \psi^{kN}\circ\phi_1\cdots\phi_{i-1}(\psi_i^{-M_i}\rho_i\psi_i^{M_i}
  \phi_i
  \psi_i^{-N}\rho_i^{-1}\psi_i^{N})\phi_{i+1}\cdots\phi_r\circ\psi^{-kN}. \]
  Since the preliminary relation had no containment problems at any
  stage, Lemma~\ref{lem:removing-inner-containment} can again be
  applied to guarantee that there replacements also do not have
  containment problems.
		 
  Furthermore, Corollary~\ref{cor:making-overlap-cyclic} implies that
  for this relation any overlap problems can only occur within the
  intersection of the bad factor $B_i$ of $\phi_i$ and the bad factor
  of the move $\rho_i$.  Now, by construction, there are no conjugacy
  classes that both of those factors have in common with
  $(\phi_1\cdots\phi_{i-1})^{-1}E$. Hence, this final relation indeed
  solves all containment and overlap problems.
		
  If we conjugate this relation by a further power of $\psi^k$, then
  Lemma~\ref{lem:removing-inner-containment} shows that in the
  resulting relation $E$ is still no containment problem at any stage,
  and Corollary~\ref{cor:making-overlap-cyclic} shows the same for
  overlap problems. This shows the proposition for
  basic Nielsen moves.
		
  \smallskip Now, let $\phi=\iota_x$ be a basic invert move. Since the
  subgroup generated by basic Nielsen moves is normal, for any product
  $\alpha$ of basic Nielsen moves there is a product of basic Nielsen
  moves $\beta$, so that
  \[ \iota_x = \alpha \iota_x \beta. \] By choosing $\alpha$ to be a
  large power of a pseudo-Anosov mapping class, we may assume that $\alpha^{-1}E$ is not a
  containment or overlap problem for $\iota_x$.
		 
  Now (similar to the proof of
  Proposition~\ref{prop:solving-containment}), by applying the current
  proposition for basic Nielsen moves, we can write
  \[ \alpha = \alpha_1\circ\cdots\circ \alpha_r, \beta =
  \beta_1\circ\cdots\circ\beta_s \] so that $E$ is not an overlap or
  containment problem at any stage of the first relation, and so that
  $(\alpha\iota_x)^{-1}E$ is not an overlap or containment problem at
  any stage of the second. The resulting relation
  \[ \iota_x = \alpha_1\circ\cdots\circ \alpha_r \iota_x
  \beta_1\circ\cdots\circ\beta_s \] then has the desired property.
\end{proof}
   
To prove Lemmas \ref{lem:preliminary-relation} and
\ref{lem:short-relation} which construct relations, we need to collect
some results on controlling the intersections between finitely
generated subgroups of free groups. These results are basically
standard (see \cite{Stallings}), but we present them in a form useful
for the checks below. Throughout, we denote by $R_n$ the rose labelled
by the elements of out chosen standard geometric basis $\mathcal{B}$.
We identify edge-paths in $R_n$ with words in $\mathcal{B}$.
    	
Suppose we are given a subgroup
\[ A = \langle \alpha_1, \ldots, \alpha_r \rangle \] where each
$\alpha_i$ is a reduced word in our fixed basis $\mathcal{B}$. We
denote by $R_A$ the subdivided rose labelled by the $\alpha_i$, and by
$f:R_A \to R_n$ the graph morphism inducing the inclusion of $A$ as a
subgroup of $F_n$ (recall that graph morphisms map vertices to
vertices, and edges to edges).
    	
Let $\Gamma_A$ be a graph obtained by folding from $R_A$, so that $f$
factors as
\[ R_A \stackrel{p_A}{\to} \Gamma_A \stackrel{g_A}{\to} R_n \] where
$g_A$ is an immersion.

\begin{defin}
  \begin{enumerate}
  \item A subword $w$ of one of the $\alpha_i$ is called a
    \emph{certificate in $\alpha_i$}, if there is an embedded path
    $\gamma_w \subset \Gamma_A$, which lifts to a path
    $\widetilde{\gamma}_i$ in $R_A$ contained in the petal
    corresponding to $\alpha_i$, and representing $w$.
  \item We say that a certificate is \emph{uncancellable} if
    $\gamma_w$ is disjoint from the images of all other petals
    $\alpha_j, j \neq i$ of $R_A$ under $p_A$.
  \item A reduced word $w$ in $\mathcal{B}$ is \emph{impossible in
      $A$}, if the corresponding path in $R_n$ does not lift to
    $\Gamma_A$ (equivalently, there is no path in $\Gamma_A$ labelled
    by $w$)
  \end{enumerate}
\end{defin}
    	
\begin{lem}[Dropping Generators -- Impossible
  Certificates]\label{lem:drop-turn-easy}
  Suppose that
  \[ A = \langle \alpha_1, \ldots, \alpha_r \rangle, \]
  \[ B = \langle \beta_1, \ldots, \beta_s \rangle \] are two subgroups
  (where the $\alpha_i, \beta_j$ are words in a common basis
  $\mathcal{B}$).
    		
  Suppose that $\tau$ is an uncancellable certificate in $\alpha_1$,
  which is impossible in $B$.  Then any conjugacy class contained in
  $A$ and $B$ is also contained in $\langle \alpha_2, \ldots, \alpha_r
  \rangle$
\end{lem}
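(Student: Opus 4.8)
The plan is to work inside the folded graphs $\Gamma_A$ and $\Gamma_B$, exploiting that $g_A\colon\Gamma_A\to R_n$ and $g_B\colon\Gamma_B\to R_n$ are immersions. Let $c$ be a conjugacy class conjugate into both $A$ and $B$; we may assume $c\neq 1$, and fix the cyclically reduced word $u$ in $\mathcal B$ representing $c$. Since $c$ is conjugate into $B=\pi_1(\Gamma_B)$ and $g_B$ is an immersion, $u$ is read by a cyclic reduced (hence immersed) loop in $\Gamma_B$; traversing it repeatedly, the bi-infinite periodic word $u^\infty$ is read by a bi-infinite immersed path in $\Gamma_B$. Consequently every finite subword of $u^\infty$ labels a path in $\Gamma_B$, i.e.\ is possible in $B$. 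Since $\tau$ is impossible in $B$, and reversing a path labelled $\tau^{-1}$ produces one labelled $\tau$, neither $\tau$ nor $\tau^{-1}$ occurs as a subword of $u^\infty$.

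Next I would examine the $A$-side. As $c$ is conjugate into $A=\pi_1(\Gamma_A)$ and $g_A$ is an immersion, $u$ is read by a cyclically reduced immersed loop $\ell$ in $\Gamma_A$, so $u^\infty$ is read by a bi-infinite periodic immersed path running along $\ell$. Let $\Gamma'=p_A(\bigcup_{j\geq 2}\mathrm{petal}_j)\subseteq\Gamma_A$; being a subgraph of a folded graph it is itself folded, and $\pi_1(\Gamma')=\langle\alpha_2,\dots,\alpha_r\rangle$. Let $E_1$ denote the set of edges of $\Gamma_A$ not lying in $\Gamma'$ — equivalently, the edges whose $p_A$-preimage meets only the petal of $\alpha_1$; by the definition of uncancellability every edge of $\gamma_\tau$ belongs to $E_1$. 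Two structural facts about $E_1$ drive the argument. First, any vertex of $\Gamma_A$ in the interior of $E_1$ (i.e.\ not in $\Gamma'$) lies only in the image of the $\alpha_1$-petal, which is a reduced loop reading $\alpha_1$, and therefore has valence exactly two in $\Gamma_A$. Second, since $A=\langle\alpha_2,\dots,\alpha_r,\alpha_1\rangle$ is generated by $\langle\alpha_2,\dots,\alpha_r\rangle$ together with one further element, $b_1(\Gamma_A)\le b_1(\Gamma')+1$; an Euler-characteristic count then forces $E_1$, together with its attaching vertices in $\Gamma'$, to form a single embedded arc (or loop) attached to $\Gamma'$, with valence-two interior, whose label $w_0$ is a subword of $\alpha_1$ containing $\tau$. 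In particular $\gamma_\tau$ is a subpath of this arc.

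Combining the two sides finishes the proof. Because the interior of $E_1$ consists of valence-two vertices and $\ell$ is immersed, the moment $\ell$ enters $E_1$ it is forced all the way across, reading $w_0^{\pm1}$ and hence $\tau^{\pm1}$; this would put $\tau$ or $\tau^{-1}$ into $u^\infty$, contradicting the first paragraph. Therefore $\ell$ stays inside $\Gamma'$, so $c=[\ell]$ is conjugate into $\pi_1(\Gamma')=\langle\alpha_2,\dots,\alpha_r\rangle$, as claimed. The step I expect to be the main obstacle is the second structural fact about $\Gamma_A$: one must track the folding carefully to see that the $\alpha_1$-exclusive edges assemble into a single arc attached to $\Gamma'$, and it is precisely here that uncancellability (disjointness of $\gamma_\tau$ from the images of the other petals) together with the rank count are used. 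Once that normal form for $\Gamma_A$ is established, the valence-two/no-backtracking dichotomy does the rest with no further calculation.
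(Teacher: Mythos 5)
Your first paragraph (no occurrence of $\tau^{\pm 1}$ in $u^{\infty}$ because $u$ is readable in the immersed graph $\Gamma_B$) is fine, and it matches the endgame of the paper's own argument. The genuine gap is exactly at the step you flag as the main obstacle, and it is not merely unproven: the claimed normal form for $\Gamma_A$ is false. The bound $b_1(\Gamma_A)\le b_1(\Gamma')+1$ does not force the $\alpha_1$--exclusive edges $E_1$ to form a single embedded arc or loop with valence-two interior, because attaching a ``lollipop'' (an arc ending on a circle) also raises $b_1$ by exactly one; such configurations occur whenever the private part of $\alpha_1$ is not cyclically reduced relative to $\Gamma'$. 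Concretely, take $\alpha_2=a$ and $\alpha_1=a\,t\,s\,t^{-1}$ with $a,s,t$ distinct basis letters: then $\Gamma_A$ is the $a$--loop at the basepoint with a $t$--edge to a new vertex $v$ and an $s$--loop at $v$, so $E_1=\{t\text{--edge},\,s\text{--loop}\}$ and $v\notin\Gamma'$ has valence three. Both your ``valence exactly two'' claim and the ``single arc'' claim fail, and with them your last step: the cyclically reduced loop representing the class of $t s t^{-1}$ runs only around the $s$--loop, so it enters $E_1$ without ever being forced across the edge carrying the certificate $\tau=t$ (which is edge-disjoint from the image of the $\alpha_2$--petal, hence uncancellable in the sense the paper uses in its applications, and impossible in $B=\langle s\rangle$). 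In fact in this configuration the class of $s$ lies in both $A$ and $B$ but not in $\langle a\rangle$, which shows that the crossing property you are trying to establish (every cyclically reduced loop meeting $E_1$ reads $\tau^{\pm1}$) cannot be deduced from the stated hypotheses alone; no Euler-characteristic count can fill this hole.

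A secondary unjustified assertion is $\pi_1(\Gamma')=\langle\alpha_2,\dots,\alpha_r\rangle$: folds forced by the $\alpha_1$--petal can identify points of the images of the other petals and strictly enlarge $\pi_1(\Gamma')$ (for instance $\alpha_1=a$, $\alpha_2=ac$, $\alpha_3=bc$ yields $\pi_1(\Gamma')=\langle a,b,c\rangle$); ruling this out again requires the existence of the uncancellable certificate together with exactly the careful tracking of the folding that you postpone, and your endgame (``$\ell\subset\Gamma'$, hence $c$ is conjugate into $\langle\alpha_2,\dots,\alpha_r\rangle$'') needs it. For comparison, the paper does not attempt any normal form: it takes a geodesic loop $\gamma$ in $\Gamma_A$ representing an element not conjugate into $\langle\alpha_2,\dots,\alpha_r\rangle$, asserts directly from uncancellability that $\gamma$ contains a subpath labelled $\tau$, and then concludes as in your first paragraph. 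That crossing assertion is precisely what your structural detour was meant to justify, and it is where your proposal breaks down.
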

        
\begin{proof}
  Let $x\in A$ be an element which is not conjugate into $\langle
  \alpha_2, \ldots, \alpha_r \rangle$.  Then, let $\gamma \subset
  \Gamma_A$ be a geodesic representing $x$. Since any loop
  representing $x$ in $R_A$ has to involve $\alpha_1$, and by
  definition of uncancellable certificate, $\gamma$ contains a subpath
  labelled by $\tau$. Thus, the geodesic $g_A(\gamma)$ contains a
  subpath $g_A(\tau)$ which, as $\tau$ is impossible for $B$, is in
  the image of no loop $\gamma' \subset \Gamma_B$ under $g_B$. This
  shows the claim.
\end{proof}
    	
We need a version of the dropping letters lemma which applies when $A$
and $B$ share a generator.
\begin{lem}[Dropping Generators -- Impossible Unique
  Followup]\label{lem:drop-turn-hard}
  We are given two subgroups
  \[ A = \langle \alpha_1, \ldots, \alpha_r, \delta_A \rangle \]
  \[ B = \langle \beta_1, \ldots, \beta_s, \delta_B \rangle, \] where
  the $\alpha_i, \delta_A, \beta_j, \delta_B$ are words in a fixed
  basis $\mathcal{B}$.
    		
  Suppose that
  \begin{enumerate}
  \item $\beta_1$ contains an uncancelable certificate $\tau$,
  \item the only path $\tau_A$ in $\Gamma_A$ which lifts to $\tau$ is
    contained within the geodesic representative $\overline{\delta_A}$
    of the image of $\delta_A$ in the immersed graph $\Gamma_A$,
  \item there is an uncancellable certificate $\tau'$ in $\delta_A$
    whose image immediately follows $\tau_A$,
  \item no path corresponding to a reduced word $\beta_1 b$ (for $b\in B$).
    lifts to a path starting with $\tau\tau'$.
  \end{enumerate}
  Then any conjugacy class in $A$ and $B$ is also contained in
  \[ B = \langle \beta_2, \ldots, \beta_s, \delta_B \rangle. \]
\end{lem}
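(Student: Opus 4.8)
The plan is to argue exactly as in the proof of Lemma~\ref{lem:drop-turn-easy}, but tracking how the shared generator $\delta$ can interfere with the ``uncancellable certificate'' argument. Let $x$ be a conjugacy class contained in both $A$ and $B$, and suppose for contradiction that $x$ is not conjugate into $\langle \beta_2,\ldots,\beta_s,\delta_B\rangle$. Represent $x$ by a cyclically reduced geodesic loop $\gamma_B\subset\Gamma_B$; since $x$ is not conjugate into $\langle \beta_2,\ldots,\beta_s,\delta_B\rangle$, every such loop must traverse the petal of $R_B$ corresponding to $\beta_1$, so by hypothesis~(1) (uncancellability of $\tau$) the loop $\gamma_B$ contains a subpath labelled by the word $\tau$. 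First I would record the location of this occurrence inside $\gamma_B$ and then push the whole picture down to $R_n$ via the immersion $g_B$, so we obtain a geodesic word $w$ representing $x$ (or a cyclic conjugate) that contains $\tau$ as a subword, arising from reading across the $\beta_1$ petal.

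Next I would pull $w$ back up to $\Gamma_A$: since $x\in A$, the word $w$ lifts to a loop $\gamma_A\subset\Gamma_A$, and in particular the subword $\tau$ lifts to a path $\tau_A$ in $\Gamma_A$. By hypothesis~(2), $\tau_A$ lies inside the geodesic representative $\overline{\delta_A}$; by hypothesis~(3), there is an uncancellable certificate $\tau'$ in $\delta_A$ whose image immediately follows $\tau_A$ along $\overline{\delta_A}$. The key point is that $\gamma_A$, being a path in $\Gamma_A$, cannot ``leave'' $\overline{\delta_A}$ at the end of $\tau_A$ unless it does so at a branch point of $\Gamma_A$; but because $\tau'$ is an uncancellable certificate, the image of $\overline{\delta_A}$ immediately past $\tau_A$ is disjoint from all other petals, hence has no branch point there, so $\gamma_A$ is forced to continue along $\overline{\delta_A}$ and read the letters of $\tau'$. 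Translating back into words: the geodesic word $w$ representing $x$ must contain $\tau$ immediately followed by $\tau'$. But the occurrence of $\tau$ we found came from crossing the $\beta_1$ petal of $R_B$, so in $w$ the letters of $\tau$ are immediately followed by the beginning of some word of the form $\beta_1 b$ with $b\in B$ (here using that $\gamma_B$ is a reduced loop, so after traversing $\beta_1$ it continues with some element of $B$). Hence the word $\beta_1 b$ lifts in $\Gamma_A$ to a path starting with $\tau\tau'$, contradicting hypothesis~(4).

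The main obstacle I anticipate is the bookkeeping at the seam between the two graphs: one has to be careful that ``the occurrence of $\tau$ produced by crossing the $\beta_1$ petal'' in $\gamma_B$ really does sit at a spot in the geodesic word $w$ where what follows is an honest suffix of the form (tail of $\beta_1$)$\cdot b$ with $b\in B$ reduced, rather than being cancelled against a neighbouring petal --- this is exactly why hypothesis~(1) insists $\tau$ be \emph{uncancellable} in $\Gamma_B$. Dually, one must make sure that the forced continuation along $\overline{\delta_A}$ genuinely appears in the \emph{reduced} word $w$ and is not an artifact of backtracking; this is guaranteed by $\tau'$ being an uncancellable certificate in $\delta_A$ together with choosing $\gamma_A$ to be the lift of the already-reduced $w$. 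Once these two ``no hidden cancellation'' points are nailed down, the contradiction with hypothesis~(4) is immediate, and the lemma follows.
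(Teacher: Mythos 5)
Your argument is correct and follows essentially the same route as the paper's proof: force $\tau$ to appear in the reduced word for $x$ via the $\beta_1$ petal in $\Gamma_B$, use hypotheses (2) and (3) to force the continuation $\tau\tau'$ on the $\Gamma_A$ side, and then contradict (4) using the $B$-side description of what follows $\tau$. Your extra justification of why the loop in $\Gamma_A$ cannot branch off after $\tau_A$ (uncancellability of $\tau'$ keeping the other petals away from that vertex) is a detail the paper leaves implicit, and it is consistent with its proof.
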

\begin{proof}
  The proof is very similar to the previous one.  Let $x\in B$ be an
  element which is not conjugate into $\langle \beta_2, \ldots,
  \beta_s, \delta_B \rangle$.  Then, let $\gamma \subset \Gamma_B$ be
  a geodesic representing $x$. Since any loop representing $x$ in
  $R_B$ has to involve $\beta_1$, and by definition of uncancellable
  certificate, $\gamma$ contains a subpath labelled by $\tau$.  Now,
  suppose $\gamma' \subset \Gamma_A$ is a geodesic representing the
  same conjugacy class $x$. Then, $\gamma'$ contains a subpath
  labelled by $\tau$, and by (2) this occurs in $\overline{\delta_A}$
  and is followed by $\tau'$. By uncancelability, $\tau'$ also follows
  $\tau$ in the loop $\gamma$ -- which contradicts (4).
\end{proof}
    	
Finally, we need the following well-known fact.
\begin{lem}[Intersecting with factors]\label{lem:whitehead}
  Let
  \[ \partial = \prod_{i=1}^g [\hat{a}_i, a_i^{-1}] \] or
  \[ \partial = (n\hat{b}b\hat{b}^{-1}nb)\prod_{i=1}^g [\hat{a}_i,
  a_i^{-1}] \] be the boundary of the surface. Then $\partial$ is
  contained in no proper free factor of the free group.
\end{lem}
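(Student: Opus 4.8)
The plan is to recognise $\partial$, in both the orientable and the non-orientable case, as the relator of the associated \emph{closed} surface $\hat\Sigma$ in its standard CW-structure: a single $0$-cell, the elements of the standard geometric basis $\mathcal{B}$ as $1$-cells, and one $2$-cell. Indeed, the cyclic-order conditions~(3) and~(5) of Section~\ref{sgb} are precisely the attaching map of this $2$-cell, read around the link of the $0$-cell. Two features that the argument uses: $\partial$ is cyclically reduced with respect to $\mathcal{B}$, and every basis letter occurs in $\partial$ together with its inverse (this is exactly what rules out $\partial$ lying, for trivial reasons, in the free factor spanned by the letters it misses).

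Next I would compute the Whitehead graph $\mathrm{Wh}_{\mathcal B}(\partial)$ with respect to $\mathcal{B}$. By the previous paragraph it is literally the link of the unique $0$-cell of $\hat\Sigma$; since $\hat\Sigma$ is a closed surface, this link is a single circle. Hence $\mathrm{Wh}_{\mathcal B}(\partial)$ is a $2n$-cycle, in particular connected and with no cut vertex. (One can also verify this directly from the displayed words; the topological description just makes it uniform across all cases.)

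Finally I would invoke Whitehead's algorithm in the following classical form (see \cite{Stallings}, or Whitehead's original papers): if the Whitehead graph of a cyclically reduced word $w$, with respect to some basis, is connected and has no cut vertex, then $w$ is contained in no proper free factor of $F_n$. Briefly: the no-cut-vertex hypothesis, via the cut-vertex lemma and peak reduction, forces $w$ to have minimal cyclic length in its $\mathrm{Aut}(F_n)$-orbit; and a word of minimal cyclic length lies in a proper free factor if and only if its Whitehead graph is disconnected. Applying this with $w=\partial$ and the basis $\mathcal{B}$ completes the proof.

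The argument is in essence a citation of Whitehead's algorithm, so I do not expect a serious obstacle; the only genuine content is the identification of $\mathrm{Wh}_{\mathcal B}(\partial)$ with the vertex link in the second step, which is immediate from the definition of a standard geometric basis, together with a moment's bookkeeping in the non-orientable case to confirm that the displayed word is genuinely the full relator. As an alternative more in the spirit of this section, one could argue topologically: a non-trivial free splitting $F_n = A * B$ of $\pi_1(\Sigma)$ with $\partial$ elliptic would be dual to a non-empty system of essential properly embedded arcs, and a pushed-in copy of $\partial\Sigma$ necessarily crosses any such arc, contradicting ellipticity. I would nonetheless prefer the Whitehead-graph proof, since the arc analysis (especially for non-separating arcs) is more delicate.
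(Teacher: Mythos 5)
Your proposal is correct and is essentially the paper's own argument: the paper's proof is precisely the observation that the Whitehead graph of $\partial$ (in either case) is a single loop, hence connected with no cut point, so Whitehead's algorithm shows $\partial$ lies in no proper free factor. Your additional identification of the Whitehead graph with the vertex link of the closed surface, and the sketched alternative via arc systems, are just more detailed elaborations of the same route.
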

\begin{proof}
  The claim follows immediately from Whitehead's algorithm
  \cite{Whitehead}, since the Whitehead graph for $\partial$ is a
  single loop in both cases, and therefore has no cut point.
\end{proof}
    
We are now ready to prove the lemmas.
\begin{proof}[{Proof of Lemma~\ref{lem:preliminary-relation}}]
  The construction depends on the nature of the involved letters (one-
  or two-sided, linked with the one-sided or not; as in
  Section~\ref{sec:moves}) of the basic move $\phi$.
          
  Here, we discuss the case of $\phi=\rho_{x,y}$ on an orientable
  surface in detail. The computations for the other cases follow the
  same general approach; we have collected the details in
  Appendix~\ref{sec:check}.
    		
  For ease of notation in this construction, we assume that the order
  of loops in the basis is
  \[ \hat{x}, x, \hat{y}, y, \hat{a}_3, a_3, \ldots \] Thus, the bad
  subgroup is $B=\langle
  y,\hat{y},x^{-1}\hat{x}x,\partial_{\hat{y}}\rangle$, where
  $\partial_{\hat{y}}$ denotes the cyclic permutation of the boundary
  word
  \[ \partial = [\hat{x}, x^{-1}][\hat{y}, y^{-1}]\prod^g_{i>2}
  [\hat{a}_i, a_i^{-1}] \] starting at $\hat{y}$.
    	
  We use the relation
  \[
  \rho_{x,y}=\rho_{\hat{y},u}^{-1}\rho_{y,z}^{-1}\rho_{x,y}\rho_{x,z}\rho_{y,z}\rho_{\hat{y},u}, \]
  where $u = a_4, z = a_6$ and $g\geq 7$.
    	
  \begin{enumerate}[a)]
  \item $\rho_{\hat{y},u}^{-1}$ has bad subgroup
    \[ A= \langle u,\hat{u}, y, \partial_{y^{-1}} \rangle,\] where
    $\partial_{y^{-1}}$ is the cyclic permutation of $\partial$
    beginning with $y^{-1}$. We need to intersect this subgroup with
    \[ B=\langle y,\hat{y},x^{-1}\hat{x}x, \partial_{\hat{y}}
    \rangle \] We begin by finding graphs which immerse into the rose
    with petals corresponding to the basis $\mathcal{B}$, and which
    represent $A$ and $B$.

    We begin with $A$. Here, the starting point is a rose with four
    petals corresponding to the four generators $u,\hat{u},
    y, \partial_{y^{-1}}$. This is not yet immersed, as the petal
    corresponding to $\partial_{y^{-1}}$ begins and ends with segments
    $y^{-1}\hat{y}^{-1}y$ and $\hat{y}$ which can be folded over the
    other petals. The resulting folded petal $\partial_A$ starts with
    $\hat{a}_3$ and ends with $a_g$.  Hence, this resulting graph
    immerses (compare the left side of
    Figure~\ref{fig:immersed-graphs}).

    The immersed graph for $B$ is similarly obtained by first folding
    the first and last segment of the petal labeled by
    $x^{-1}\hat{x}x$ together, and then folding the initial commutator
    $[\hat{y},y^{-1}]$ and last segment $x^{-1}\hat{x}x$ of
    $\partial_{\hat{y}}$ over the rest. We denote by $\partial_B$ the
    image of this folded petal; note that it is still based at the
    same point (compare the right side of
    Figure~\ref{fig:immersed-graphs}).

    \begin{figure}
      \includegraphics[width=0.7\textwidth]{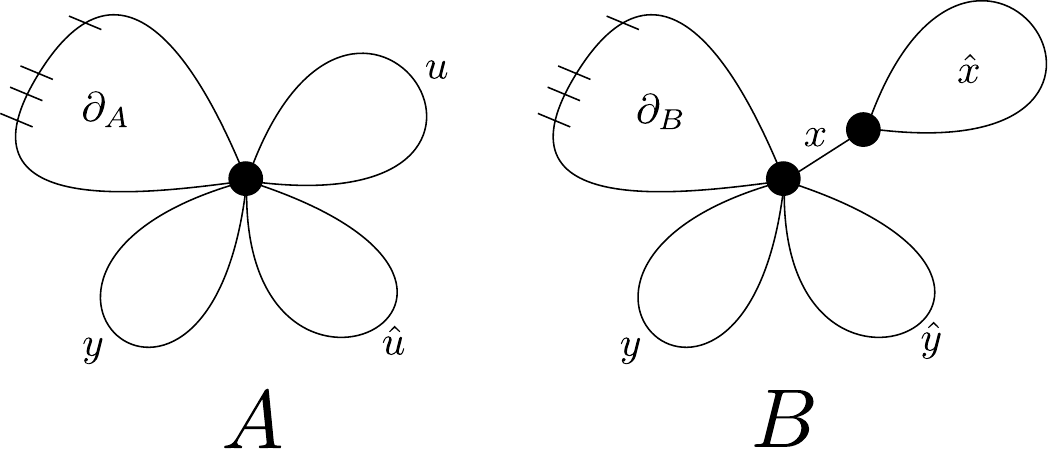}
      \caption{The immersed graphs for the intersection in step a)}
      \label{fig:immersed-graphs}
    \end{figure}
    To compute the conjugacy classes in the intersection of these
    groups, we begin by using Lemma~\ref{lem:drop-turn-hard} with
    $\tau = u$ as the input path. Observe that it is indeed
    uncancellable in $A$, and appears in $B$ only in the petal
    $\partial_B$.

    Since $u = a_4$ and the rank is at least $6$, the petal
    $\partial_B$ will contain a subpath labelled $[\hat{a}_4,
    a_4^{-1}][\hat{a}_5, a_5^{-1}]$. We let $\tau'$ be the
    (uncancellable) path $a_4[\hat{a}_5, a_5^{-1}]$ following $u$ in
    this subpath.

    Observe that this it is impossible to achieve such a path in $A$
    starting with $u$, since $a_5$ appears only in the interior
    $\partial_A$. Hence, Lemma~\ref{lem:drop-turn-hard} applies, and
    any conjugacy class contained in $A$ and $B$ is in fact also
    contained in
    \[ A' = \langle \hat{u}, y, \partial_{y^{-1}} \rangle. \] Hence,
    we now aim to compute the intersection of $A'$ and $B$ using the
    same method. The immersed graph for $A'$ is obtained by simply
    deleting the petal labeled $u$ from the graph for $A$.  We can
    then argue exactly as above (with the input path $\tau = \hat{u}$)
    to also drop the generator $\hat{u}$, and find that any conjugacy
    class common to $A$ and $B$ is also contained in
    \[ A'' = \langle y, \partial_{y^{-1}} \rangle. \] Observe that
    this rank-$2$ group is indeed contained in both $A$ and $B$, and
    so it is the full intersection. Since it has the desired form, we
    are done with this step.
   			
  \item $\rho_{y,z}^{-1}$ has bad subgroup $A_2 = \langle
    z,\hat{z},y^{-1}\hat{y}y, \partial_{\hat{a}_3}\rangle$, which we
    need to intersect with
    \[ \rho_{\hat{y},u}B=\langle y,\hat{y}u,x^{-1}\hat{x}x,
    \rho_{\hat{y},u}\partial_{\hat{y}}\rangle. \]

    For this intersection, we need to take some care of the order of
    simplifications. We begin by observing that the path $\hat{y}u$,
    which corresponds to a petal of the immersed graph of
    $\rho_{\hat{y},u}B$ is impossible in $A_2$ -- the only generator
    which contains $u=a_4$ at all is $\partial_{\hat{a}_3}$, and there
    it is never directy adjacent to $y$. Hence, by
    Lemma~\ref{lem:drop-turn-easy} we may replace $\rho_{\hat{y},u}B$
    by
    \[ \langle y,x^{-1}\hat{x}x,
    \rho_{\hat{y},u}\partial_{\hat{y}}\rangle. \] Now, we can further
    remove $\rho_{\hat{y},u}\partial_{\hat{y}}$, as it also contains
    $\hat{y}u$ as a subword (observe that this would have been
    impossible as the first step, since this subword was folded over
    the petal $\hat{y}u$ in the original immersed graph). Now, we need
    to compare
    \[ \langle y,x^{-1}\hat{x}x\rangle \quad\mbox{ and } A_2 = \langle
    z,\hat{z},y^{-1}\hat{y}y, \partial_{\hat{a}_3}\rangle.\] From the
    latter, we can drop $\partial_{\hat{a}_3}$ since it clearly
    contains uncancellable subwords which are impossible in the former
    (again, using Lemma~\ref{lem:drop-turn-easy}). Then, it is easy to
    see that the remaining groups have no conjugacy classes in common
    (by drawing immersed graphs representing them, or further applying
    Lemma~\ref{lem:drop-turn-easy}).
    		
  \item $\rho_{x,y}$ has bad subgroup $B=\langle
    y,\hat{y},x^{-1}\hat{x}x,\partial_{\hat{y}}\rangle$ and we need to
    intersect with
    \[ \rho_{y,z}\rho_{\hat{y},u}B=\langle yz,
    \hat{y}u,x^{-1}\hat{x}x,
    \rho_{y,z}\rho_{\hat{y},u}\partial_{\hat{y}} \rangle. \] The
    argument is similar to b). We first focus on the generators $yz,
    \hat{y}u$ of $\rho_{y,z}\rho_{\hat{y},u}B$. Using
    Lemma~\ref{lem:drop-turn-easy} we can drop these in order to
    compute the intersection (as these certificates are impossible
    $B$). After that is done, we can then also further drop
    $\rho_{y,z}\rho_{\hat{y},u}\partial_{\hat{y}}$ from
    $\rho_{y,z}\rho_{\hat{y},u}B$ using Lemma~\ref{lem:drop-turn-easy}
    again, as $yz$ or $\hat{y}u$ are now certificates (after the
    previous step, these survive in the immersed graph), which are
    impossible in $B$. Hence, the intersection is $\< \hat{x} \>$.

  \item $\rho_{x,z}$ has bad subgroup $A_3 = \langle
    z,\hat{z},x^{-1}\hat{x}x,\partial_{\hat{y}}\rangle$, and we need
    to intersect with
    \[
    \rho_{x,y}^{-1}\rho_{y,z}\rho_{\hat{y},u}B=\langle
    yz,\hat{y}u,yx^{-1}\hat{x}xy^{-1},
    \rho_{x,y}^{-1}\rho_{y,z}\rho_{\hat{y},u}\partial_{\hat{y}}\rangle.\]
    We begin by dropping $\hat{y}u$ from the latter, since it is
    impossible in $A_3$. Afterwards, we can also drop
    $\rho_{x,y}^{-1}\rho_{y,z}\rho_{\hat{y},u}\partial_{\hat{y}}$
    (since it also contains $\hat{y}u$, and this subpath is now
    certainly not folded over anymore, as above). After this, we can
    remove $\partial_{\hat{y}}$ from $A_3$ since it contains (many)
    subpaths which are impossible in the other group. At this stage,
    we need to compare
    \[ \langle z,\hat{z},x^{-1}\hat{x}x\rangle \quad\mbox{ and }\quad
    \langle yz,yx^{-1}\hat{x}xy^{-1}\rangle,\] whose intersection is
    clearly $\<\hat{x}\>$.

  \item $\rho_{y,z}$ has bad subgroup $A_2 = \langle
    z,\hat{z},y^{-1}\hat{y}y, \partial_{\hat{a}_3}\rangle$ and we need
    to intersect with \[
    \rho_{x,z}^{-1}\rho_{x,y}^{-1}\rho_{y,z}\rho_{\hat{y},u}B =
    \langle yz,\hat{y}u,x^{-1}\hat{x}x,
    \rho_{x,z}^{-1}\rho_{x,y}^{-1}\rho_{y,z}\rho_{\hat{y},u}\partial_{\hat{y}}
    \rangle.\] As before, we start by removing $\hat{y}u$ from the
    latter, then the boundary word from both. The remaining
    intersection between
    \[ \langle z,\hat{z},y^{-1}\hat{y}y\rangle \quad\mbox{ and
    }\quad\langle yz,x^{-1}\hat{x}x\rangle. \] is trivial.

  \item Finally, $\rho_{\hat{y},u}$ has bad subgroup $A = \langle
    u,\hat{u}, y, \partial_{y^{-1}}\rangle$, which we intersect with
    \[\rho_{y,z}^{-1}\rho_{x,z}^{-1}\rho_{x,y}^{-1}\rho_{y,z}\rho_{\hat{y},u}B=\langle
    y,\hat{y}u,x^{-1}\hat{x}x ,
    \rho_{y,z}^{-1}\rho_{x,z}^{-1}\rho_{x,y}^{-1}\rho_{y,z}\rho_{\hat{y},u}\partial_{\hat{y}}\rangle\]
    to find in $\langle y\rangle$ (arguing as before).
  \end{enumerate}
  The relation for $\lambda_{x,y}$ is similar, with $\rho$ changed to
  $\lambda$. The case where either $x$, $y$ or both are ``hatted
  letters'' is also analogous.
\end{proof}

\begin{proof}[{Proof of Lemma~\ref{lem:short-relation}}]
  As in the previous lemma, the details vary depending on the nature
  of $\phi$, and the construction is explicit. In contrast to the
  previous lemma, the arguments are straightforward here, and we only
  give the details for the case discussed in the proof of
  Lemma~\ref{lem:preliminary-relation}. The letters below indicate the
  terms in the relation constructed in that proof.
  \begin{enumerate}[a)]
  \item We perform $\lambda_{y,w}$ before this move and
    $\lambda_{y,w}^{-1}$ after.  Note that these moves indeed commute
    with $\rho_{\hat{y},u}^{-1}$.

    The bad subgroup of $\lambda_{y,w}$ is $\langle \hat{y}, w,
    \hat{w}, \partial_y \rangle$. We want to compute the intersection
    with the rank $2$ intersection group from step a) of the previous
    lemma, i.e. with $\langle y, \partial_{y^{-1}}\rangle$. Using
    e.g. Lemma~\ref{lem:drop-turn-hard} we can see that the
    intersection of these two is in fact
    $\langle\partial_{y^{-1}}\rangle$. By Lemma~\ref{lem:whitehead}
    $\langle\partial_{y^{-1}}\rangle$ intersects $E$ trivially, and so
    (1) holds as claimed.

    Finally, as $yw$ is not bad for $\rho_{\hat{y},u}^{-1}$, claim (2)
    holds.
  \item No need
  \item We perform $\rho_{\hat{x},w}$ before this move and
    $\rho_{\hat{x},w}^{-1}$ after.  Note they commute with
    $\rho_{x,y}$, and that $\hat{x}$ is not bad for
    $\rho_{\hat{x},w}$.  As $\hat{x}w$ is not bad for $\rho_{x,y}$,
    the conclusion holds.
  \item No need
  \item No need
  \item This is analogous to a).
  \end{enumerate}
\end{proof}
    
\section{Proof of Theorem \ref{thm:main}} \label{sec:contract} Before
proving the main theorem, we establish the main ingredient, that the
set of arational surface type elements of $\bout$ (even in different
copies of \pml) is path connected. Note that the last sentence of
Theorem \ref{thm:surface case} is used with Proposition
\ref{prop:chain} to prove Theorem \ref{thm:main}.
\begin{thm}\label{thm:surface case}If $x_s$ and $x_e$ are dual to uniquely ergodic (or, in the nonorientable case, elements of $\mathcal{P}$) surface type elements of \bout then there exists $p:[0,1] \to \bout$ continuous so that $p(t)$ is arational for all $t\in[0,1]$. Moreover, for any $\epsilon>0$ and combinatorial chain of $\pml$s from $x_s$ to $x_e$ we may assume that this path is in an $\epsilon$ neighborhood of that chain. 
\end{thm}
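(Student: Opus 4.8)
The plan is to build the path $p$ as a uniform limit of a Cauchy sequence of \blas paths $p_0, p_1, p_2, \ldots$, each avoiding more and more of the sets $K_A$. Recall from Proposition~\ref{prop:the K} that the arational trees are exactly the complement of $\bigcup_{A\in\F} K_A$, where $\F$ is a countable set; enumerate it as $K_1, K_2, \ldots$. The base case is provided by Proposition~\ref{prop:BLAS}: first reduce to the case where $x_s, x_e$ are dual to uniquely ergodic (resp.\ $\mathcal{P}$-type) laminations using Lemma~\ref{surfaces dense} (perturb slightly within a prescribed chain of \pmls), then apply Proposition~\ref{prop:BLAS} to get $p_0$, a \blas path through the prescribed chain. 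By construction $p_0$ is arational except at finitely many points $B_{p_0}$, each of which is the dual tree of a stable foliation of a partial pseudo-Anosov supported on a subsurface.

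\textbf{The inductive step.} Suppose we have a \blas path $p_j$ avoiding $K_1, \ldots, K_j$, with finitely many non-arational points $B_{p_j}$ of the controlled type. To produce $p_{j+1}$ avoiding also $K_{j+1} = K_E$ for some free factor $E$: the arational portion of $p_j$ already avoids $K_E$ (arational trees are outside every $K_A$), so we only need to fix neighborhoods of the finitely many points $T\in B_{p_j}$. Near such a $T$, the \blas path looks like $\bigcup_k \psi_T^k\xi_i$ on each side, converging to $T$ itself, which is dual to the stable lamination of $\psi_T$ on some subsurface $S$. We replace a small segment of $p_j$ around $T$ by applying Theorem~\ref{thm:avoid}: this gives a \blas path $q$ joining two nearby points (lying in the two adjacent \pmls), with the key features that (Property~\ref{C:avoid}) $\psi_T^{mr}(q)\cap K_E = \emptyset$ for all $r\geq 0$, and (Property~\ref{C:contract}) $\psi_T^k(q)$ converges to the stable lamination of $\psi_T$ as $k\to\infty$. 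We then splice $\psi_T^{N}(q)$ (for suitable large $N=mr$) into $p_j$, replacing the old segment near $T$; Property~(2) guarantees this replacement is arbitrarily close to $T$ (so $p_{j+1}$ stays within $\epsilon$ of the original chain, and close to $p_j$), and Property~(1) guarantees it avoids $K_{j+1}$. Since $\psi_T^N(q)$ agrees with the old \blas structure outside a small set (the endpoints of $q$ can be taken on the old path, dual to uniquely ergodic laminations in the adjacent \pmls), doing this simultaneously near every point of $B_{p_j}$ produces a genuine \blas path $p_{j+1}$. Because the modifications can be made as small as we like in the Hausdorff metric (shrinking the splicing neighborhoods), $p_{j+1}$ still avoids $K_1, \ldots, K_j$ and we maintain $\dist(p_{j+1}, p_j) < 2^{-j}$, say.

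\textbf{Taking the limit.} The sequence $p_j$ is then Cauchy (in the sup metric on maps $[0,1]\to\bout$, after suitable reparametrization), so it converges uniformly to a continuous path $p_\infty$. For any fixed $N$, all $p_j$ with $j\geq N$ avoid $K_N$, and since $K_N$ is closed and the convergence is uniform, $p_\infty$ also avoids $K_N$; hence $p_\infty$ avoids $\bigcup_N K_N$, i.e.\ $p_\infty$ is arational at every point. The $\epsilon$-neighborhood statement follows since each $p_j$ was kept in an $\epsilon$-neighborhood of the prescribed chain, and this is a closed condition preserved under uniform limits. Finally, to handle the case where $x_s$ or $x_e$ is itself only \emph{of surface type} but not uniquely ergodic, note these are vertices of their simplices $\Delta_{x_s}$, $\Delta_{x_e}$; the \blas paths we build limit onto (a point close to) such trees because at the relevant \pml the construction of Proposition~\ref{prop:BLAS} is carried out inside a shrinking neighborhood of the stable foliation, but here we instead terminate the chain at $x_s$ directly — the endpoint continuity is exactly the last sentence of the statement, and is arranged by controlling the neighborhood in which the construction takes place.

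\textbf{The main obstacle} is the inductive step — specifically, verifying that the spliced path $\psi_T^N(q)$ genuinely fits together with $p_j$ into a new \blas path while simultaneously (a) not spoiling the avoidance of $K_1, \ldots, K_j$ (handled by the contraction Property~(2), which forces the new segment near $T$, combined with openness of the complement of the compact sets $K_1,\dots,K_j$ and the already-arational status of $T$), and (b) achieving avoidance of $K_{j+1}$ via Property~(1). The delicate point, which is really the content of Theorem~\ref{thm:avoid} and hence of Section~\ref{sec:avoid}, is that the detour $q$ must be chosen so that \emph{all} of its finitely many partial-pseudo-Anosov points stay outside $K_E$ — this is where Lemma~\ref{lem:leaving the K with pAs} and the relation-surgery of Propositions~\ref{prop:solving-containment} and~\ref{prop:solving-overlap} are used. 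Here we may invoke all of that machinery as a black box, so the remaining work is the bookkeeping of the limiting argument and the reduction of general surface-type endpoints to uniquely ergodic ones.
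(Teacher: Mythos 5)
Your overall strategy is the paper's: start from a \blas path provided by Proposition~\ref{prop:BLAS}, repeatedly splice in the contracted detours $\psi_T^{N}(q)$ from Theorem~\ref{thm:avoid} near the finitely many non-arational points (this inductive step is exactly Proposition~\ref{prop:improve BLAS}), and pass to a uniform limit. However, your limiting step has a genuine gap. From ``all $p_j$ with $j\geq N$ avoid $K_N$'' together with uniform convergence you conclude that $p_\infty$ avoids $K_N$ ``since $K_N$ is closed''; that inference is false: a uniform limit of paths each missing a closed set can perfectly well meet that set, because each $p_j$ only has \emph{some} positive distance to $K_N$, and these distances may tend to $0$ as $j\to\infty$. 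Your perturbation budget $\dist(p_{j+1},p_j)<2^{-j}$ is fixed in advance and is not tied to the actual distances $\dist(p_j([0,1]),K_i)$ for $i\le j$, so nothing prevents the total drift $\sum_{j\ge N}2^{-j}$ from exceeding $\dist(p_N([0,1]),K_N)$ and pushing $p_\infty$ into $K_N$.

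The fix is precisely the bookkeeping the paper carries out: after arranging $p_i([0,1])\cap K_j=\emptyset$, record $\epsilon_j>0$ with $\dist(p_i([0,1]),K_j)>\epsilon_j$ (possible since $p_i([0,1])$ is compact and $K_j$ closed), and then force every later perturbation to be a definite small fraction of \emph{all} previously recorded $\epsilon_j$; the paper takes $\epsilon_{i+1}=3^{-(i+1)}\min_{j\le i}\epsilon_j$, which yields inequality~\eqref{eq:distance} and hence $\dist(p_\infty([0,1]),K_j)\ge \tfrac12\epsilon_j>0$ for every $j$. With that modification your argument coincides with the paper's proof. A smaller remark: your closing discussion of endpoints that are of surface type but not dual to uniquely ergodic laminations is too vague to count as an argument (``arranged by controlling the neighborhood'' is not a proof of endpoint continuity). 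The paper does not resolve this inside Theorem~\ref{thm:surface case} either; it applies Proposition~\ref{prop:BLAS} directly and handles general limiting endpoints only in the proof of Theorem~\ref{thm:main}, via Lemma~\ref{surfaces dense} and Proposition~\ref{prop:chain}. So you should either keep the unique-ergodicity (or $\mathcal{P}$) hypothesis on $x_s,x_e$ here, or route your reduction through that later argument rather than perturbing the endpoints inside this theorem.
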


\begin{prop}\label{prop:improve BLAS}Let $p:[0,1]\to \bout$ be a \blas
  path, $K_E$ as in Proposition~\ref{prop:the K}, and $\epsilon>0$ be
  given. There exists $p':[0,1] \to \bout$ so that
\begin{enumerate}
\item the distance from $p(x)$ to $p'(x)$ is at most $\epsilon$ for all
  $x \in [0,1]$,
\item $p'([0,1]) \cap K_E=\emptyset$.
\end{enumerate}
\end{prop}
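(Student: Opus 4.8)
The plan is to improve the given \blas path $p$ on a small neighbourhood of each of its finitely many non-arational points, one at a time, using Theorem~\ref{thm:avoid}. Recall $p$ has a finite bad set $B_p = \{T_1, \ldots, T_m\}$, where each $T_j$ is the dual tree to the stable lamination of a partial pseudo-Anosov $\psi_j$ supported on the good subsurface of an adjusted move relating the two copies of $\pml$ meeting at $T_j$. Away from $B_p$ the path is already arational, hence automatically disjoint from $K_E$ by Proposition~\ref{prop:the K}; so all the work is near the points of $B_p$. Fix $j$, and choose a small neighbourhood $\mathcal{N}(T_j)$ of $T_j$ as in the definition of \blas path, small enough that it is disjoint from the (finitely many) other points of $B_p$; let $\gamma_1, \gamma_2$ be the two components of $\mathcal{N}(T_j) \cap p([0,1]) \setminus T_j$, with $\gamma_i = \bigcup_{k\ge 0}\psi_j^k \xi_i$.

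The key step is to replace the subpath $\xi_1 \ast \{T_j\} \ast \xi_2$ (suitably, the part of $p$ inside a slightly smaller neighbourhood of $T_j$) by the \blas path $q$ provided by Theorem~\ref{thm:avoid}, applied to the factor $E$, the two adjacent copies $\pml_{\sigma_1}, \pml_{\sigma_2}$ meeting at $T_j$, the (uniquely ergodic, or $\mathcal{P}$--type) endpoints of the old subpath, and the partial pseudo-Anosov $\psi_j$. Theorem~\ref{thm:avoid} gives a \blas path $q$ joining those two endpoints with a number $m>0$ such that $\psi_j^{mr}(q[0,1]) \cap K_E = \emptyset$ for all $r\ge 0$, and such that $\psi_j^k(q[0,1])$ converges to $T_j$ as $k\to\infty$. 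Now form the new path by concatenating: $\psi_j^{mr}(q)$ together with its forward $\psi_j$--orbit on each side, spiralling into $T_j$. Precisely, on one side glue in $\bigcup_{k\ge 0}\psi_j^{mr+k}(\xi_i')$ where $\xi_i'$ is an initial segment of $q$ whose forward orbit tiles the half-path; on the other side do the same; and keep $T_j$ itself as the (still non-arational, but now controlled) junction point, noting that $\psi_j^k(q[0,1]) \to T_j$ guarantees continuity there. Every non-arational point of the modified path is now either $T_j$ or one of the points of $\psi_j^{mr+k}(q)$, and the latter are dual to stable laminations of partial pseudo-Anosovs, hence arational, hence outside $K_E$; and the former — the images $\psi_j^{mr+k}(q[0,1])$ for $k$ a multiple of $m$ — are outside $K_E$ by Property~(\ref{C:avoid}) of Theorem~\ref{thm:avoid}, while for the remaining values of $k$ one absorbs them by noting $\psi_j^{mr+k} = \psi_j^{k}\circ \psi_j^{mr}$ and choosing the tiling segments $\xi_i'$ to come from $\psi_j^{mr}(q)$ with $mr \equiv 0$, i.e. arranging that only multiples of $m$ occur. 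Finally, by taking $r$ large (using Property~(\ref{C:contract}), convergence to $T_j$) the entire replacement is contained in an arbitrarily small neighbourhood of $T_j$, hence within $\epsilon$ of the original $p$; doing this for each $j\in\{1,\ldots,m\}$ in turn produces the desired $p'$ with $p'([0,1])\cap K_E = \emptyset$ and $\mathrm{dist}(p(x),p'(x))\le \epsilon$ for all $x$.

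The main obstacle is bookkeeping the indices so that \emph{every} non-arational point of the new path lands outside $K_E$: Theorem~\ref{thm:avoid} only guarantees this for $\psi_j^k(q)$ with $k$ a multiple of $m$, so the spiralling tiles must be set up as $\psi_j^{km}(\xi_i')$ with $\xi_i'$ an initial piece of $\psi_j^{m r_0}(q)$ for a fixed large $r_0$, and one must check that concatenating these tiles with the (unchanged) tails of the old path $\gamma_i = \bigcup \psi_j^k\xi_i$ still yields a genuine \blas path — i.e. that the new junction points also have the required two-sided spiralling structure. A secondary subtlety, glossed over above, is that $q$ travels through a whole chain of $\pml$s rather than a single adjacency, so the modified path has more bad points than before; but this is harmless, as the inductive structure in the proof of Theorem~\ref{thm:main} only needs the $K_E$--avoidance and the \blas property, both of which are preserved, and the new bad points are themselves surface-type and will be handled at later stages of the induction there.
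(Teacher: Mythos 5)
Your overall framework (reduce to one bad point $T_j$, invoke Theorem~\ref{thm:avoid} with the partial pseudo-Anosov $\psi_j$, push the replacement deep into the spiral so it stays $\epsilon$--close to $p$) matches the paper's strategy, but the actual construction you propose has a genuine gap: you keep $T_j$ on the modified path as a ``junction point'' into which the new tiles spiral. The whole difficulty the proposition must address is that $T_j$, being dual to the stable lamination of a partial pseudo-Anosov supported on a proper subsurface, is \emph{not} arational and may very well lie in $K_E$ (this is exactly what Lemma~\ref{lem:leaving the K with pAs} characterizes, and it is the case the detour exists for). If $T_j\in K_E$, your $p'$ still meets $K_E$ at $T_j$, so property (2) fails; if $T_j\notin K_E$, no modification was needed in the first place. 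Related to this, your assertion that the new non-arational points are ``dual to stable laminations of partial pseudo-Anosovs, hence arational, hence outside $K_E$'' is backwards: such trees are precisely the non-arational points of a \blas path, and their avoidance of $K_E$ is not automatic but is exactly what Theorem~\ref{thm:avoid}(1) provides.

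The paper's proof avoids both problems by excising $T_j$ entirely rather than spiralling back into it: with $x_i$ the starting point of $\xi_i$, one truncates $p$ by deleting all tiles $\psi_\sigma^k\xi_i$ with $k\geq k_1$ together with $\sigma=T_j$, and bridges the resulting gap with the \emph{single} translate $\psi_\sigma^{k_1}(q)$, which joins $\psi_\sigma^{k_1}x_1$ to $\psi_\sigma^{k_1}x_2$, i.e.\ exactly the endpoints of the truncation. Here $k_1$ is chosen to be a large multiple of the $m$ from Theorem~\ref{thm:avoid}, so conclusion (1) of that theorem gives $\psi_\sigma^{k_1}(q)\cap K_E=\emptyset$, and conclusion (2) plus the convergence of the deleted tiles to $\sigma$ gives the $\epsilon$--closeness pointwise. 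Because only one power of $\psi_\sigma$ is applied to $q$, your ``multiples of $m$'' bookkeeping and the delicate re-tiling of the spiral never arise, and the resulting $p'$ is again a \blas path whose bad points are only those of $\psi_\sigma^{k_1}(q)$, all outside $K_E$.
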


\begin{proof} 
  It suffices to prove the proposition in the case where there is
  exactly one point in $p([0,1])$ which is not arational, call that
  point $\sigma$.  Recall, from the definition of \blas paths, that in
  that case $\sigma$ is the dual tree to a stable lamination of a
  partial pseudo-Anosov $\psi_\sigma$ (for some identification with a
  surface). Also recall that in a neighbourhood of $\sigma$ the path
  $p$ has the form $\cup_k\psi_\sigma^k\xi_i$ for $i=1,2$.  Let $x_1$
  be the starting point of $\xi_1$ (which means $\psi_\sigma x_1$ is
  the ending point) and similarly for $x_2$ and $\xi_2$.  Let $q$ be
  the path as in Theorem \ref{thm:avoid} with $x_1,x_2$, $K_i$ and
  $\psi_\sigma$.  By Theorem \ref{thm:avoid} \eqref{C:contract} there
  exists $k_0$ so that for all $k\geq k_0$, the distance from
  $\psi_\sigma^k (q([0,1])$ to $\sigma$ is at most $\frac \epsilon 2$.
  Let $k_1\geq k_0$ so that $\psi_{\sigma}^{k_1}(q([0,1]))\cap
  K_i=\emptyset$ and the Hausdorff distance from
  $\psi_\sigma^{r}\xi_1$ and $\psi_\sigma^r\xi_2$ to $\sigma$ is at
  most $\frac \epsilon 2$. This exists by Theorem \ref{thm:avoid}
  \eqref{C:avoid}.  Let $p'=p$ outside of $ \cup_{i=k_1}^\infty
  \gamma_i$ and let $p'(t)=\psi_\sigma^{k_1}q$ on $p \setminus
  \cup_{i=k_1}^\infty \gamma_i$. Condition (1) is clear for the $x$ so
  that $p(x)=p'(x)$. All other $x$ have that the distance from both
  $p(x)$ and $p'(x)$ to $\sigma$ is at most
  $\frac{\epsilon}2$. Condition (2) is obvious for the points in $p'$
  that are arational. The other points are contained in
  $\psi_\sigma^{k_1}q$, which was constructed to avoid $K_i$.
\end{proof}

\begin{proof}[Proof of Theorem \ref{thm:surface case}] 
  Enumerate the set of proper free factors in some way as $\mathcal{F}
  = \{ E_i, i \in \mathbb{N} \}$, and denote by $K_i = K_{E_i}$. By

  Proposition \ref{prop:BLAS} there exists a \blas path from $x_s$ to
  $x_e$.
  Let $\epsilon'>0$ be given. By Proposition \ref{prop:improve BLAS},
  with $\epsilon_0:=\epsilon=\frac {\epsilon'} 4$ we may assume
  $p([0,1]) \cap K_1=\emptyset$. Since $K_1$ is closed and $p([0,1])$
  is compact, there exists $\epsilon_1>0$ so that
  $\dist(p([0,1)],K_1)>\epsilon_1$.  Inductively we assume that we are
  given a $\blas$ path $p_i$ and a $\epsilon_1,...\epsilon_i>0$ so
  that
  \begin{equation}\label{eq:distance}
    \dist(p([0,1],K_j)>\left(1-\sum_{\ell=j+1}^i3^{j-\ell} \right)\epsilon_j>\frac 1 2 \epsilon_j
  \end{equation}
  for all $j\leq i$.  By Proposition \ref{prop:improve BLAS} with
  $\epsilon=\epsilon_{i+1}=\frac 1 3^{i+1}\min\{\epsilon_j\}_{j=1}^i$
  and $p=p_i$ and $K=K_{i+1}$ there exists a \blas path, $p_{i+1}$
  from $x_s$ to $x_e$ satisfying equation \eqref{eq:distance} for all
  $j\leq i+1$.  Let $p_{\infty}$ be the limit of the $p_i$. By our
  inductive procedure our sequence of function $p_i$ converges.  By
  \eqref{eq:distance} we have $\dist(p_{\infty}([0,1]),K_i)\geq \frac 1
  2 \epsilon_i>0$ for all $i$. Thus by Proposition \ref{prop:the K} we
  have a path from $x_s$ to $x_e$ so that every $p_\infty(t)$ is
  arational for all $t\in[0,1]$, establishing Theorem \ref{thm:surface
    case}.
\end{proof}


To complete the proof of Theorem \ref{thm:main} we need the following result:
\begin{prop} \label{prop:chain} Let $T\in\partial CV_n$ be an
  arational tree, and $\Delta$ be the simplex of arational trees with
  the same dual lamination as $T$.

  For every neighborhood $U$ of $\Delta$ in $\overline {CV_n}$ there
  is a smaller neighborhood $V$ with the following property. Suppose
  $x,y\in V$ are arational and dual to surface laminations (possibly
  on different surfaces). If the dual lamination to $T$ is supported
  on a surface $\Sigma$, we additionally assume that neither $x,y$ are
  dual to laminations on $\Sigma$.  Then $x$ and $y$ can be joined by
  a chain of consecutively adjacent $\pml$'s, each of which is
  contained in $U$.
\end{prop}

The proof of this proposition requires a variant of \cite[Theorem 4.4]{BesRey}. In its
statement we denote by $L(T)$ the dual lamination to a tree $T$. Given a lamination $L$
we denote by $L'$ the sublamination formed by all non-isolated leaves of $L$.
\begin{prop}\label{prop:intersection-0-prop}
  Let $T\in\partial CV_n$ be an arational tree. If $\mu$ is a current so that
  \[ \langle T, \mu \rangle = 0, \]
  and $U\in\partial CV_n$ is another tree with
  \[ \langle U, \mu \rangle = 0, \]
  then
  \begin{enumerate}
   \item either, the dual laminations of $T,U$ agree: $L(U) = L(T)$, or
   \item $T$ is dual to a lamination on a surface $S$, and the support of $\mu$ is
     a multiple of the boundary current
       $\mu_{\partial S}$ of that surface.
  \end{enumerate}
\end{prop}
\begin{proof}
  By the assumption on $T, \mu$, \cite[Theorem~1.1]{Kapovich-Lustig-GAFA} yields
  \[ \mathrm{Supp}(\mu) \subset L(T). \]
  We begin with the case where $T$ is not dual to a surface lamination. In this case,
  \cite[Proposition~4.2~(i)]{BesRey} applies, and shows that $L(T)$ is obtained from
  the minimal lamination $L'(T)$ by adding isolated leaves, each of which is diagonal and
  not periodic. On the other hand, the support of a current cannot contain non-periodic
  isolated leaves. Thus, we then have $\mathrm{Supp}(\mu) \subset L'(T)$, hence
  $\mathrm{Supp}(\mu) = L'(T)$ by minimality.

  Applying \cite[Theorem~1.1]{Kapovich-Lustig-GAFA} to $U, \mu$ yields
  \[ L'''(T) \subset L'(T) = \mathrm{Supp}(\mu) \subset L(U). \]
  In this case, \cite[Corollary~4.3]{BesRey} shows that $L(T) = L(U)$, and we are in case (1).

  \medskip Now suppose that $T$ is dual to a surface lamination. In this case we need
  to describe the dual lamination of $T$ more precisely (see also the proof of
  \cite[Proposition~4.2~(ii)]{BesRey}). Let
  $S$ be a hyperbolic surface with one boundary component which is
  totally geodesic and let $\Lambda$ be a minimal filling measured
  geodesic lamination on $S$, so that $T$ is the $\R$-tree dual to
  $\Lambda$.

  Consider the universal cover $\tilde S$ and the preimage
  $\tilde\Lambda$ of $\Lambda$. The complementary components of
  $\tilde \Lambda$ are ideal polygons and regions containing the lifts
  of the boundary (these are universal covers of hyperbolic crowns and
  are bounded by a lift of $\partial S$ and a chain of leaves with
  consecutive leaves cobounding a cusp) and these, along with
  non-boundary leaves of $\tilde\Lambda$, are in 1-1 correspondence
  with the points of $T$. The lamination $L(T)$ dual to $T$
  consists of pairs of distinct ends of $\tilde S$ that are joined by
  geodesics with $0$ measure. Thus the leaves of $L(T)$ are as
  follows:
  \begin{enumerate}[(i)]
  \item leaves of $\tilde\Lambda$,
    \item diagonal leaves in the complementary components that are
      ideal polygons,
    \item leaves in the crown regions connecting distinct cusps,
      \item leaves in the crown regions connecting a cusp with an end
        corresponding to a lift of $\partial S$,
      \item lifts of $\partial S$.
  \end{enumerate}
  Recall that $\mathrm{supp}(\mu) \subset L(T)$. Since
  the leaves of type (ii) and (iii) are isolated and accumulate on
  leaves of type (i), the measure $\mu$ must assign zero measure to
  them.  Thus the support of $\mu$ is contained in the sublamination
  of $L(T)$ consisting of leaves of type (i), (iv) and (v). In
  this sublamination, the leaves of type (iv) are isolated and
  accumulate on the leaves of both types (i) and (v), so $\mu$ is
  supported on the disjoint union of $\tilde\Lambda$ and the
  lamination $\Delta$ consisting of the lifts of $\partial S$. Thus
  \[ \mu = \nu_1+\nu_2, \] where $\nu_1$ is supported on
  $\tilde\Lambda$ and $\nu_2$ supported on $\Delta$. If $\nu_2$
  assigns $\alpha\geq 0$ to a lift of $\partial S$ then $\nu_2=\alpha
  \mu_{\partial S}$.

  Now, if $\nu_1 \neq 0$, then since 
  \[ \langle U, \nu_1 \rangle = 0, \]
  we can apply \cite[Theorem~1.1]{Kapovich-Lustig-GAFA} to $U, \nu_1$ to obtain
  \[ L'''(T) = \tilde\Lambda = \mathrm{Supp}(\nu_1) \subset L(U), \]
  and \cite[Corollary~4.3]{BesRey} again shows that $L(T) = L(U)$, hence we are in case (1).

  Otherwise, $\mu = \nu_2$ and we are in case (2).
\end{proof}

\begin{lem}\label{lem:complete containment}
  For every neighborhood $U$ of $\Delta$ in $\overline {CV_n}$ there
  is a smaller neighborhood $V$ with the following property. If
  $\pml(\Sigma)$ intersects $V$, and in the case that $T$ is dual to a surface lamination
  on $\Sigma'$, is distinct from $\pml(\Sigma')$, then it is contained in $U$.
\end{lem}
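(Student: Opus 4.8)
The plan is to prove Lemma~\ref{lem:complete containment} by contradiction, exploiting compactness of $\overline{CV_n}$ together with the description of the simplex $\Delta$ and the maps $\rho_A$ constructed in Section~\ref{3.1}. Suppose the conclusion fails: then there is a neighborhood $U$ of $\Delta$ such that for every neighborhood $V$ of $\Delta$ there is an identification $\sigma$ with $\pml_\sigma\cap V\neq\emptyset$ but $\pml_\sigma\not\subset U$. Shrinking $V$ along a neighborhood basis, we obtain a sequence of identifications $\sigma_i$, points $S_i\in \pml_{\sigma_i}$ with $S_i\to \Delta$ (meaning every accumulation point of $(S_i)$ lies in $\Delta$), and points $R_i\in\pml_{\sigma_i}\setminus U$. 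By compactness of $\overline{CV_n}$ we may pass to a subsequence so that $R_i\to R\in\overline{CV_n}\setminus U$; in particular $R\notin\Delta$.

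The key step is to show that $R$ is forced into $\Delta$, giving the contradiction. Here I would use the structure of $\pml_{\sigma_i}$: each is the image of $\pml(\Sigma)$ under $\iota_{\sigma_i}$, hence a compact set of trees dual to measured laminations on a single surface $\Sigma$ under a varying marking. The first subtlety is that the $\sigma_i$ range over infinitely many markings, so one cannot directly appeal to continuity in a fixed $\pml$. Instead I would argue via the free factor complex projection $\Phi:\overline{CV_n}\to\overline{FF_n}$ and its coarse continuity (the third bulleted property after the Bestvina--Reynolds theorem): since $S_i\to\Delta$, we have $\Phi(S_i)\to\Phi(\Delta)$, a single point of $\partial FF_n$. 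Now $S_i$ and $R_i$ both lie in $\pml_{\sigma_i}$, which is the $\mathrm{Out}(F_n)$-translate $\phi_i\pml(\Sigma)$ of a fixed copy of $\pml$; the diameter of $\Phi(\pml_{\sigma_i})$ in $FF_n$ is bounded independently of $i$ because $\pml(\Sigma)$ sits inside the (coarsely well-defined) image of a fixed free splitting and $\Phi$ is coarsely $\mathrm{Out}(F_n)$-equivariant. Hence $\Phi(R_i)$ stays within bounded distance of $\Phi(S_i)$, so $\Phi(R_i)$ also converges to $\Phi(\Delta)$. By the coarse continuity property again (the ``only if'' direction), this forces the accumulation set of $(R_i)$ to be contained in $\Delta$, so $R\in\Delta$ — contradicting $R\notin U\supset\Delta$.

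The main obstacle I anticipate is making precise the claim that $\Phi(\pml_{\sigma})$ has $FF_n$-diameter bounded uniformly over all identifications $\sigma$ coming from a fixed surface $\Sigma$. The point is that $\pml_\sigma$ consists of trees that are either arational (mapping into $\partial FF_n$) or carry a proper free factor of $F_n$ that is a subsurface subgroup of $(\Sigma,\sigma)$; since any two such subsurface free factors of a fixed surface are a bounded distance apart in $FF_n$ (they are joined by a bounded chain of one-edge free splittings read off from curves on $\Sigma$), and the map to $\overline{FF_n}$ is coarsely Lipschitz on $\overline{CV_n}$, the image has bounded diameter, and this diameter bound is $\mathrm{Out}(F_n)$-invariant hence independent of $\sigma$. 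Once this uniform-diameter fact is in hand, the argument above closes. An alternative, should the $FF_n$-diameter argument prove awkward, is to work directly with the maps $\rho_A$: for each vertex $v$ of $\Delta$ and each $A\in\F$ with $v\notin K_A$ one has $\rho_A(v)\in cv(A)$ with $A$ free and discrete on $v$; one then shows that for $S_i$ close to $\Delta$ and $R_i$ in the same $\pml_{\sigma_i}$, the lamination-theoretic constraint that $\Phi(S_i)\to\Phi(\Delta)$ pins down $L(R)$ to coincide with the dual lamination of $\Delta$, whence $R\in\Delta$ by the description of arational simplices; but the $\Phi$-based argument is cleaner and I would pursue it first.
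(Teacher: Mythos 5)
Your reduction to sequences $S_i\to\Delta$, $R_i\to R\notin U$ is fine, but the step you yourself flag as the crux is false: $\Phi(\pml_{\sigma})$ does \emph{not} have bounded diameter in $FF_n$, and no uniform-diameter statement of this kind can hold. The copy $\pml_\sigma$ contains the dual trees of all filling laminations on $\Sigma$; these are arational trees of surface type, and $\Phi$ sends inequivalent ones to distinct points of $\partial FF_n$. Approximating such a filling lamination by weighted simple closed curves and using the coarse continuity of $\Phi$, one sees that the curve/subsurface free factors associated to a single marked surface accumulate on infinitely many points of $\partial FF_n$, hence form an unbounded subset of $FF_n$. (Concretely: a pseudo-Anosov on the once-punctured surface induces a fully irreducible, hence loxodromic, element of $\mathrm{Out}(F_n)$, and the orbit of one curve-factor under its powers consists of subsurface factors of the same marked surface going to infinity in $FF_n$.) Your justification that any two subsurface factors are joined by a bounded chain of splittings only works for disjoint or nested subsurfaces; overlapping subsurfaces can be arbitrarily far apart. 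Consequently, knowing $\Phi(S_i)\to\Phi(\Delta)$ and that $R_i$ lies on the same $\pml_{\sigma_i}$ gives no control on $\Phi(R_i)$: the whole point of the lemma is that proximity of one point of the sphere to $\Delta$ in $\overline{CV_n}$ controls the entire sphere, and this cannot be detected in the $FF_n$-shadow, which is enormous.

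The missing idea is a device that ties \emph{every} tree of $\pml_{\sigma_i}$ to the point close to $\Delta$, and the paper's proof supplies it: the boundary curve $\gamma_i$ of $\Sigma$ under the marking $\sigma_i$ is elliptic in every tree of $\pml_{\sigma_i}$, in particular in both $x_i$ and $y_i$. After passing to a subsequence, $\gamma_i$ converges projectively to a current $\mu$, and continuity of the length pairing gives $\langle x,\mu\rangle=\langle y,\mu\rangle=0$ for the limits $x\in\Delta$ and $y\notin U$. Since $x$ is arational, the Bestvina--Reynolds result ([BR, Thm 4.4]) then forces $y$ to be arational with the same dual lamination, i.e.\ $y\in\Delta$, a contradiction. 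Your fallback sketch via the maps $\rho_A$ gestures at pinning down $L(R)$ but never identifies any mechanism linking $R_i$ to $S_i$; without something playing the role of the common elliptic boundary current, that route does not close either.
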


\begin{proof}
  Suppose such $V$ does not exist. Then we have a sequence of
  pairwise
  distinct surfaces
  $\Sigma_i$ and points $x_i,y_i\in \pml(\Sigma_i)$ such that $x_i\to
  x\in\Delta$ and $y_i\to y\notin U$. The boundary curve $\gamma_i$ of
  $\Sigma_i$ is elliptic in both $x_i$ and $y_i$. After a subsequence,
  $\gamma_i$ projectively converges to a current $\mu$, and by the
  continuity of the length pairing we have
  \[ \langle x,\mu\rangle=\langle y,\mu\rangle=0.\]

  Now, apply Proposition~\ref{prop:intersection-0-prop} to
  $x,y,\mu$. If we are in case~(1) of that proposition, then $y$ has
  the same dual lamination as $x$ (equivalently, $T$),
  i.e. $y\in\Delta$. This is a contradiction.

  In case~(2), we instead conclude that the boundary curves $\gamma_i$
  of the $\Sigma_i$ converge (as a current) to the boundary $\gamma$ of the surface $\Sigma$
  supporting the dual lamination of $T$.

  We choose an $i$ large enough (see below), and consider $\delta = \gamma_i$,
  written as a cyclically reduced word 
  \[ \delta = \prod \gamma^{n_j}b_j. \]
  in a basis where $\gamma$ is written as a shortest possible word.
  Since the $\gamma_i$ converge to $\gamma$ as currents, for any given $\epsilon$ we
  may ensure (by choosing $i$ large enough) that
  \[ \sum l(b_j) \leq \epsilon \sum n_jl(\gamma). \]
  Further, since the $\Sigma_i$ are all distinct, the length of $\gamma_i$ diverges,
  and we may thus assume that $l(\delta)$ is much larger than $l(\gamma)$.

  On the other hand, since the $\gamma_i$ are boundary curves of
  surfaces, and therefore have uniformly small length $L_0=l(\gamma)$
  in a suitable basis, by Whitehead's theorem, there is a sequence of
  Whitehead moves $\phi_k$ so that
  \[ l(\delta) > l(\phi_1\delta) > l(\phi_2\phi_1\delta) > \ldots \]
  Let $B$ be a bounded-cancellation constant that works for all Whitehead moves.

  Let $N$ be the largest number so that $l(\phi_i\cdots\phi_1\gamma) = l(\gamma)$ for
  all $i \leq N$.

  We first claim that $N\geq 1$. Namely, suppose for contradiction
  that $l(\phi_1\gamma) \geq l(\gamma)+1$. Intuitively, the increase
  in length in $\gamma^{n_j}$ is much larger than the decrease in
  length in $b_j$. More formally,
  we then have
  \begin{eqnarray*}
  l(\phi_1\delta) &\geq& \sum (n_j(L_0+1) - B) \\
    &\geq& \frac{L_0+1}{L_0} (\sum n_jl(\gamma)) - B \sum l(b_j) \\
    &\geq& \left(\frac{L_0+1}{L_0} - B\epsilon\right) \sum n_jl(\gamma) \\
    &\geq& \left(\frac{L_0+1}{L_0} - B\epsilon\right) \sum ( n_jl(\gamma) + l(b_j) ) - \sum l(b_j) \\
    &\geq& \left(\frac{L_0+1}{L_0} - (B+1)\epsilon\right) l(\delta),  
  \end{eqnarray*}
  which, if $\epsilon$ is chosen small enough, would imply $l(\phi_1\delta) > l(\delta)$ which is
  impossible by the above.

    \smallskip There are a finite
  number of identifications of $F_n$ with a surface with one boundary
  component so that the length of its boundary word is
  $l(\gamma)$. Denote those surfaces by
  $\Sigma=\Sigma^1, \ldots, \Sigma^K$.  By the above, for all
  $i \leq N$, the maps $\phi_i$ can be represented by homeomorphisms
  between suitable $\Sigma^r, \Sigma^s$, and thus the same is true for
  the map $\Psi= \phi_N\cdots \phi_1$. Now, consider (as unreduced words)
  \[ \Psi\delta = \prod \Psi(\gamma^{n_j}) \Psi(b_j). \]
  We can write $b_j$ as a product of at most $l(b_j)$ simple loops on $\Sigma_i=\Sigma^{r_0}$.
  Since $\Psi$ is a surface map, $\Psi(b_j)$ is also a product of at most $l(b_j)$ simple nonseparting loops
  on $\Sigma^s$. Now, a reduced word representing a simple nonseparating loop cannot contain the square of
  the boundary as a subword (e.g. by considering Whitehead graphs, and observing that any nonseparating simple loop is primitive). Thus, reducing the description
  of $\Psi\delta$ above removes at most one copy of the boundary word $\Psi(\gamma)$ for each
  simple component. Hence, there is a \emph{reduced} description
  \[ \Psi\delta = \prod \gamma_0^{m_j} c_j \]
  which has
  \[ \sum m_j l(\gamma) \geq \sum n_j l(\gamma) - \sum l(b_j)
    \geq (1-\epsilon) \sum n_j l(\gamma), \]
  and 
  \[
     (1+\epsilon) \sum n_jl(\gamma) \geq l(\gamma) > l(\Psi\delta) = \sum n_j l(\gamma) + \sum l(c_j), 
   \]
   hence
   \[ \sum l(c_j) \leq \epsilon \sum n_jl(\gamma) \leq \frac{\epsilon}{1-\epsilon}\sum m_j l(\gamma). \]
   But now, if $\epsilon$ was chosen small enough, so that the argument showing
   $l(\phi_1\gamma) = L_0$ also applies to $\epsilon' = \frac{\epsilon}{1-\epsilon}$, then
   that same argument shows $l(\phi_{N+1}\Psi(\gamma)) = L_0$, contradicting maximality of $N$.
\end{proof}

\begin{lem}\label{lem:folding close} Let $U$ be a neighborhood of
  $\Delta$ in $\overline{CV_n}$. There exists a neighborhood $V$ of
  $\Delta$ in $\overline{CV_n}$ so that if $x',y' \in CV_n \cap V$
  then any folding path from $x'$ to $y'$ is contained in $U$.
\end{lem}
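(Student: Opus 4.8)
The plan is to argue by contradiction, combining hyperbolicity of $FF_n$ with the coarse continuity of $\Phi$ from Section~\ref{3.1} and the Bestvina--Feighn fact that folding paths project to uniform unparametrised quasigeodesics in $FF_n$ \cite{BesFei}. Concretely: fixing a metric on the metrisable space $\overline{CV_n}$ and recalling that the simplex $\Delta$ is compact, hence closed, I would suppose the lemma fails for some neighbourhood $U\supseteq\Delta$, so that for each $i$ the $1/i$--neighbourhood $V_i$ of $\Delta$ fails the conclusion: there are $x_i',y_i'\in CV_n\cap V_i$ and a folding path $g_i$ from $x_i'$ to $y_i'$ (which stays in $CV_n$) carrying some point $z_i\notin U$. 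Since the $\overline{V_i}$ are nested with intersection $\Delta$, every accumulation point of $(x_i')$ and of $(y_i')$ lies in $\Delta$, while every accumulation point of $(z_i)$ lies in the closed set $\overline{CV_n}\setminus U$, hence outside $\Delta$. As $(z_i)$ has at least one accumulation point by compactness of $\overline{CV_n}$, it suffices to show that all of them lie in $\Delta$.

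Next I would push the picture into $FF_n$. By coarse continuity of $\Phi$, the containment of the accumulation set of $(x_i')$ in $\Delta$ gives $\Phi(x_i')\to\Phi(\Delta)$ in $\overline{FF_n}$, and likewise $\Phi(y_i')\to\Phi(\Delta)$; in Gromov--product terms, $(\Phi(x_i')\mid\Phi(\Delta))_*\to\infty$ and $(\Phi(y_i')\mid\Phi(\Delta))_*\to\infty$ for a fixed basepoint $*\in FF_n$. The image $\Phi(g_i)$ is a $(K,C)$--unparametrised quasigeodesic (uniform constants) whose endpoints lie within bounded distance of $\Phi(x_i')$ and $\Phi(y_i')$, and it contains the point $\Phi(z_i)$. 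The step I expect to be the crux is the following elementary but essential hyperbolic fact, which I would isolate as a sublemma: there is $C'=C'(\delta,K,C)$ so that every point $p$ of a $(K,C)$--quasigeodesic from $a$ to $b$ in $FF_n$ satisfies $(p\mid\xi)_*\ge\min\{(a\mid\xi)_*,(b\mid\xi)_*\}-C'$ for every $\xi\in\partial FF_n$ — that is, a quasigeodesic between two points lying deep in the shadow of $\xi$ stays deep in that shadow. This reduces, via stability of quasigeodesics, to the case of a geodesic $[a,b]$, where thinness of the triangle on $a$, $b$ and a point approximating $\xi$ yields the estimate with $C'=\delta$. Applying it to $\Phi(g_i)$ with $\xi=\Phi(\Delta)$ and using that both endpoint Gromov products tend to $\infty$ gives $(\Phi(z_i)\mid\Phi(\Delta))_*\to\infty$, i.e.\ $\Phi(z_i)\to\Phi(\Delta)$ in $\overline{FF_n}$.

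Finally, with $\Phi(z_i)\to\Phi(\Delta)$ in hand, the other direction of coarse continuity forces the accumulation set of $(z_i)$ into $\Delta$, contradicting that it is nonempty and disjoint from $\Delta$; this contradiction produces the required $V$. Apart from the hyperbolic sublemma, the only points I would be careful about are that $\Phi|_{CV_n}$ agrees up to bounded error with the coarse Lipschitz projection $CV_n\to FF_n$ for which the Bestvina--Feighn quasigeodesic statement is formulated, and that $\Delta$ is genuinely closed in $\overline{CV_n}$, so that ``accumulation set contained in $\Delta$'' is exactly what the shrinking neighbourhoods deliver.
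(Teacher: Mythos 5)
Your argument is correct and is essentially the paper's proof: both rest on the coarse continuity of $\Phi$, the Bestvina--Feighn fact that folding paths project to uniform reparametrised quasigeodesics in $FF_n$, and the hyperbolic-geometry observation that such quasigeodesics with endpoints deep in the shadow of $[\Delta]=\Phi(\Delta)\in\partial FF_n$ stay deep in that shadow. The paper phrases this directly with nested neighbourhoods $V'\subset U'$ of $[\Delta]$ in $\overline{FF_n}$ rather than by contradiction with sequences and Gromov products, but the content is the same.
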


\begin{proof}
  Recall from Section \ref{3.1} that there is a coarsely continuous
  function $\Phi:\overline{CV_n}\to \overline{FF_n}$ that restricted
  to arational trees gives a quotient map to $\partial CV_n$. This map
  takes folding paths in $CV_n$ to reparametrized quasigeodesics with
  uniform constants in $FF_n$ \cite{BesFei}
  and it takes $\Delta$ to a point $[\Delta]\in\partial FF_n$. By the
  coarse continuity, there
  is a neighborhood $U'$ of $[\Delta]\in \overline{FF_n}$ such that
  $\Phi^{-1}(U')\subset U$. By hyperbolic geometry there is a
  neighborhood $V'\subset U'$ of $[\Delta]$ such that any
  quasigeodesic with above constants with endpoints in $V'$ is
  contained in $U'$. Finally, let $V$ be a neighborhood of $\Delta$
  such that $\Phi(V)\subset V'$ ($V$ exists by the coarse continuity).
\end{proof}

Before we can prove Proposition~\ref{prop:chain}, we need one more
definition. Namely, given an identification $\sigma$ of the free group
with $\pi_1(\Sigma)$, we define the {\it extended projective measured
  lamination sphere} $\widetilde{\pml}_\sigma$ to be the union of
$\pml_\sigma$ and the subset of $CV_n$ consisting of graphs where the
boundary curve of $\Sigma$ crosses every edge exactly twice
(alternatively, the graph can be embedded in the surface with the
correct marking).

\begin{proof}[Proof of Proposition \ref{prop:chain}]
  Let $U=U_0$ be a given neighborhood of $\Delta$. For a large (for
  now unspecified) integer $N$ find neighborhoods $$U_0\supset
  U_1\supset U_2\supset\cdots\supset U_N$$ of $\Delta$ so that each
  pair $(U_i,U_{i+1})$ satisfies Lemmas \ref{lem:complete containment}
  and \ref{lem:folding close}. We then set $V=U_N$. To see that this
  works, let $x,y\in V$ be arational and dual to surface
  laminations. Let $P_x$ and $P_y$ be the extended PML's containing
  $x,y$ respectively. Thus $P_x,P_y\subset U_{N-1}$. Choose roses
  $x'\in P_x\cap CV_n$ and $y'\in P_y\cap CV_n$. After adjusting the
  lengths of edges of $x'$ there will be a folding path from $x'$ to
  $y'$ which is then contained in $U_{N-2}$. We can assume that the
  folding process folds one edge at a time. We can choose a finite
  sequence of graphs along the path, starting with $x'$ and ending
  with $y'$, so that the change in topology in consecutive graphs is a
  simple fold. It follows that the extended PML's can be chosen so
  that the surfaces share a subsurface of small cogenus. Further, in
  each graph we can collapse a maximal tree so we get a
  rose. Consecutive roses will differ by the composition of boundedly
  many Whitehead automorphsims and each Whitehead automorphism is a
  composition of boundedly many basic moves. We can then insert a
  bounded chain of extended PML's between any two in our sequence so
  that in this expanded chain any two consecutive PML's differ by a
  basic move. If $N$ is sufficiently large this new chain will be
  contained in $U=U_0$.
\end{proof}

\begin{proof}[Proof of Theorem \ref{thm:main}]
  To prove that $\partial FF_n$ is path-connected, it suffices to join
  by a path points $\Phi(T),\Phi(S)\in \partial FF_n$ where both
  $T,S\in\partial CV_n$ are arational trees and $S$ is 
    dual to a surface lamination $\lambda$ which is
    uniquely ergodic (or, in the nonorientable case, an element of
    $\mathcal{P}$). Additionally, if $T$ is itself dual
  to a lamination on a surface $\Sigma$, we assume that $\lambda$ is not
  a lamination of that same surface. For brevity, we call such trees \emph{good surface trees} in this proof.
    
Let $\Delta\subset \partial CV_n$ be the simplex of arational trees
equivalent to $T$, and let $U_1\supset U_2\supset\cdots$ be a nested
sequence of smaller and smaller neighborhoods of $\Delta$ so that each
pair $(U_i,U_{i+1})$ satisfies Proposition \ref{prop:chain}. Choose
$S_i\in U_i$ to be a good surface tree, for $i\geq 1$ (see Lemma
\ref{surfaces dense}).  By Theorem \ref{thm:surface case} there is a
path $p$ from $S$ to $S_1$ in $\partial CV_n$ consisting of arational
trees, and likewise there is such a path $p_i$ from $S_i$ to
$S_{i+1}$. By our choice of the $U_i$ and the last sentence in Theorem
\ref{thm:surface case} we can arrange that each $p_i$ is contained in
$U_{i-1}$ for $i\geq 2$. The concatenation $q=p*p_1*p_2*\cdots$ is a
path parametrized by a half-open interval that accumulates on $\Delta$
since it is eventually contained in $U_i$ for every $i$. It may not
converge in $\partial CV_n$ unless $\Delta$ is a point (that is, $T$
is uniquely ergometric, see \cite{CHL0}) but $\Phi(q)$ converges to
$\Phi(T)$, proving path connectivity.

Local path connectivity is similar.  The key observation
  is that, in the construction of paths above, if we choose
  $S\in U_{i+1}$ then the path joining $S$ to the simplex of $T$ can
  be chosen to lie in $U_i$.
  
  Now, recall that a space $X$
  is locally path connected at $x\in X$ if for every neighborhood $U$
  of $x$ there is a smaller neighborhood $V$ of $x$ so that any two
  points in $V$ are connected by a path in $U$; this implies the
  ostensibly stronger property that $x$ has a path connected
  neighborhood contained in $U$ (namely, by taking the path component of $x$
  in $U$).

    Hence, we want to show that for every $i$ there is
    $j>i$ so that if $T'\in U_j$ is arational, there is a path of
    arationals defined on an open interval accumulating to the
    associated simplices $\Delta,\Delta'$ on the two ends which is contained
    in $U_i$.

    We begin by choosing $j>i$ so that when $T'\in U_j$ is
    arational then its simplex $\Delta'$ is contained in $U_{i+1}$ (this is
    possible because $\Phi(U_{i+1})$ contains neighbourhoods of $\Phi(T)$).

    Now pick a 
    $S\in U_{i+1}$ which is a good surface tree (for both $T, T'$) close to $\Delta'$. The key observation above implies that
    for any such $S$ we can find a path
    from $S$ accumulating on $\Delta$ which lies in $U_i$. 
    If we choose $S$ close enough to
    $\Delta'$, then again by the key observation, we can also find a path from
    $S$ accumulating on the simplex of $\Delta'$, which is contained
    in $U_i$ (by choosing it to lie in the corresponding sequence
    $U'_k$ for $\Delta'$ for a large enough $k$). 
  Putting them together gives the desired path.
\end{proof}

\section{One-endedness of other combinatorial complexes}

In this section, we discuss one-endedness of various combinatorial
complexes. To this end, we use the following criterion.
\begin{prop}\label{1-ended}
  Let $X,Y$ be $\delta$-hyperbolic spaces, $G$ a group acting
  coboundedly by isometries on $X$ and $Y$, and let $\pi:X\to Y$ be an
  equivariant Lipschitz map which is alignment preserving. Suppose
  there is some $g\in G$ which is loxodromic in $Y$ (and therefore
  also in $X$).

  If $Y$ is 1-ended, so is $X$.
\end{prop}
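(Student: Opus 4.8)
The plan is to prove the contrapositive: assuming $\partial X$ is disconnected, I will produce a disconnection of $\partial Y$, contradicting one-endedness of $Y$. Throughout I use the standard dictionary for a $\delta$--hyperbolic space carrying a cobounded isometric action: such a space is one-ended precisely when its Gromov boundary is connected (and its boundary is empty precisely when the space is bounded), and a disconnection of the boundary is witnessed by a bounded subset that coarsely separates the space. The point of the hypothesis that $g$ is loxodromic in both $X$ and $Y$, together with coboundedness, is that an alignment-preserving Lipschitz map need not induce a map on full Gromov boundaries -- it can collapse directions -- and the loxodromic singles out a dense, $G$--invariant family of boundary points on which the induced boundary map is well behaved.

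The first step is to construct this family. Let $A_X\subset X$ and $A_Y\subset Y$ be quasi-axes of $g$. Since $\pi$ is equivariant and $g$ is loxodromic in $Y$, the set $\pi(A_X)$ is $\langle g\rangle$--coarsely invariant and unbounded, hence lies within bounded Hausdorff distance of $A_Y$, and $\pi$ restricts to a quasi-isometry $A_X\to A_Y$; by equivariance the same holds, with uniform constants, on every translate $hA_X\to hA_Y$. Because $G$ acts coboundedly on $X$ and $A_X\neq\emptyset$, every point of $X$ is uniformly close to some translate $hA_X$ (and similarly in $Y$). Consequently, on the set $\Lambda_0:=G\cdot\{g^+,g^-\}\subset\partial X$ there is a well-defined $G$--equivariant map $\partial\pi\colon\Lambda_0\to\partial Y$ with $\partial\pi(hg^{\pm})=hg^{\pm}$, and its image $G\cdot\{g^+,g^-\}\subset\partial Y$ is dense: coboundedness forces the limit set of the $G$--action on $Y$ to be all of $\partial Y$, which is infinite (it contains the distinct points $g^{\pm}$ and, being connected in the one-ended case, is perfect), so the $G$--action is non-elementary and attracting fixed points of loxodromics are dense; the same argument shows $\Lambda_0$ is dense in $\partial X$.

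Now suppose $\partial X=U\sqcup V$ with $U,V$ clopen and nonempty. Choose a bounded set $B\subset X$ coarsely separating the shadows of $U$ and $V$, so that every bi-infinite geodesic joining a point of $U$ to a point of $V$ passes within a uniform distance of $B$, and set $B':=\pi(B)$, a bounded subset of $Y$. Fix $\xi\in U\cap\Lambda_0$ and $\eta\in V\cap\Lambda_0$ (possible by density). Approximating the line $(\xi,\eta)$ by long geodesic segments $[a,b]$ with $a$ deep along the quasi-axis through $\xi$, $b$ deep along the one through $\eta$, and containing the point $p\in(\xi,\eta)$ nearest $B$: alignment-preservation gives $\pi([a,b])\subset N_C([\pi a,\pi b])$; no-contraction on quasi-axes gives $\pi a\to\partial\pi(\xi)$ and $\pi b\to\partial\pi(\eta)$; and $\pi$ Lipschitz gives $\pi(p)\in N_{C}(B')$. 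Passing to the limit, the geodesic $(\partial\pi(\xi),\partial\pi(\eta))$ in $Y$ passes within a uniform distance of $B'$, i.e. $(\partial\pi(\xi)\mid\partial\pi(\eta))_{y_0}\le M$ with $M$ depending only on $d(y_0,B')$ and $\delta$. As this holds for all $\xi\in U\cap\Lambda_0$, $\eta\in V\cap\Lambda_0$, the nonempty sets $P_0:=\partial\pi(U\cap\Lambda_0)$ and $Q_0:=\partial\pi(V\cap\Lambda_0)$ satisfy $(\zeta\mid\omega)_{y_0}\le M$ for all $\zeta\in P_0$, $\omega\in Q_0$; in particular $P_0\cap Q_0=\emptyset$, since $(\omega\mid\omega)_{y_0}=\infty$. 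In a visual metric $d_{y_0}$ this bound reads $\operatorname{dist}(P_0,Q_0)\ge\rho$ for a fixed $\rho>0$, so $\overline{P_0}$ and $\overline{Q_0}$ are disjoint closed subsets of $\partial Y$ with $\overline{P_0}\cup\overline{Q_0}=\overline{\partial\pi(\Lambda_0)}=\partial Y$. Hence $\partial Y$ is disconnected, contradicting that $Y$ is one-ended.

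The main obstacle I anticipate is the second and third steps: extracting from ``equivariant Lipschitz alignment-preserving with a common loxodromic and cobounded actions'' the concrete facts that $\pi$ does not coarsely collapse the quasi-axes -- so that $\partial\pi$ is genuinely well-defined and equivariant on the dense set $\Lambda_0$ -- and that this control survives the push-forward of a coarse separation, all with constants uniform over the translates. By contrast, the equivalence between one-endedness and connectedness of the boundary, and the translation of a disconnection of $\partial X$ into a coarse separation of $X$, are standard hyperbolic geometry; there the only real care needed is keeping the neighbourhood constants uniform in the final push-forward estimate.
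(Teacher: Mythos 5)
Your argument never engages with one-endedness directly: it proves ``$\partial X$ disconnected $\Rightarrow$ $\partial Y$ disconnected'' and then leans, at both ends, on the ``standard dictionary'' that a hyperbolic space with a cobounded action is one-ended precisely when its Gromov boundary is connected. That equivalence is standard only for \emph{proper} (locally compact) spaces, essentially via the correspondence between ends and boundary components for hyperbolic groups; it is not available here. The spaces this proposition is applied to (the free splitting and cyclic splitting complexes) are locally infinite, and for non-proper hyperbolic spaces the dictionary genuinely breaks: for example, two rays joined at their origins, with the points at distance $n$ on the two rays joined by an arc of length $2^n$, is a hyperbolic space that is one-ended in the bounded-set sense used in this paper but has two-point (disconnected) boundary. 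In the other direction, which you also need (``$\partial X$ connected $\Rightarrow$ $X$ one-ended'' to conclude), mere connectedness of the boundary is not known to suffice even in the cobounded setting -- this is exactly why the paper's Corollary deduces one-endedness of $FF_n$ from \emph{path}-connectivity (and why one-endedness of the curve complex historically came from Gabai's path-connectivity of $\mathcal{EL}$, not from Klarreich's identification alone). So the two steps you dismiss as routine bookkeeping are precisely where the proof is missing, and no amount of uniformity in the constants repairs them.

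By contrast, the paper's proof works entirely inside the spaces, with one-endedness taken in the bounded-set sense: given a ball $K_X$ and two far points, it first slides each point along a translate of a quasi-axis of $g$ (quasi-convexity of the ball lets one choose the direction so the ray misses $K_X$), so that the $\pi$-images escape any prescribed bounded subset of $Y$; then it uses one-endedness of $Y$ to connect the images by a path avoiding a large neighborhood of $\pi(K_X)$; finally it coarsely lifts that path back to $X$ using coarse surjectivity (from coboundedness) and joins consecutive lifts by geodesics, with alignment preservation guaranteeing the lifted path's image in $Y$ stays far from $\pi(K_X)$, hence the path itself misses $K_X$. Your intermediate analysis of $\pi$ on translates of the axis (that $\pi$ is a uniform quasi-isometry from $hA_X$ to $hA_Y$, and that $G\cdot\{g^{\pm}\}$ is dense in the limit sets) is sound and could be salvaged, but as written the overall strategy rests on an equivalence that is false in the generality required, so the proof has a genuine gap.
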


Recall \cite{Guirardel, KapovichRafi} that a map $\pi$ is {\it
  alignment preserving} if there is a constant $C\geq 0$ such that the
image of any geodesic segment is contained in the $C$-neighborhood of
any geodesic joining the images of the endpoints.
\begin{rem}
  We want to remark that \cite{KapovichRafi} use only the apparently
  weaker property that $\pi([x,y])$ is bounded whenever
  $\pi(x),\pi(y)$ are close, rather than alignment preserving.
  However, they also show that a map between hyperbolic metric spaces
  with this weaker property is alignment preserving in the stronger
  sense.
\end{rem}

\begin{proof}[Proof of Proposition~\ref{1-ended}]
  Let $K_X$ be a metric ball in $X$. We define $L_X$ to be the
  Hausdorff $N$-neighborhood of $K_X$, with $N$ sufficiently
  large. Let $x_1,x_2\in X\smallsetminus L_X$. We will connect
  $x_1,x_2$ by a path in the complement of $K_X$.

  Fix an axis $\ell$ in $X$ of $g$ (i.e. a quasi-geodesic line where
  $g$ acts by translation). Since the action of $G$ on $X$ is
  cobounded, there is a translate $\ell_1$ of $\ell$ that passes
  within a bounded distance from $x_1$. Let $r$ be the ray starting at
  $x_1$, having a bounded initial segment joining $x_1$ with $\ell_1$,
  and the rest is one of the two half-lines in $\ell_1$. Since $K_X$
  is quasi-convex, there is a choice of a half-line so that if $N$ is
  sufficiently large, $r$ is disjoint from $K_X$. The image of $r$ in
  $Y$ follows an axis of a conjugate of $g$, so it goes to infinity in
  $Y$. We can thus join $x_1$ by a path missing $K_X$ to a point
  $x_1'$ whose image in $Y$ misses a bounded set $P$ such that points
  in the complement of $P$ can be joined by paths missing the
  $N$-neighborhood of $\pi(K_X)$. In the same way we can join $x_2$ to
  a point $x_2'$. It now remains to join $x_1'$ to $x_2'$.

  Join $\pi(x_1'),\pi(x_2')$ by a path missing the $N$-neighborhood of
  $\pi(K_X)$. We will now coarsely lift this path to the desired
  path. Let $\pi(x_1)=y_1,y_2,\cdots,y_s=\pi(x_2)$ be points along the
  path at distance $\leq 1$. For each $y_i$ choose a point $\tilde
  y_i\in X$ whose image in $Y$ is at a bounded distance from $y_i$
  (this is possible since $\pi$ is coarsely onto), and so that $\tilde
  y_1=x_1$ and $\tilde y_s=x_2$. The desired path is the concatenation
  of geodesic segments joining the consecutive $\tilde y_i$. Since
  $\pi$ is alignment preserving, the images in $Y$ of these geodesic
  segments are uniformly bounded, so when $N$ is large they will miss
  $\pi(K_X)$, and the path between $x_1$ and $x_2$ will miss $K_X$.
\end{proof}

\begin{cor}\label{cor:other-complexes}
  For $n\geq \rank$ the free splitting complex $FS_n$, the cyclic
  splitting complex $FZ_n$, and the maximal cyclic splitting complex
  $FZ^{max}_n$ are all 1-ended.
\end{cor}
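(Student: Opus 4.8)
The plan is to derive Corollary~\ref{cor:other-complexes} from Proposition~\ref{1-ended}, using the free factor complex $FF_n$ as the target space $Y$. Since $FF_n$ is Gromov hyperbolic \cite{BesFei} and its Gromov boundary $\partial FF_n$ is (path-)connected for $n \geq \rank$ by Theorem~\ref{thm:main}, the complex $FF_n$ is one-ended. Thus it suffices, for each of $X = FS_n$, $FZ_n$, $FZ^{max}_n$, to exhibit an $\mathrm{Out}(F_n)$--equivariant Lipschitz alignment preserving map $X \to FF_n$ together with an element of $\mathrm{Out}(F_n)$ acting loxodromically on $FF_n$, and then quote Proposition~\ref{1-ended}.

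First I would collect the standard facts needed. Each of $FS_n$, $FZ_n$ and $FZ^{max}_n$ is Gromov hyperbolic (by work of Handel--Mosher, Mann, and Horbez), and $\mathrm{Out}(F_n)$ acts cocompactly, hence coboundedly, on each of them, just as on $FF_n$. There are natural $\mathrm{Out}(F_n)$--equivariant coarsely Lipschitz ``collapse'' maps from $FS_n$, $FZ_n$ and $FZ^{max}_n$ onto $FF_n$ (sending a splitting to a free factor obtained by collapsing edges); these are coarsely surjective because the actions are cocompact. Each such map is alignment preserving in the sense recalled just before Proposition~\ref{1-ended}: for $FS_n \to FF_n$ this is due to Handel--Mosher and Kapovich--Rafi, and the cyclic and maximal cyclic cases follow by the same scheme, together with the reduction of \cite{KapovichRafi} that a coarsely Lipschitz map between hyperbolic spaces taking segments with close endpoints to bounded sets is automatically alignment preserving. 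Finally, fixing a fully irreducible $\phi \in \mathrm{Out}(F_n)$ (which exists for all $n \geq 2$), the element $\phi$ is loxodromic on $FF_n$ \cite{BesFei}.

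With these in hand the conclusion is immediate: applying Proposition~\ref{1-ended} with $G = \mathrm{Out}(F_n)$, $g = \phi$, $Y = FF_n$, and $X$ and $\pi$ equal respectively to $FS_n$, $FZ_n$ or $FZ^{max}_n$ and the corresponding collapse map, we conclude that each of $FS_n$, $FZ_n$ and $FZ^{max}_n$ is one-ended for $n \geq \rank$. The main point requiring care is the verification that the collapse maps from the cyclic and maximal cyclic splitting complexes to $FF_n$ are alignment preserving --- the free splitting case is already in the literature, and the cyclic variants follow by the same method, but the precise references should be pinned down. Every other ingredient (hyperbolicity, cocompactness, and the existence of a loxodromic fully irreducible element) is entirely standard.
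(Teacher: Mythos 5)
Your proposal is correct and follows essentially the same route as the paper: apply Proposition~\ref{1-ended} with $Y=FF_n$ (one-ended since $\partial FF_n$ is path connected by Theorem~\ref{thm:main}), $G=\mathrm{Out}(F_n)$, $g$ a fully irreducible element, and the natural equivariant collapse maps $FS_n, FZ^{max}_n, FZ_n \to FF_n$. The only cosmetic difference is how alignment preservation is justified: the paper gets it uniformly for all maps starting from $FS_n$ by noting that the hyperbolicity proofs show images of folding paths in $CV_n$ are reparametrized quasi-geodesics with uniform constants, whereas you propose case-by-case references plus the Kapovich--Rafi reduction, which amounts to the same thing.
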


\begin{proof}
  There are natural coarse maps $$CV_n\to FS_n\to FZ^{max}_n\to
  FZ_n\to FF_n$$ and they are equivariant with respect to the action
  of $Out(F_n)$. Except on $CV_n$, all these spaces are hyperbolic and
  the $Out(F_n)$ action is cobounded. Proofs of hyperbolicity show
  that images of folding paths in $CV_n$ are reparametrized
  quasi-geodesics with uniform constants. This implies that all the
  maps starting from $FS_n$ are alignment preserving. Fully
  irreducible automorphisms are loxodromic in all four complexes.
\end{proof}

\appendix

\section{Explicit Constructions of Curves and Subsurfaces}
\label{sec:pictures-cases}

In this section we collect the constructions of curves and
subsurfaces claimed in Lemma~\ref{lem:additional-twist}.

We begin with the construction of curves in
Lemma~\ref{lem:additional-twist}.  In
Figure~\ref{fig:finding-extra-twists}, the curves for the right
multipliation moves are shown in green; the curves for the left
multiplication moves are shown in purple. Dashed lines entering a
group of bands are understood to follow around the boundary of the
surface, not intersecting the basis loops corresponding to the loops.

\begin{sidewaysfigure}
  \vspace{13 cm}
  \includegraphics[width=0.95\textwidth]{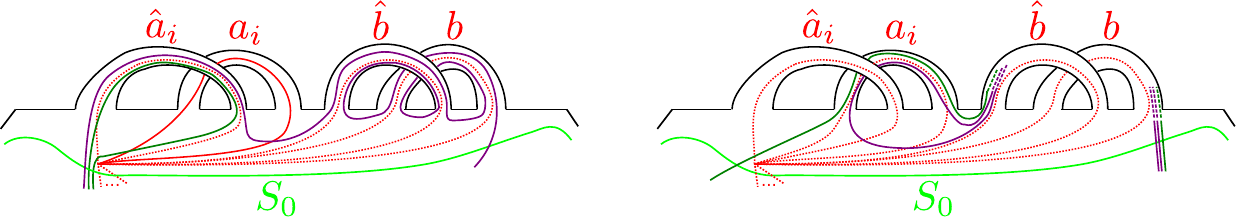}
  \includegraphics[width=0.95\textwidth]{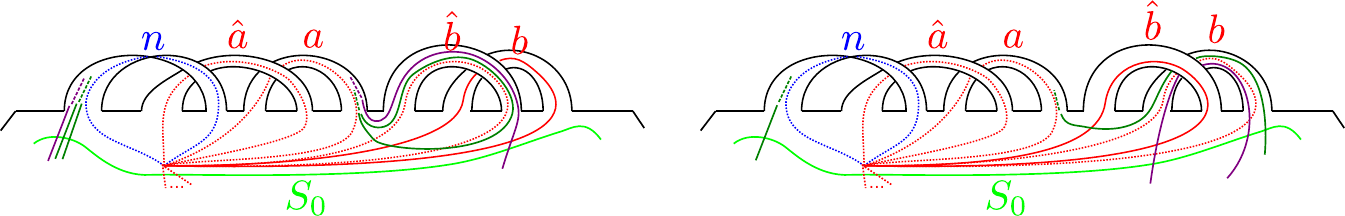}
  \includegraphics[width=0.95\textwidth]{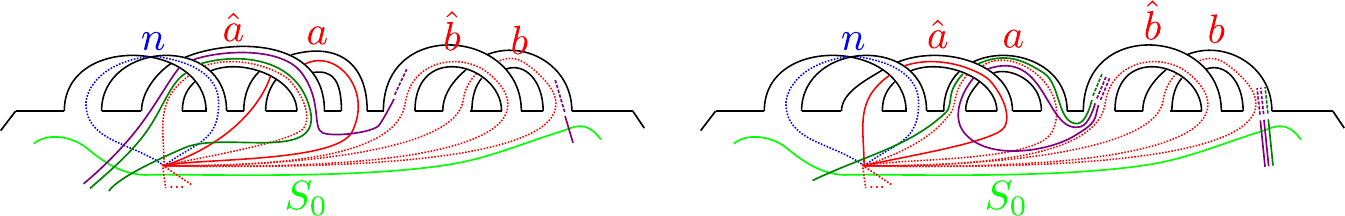}
  \includegraphics[width=0.4\textwidth]{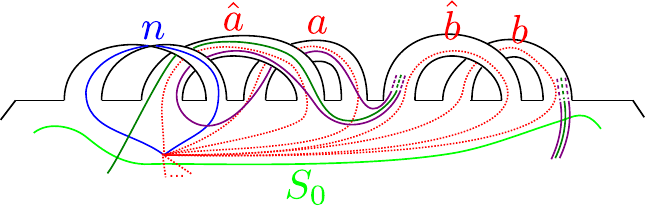}
  \caption{Constructing ``extra twists''}
  \label{fig:finding-extra-twists}
\end{sidewaysfigure}

Finally, in Figure~\ref{fig:finding-extra-twists2}, the additional 
curves for the last claim of the lemma are shown.
\begin{figure}
  \begin{center}
    \includegraphics[width=0.95\textwidth]{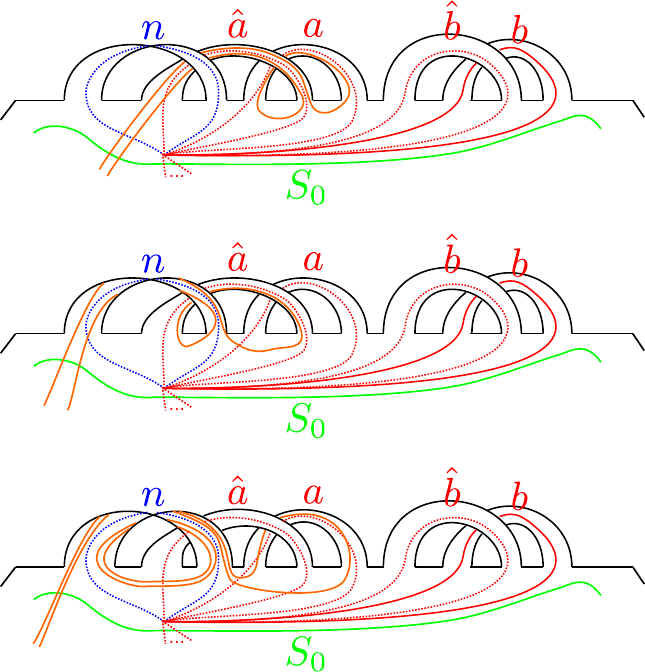}
  \end{center}

  \caption{Constructing even more ``extra twists''}
  \label{fig:finding-extra-twists2}
\end{figure}

From these explicit descriptions, fundamental groups of the bad
subsurfaces can be read off. We collect the results in the following
lemma.
\begin{lem}\label{lem:explicit-good-subsurfaces}
  For a move $\phi=\lambda_{x,y}$ or $\rho_{x,y}$, the bad subsurfaces
  have the following fundamental groups.  We denote by $\partial$ the
  word representing the boundary of the surface, i.e.
  \[ \partial = \prod_{i=1}^g [\hat{a}_i, a^{-1}_i], \] if the surface
  is orientable, and
  \[ \partial = (n \hat{a} a \hat{a}^{-1} n a) \prod_{i=2}^g
  [\hat{a}_i, a^{-1}_i], \] otherwise (here, $\hat{a},a$ are linked
  with the nonorientable letter $n$, and $\hat{a}_2,\ldots$ are the
  following letters). We denote by $\partial_w$ the cyclic permutation
  of $\partial$ starting with the letter $w$.
  
\begin{description}
\item[$x$ two-sided, not linked with one-sided] Here, three
  possibilities for $y$ exist.
  \begin{description}
  \item[$y$ two-sided, not linked with one-sided] For $x=a_i$ (not a
    hatted letter), and the right multiplication move $\rho_{x,y}$ we
    have
    \[ \pi_1(\Sigma-S^g) = \langle y, \hat{y},
    x^{-1}\hat{x}x,\partial_{\hat{a}_{i+1}}\rangle \] For $x=a_i$ (not
    a hatted letter), and the left multiplication move $\lambda_{x,y}$
    we have
    \[ \pi_1(\Sigma-S^g) = \langle y, \hat{y}, \hat{x},\partial_{a_i}\rangle \]

    For $x=\hat{a}_i$ (a hatted letter), and the right multiplication
    move $\rho_{x,y}$ we have
    \[ \pi_1(\Sigma-S^g) = \langle y, \hat{y},
    \hat{x},\partial_{a^{-1}_i}\rangle \] For $x=\hat{a}_i$ (a hatted
    letter), and the left multiplication move $\lambda_{x,y}$ we have
    \[ \pi_1(\Sigma-S^g) = \langle y, \hat{y},
    x\hat{x}x^{-1},\partial_{\hat{a}_i}\rangle \]
				
  \item[$y$ one-sided or linked with one-sided] Here, we call the
    one-sided letter $n$, the linked two-sided letters $a, \hat{a}$
    (i.e. $y$ is one of these three), and we assume that $x$ is one of
    the adjacent $b, \hat{b}$ (this is enough due to the previous
    normalisation). In this case, the fundamental group of the bad
    subsurface $\Sigma-S^g$ has rank three, with two generators $g_1,
    g_2$ depending solely on $y$, and the final one on $x$ and the
    type of move.  Namely, put
    \begin{description}
    \item[$y=a$] 
      \[ g_1 = a, \quad g_2 = n\hat{a}a\hat{a}^{-1} \]
    \item[$y=\hat{a}$] 
      \[ g_1 = \hat{a}, \quad g_2 = na \]
    \item[$y=n$] 
      \[ g_1 = n, \quad g_2 = (a^{-1}n^{-1}\hat{a})a(a^{-1}n^{-1}\hat{a})^{-1} \]
    \end{description}
			
    For $x=b$, and the right multiplication move $\rho_{x,y}$ we have
    \[ \pi_1(\Sigma-S^g) = \langle g_1, g_2,
    b^{-1}\hat{b}b,\partial_{b^{-1}}\rangle \] For $x=b$, and the left
    multiplication move $\lambda_{x,y}$ we have
    \[ \pi_1(\Sigma-S^g) = \langle g_1, g_2,
    \hat{b},\partial_{\hat{b}^{-1}} \rangle \]

    For $x=\hat{b}$, and the right multiplication move $\rho_{x,y}$ we
    have
    \[ \pi_1(\Sigma-S^g) = \langle g_1, g_2,
    b,\partial_{b^{-1}}\rangle \] For $x=\hat{b}$, and the left
    multiplication move $\lambda_{x,y}$ we have
    \[ \pi_1(\Sigma-S^g) = \langle g_1, g_2,
    \hat{b}b\hat{b}^{-1},\partial_{\hat{b}}\rangle \]
  \end{description}
	
\item[$x$ two-sided, linked with one-sided] Again, we call the
  one-sided letter $n$ and the linked two-sided letters $a, \hat{a}$
  (of which $x$ is one), and we assume that $y$ is one of the adjacent
  $b, \hat{b}$ (this is enough due to the previous normalisation).

  For $x=a$, and the right multiplication move $\rho_{x,y}$ we have
  \[ \pi_1(\Sigma-S^g) = \langle b, \hat{b}, a^{-1}n^{-1}\hat{a},
  a^{-1}\hat{a}a ,\partial_{\hat{b}}\rangle \] For $x=a$, and the left
  multiplication move $\lambda_{x,y}$ we have
  \[ \pi_1(\Sigma-S^g) = \langle \hat{a}, b, \hat{b}, a\hat{a}^{-1}n,\partial_{a}
  \rangle \]

  For $x=\hat{a}$, and the right multiplication move $\rho_{x,y}$ we
  have
  \[ \pi_1(\Sigma-S^g) = \langle b, \hat{b}, a,
  \hat{a}^{-1}n\hat{a},\partial_{a} \rangle \] For $x=\hat{a}$, and
  the left multiplication move $\lambda_{x,y}$ we have
  \[ \pi_1(\Sigma-S^g) = \langle b, \hat{b}, n,
  \hat{a}a\hat{a}^{-1},\partial_n \rangle \]

\item[$x$ one-sided] Again, we call the one-sided letter $n=x$, the
  linked two-sided letters $a, \hat{a}$, and we assume that $y$ is one
  of the adjacent $b, \hat{b}$ (this is enough due to the previous
  normalisation).

  For $x=n$, and the right multiplication move $\rho_{x,y}$ we have
  \[ \pi_1(\Sigma-S^g) = \langle b, \hat{b}, \hat{a},
  a\hat{a}^{-1}n,\partial_{\hat{a}} \rangle \] For $x=n$, and the left
  multiplication move $\lambda_{x,y}$ we have
  \[ \pi_1(\Sigma-S^g) = \langle b, \hat{b}, a\hat{a}^{-1},
  n\hat{a},\partial_{n} \rangle \]

\end{description}
In all cases, any loop corresponding to a basis element except for $x,
\hat{x}, y, \hat{y}$ (and possibly $n$, if one of $x,y$ is linked) can
be homotoped into the good subsurface.
\end{lem}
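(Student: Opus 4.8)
The strategy is identical to that of Lemma~\ref{lem:explicit-good-subsurfaces-case1}; we indicate how it applies in each of the remaining configurations. Fix the relevant standard geometric basis $\mathcal{B}$ and the move $\phi=\lambda_{x,y}$ or $\rho_{x,y}$. By Lemma~\ref{lem:obvious-good-subsurface} we have the ``obvious'' good subsurface $S_0$, disjoint from the active letters (which are $x,\hat{x},y,\hat{y}$ in the unlinked case, and $n,a,\hat a,y,\hat y$, respectively $n,a,\hat a, b,\hat b$, in the cases involving the one-sided letter), with every other basis loop freely homotopic into $S_0$. By part~(3) of that lemma, any mapping class supported in $S_0$ commutes with $\phi$.

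Next, we enlarge $S_0$ using the curves of Lemma~\ref{lem:additional-twist}: for a right multiplication move $\rho_{x,y}$ we use $\delta^+$ (which meets $x$ once, on $x^+$, does not cross the band of $x$, is disjoint from $y$, and meets $S_0$ essentially), while for a left multiplication move $\lambda_{x,y}$ we use $\delta^-$ (meeting $x$ once on $x^-$). When one of $y,\hat y$ is one-sided or linked with the one-sided letter, we additionally use the curve $\delta^0$ of that lemma (disjoint from $x$, meeting $\partial S_0$ essentially), so that the bad subsurface acquires a third non-boundary generator, accounting for the rank-three bad subgroups recorded in the statement. Exactly as in the proof of Lemma~\ref{lem:explicit-good-subsurfaces-case1}, properties (1)--(3) of Lemma~\ref{lem:additional-twist} show that the Dehn twist $T_{\delta^\pm}$ (and $T_{\delta^0}$, when used) acts on $x$ by multiplication by a word not involving $x$, fixes $y$, and sends every other basis element to a word not involving $x$; hence these twists commute with $\phi$. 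We then set $S^g$ to be a regular neighbourhood of $S_0$ together with the chosen curve(s). By property~(4) of Lemma~\ref{lem:additional-twist} this is strictly larger than $S_0$, it is filled by $\delta^\pm$ (and $\delta^0$) together with curves in $S_0$, and a suitable product of the commuting twists is a pseudo-Anosov on $S^g$; so $S^g$ is a good subsurface for $\phi$.

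Finally, the fundamental group of the bad subsurface $\Sigma-S^g$ is read off directly from the explicit pictures in Figure~\ref{fig:finding-extra-twists} (and Figure~\ref{fig:finding-extra-twists2} for the $\delta^0$--generators): in each case one traces the boundary of $S^g$ against the bands of $\Sigma$ to obtain the generators listed, the word $\partial_w$ arising as the outer boundary contribution (with the cyclic starting point $w$ dictated by where the band of $x$ attaches). Carrying this out in each of the enumerated cases --- $x$ two-sided and unlinked with $y$ two-sided and unlinked (four sub-cases according to whether $x,y$ are hatted and the side of the move), $x$ two-sided unlinked with $y$ linked or one-sided, $x$ linked with the one-sided letter, and $x$ one-sided --- yields precisely the fundamental groups in the statement; the claim that every basis loop other than the active ones is freely homotopic into $S^g$ is inherited from part~(2) of Lemma~\ref{lem:obvious-good-subsurface}. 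The only point requiring mild care beyond the orientable computation is ensuring throughout the non-orientable cases that the arcs and curves used ($\gamma$ in $\Sigma-S_0$ and the chosen arc in $S_0$) are two-sided, which is arranged in the proof of Lemma~\ref{lem:additional-twist}; this is where the asymmetric placement of the one-sided band in the standard geometric basis is used. The remaining verifications are the same bookkeeping as in Lemma~\ref{lem:explicit-good-subsurfaces-case1} and we omit them.
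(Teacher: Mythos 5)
Your proposal is correct and follows essentially the same route as the paper: the paper's own treatment of this lemma consists precisely of the remark that the strategy of Lemma~\ref{lem:explicit-good-subsurfaces-case1} applies verbatim in all remaining configurations, together with the explicit curves drawn in Figures~\ref{fig:finding-extra-twists} and~\ref{fig:finding-extra-twists2}, from which the fundamental groups of the bad subsurfaces are read off. Your verification that the twists $T_{\delta^\pm}$ (and $T_{\delta^0}$ in the linked/one-sided cases) commute with the move, the passage to a regular neighbourhood of $S_0$ together with the chosen curves, and the appeal to the figures for the generator lists match the paper's argument, leaving only the same case-by-case bookkeeping in both versions.
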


\section{Proof of Lemma~\ref{lem:preliminary-relation}}
\label{sec:check}
Throughout, we call the argument in the proof of
Proposition~\ref{prop:solving-overlap} \emph{Case 1}.

First, we observe that the argument of Case~1 extends to the case
nonorientable surface. The only difference in this case is that the
boundary word $\partial$ has a slightly different form (see
Section~\ref{sec:moves}).  However, as we may assume that all the
auxiliary letters used above are two-sided, and the boundary word of
the nonorientable surface also contains commutators of all the
two-sided letters which are not linked with the one-sided letter, the
argument works completely analogously.
    
\bigskip It remains to discuss the remaining cases in the case of a
non-orientable surface, where either $x$ or $y$ is one-sided or linked
with the one-sided.  Unjustified claims about intersections between
subgroups are proved using the arguments in the proof of
Proposition~\ref{prop:solving-overlap}.  We make use of the following
notation and assumptions throughout:
\begin{enumerate}
\item We denote by $\partial_x$ the cyclic permutation of the boundary
  word $\partial$ starting at (the first occurrence of) the (signed)
  letter $x$.
\item All ``auxiliary letters'' are chosen to be seperated by at least
  one index from all active letters and from each other (so that the
  subword detection arguments from Case 1 apply).
\item If $x$ is a chosen, two-sided letter (i.e. $x = a_i$ or
  $\hat{a}_i$), then we denote by $x_+$ the \emph{next} letter of the
  same type (i.e. $x_+ = a_{i+1}$ or $x_+ = \hat{a}_{i+1}$) and
  $\hat{x}_+$ the next letter of opposite type (i.e. $\hat{x}_+ =
  \hat{a}_{i+1}$ or $\hat{x}_+ = a_{i+1}$).
\end{enumerate}

\subsection{Case 2}
This case case concerns $y$ general $x$ two-sided, linked to
one-sided. Let $n$ be the one-sided letter. The fundamental group of
the bad surface is \[ B = \langle
y,\hat{y},x^{-1}n^{-1}\hat{x},x^{-1}\hat{x}x, \partial_{\hat{x}_+}
\rangle.\]

The relation we will use is:
\[\rho_{x,y}=\rho_{\hat{y},u}^{-1}\rho_{y,z}^{-1}\rho_{n,w}^{-1}\rho_{x,y}\rho_{x,z}\rho_{n,w}\rho_{y,z}\rho_{\hat{y},u}\].

\vspace{2mm}
\noindent
\textit{Check:}
\begin{enumerate}[a)]
\item $\rho_{\hat{y},u}^{-1}$ has bad subgroup $\langle u,\hat{u},y
  ,\partial_{y^{-1}}\rangle$ which intersects $B$ in $\langle
  y, \partial_{y^{-1}} \rangle$.
	
\item $\rho_{y,z}^{-1}$ has bad subgroup $\langle z,\hat{z},
  y^{-1}\hat{y}y, \partial_{\hat{y}_+} \rangle$ which intersects
  $\rho_{\hat{y},u}B=\langle
  y,\hat{y}u,x^{-1}n^{-1}\hat{x},x^{-1}\hat{x}x,
  \rho_{\hat{y},u}\partial_{\hat{x}_+} \rangle$ trivially. Indeed, by
  Lemma~\ref{lem:drop-turn-easy} applied to the path
  $\hat{y}y\hat{y}^{-1}y^{-1}$ we may drop $\partial_{\hat{y}_+}$ from
  $\langle z,\hat{z}, y^{-1}\hat{y}y, \partial_{\hat{y}_+}
  \rangle$. Having done this, applying Lemma~\ref{lem:drop-turn-easy}
  (to a number of paths) we may drop
  $\rho_{\hat{y},u}\partial_{\hat{x}_+}$ from
  $\rho_{\hat{y},u}B$. Having done this we may apply
  Lemma~\ref{lem:drop-turn-easy} to $\hat{y}u$ and $y^{-1}\hat{y}y$ we
  may drop $\hat{y}u$ from $\rho_{\hat{y},u}\langle y, \hat{y},
  x^{-1}n^{-1}\hat{x},x^{-1}\hat{x}x\rangle$ and $y^{-1}\hat{y}y$ from
  $\langle u,\hat{u},y^{-1}\hat{y}y\rangle$. The rest of this case is
  straightforward.
	
\item $\lambda_{n,w}^{-1}$ has bad subgroup $\langle
  n\hat{x},x^{-1}\hat{x}^{-1},w,\hat{w}^{-1}, \partial_n\rangle$ which
  intersects $\rho_{y,z} \rho_{\hat{y},u}B=\langle yz,\hat{y}u,
  x^{-1}n^{-1}\hat{x},x^{-1}\hat{x}x ,\rho_{y,z}
  \rho_{\hat{y},u} \partial_{\hat{x}_+}\rangle$ in $\langle \hat{x}
  \rangle$ up to conjugation.

  Indeed, as in the previous step, by Lemma~\ref{lem:drop-turn-easy}
  applied to the path $\hat{y}y\hat{y}^{-1}y^{-1}$ we may drop
  $\partial_n$ from $\langle
  n\hat{x},x^{-1}\hat{x}^{-1},w,\hat{w}^{-1}, \partial_n\rangle$. Having
  done this we may drop $\rho_{y,z}
  \rho_{\hat{y},u}\partial_{\hat{x}_+}$ and then $yz$ and $\hat{y}u$
  from $\rho_{y,z} \rho_{\hat{y},u}B$.  So it suffices to consider the
  intersection of $\langle n \hat{x},x^{-1}\hat{x}^{-1}\rangle $ and
  $\langle x^{-1}n^{-1}\hat{x},x^{-1}\hat{x}x\rangle$. Since these are
  free factorse we consider the abelinization of these which are
  isomorphic to $\mathbb{Z}^3$ where the vector $(a,b,c)$ represents
  $n^a\hat{x}^bx^c$. The claim follows from the fact that the subspace
  spanned by $\{(1,1,0),(0,-1,-1)\}$ intersects the subspace spanned
  by $\{(-1,1,-1),(0,1,0)\}$ trivially.
	
\item $\rho_{x,y}$ has bad subgroup $B$ which intersects
  $\lambda_{n,w}\rho_{y,z} \rho_{\hat{y},u}B=\langle yz,\hat{y}u,
  n^{-1}w^{-1}\hat{x},x^{-1}\hat{x}x, \lambda_{n,w}\rho_{y,z}
  \rho_{\hat{y},u}\partial_{\hat{x}_+}\rangle$ in $\langle
  x^{-1}\hat{x}x\rangle$.
	
\item $\rho_{x,z}$ has bad subgroup $\langle
  z,\hat{z},xn^{-1}\hat{x},x^{-1}\hat{x}x, \partial_{\hat{x}_+}
  \rangle$ which intersects $\rho_{x,y}^{-1}\lambda_{n,w}\rho_{y,z}
  \rho_{\hat{y},u}B=\langle yz,\hat{y}u,
  yx^{-1}n^{-1}w^{-1}\hat{x},yx^{-1}\hat{x}xy^{-1},
  \rho_{x,y}^{-1}\lambda_{n,w}\rho_{y,z}
  \rho_{\hat{y},u}\partial_{\hat{x}_+}\rangle$ in
  $\langle\hat{x}\rangle$ up to conjugation.
	
\item $\lambda_{n,w}$ has bad subgroup $\langle
  n\hat{x},x\hat{x}^{-1},w,\hat{w}^{-1}, \partial_x\rangle$ which
  intersects $\rho_{x,z}^{-1} \rho_{x,y}^{-1}\lambda_{n,w}\rho_{y,z}
  \rho_{\hat{y},u}B =\langle yz,\hat{y}u,
  yzx^{-1}n^{-1}w^{-1}\hat{x},yzx^{-1}\hat{x}xz^{-1}y^{-1},
  \rho_{x,z}^{-1} \rho_{x,y}^{-1}\lambda_{n,w}\rho_{y,z}
  \rho_{\hat{y},u}\partial_{\hat{x}_+}\rangle=\langle yz,\hat{y}u,
  x^{-1}w^{-1}n^{-1}\hat{x},x^{-1}\hat{x}x, \rho_{x,z}^{-1}
  \rho_{x,y}^{-1}\rho_{n,w}\rho_{y,z}
  \rho_{\hat{y},u}\partial_{\hat{x}_+}\rangle$ trivially.  Similarly
  to in previous cases we apply Lemma~\ref{lem:drop-turn-easy} to
  first drop $\partial_x$ and then
  $\rho_{x,y}^{-1}\rho_{n,w}\rho_{y,z}
  \rho_{\hat{y},u}\partial_{\hat{x}_+}$ from their respective
  subrgoups.  It is now clear that we can restrict our consideration
  to possible intersections of $\langle
  n\hat{x},x\hat{x}^{-1},w\rangle$ and $\langle x
  \hat{x}^{-1}nw,\hat{x}\rangle$. Any nontrivial reduced word in the
  latter (except $\hat{x}$) contains the subword $nw$ (or its inverse)
  without cancellation. As neither $\hat{x}$ nor any word containing
  the subword $nw$ is contained in the former, the claim follows.

\item $\rho_{y,z}$ has bad subgroup $\langle z,\hat{z}, y^{-1}\hat{y}y
  , \partial_{\hat{y}_+}\rangle$ which intersects
  $\lambda_{n,w}^{-1}\rho_{x,z}^{-1}\rho_{x,y}^{-1}\lambda_{n,w}\rho_{y,z}
  \rho_{\hat{y},u}B =\langle yz,\hat{y}u,
  yzx^{-1}w^{-1}w^{-1}n^{-1}\hat{x},yzx^{-1}\hat{x}xz^{-1}y^{-1},
  \rho_{x,z}^{-1}\rho_{x,y}^{-1}\rho_{y,z}
  \rho_{\hat{y},u}\partial_{\hat{x}_+}\rangle$ trivially.
	
\item $\rho_{\hat{y},u}$ has bad subgroup $\langle
  u,\hat{u},y, \partial_{y^{-1}}\rangle$ which intersects
  $\rho_{y,z}^{-1}
  \lambda_{n,w}^{-1}\rho_{x,z}^{-1}\rho_{x,y}^{-1}\lambda_{n,w}\rho_{y,z}
  \rho_{\hat{y},u}B =\langle y,\hat{y}u,
  yx^{-1}w^{-1}w^{-1}n^{-1}\hat{x},yx^{-1}\hat{x}xy^{-1},\rho_{x,y}^{-1}
  \rho_{\hat{y},u}\partial_{\hat{x}_+} \rangle$ in $\langle y\rangle$.
	
\end{enumerate}
This completes the checks for the preliminary relation.

We now collect some variants on this case. First is \textbf{Case 2'}
of the left multiplication move $\lambda_{x,y}$. Here, the bad
subgroup is $B=\langle \hat{x},y,\hat{y}, x\hat{x}^{-1}n, \partial_x
\rangle$. We use the relation
\[ \lambda_{x,y}=
\rho_{\hat{y},u}^{-1}\rho_{y,z}^{-1}\lambda_{n,w}^{-1}\lambda_{x,y}\lambda_{x,z}\lambda_{n,w}\rho_{y,z}\rho_{\hat{y},u}. \]
We only indicate how the checks above need to be amended in this case.

\noindent
a), b), c) are similar to case 2.

\noindent
d) $\lambda_{x,z}$ which has bad subgroup $\langle \hat{x},z,\hat{z},
x\hat{x}^{-1}n, \partial_x \rangle$ which intersects
$\rho_{n,w}\rho_{y,z}\rho_{\hat{y},u}B$ in $\langle \hat{x}\rangle$.

\noindent
e) $\lambda_{x,y}$ which has bad subgroup $B$ which intersects
$\lambda_{x,z}\rho_{n,w}\rho_{y,z}\rho_{\hat{y},u}B=\langle
\hat{x},yz,\hat{y}u, z^{-1}x\hat{x}^{-1}x^{-1}x,
\lambda_{x,z}\rho_{n,w}\rho_{y,z}\rho_{\hat{y},u}\partial_x \rangle$
in $\<\hat{x}\>$.

\noindent f), g) and h) are similar to case 2.

Finally, \textbf{Case 2''} and \textbf{Case 2'''}: with $x$ hatted for
both $\rho$ and $\lambda$ are similar.

\subsection{Case 3}
$x$ general unhatted, $y$ unhatted two-sided and linked to
one-sided. Let $n$ be the one-sided letter and $u,z,w,v$ be general.

$B=\langle y,n\hat{y}y\hat{y}^{-1},
x^{-1}\hat{x}x, \partial_{x^{-1}}\rangle$

$\rho_{x,y}=\rho_{\hat{x},z}^{-1}\rho_{n,u}^{-1}\rho_{y,v}^{-1}\rho_{x,y}\rho_{y,v}\rho_{x,v}\rho_{n,u}\rho_{\hat{x},z}$

\noindent
\textit{Check:}

\begin{enumerate}[a)]
\item $\rho_{\hat{x},z}^{-1}$ has bad subgroup $\langle
  z,\hat{z},x, \partial_{x^{-1}}\rangle$ which intersects $B$ at most
  in $\langle x, \partial_{x^{-1}} \rangle$. In fact, by considering
  immersed graphs representing the subgroups, one can show that the
  intersection is $\langle \partial_{x^{-1}} \rangle$, but we do not
  need this fact.
	
\item $\lambda_{n,u}^{-1}$ has bad subgroup $\langle
  n\hat{y},y\hat{y}^{-1},u, \hat{u}, \partial_n\rangle$ which
  intersects $\rho_{\hat{x},z}B$ in $\langle
  n\hat{y}y\hat{y}^{-1}\rangle.$ Indeed, by applying
  Lemma~\ref{lem:drop-turn-easy} as above we may drop $\partial_n$,
  $\rho_{\hat{x},z}\partial_{x^{-1}}$, and $u,\hat{u},
  x^{-1}\hat{x}zx$ in sequence. So it suffices to consider the
  intersection of $\langle y, n\hat{y}y\hat{y}^{-1}\rangle$ and
  $\langle n\hat{y},y\hat{y}^{-1}\rangle$.  As both of these are free
  factors, the intersection is again a free factor. In particular,
  either the two factors are equal, or the intersection is of rank at
  most $1$. Since neither is contained in the other (e.g. by
  considering Abelianisations), the intersection is at most cyclic. As
  $\langle n\hat{y}y\hat{y}^{-1}\rangle$ is contained in both, the
  claim follows.
	
\item $\rho_{y,v}^{-1}$ has bad subgroup $\langle v, \hat{v},
  y^{-1}n^{-1}\hat{y}, y^{-1}\hat{y}y, \partial_{\hat{y}_+}\rangle$
  which intersects $\lambda_{n,u}\rho_{\hat{x},z}B=\langle
  y,un\hat{y}y\hat{y}^{-1}, x^{-1}\hat{x}zx,
  \lambda_{n,u}\rho_{\hat{x},z}\partial_{x^{-1}}\rangle$ trivally.
	
\item $\rho_{x,y}$ has bad subgroup $B$ which intersects
  $\rho_{y,v}\lambda_{n,u}\rho_{\hat{x},z}B=\langle
  yv,un\hat{y}yv\hat{y}^{-1},x^{-1}\hat{x}zx,
  \rho_{y,v}\lambda_{n,u}\rho_{\hat{x},z}\partial_{x^{-1}}\rangle$
  trivially.
	
\item $\rho_{y,v}$ has bad subgroup $\langle v, \hat{v},
  y^{-1}n^{-1}\hat{y}, y^{-1}\hat{y}y, \partial_{\hat{y}_+}\rangle$
  which intersects
  $\rho_{x,y}^{-1}\rho_{y,v}\lambda_{n,u}\rho_{\hat{x},z}B=\langle
  yv,un\hat{y}yv\hat{y}^{-1}, yx^{-1}\hat{x}zxy^{-1},
  \rho_{x,y}^{-1}\rho_{y,v}\lambda_{n,u}\rho_{\hat{x},z}\partial_{x^{-1}}\rangle
  $ trivially.
	
\item $\rho_{x,v}$ has bad subgroup $\langle v,\hat{v},
  x^{-1}\hat{x}x, \partial_{\hat{x}_+}\rangle$ which intersects
  $\rho_{y,v}^{-1}\rho_{x,y}^{-1}\rho_{y,v}\lambda_{n,u}\rho_{\hat{x},z}B=\langle
  y,un\hat{y}y\hat{y}^{-1}, yv^{-1}x^{-1}\hat{x}zxvy^{-1},
  \rho_{y,v}^{-1}\rho_{x,y}^{-1}\rho_{y,v}\lambda_{n,u}\rho_{\hat{x},z}\partial_{x^{-1}}\rangle$
  in the conjugacy class $\langle\hat{x}\rangle$.
	
\item $\lambda_{n,u}$ has bad subgroup $\langle
  n\hat{y},y\hat{y}^{-1},u, \hat{u}, \partial_n\rangle$ which
  intersects
  $\rho_{x,v}^{-1}\rho_{y,v}^{-1}\rho_{x,y}^{-1}\rho_{y,v}\lambda_{n,u}\rho_{\hat{x},z}B=\langle
  y,un\hat{y}y\hat{y}^{-1}, yx^{-1}\hat{x}zxy^{-1},
  \rho_{x,y}^{-1}\lambda_{n,u}\rho_{\hat{x},z}\partial_{x^{-1}}\rangle$
  trivially.
	
\item $\rho_{\hat{x},z}$ has bad subgroup $\langle
  z,\hat{z},x, \partial_{x^{-1}}\rangle$ which intersects
  $\lambda_{n,u}^{-1}\rho_{x,v}^{-1}\rho_{y,v}^{-1}\rho_{x,y}^{-1}\rho_{y,v}\lambda_{n,u}\rho_{\hat{x},z}B=\langle
  y,n\hat{y}y\hat{y}^{-1}, yx^{-1}\hat{x}zxy^{-1},
  \rho_{x,y}^{-1}\rho_{\hat{x},z}\partial_{x^{-1}}\rangle$ at most in
  $\langle x, \partial_{x^{-1}} \rangle$. In fact, by considering
  immersed graphs representing the subgroups, one can show that the
  intersection is trivial, but we do not need this fact.
	
\end{enumerate}
This completes the checks for the preliminary relation. We now collect
some variants on this case.  The case of $\lambda_{x,y}$ is analogous
using $\lambda_{x,y}=\rho_{\hat
  x,z}^{-1}\rho_{n,u}^{-1}\lambda_{x,y}\rho_{n,u}\rho_{\hat{x},z}.$
Indeed the bad subgroup is the same except $x^{-1}\hat{x}x$ is
replaced by $x$, and $\partial_{x^{-1}}$ by $\partial_{\hat{x}^{-1}}$.

\noindent \textbf{Case 3'} is the case of $x$ general, $y$ hatted
two-sided and linked to one-sided. The bad subgroup now is $B=\langle
\hat{y}, ny,x^{-1}\hat{x}x, \partial_{x^{-1}}\rangle$.

We use the relation
$\rho_{x,y}=\rho_{\hat{x},z}^{-1}\rho_{n,u}^{-1}\rho_{x,y}\rho_{n,u}\rho_{\hat{x},z}$
and the steps are the same except the overlap of the bad factor for
$\rho_{x,y}$ and $\rho_{n,u}\rho_{\hat{x},z}B$ is $\langle ny\rangle$.

\noindent
\textbf{Case 3''} is $x$ general hatted, $y$ unhatted two sided and
linked to one-sided. The bad subgroup is $B=\langle
y,n\hat{y}y\hat{y}^{-1}, \hat{x},\partial_{\hat{x}^{-1}}\rangle$. This
is similar.

\noindent Finally, \textbf{Case 3'''} is $x$ general hatted, $y$
hatted two-sided and linked to one-sided. The bad subgroup is
$B=\langle \hat{a},na,b,\partial_{\hat{x}^{-1}}\rangle.$ Again, this
is similar.

\subsection{Case 4} $x$ general unhatted, $y$ one-sided, $\rho_{x,y}$
and let $a,\hat{a}$ denote the letters that are linked with $y$.

The bad subgroup is $B= \langle
y,(a^{-1}y^{-1}\hat{a})a(a^{-1}y^{-1}\hat{a})^{-1},x^{-1}\hat{x}x
, \partial_{x^{-1}}\rangle$, and we use the relation
\[\rho_{x,y}=\rho_{y,u}^{-1}\rho_{\hat{x},z}^{-1}\rho_{x,y}\rho_{x,u}\rho_{\hat{x},z}\rho_{y,u}\]

\vspace{2mm}

\noindent
\textit{Check:}

\begin{enumerate}[a)]
\item $\rho_{y,u}^{-1}$ has bad subgroup $\langle
  \hat{a},a\hat{a}^{-1}y,u,\hat{u} , \partial_{\hat{a}}\rangle$ which
  intersects $B$ in $\langle \partial_{\hat{a}} \rangle$.

  Indeed, as in previous cases by Lemma~\ref{lem:drop-turn-hard} we
  may drop $u$, $\hat{u}$ and $x^{-1}\hat{x}x$. We now consider
  $\langle
  y,(a^{-1}y^{-1}\hat{a})a(a^{-1}y^{-1}\hat{a})^{-1}, \partial_{x^{-1}}\rangle
  $ and $\langle \hat{a},a\hat{a}^{-1}y,\partial_{\hat{a}}\rangle.$ By
  considering $y^{-1}\hat{a}aay$, a subword of
  $(a^{-1}y^{-1}\hat{a})a(a^{-1}y^{-1}\hat{a})^{-1}$ which can not
  occur in $\langle \hat{a},a\hat{a}^{-1}y,\partial_{\hat{a}}\rangle$
  we reduce to $\langle
  \hat{a},a\hat{a}^{-1}y,\partial_{x^{-1}}\rangle$ and $\langle
  y, \partial_{\hat{a}}\rangle.$ Similarly we may remove
  $a\hat{a}^{-1}y$ and then $y$ and $\hat{a}$.

\item $\rho_{\hat{x},z}^{-1}$ has bad subgroup $\langle z,\hat{z},x
  , \partial_{x^{-1}}\rangle$ which intersects $\rho_{y,u}B= \langle
  yu,(a^{-1}u^{-1}y^{-1}\hat{a})a(a^{-1}u^{-1}y^{-1}\hat{a})^{-1},x^{-1}\hat{x}x,
  \rho_{y,u}\partial_{x^{-1}}\rangle$ trivially.
	
\item $\rho_{x,y}$ has bad subgroup $B$ which intersects
  $\rho_{\hat{x},z}\rho_{y,u}B=\langle
  yu,(a^{-1}u^{-1}y^{-1}\hat{a})a(a^{-1}u^{-1}y^{-1}\hat{a})^{-1},x^{-1}\hat{x}zx,
  \rho_{\hat{x},z}\rho_{y,u}\partial_{x^{-1}}\rangle$ trivially.
	
\item $\rho_{x,u}$ has bad subgroup $\langle u,\hat{u},x^{-1}\hat{x}x
  , \partial_{\hat{x}_+} \rangle$ which intersects
  $\rho_{x,y}^{-1}\rho_{\hat{x},z}\rho_{y,u}B = \langle
  yu,(a^{-1}u^{-1}y^{-1}\hat{a})a(a^{-1}u^{-1}y^{-1}\hat{a})^{-1},yx^{-1}\hat{x}zxy^{-1},
  \rho_{x,y}^{-1}\rho_{\hat{x},z}\rho_{y,u}\partial_{x^{-1}}\rangle$
  trivially.
	
\item $\rho_{\hat{x},z}$ has bad subgroup $\langle z,\hat{z},x
  , \partial_{x^{-1}} \rangle$ which intersects
  $\rho_{x,u}^{-1}\rho_{x,y}^{-1}\rho_{\hat{x},z}\rho_{y,u}B = \langle
  yu,(a^{-1}u^{-1}y^{-1}\hat{a})a(a^{-1}u^{-1}y^{-1}\hat{a})^{-1},yux^{-1}\hat{x}zxu^{-1}y^{-1},
  \rho_{x,u}^{-1}\rho_{x,y}^{-1}\rho_{\hat{x},z}\rho_{y,u}\partial_{x^{-1}}\rangle$
  trivially.
	
\item $\rho_{y,u}$ has bad subgroup $\langle u, \hat{u}, a
  \hat{a}^{-1}y, \hat{a} , \partial_{\hat{a}} \rangle$ which
  intersects
  $\rho_{\hat{x},z}^{-1}\rho_{x,u}^{-1}\rho_{x,y}^{-1}\rho_{\hat{x},z}\rho_{y,u}B
  = \langle
  yu,(a^{-1}u^{-1}y^{-1}\hat{a})a(a^{-1}u^{-1}y^{-1}\hat{a})^{-1},yux^{-1}\hat{x}xu^{-1}y^{-1},
  \rho_{\hat{x},z}^{-1}\rho_{x,u}^{-1}\rho_{x,y}^{-1}\rho_{\hat{x},z}\rho_{y,u}\partial_{x^{-1}}\rangle$
  trivially.

  Namely, as before, we can drop the (modified) boundary words, as
  well as $yux^{-1}\hat{x}xu^{-1}y^{-1}, \hat{a}$. We now need to
  control the intersection of $\langle u, \hat{u}, a \hat{a}^{-1}y
  \rangle$ and $\langle
  yu,(a^{-1}u^{-1}y^{-1}\hat{a})a(a^{-1}u^{-1}y^{-1}\hat{a})^{-1}\rangle$. Since
  both are free factors, and their Abelianisations do not intersect,
  the claim follows.
\end{enumerate}

The case of $x$ general hatted and the relevant $\lambda$ cases are similar. 

\subsection{Case 5} $x$ one sided, $y$ general. As before, we denote
by $a,\hat{a}$ the linked two-sided letters.  Here, we consider
$\rho_{x,y}$ which has bad subgroup
\[B=\langle y, \hat{y},\hat{a}, a\hat{a}^{-1}x , \partial_{\hat{a}}
\rangle.\] We use the relation
\[\rho_{x,y}=\rho_{\hat{y},u}^{-1}\rho_{y,z}^{-1}\lambda_{\hat{a},w}^{-1}\rho_{x,y}\rho_{x,z}\lambda_{\hat{a},w}\rho_{y,z}\rho_{\hat{y},u}.\]

\begin{enumerate}[a)]
\item $\rho_{\hat{y},u}^{-1}$ has bad subgroup $\langle u, \hat{u},y
  , \partial_{y^{-1}} \rangle$. This intersects $B$ in $\langle
  y,\partial_{y^{-1}}\rangle$.
\item $\rho_{y,z}^{-1}$ has bad subgroup $\langle
  z,\hat{z},y^{-1}\hat{y}y , \partial_{\hat{y}_+} \rangle$ and this
  intersects $\rho_{\hat{y},u}B=\langle y, \hat{y}u,\hat{a},
  a\hat{a}^{-1}x, \rho_{\hat{y},u}\partial_{\hat{a}} \rangle$
  trivially.
	
\item $\lambda_{\hat{a},w}^{-1}$ has bad subgroup $\langle
  \hat{a}a\hat{a}^{-1},x,w,\hat{w} , \partial_n \rangle$ and this
  intersects $\rho_{y,z}\rho_{\hat{y},u}B=\langle yz,
  \hat{y}u,\hat{a}, a\hat{a}^{-1}x,
  \rho_{y,z}\rho_{\hat{y},u}\partial_{\hat{a}} \rangle$ in $\langle
  \hat{a}a\hat{a}^{-1}x\rangle$. Namely, after dropping the boundary
  terms as usual, we can also drop $yz, \hat{y}u,w, \hat{w}$. The
  resulting rank $2$ free factors $\langle \hat{a}a\hat{a}^{-1},x
  \rangle$ and $\langle \hat{a}, a\hat{a}^{-1}x \rangle$ have
  Abelianisations that intersect in a rank $1$ submodule.  The
  intersection is therefore at most a rank $1$ free factor, hence it
  is the one claimed.
		
\item $\rho_{x,y}$ has bad subgroup $B$ which intersects
  $\lambda_{\hat{a},w}\rho_{y,z}\rho_{\hat{y},u}B=\langle yz,
  \hat{y}u,w\hat{a}, a\hat{a}^{-1}w^{-1}x,
  \lambda_{\hat{a},w}\rho_{y,z}\rho_{\hat{y},u}\partial_{\hat{a}}
  \rangle$ trivially.
		
\item $\rho_{x,z}$ has bad subgroup $\langle
  \hat{a},a\hat{a}^{-1}x,z,\hat{z}, \partial_{\hat{a}} \rangle$ which
  intersects
  $\rho_{x,y}^{-1}\lambda_{\hat{a},w}\rho_{y,z}\rho_{\hat{y},u}B=
  \langle yz, \hat{y}u,w\hat{a}, a\hat{a}^{-1}w^{-1}xy^{-1},
  \rho_{x,y}^{-1}\lambda_{\hat{a},w}\rho_{y,z}\rho_{\hat{y},u}\partial_{\hat{a}}
  \rangle$ trivially.

\item $\lambda_{\hat{a},w}$ has bad subgroup $\langle
  \hat{a}a\hat{a}^{-1},x,w,\hat{w}, \partial_n\rangle$ which we need
  to intersect with
  $\rho_{x,z}^{-1}\rho_{x,y}^{-1}\lambda_{\hat{a},w}\rho_{y,z}\rho_{\hat{y},u}B=
  \langle yz, \hat{y}u,w\hat{a}, a\hat{a}^{-1}w^{-1}xz^{-1}y^{-1},
  \rho_{x,z}^{-1}\rho_{x,y}^{-1}\lambda_{\hat{a},w}\rho_{y,z}\rho_{\hat{y},u}\partial_{\hat{a}}
  \rangle$.  As usual, we can discard the boundary word terms, and
  clean up generators to compare $\langle
  \hat{a}a\hat{a}^{-1},x,w,\hat{w}\rangle$ and $\langle yz,
  \hat{y}u,w\hat{a}, a\hat{a}^{-1}w^{-1}x \rangle$. We can drop $yz,
  \hat{y}u$ from the latter, replacing it with $\langle w\hat{a},
  a\hat{a}^{-1}w^{-1}x \rangle$. Since the $w\hat{a}$ is not
  homologous into the former factor, the intersection is at most rank
  $1$. Thus, the intersection is $\langle
  w\hat{a}a\hat{a}^{-1}w^{-1}x\rangle$.

\item $\rho_{y,z}$ has bad subgroup $\langle z, \hat{z},
  y^{-1}\hat{y}y, \partial_{\hat{y}_+} \rangle$ which intersects
  $\lambda_{\hat{a},w}^{-1}\rho_{x,z}^{-1}\rho_{x,y}^{-1}\lambda_{\hat{a},w}\rho_{y,z}\rho_{\hat{y},u}B=
  \langle yz, \hat{y}u,\hat{a}, a\hat{a}^{-1}xz^{-1}y^{-1},
  \lambda_{\hat{a},w}^{-1}\rho_{x,z}^{-1}\rho_{x,y}^{-1}\lambda_{\hat{a},w}\rho_{y,z}\rho_{\hat{y},u}\partial_{\hat{a}}
  \rangle$ trivially.
	
\item $\rho_{\hat{y},u}$ has bad subgroup $\langle
  u,\hat{u},y, \partial_{y^{-1}} \rangle$ which intersects
  $\rho_{y,z}^{-1}\lambda_{\hat{a},w}^{-1}\rho_{x,z}^{-1}\rho_{x,y}^{-1}\lambda_{\hat{a},w}\rho_{y,z}\rho_{\hat{y},u}B=
  \langle y, \hat{y}u,\hat{a}, a\hat{a}^{-1}xy^{-1},
  \rho_{y,z}^{-1}\lambda_{\hat{a},w}^{-1}\rho_{x,z}^{-1}\rho_{x,y}^{-1}\lambda_{\hat{a},w}\rho_{y,z}\rho_{\hat{y},u}\partial_{\hat{a}}
  \rangle$ in $\langle y\rangle$.
\end{enumerate}	

\noindent
\textbf{Case 5'} This is the analogous left-multiplication move
$\lambda_{x,y}$ with notation as in Case 5, and thus the bad subgroup
is \[B=\langle x\hat{a},a\hat{a}^{-1},y,\hat{y}, \partial_n \rangle\]
where $a,\hat{a}$ is linked to $x$. Let $z,w,u$ be general. We use the
relation

$\lambda_{x,y}=\rho_{\hat{y},u}^{-1}\rho_{y,z}^{-1}\rho_{\hat{a},w}^{-1}\lambda_{x,z}\lambda_{x,y}\rho_{\hat{a},w}\rho_{y,z}\rho_{\hat{y},u}.$

The checks here are similar to Case 5.
Indeed,
\textit{Check:} 

\begin{enumerate}[a)]
\item $\rho_{\hat{y},u}^{-1}$ has bad subgroup $\langle u, \hat{u},y
  , \partial_{y^{-1}} \rangle$. This intersects $B$ in $\langle
  y, \partial_{y^{-1}} \rangle$.
	
\item $\rho_{y,z}^{-1}$ has bad subgroup $\langle
  z,\hat{z},y^{-1}\hat{y}y , \partial_{\hat{y}_+} \rangle$ and this
  intersects $\rho_{\hat{y},u}B=\langle
  x\hat{a},a\hat{a}^{-1},y,\hat{y}u, \rho_{\hat{y},u}\partial_n
  \rangle$ trivially.
	
\item $\rho_{\hat{a},w}^{-1}$ has bad subgroup $\langle w,
  \hat{w},a,\hat{a}^{-1}x\hat{a}, \partial_a \rangle$ and this
  intersects $\rho_{y,z}\rho_{\hat{y},u}B=\langle
  x\hat{a},a\hat{a}^{-1},yz,\hat{y}u,
  \rho_{y,z}\rho_{\hat{y},u}\partial_n \rangle$ in $\langle
  a\hat{a}^{-1}x\hat{a}\rangle$.
	
\item $\lambda_{x,z}$ has bad subgroup $\langle
  x\hat{a},a\hat{a}^{-1},z,\hat{z}, \partial_n \rangle$ which
  intersect $\rho_{\hat{a},w}\rho_{y,z}\rho_{\hat{y},u}B=\langle
  x\hat{a}w,aw^{-1}\hat{a}^{-1},yz,\hat{y}u,
  \rho_{\hat{a},w}\rho_{y,z}\rho_{\hat{y},u}\partial_n \rangle$
  trivially.
	
\item $\lambda_{x,y}$ has bad subgroup $B$ which intersects
  $\lambda_{x,z}^{-1}\rho_{\hat{a},w}\rho_{y,z}\rho_{\hat{y},u}B=\langle
  z^{-1}x\hat{a}w,aw^{-1}\hat{a}^{-1},yz,\hat{y}u,
  \lambda_{x,z}^{-1}\rho_{\hat{a},w}\rho_{y,z}\rho_{\hat{y},u}\partial_n
  \rangle$ trivially.

\item $\rho_{\hat{a},w}$ has bad subgroup $\langle w,
  \hat{w},a,\hat{a}^{-1}x\hat{a} , \partial_a \rangle$ which
  intersects
  $\lambda_{x,y}^{-1}\lambda_{x,z}^{-1}\rho_{\hat{a},w}\rho_{y,z}\rho_{\hat{y},u}B
  = \langle z^{-1}y^{-1}x\hat{a}w,aw^{-1}\hat{a}^{-1},yz,\hat{y}u,
  \lambda_{x,y}^{-1}\lambda_{x,z}^{-1}\rho_{\hat{a},w}\rho_{y,z}\rho_{\hat{y},u}\partial_n\rangle$
  in $\langle aw^{-1}\hat{a}^{-1}x\hat{a}w\rangle$.

\item $\rho_{y,z}$ has bad subgroup $\langle z, \hat{z},
  y^{-1}\hat{y}y , \partial_{\hat{y}_+} \rangle$ which intersects
  $\rho_{\hat{a},w}^{-1}\lambda_{x,y}^{-1}\lambda_{x,z}^{-1}\rho_{\hat{a},w}\rho_{y,z}\rho_{\hat{y},u}B
  = \langle z^{-1}y^{-1}x\hat{a},a\hat{a}^{-1},yz,\hat{y}u,
  \rho_{\hat{a},w}^{-1}\lambda_{x,y}^{-1}\lambda_{x,z}^{-1}\rho_{\hat{a},w}\rho_{y,z}\rho_{\hat{y},u}\partial_n
  \rangle$ trivially.
	
\item $\rho_{\hat{y},u}$ has bad subgroup $\langle u,\hat{u},y
  , \partial_{y^{-1}} \rangle$ which intersects
  $\rho_{y,z}^{-1}\rho_{\hat{a},w}^{-1}\lambda_{x,y}^{-1}\lambda_{x,z}^{-1}\rho_{\hat{a},w}\rho_{y,z}\rho_{\hat{y},u}B=\langle
  y^{-1}x\hat{a},a\hat{a}^{-1},y,\hat{y}u,
  \rho_{y,z}^{-1}\rho_{\hat{a},w}^{-1}\lambda_{x,y}^{-1}\lambda_{x,z}^{-1}\rho_{\hat{a},w}\rho_{y,z}\rho_{\hat{y},u}\partial_n
  \rangle$ in $\langle y \rangle$.
\end{enumerate}

\section{Minimal foliations for nonorientable surfaces}
The purpose of this appendix is to show the following result, which
was stated as Theorem~\ref{thm:weak-nonorientable-connectivity} above.
\begin{thm}
  Suppose that $\Sigma$ is a nonorientable surface with a single
  boundary component or marked point. Then, there is a path-connected
  subset
  \[ \mathcal{P} \subset \mathcal{M}(\Sigma) \subset \pml(\Sigma) \]
  consisting of minimal measured foliations, which is invariant under
  the mapping class group of $\Sigma$. In addition, if $F$ is any
  finite set of laminations, the set $\mathcal{P}\setminus F$ is still
  path-connected.
\end{thm}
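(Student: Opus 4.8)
The plan is to transplant the point-pushing argument of Leininger and Schleimer \cite{LS} to $\Sigma$, using the marked point $p$ as the point that is pushed. I would phrase everything in terms of measured geodesic laminations (with respect to an auxiliary hyperbolic structure on the interior of $\Sigma$), since orientability plays no role in that language; in these terms $\mathcal{P}$ is the set of \emph{arational} (minimal and filling) laminations $\lambda$ such that either $\lambda$ has no angle--$\pi$ singularity at $p$ (call such $\lambda$ \emph{$p$-regular}), or $\lambda$ is the stable lamination of a point-pushing pseudo-Anosov. Let $\widehat\Sigma$ be the surface obtained by forgetting $p$, and let $P<\mathrm{MCG}(\Sigma)$ be the point-pushing subgroup, the normal subgroup which is the image of $\pi_1(\widehat\Sigma,p)$ under the Birman homomorphism. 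Mapping class group invariance of $\mathcal{P}$ is then immediate: every element of $\mathrm{MCG}(\Sigma)$ fixes $p$, hence preserves $p$-regularity, and since $P$ is normal a conjugate of a point-pushing pseudo-Anosov is again one, with stable lamination the image of the original.

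The first step is the elementary ``dragging'' observation. For a loop $\gamma$ based at $p$, dragging $p$ along the initial arc $\gamma|_{[0,t]}$ and isotoping back defines a homeomorphism of $\Sigma$ fixing $p$ which at $t=1$ represents $\mathrm{Push}(\gamma)$; applying it to a fixed $\mu\in\mathcal{P}_0 := \{\lambda\in\mathcal{P}:\lambda\text{ is }p\text{-regular}\}$ produces a path inside $\mathcal{P}_0$ from $\mu$ to $\mathrm{Push}(\gamma)\mu$, because a homeomorphism fixing $p$ carries a $p$-regular arational lamination to another one. Concatenating such paths over generators of $P$, the whole orbit $P\mu$ lies in one path component of $\mathcal{P}_0$; and if $\psi\in P$ is a point-pushing pseudo-Anosov with stable lamination $\lambda^+_\psi$, then concatenating the dragging paths joining $\mu,\psi\mu,\psi^2\mu,\dots$ and adjoining the limit $\lambda^+_\psi$ (north--south dynamics on $\pml(\Sigma)$) gives a path in $\mathcal{P}$ from $\mu$ to $\lambda^+_\psi$. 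So every lamination satisfying condition~(2) is joined within $\mathcal{P}$ to $\mathcal{P}_0$, and the task reduces to showing that $\mathcal{P}_0$ is path-connected.

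For this I would use the Leininger--Schleimer machinery directly. Forgetting $p$ identifies $\mathcal{P}_0$ with the set of arational laminations on $\widehat\Sigma$ (with their transverse measures): a minimal filling lamination on $\widehat\Sigma$ has complementary regions that are ideal polygons with at least three sides, so re-inserting $p$ never produces a monogon, and conversely a $p$-regular arational lamination on $\Sigma$ is filling on $\widehat\Sigma$. Connectivity of the arational laminations on $\widehat\Sigma$ then follows from \cite{LS}: while $\widehat\Sigma$ still carries a puncture or boundary component one runs their point-pushing argument verbatim (it is phrased via train tracks and subsurface projections, and so applies to nonorientable surfaces), which for a fixed filling $\mu$ and a geodesic ray $r$ from the puncture yields a filling limit $\lambda(\mu,r)$ together with the dragging path from $\mu$ to $\lambda(\mu,r)$; letting $\mu$ and $r$ vary and inducting on the number of punctures reduces connectivity to the closed case, which one obtains by passing to the orientation double cover, using the embedding $\pml(\widehat\Sigma)\hookrightarrow\pml(\widehat\Sigma')$ and the connectivity of $\mathcal{EL}$ for orientable surfaces (\cite{GabAF}, or Theorem~\ref{thm:ch}). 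The statement that $\mathcal{P}\setminus F$ is still path-connected for finite $F$ follows in the same way, since all of the paths constructed above occur in families with enough freedom to avoid the finitely many laminations of $F$, exactly as in the last clause of Theorem~\ref{thm:ch}.

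The part requiring the most care --- and what I expect to be the main obstacle --- is making the Leininger--Schleimer construction genuinely go through on the nonorientable surface while correctly identifying the reachable set: one must verify that the only arational laminations arising as dragging limits which fail $p$-regularity are precisely the stable laminations of point-pushing pseudo-Anosovs (so that the reachable set is exactly $\mathcal{P}$, neither larger nor smaller), and one must handle the closed-surface base case. For the latter the natural route is the orientation double cover $\widehat\Sigma'$, but this needs care: a lift of a minimal lamination need not be minimal on $\widehat\Sigma'$, only filling, so the double-cover reduction has to be run with the coarser ``filling'' property --- and, if necessary, equivariantly for the deck involution --- rather than with minimality directly.
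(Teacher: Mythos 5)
Your first two paragraphs are exactly the paper's argument: define $\mathcal{P}$ as stated, get invariance for free, and use the dragging isotopy $D_\gamma(t)$ to produce a path of $p$-regular minimal foliations from $\mu$ to $\mathrm{Push}(\gamma)\mu$, then iterate under a point-pushing pseudo-Anosov $\psi$ and adjoin the north--south limit to reach $\lambda^+_\psi$ (for the convergence one should record, as the paper does, that the dragging path is $p$-regular while both fixed points of $\psi$ have an angle-$\pi$ singularity at $p$, so the path misses the repelling fixed point). The gap is in your third step, where you reduce the theorem to path-connectivity of $\mathcal{P}_0$. After forgetting the single marked point, $\widehat\Sigma$ is a \emph{closed} nonorientable surface, so your ``induction on punctures'' is vacuous and the entire weight falls on the closed case; there the point-pushing mechanism of \cite{LS} cannot even be started (there is no puncture to push), and your fallback via the orientation double cover does not work as stated: Gabai's theorem or Theorem~\ref{thm:ch} produce paths in $\pml(\widehat\Sigma')$ that have no reason to be invariant under the deck involution, hence no reason to descend to $\widehat\Sigma$. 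You flag this equivariance issue yourself but leave it unresolved, and resolving it would require genuinely new input; path-connectivity of the arational laminations on a closed nonorientable surface is not in the cited literature, and the paper deliberately never proves (or uses) connectivity of $\mathcal{P}_0$.

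The missing observation -- and the reason the stable laminations of point-pushes are put into $\mathcal{P}$ at all -- is that the reduction to $\mathcal{P}_0$ is unnecessary. Your own dragging construction joins an \emph{arbitrary} $p$-regular minimal foliation $\mu$ to the stable lamination $\lambda^+_\Psi$ of an \emph{arbitrary} point-pushing pseudo-Anosov $\Psi$. Hence any two points of $\mathcal{P}$ can be routed through a common hub: two $p$-regular foliations are each joined to the same $\lambda^+_\Psi\in\mathcal{P}$, and a foliation of type (2) is joined to $p$-regular foliations by the same construction applied to its own pushing class. The freedom in choosing $\Psi$ (and the dragging isotopies) is what gives avoidance of a finite set $F$. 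With that adjustment your first two paragraphs already constitute the paper's proof, and your third paragraph can be deleted entirely.
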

The proof uses methods established in \cite{LS}. We consider
throughout the case of a surface $\Sigma = (S,p)$ with a marked point;
the other claim is equivalent.

We begin by observing that any foliation on $S$ defines a foliation on
$\Sigma$, and the resulting foliations of $\Sigma$ are exactly those
which do not have an angle-$\pi$ singularity at $p$.
\begin{lem}\label{lem:drop-marked-point}
  A foliation $\mathcal{F}$ on $S$ is minimal (as a foliation on $S$)
  if and only if it is minimal as a foliation of $\Sigma$.
\end{lem}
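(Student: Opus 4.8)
The plan is to reduce everything to the standard dynamical description of minimality: a measured foliation is \emph{minimal} precisely when it has no closed leaves and every half-leaf (hence every separatrix) is dense; equivalently, its canonical decomposition into minimal and periodic components (see \cite{FLP}) has a single minimal component equal to the whole surface. This formulation is purely topological, makes no reference to an auxiliary metric, and is therefore the right tool for comparing $\mathcal{F}$ on $S$ with $\mathcal{F}$ on $\Sigma=(S,p)$. The crucial observation is that marking the point $p$ changes the foliated picture only at $p$ itself: if $p$ was already a $k$-pronged singularity of $\mathcal{F}$ with $k\geq 3$, nothing changes at all; if $p$ was a regular point of $\mathcal{F}$ on $S$, then on $\Sigma$ it becomes a two-pronged (``false'') singularity, which splits the leaf $\ell_p$ through $p$ into its two separatrices. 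In either case the set of leaves of $\mathcal{F}$ inside the underlying surface is unchanged, and in particular marking $p$ neither creates nor destroys a closed leaf, nor changes the support (an essential subsurface of strictly negative Euler characteristic) of any periodic or minimal component.

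For the forward direction I would assume $\mathcal{F}$ is minimal on $S$. Then every non-singular half-leaf is dense in $S$, and since a half-leaf is not a point, deleting $p$ leaves it dense in $S\smallsetminus\{p\}$; so on $\Sigma$ there are still no closed leaves and every non-singular half-leaf away from $p$ is dense. The only extra thing to check is that the two separatrices created at $p$ (in the false-singularity case) are dense: but these are half-leaves of $\ell_p$, and minimality of $\mathcal{F}$ on $S$ already guarantees that every half-leaf of $\ell_p$ has closure equal to $S$, hence closure $S\smallsetminus\{p\}$ in $\Sigma$. Thus $\mathcal{F}$ is minimal on $\Sigma$.

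For the converse I would assume $\mathcal{F}$ is minimal on $\Sigma$. The two separatrices at $p$ (false-singularity case) are dense in $\Sigma$, so concatenating them through $p$ produces a leaf $\ell_p$ of $\mathcal{F}$ on $S$ whose closure contains $\overline{S\smallsetminus\{p\}}=S$; every other non-singular half-leaf of $\mathcal{F}$ on $S$ either is already a half-leaf of $\mathcal{F}$ on $\Sigma$ (away from $p$) and hence dense, or is a sub-half-leaf of one of the separatrices at $p$, again dense since the closure of any half-leaf of a minimal foliation is the whole surface. There are no closed leaves, so $\mathcal{F}$ is minimal on $S$. (Alternatively, one runs the component-decomposition argument: a hypothetical proper essential subsurface carrying a component of $\mathcal{F}$ on $S$ stays essential and proper in $\Sigma$, regardless of whether $p$ lies in it or in its complement, because a component's support has strictly negative Euler characteristic and adding or removing a single point from a non-disk, non-annular piece does not affect essentiality; this gives the equivalence directly.)

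I expect the only genuinely delicate point to be the leaf through $p$: one must ensure that the \emph{individual} separatrices produced by the false singularity are dense, not merely their union, which is exactly why the ``every half-leaf is dense'' formulation of minimality — rather than ``every leaf is dense'' — is the convenient one here. Everything else is a routine unwinding of definitions, and the non-orientability of $S$ plays no role in the argument.
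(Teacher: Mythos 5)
Your argument is correct, but it takes a genuinely different route from the paper. The paper disposes of the lemma in one line: non-minimality of a measured foliation is witnessed by an essential simple closed curve (a closed leaf, or the boundary of the support of a proper component), and any essential simple closed curve on $\Sigma$ remains essential on $S$ after forgetting the marked point (curves around $p$ alone never arise as witnesses, since a foliated annulus about $p$ would already give closed leaves of $\mathcal{F}$ on $S$). You instead work directly with the dynamical definition of minimality -- density of (infinite) half-leaves -- and analyze the only place where marking $p$ changes the foliated picture, namely the splitting of the leaf through $p$ into two separatrices; your emphasis that one needs density of each \emph{individual} separatrix, not just of their union, is exactly the delicate point of this approach, and you handle it correctly in both directions. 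What the two approaches buy: the paper's curve criterion is shorter and outsources the dynamics to the standard structure theory for measured foliations, while yours is more self-contained, makes the local picture at $p$ explicit, and does not depend on conventions about which curves on a once-marked surface count as essential. One small imprecision: the blanket claim that marking $p$ ``neither creates nor destroys a closed leaf'' fails for a closed leaf of $\mathcal{F}$ on $S$ passing through $p$ (on $\Sigma$ it becomes a singular loop of separatrices rather than a closed leaf); this does not affect your proof, since in each direction the minimality hypothesis excludes that configuration, but it is worth phrasing the preliminary observation accordingly.
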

\begin{proof}
  This follows, since any essential simple closed curve on $\Sigma$
  defines an essential simple closed curve on $S$ (i.e. after
  forgetting the marked point).
\end{proof}

\begin{defin}
  We define $\mathcal{P} \subset \pml(\Sigma)$ to be the set of
  minimal foliations which either
  \begin{enumerate}
  \item do not have an angle--$\pi$ singularity at $p$, or
  \item are stable foliations of point-pushing pseudo-Anosovs.
  \end{enumerate}
\end{defin}
It is clear from construction that $\mathcal{P}$ is invariant under
the mapping class group of $\Sigma$. We aim to show that any foliation
in $\mathcal{P}$ of the first type can be connected by a path to any
foliation of the second type, which will prove
Theorem~\ref{thm:weak-nonorientable-connectivity}, as we have full
flexibility which point-pushes to use.

To do so, we need to recall some facts about point-pushing maps;
compare \cite{LS, CH}. Let $\gamma:[0,1]\to S$ be an immersed smooth
loop based at $p$. We let
\[ D_\gamma:[0,1] \to \mathrm{Diff}(S) \] be a smooth isotopy
starting in the identity, so that $D_\gamma(t)(p) = \gamma(t)$.  By
definition, the endpoint $D_\gamma(1)$ is then a representative of the
point-pushing mapping class $\Psi_\gamma$ defined by $\gamma$.

Suppose that $\mathcal{F}$ be a foliation of $S$ which is
minimal. Then, the same is true for $D_\gamma(t)\mathcal{F}$ (as they
are indeed isotopic).  When seen as minimal foliations of $\Sigma$,
the assignment
\[ t \mapsto D_\gamma(t)\mathcal{F} \] is a continuous path of minimal
foliations joining $\mathcal{F}$ to $\Psi_\gamma\mathcal{F}$:
minimality follows by Lemma~\ref{lem:drop-marked-point}, and
continuity since $D_\gamma$ is smooth and intersections with
$\mathcal{F}$ vary continuously with the curve.

Now, we use the following:
\begin{lem}
  Let $\Psi$ be a point-pushing pseudo-Anosov of $\Sigma$. Then $\Psi$
  acts on $\pml(\Sigma)$ with north-south dynamics, and both fixpoints
  have an angle-$\pi$--singularity.
\end{lem}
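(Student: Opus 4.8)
The plan is to treat the marked--point case $\Sigma=(S,p)$ (the boundary case being equivalent, as already noted), and to use two structural inputs: that a pseudo-Anosov acts on $\pml$ with north--south dynamics, and that a point--push becomes trivial once the marked point is filled in. Write $\hat S$ for the surface obtained from $\Sigma$ by forgetting $p$, and recall the point--pushing (Birman) exact sequence $1\to\pi_1(S,p)\to\mathrm{MCG}(\Sigma)\xrightarrow{\pi}\mathrm{MCG}(\hat S)\to 1$, valid for the nonorientable surfaces of the complexity we consider, whose kernel is exactly the point--pushing subgroup. By hypothesis $\Psi$ is pseudo-Anosov; denote by $\lambda^\pm\in\pml(\Sigma)$ its stable and unstable measured foliations, so that $\Psi$ fixes $\lambda^\pm$ projectively.

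For the north--south dynamics I would either invoke the standard fact that pseudo-Anosov mapping classes act on $\pml$ with north--south dynamics (Thurston; this carries over to nonorientable surfaces), or, to remain self-contained, pass to the orientation double cover $\Sigma'$. There $\Psi$ lifts to a pseudo-Anosov $\hat\Psi$ whose stable and unstable foliations are the lifts $\hat\lambda^\pm$ of $\lambda^\pm$, and $\hat\Psi$ acts on $\pml(\Sigma')$ with north--south dynamics by the orientable theory. Since the lifting map $\pml(\Sigma)\to\pml(\Sigma')$ is a $\Psi$--equivariant topological embedding onto a closed invariant subset containing $\hat\lambda^\pm$, restricting the dynamics to this subset yields north--south dynamics for $\Psi$ on $\pml(\Sigma)$, with attracting and repelling fixed points $\lambda^+$ and $\lambda^-$.

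For the singularity statement, let $k\ge 1$ be the number of prongs of $\lambda^+$ at $p$; as $\lambda^+$ and $\lambda^-$ are transverse they have the same prong count at $p$, so it is enough to treat $\lambda^+$. Suppose for contradiction that $k\ge 2$. Then the transverse pair $(\lambda^+,\lambda^-)$ extends across $p$ to a transverse pair of measured foliations on $\hat S$ in which $p$ is either a regular point (if $k=2$) or a genuine $k$--prong interior singularity (if $k\ge 3$); in either case all singularities of the extended foliations have at least three prongs, so the extended pair is again a pseudo-Anosov pair, and $\Psi$ extends to a pseudo-Anosov homeomorphism of $\hat S$. Hence $\pi(\Psi)\in\mathrm{MCG}(\hat S)$ is pseudo-Anosov, in particular nontrivial --- contradicting that $\Psi$ lies in the point--pushing subgroup, i.e.\ $\pi(\Psi)=1$. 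Therefore $k=1$, so $p$ is a $1$--prong, equivalently an angle--$\pi$, singularity of $\lambda^+$ and likewise of $\lambda^-$.

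The step I expect to be the main obstacle is not conceptual but a matter of bookkeeping in the nonorientable setting: verifying that the Birman exact sequence and the Thurston classification (existence of a transverse pair of invariant measured foliations, with the three--prong condition at interior singularities) hold for the surfaces at hand, and confirming the standard fact that filling in a $\ge 2$--prong marked point of a pseudo-Anosov produces a pseudo-Anosov of the capped surface. Each of these is routine, but this is where care is required.
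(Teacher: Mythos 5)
Your argument is correct, and its first half is exactly the paper's: lift $\Psi$ to a pseudo-Anosov of the orientation double cover, use north--south dynamics there, and restrict along the ($\Psi$-equivariant, closed) embedding $\pml(\Sigma)\to\pml(\Sigma')$; you spell out the restriction step that the paper leaves implicit, and that step is fine since a closed invariant subset containing both fixed points inherits north--south dynamics. For the angle--$\pi$ claim the paper simply cites Leininger--Schleimer for the fact that point-pushing pseudo-Anosovs have a one-prong at the marked point, whereas you reprove it: if the invariant foliations had $k\geq 2$ prongs at $p$ they would extend across $p$ (regular point for $k=2$, genuine singularity for $k\geq 3$), the same homeomorphism would then be pseudo-Anosov on the capped surface, so its image under the forgetful map would be nontrivial, contradicting that $\Psi$ lies in the point-pushing kernel of the Birman exact sequence. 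That is the standard argument behind the cited result and it is sound; the only caveats are the ones you already flag, namely that the Birman exact sequence for nonorientable surfaces requires the fundamental group to be centerless (true here, since the genus is large in the paper's setting, e.g.\ rank at least $\rank$), that stable and unstable foliations share their singularity data, and that pseudo-Anosov theory on nonorientable surfaces can be run through the orientation double cover -- all routine. So your proof buys self-containedness at the cost of invoking the Birman sequence and the capping criterion, while the paper's two-line proof outsources precisely this point to \cite{LS}.
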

\begin{proof}
  Let $X \to \Sigma$ be the orientation double cover. Then $\Psi$
  lifts to a pseudo-Anosov of $X$, and the first claim follows. The
  second claim follows since point-pushes have angle--$\pi$
  singularitites at the marked point \cite{LS}.
\end{proof}

By the lemma, the path $\{ D_\gamma(t)\mathcal{F}, t \in [0,1] \}$ is
disjoint from the repelling fixed point of $\Psi$, and thus
\[ \bigcup_{n\in \N}\Psi^n \{  D_\gamma(t)\mathcal{F}, t \in [0,1] \} \]
is the desired path joining $\mathcal{F}$ to the stable foliation of $\Psi$.

\bibliographystyle{alpha}
\bibliography{blas}

\begin{thebibliography}{{Gui}00b}

\bibitem[BF]{bf:outerlimits}
Mladen Bestvina and Mark Feighn.
\newblock Outer limits.
\newblock {\em available at
  http://andromeda.rutgers.edu/feighn/papers/outer.pdf}.

\bibitem[BF14]{BesFei}
Mladen Bestvina and Mark Feighn.
\newblock Hyperbolicity of the complex of free factors.
\newblock {\em Adv. Math.}, 256:104--155, 2014.

\bibitem[BR15]{BesRey}
Mladen Bestvina and Patrick Reynolds.
\newblock The boundary of the complex of free factors.
\newblock {\em Duke Math. J.}, 164(11):2213--2251, 2015.

\bibitem[CH]{CH}
Jon Chaika and Sebastian Hensel.
\newblock Path-connectivity of the set of uniquely ergodic and cobounded
  foliations.
\newblock available on the authors homepage.

\bibitem[CHL07]{CHL0}
Thierry Coulbois, Arnaud Hilion, and Martin Lustig.
\newblock Non-unique ergodicity, observers' topology and the dual algebraic
  lamination for {$\Bbb R$}-trees.
\newblock {\em Illinois J. Math.}, 51(3):897--911, 2007.

\bibitem[CHL08a]{CHL1}
Thierry Coulbois, Arnaud Hilion, and Martin Lustig.
\newblock {$\Bbb R$}-trees and laminations for free groups. {I}. {A}lgebraic
  laminations.
\newblock {\em J. Lond. Math. Soc. (2)}, 78(3):723--736, 2008.

\bibitem[CHL08b]{CHL2}
Thierry Coulbois, Arnaud Hilion, and Martin Lustig.
\newblock {$\Bbb R$}-trees and laminations for free groups. {II}. {T}he dual
  lamination of an {$\Bbb R$}-tree.
\newblock {\em J. Lond. Math. Soc. (2)}, 78(3):737--754, 2008.

\bibitem[CHL08c]{CHL3}
Thierry Coulbois, Arnaud Hilion, and Martin Lustig.
\newblock {$\Bbb R$}-trees and laminations for free groups. {III}. {C}urrents
  and dual {$\Bbb R$}-tree metrics.
\newblock {\em J. Lond. Math. Soc. (2)}, 78(3):755--766, 2008.

\bibitem[CV86]{CullerVogtmann}
Marc {Culler} and Karen {Vogtmann}.
\newblock {Moduli of graphs and automorphisms of free groups}.
\newblock {\em {Invent. Math.}}, 84:91--119, 1986.

\bibitem[FLP12]{FLP}
Albert Fathi, Fran\c{c}ois Laudenbach, and Valentin Po\'{e}naru.
\newblock {\em Thurston's work on surfaces}, volume~48 of {\em Mathematical
  Notes}.
\newblock Princeton University Press, Princeton, NJ, 2012.
\newblock Translated from the 1979 French original by Djun M. Kim and Dan
  Margalit.

\bibitem[Gab09]{GabAF}
David Gabai.
\newblock Almost filling laminations and the connectivity of ending lamination
  space.
\newblock {\em Geom. Topol.}, 13(2):1017--1041, 2009.

\bibitem[Gab14]{GabTop}
David Gabai.
\newblock On the topology of ending lamination space.
\newblock {\em Geom. Topol.}, 18(5):2683--2745, 2014.

\bibitem[Gui00a]{guirardel:thesis}
Vincent Guirardel.
\newblock Dynamics of {${\rm Out}(F_n)$} on the boundary of outer space.
\newblock {\em Ann. Sci. \'{E}cole Norm. Sup. (4)}, 33(4):433--465, 2000.

\bibitem[{Gui}00b]{Guirardel}
Vincent {Guirardel}.
\newblock {Dynamics of \(\text{Out}(F_n)\) on the boundary of outer space}.
\newblock {\em {Ann. Sci. \'Ec. Norm. Sup\'er. (4)}}, 33(4):433--465, 2000.

\bibitem[Ham]{Ham}
Ursula Hamenst\"adt.
\newblock The boundary of the free factor graph and the free splitting graph.
\newblock {\em arXiv:1211.1630}.

\bibitem[{Hor}17]{horbez:boundaryouterspace}
Camille {Horbez}.
\newblock {The boundary of the outer space of a free product}.
\newblock {\em {Isr. J. Math.}}, 221(1):179--234, 2017.

\bibitem[Kap06]{Ilya}
Ilya Kapovich.
\newblock Currents on free groups.
\newblock In {\em Topological and asymptotic aspects of group theory}, volume
  394 of {\em Contemp. Math.}, pages 149--176. Amer. Math. Soc., Providence,
  RI, 2006.

\bibitem[KL09]{Kapovich-Lustig-pairing}
Ilya Kapovich and Martin Lustig.
\newblock Geometric intersection number and analogues of the curve complex for
  free groups.
\newblock {\em Geom. Topol.}, 13(3):1805--1833, 2009.

\bibitem[KL10]{Kapovich-Lustig-GAFA}
Ilya Kapovich and Martin Lustig.
\newblock Intersection form, laminations and currents on free groups.
\newblock {\em Geom. Funct. Anal.}, 19(5):1426--1467, 2010.

\bibitem[Kla]{Kla}
Erica Klarreich.
\newblock The boundary at infinity of the curve complex and the relative
  teichm\"uller space.
\newblock to appear in Groups, Geometry, and Dynamics. arXiv:1803.10339.

\bibitem[KR14]{KapovichRafi}
Ilya {Kapovich} and Kasra {Rafi}.
\newblock {On hyperbolicity of free splitting and free factor complexes}.
\newblock {\em {Groups Geom. Dyn.}}, 8(2):391--414, 2014.

\bibitem[LL03]{Levitt-Lustig}
Gilbert Levitt and Martin Lustig.
\newblock Irreducible automorphisms of {$F_n$} have north-south dynamics on
  compactified outer space.
\newblock {\em J. Inst. Math. Jussieu}, 2(1):59--72, 2003.

\bibitem[LS09]{LS}
Christopher~J. Leininger and Saul Schleimer.
\newblock Connectivity of the space of ending laminations.
\newblock {\em Duke Math. J.}, 150(3):533--575, 2009.

\bibitem[Mar95]{Reiner}
Reiner Martin.
\newblock {\em Non-uniquely ergodic foliations of thin-type, measured currents
  and automorphisms of free groups}.
\newblock ProQuest LLC, Ann Arbor, MI, 1995.
\newblock Thesis (Ph.D.)--University of California, Los Angeles.

\bibitem[MM99]{MM1}
Howard~A. Masur and Yair~N. Minsky.
\newblock Geometry of the complex of curves. {I}. {H}yperbolicity.
\newblock {\em Invent. Math.}, 138(1):103--149, 1999.

\bibitem[Nie24]{Nielsen24}
Jakob Nielsen.
\newblock Die {I}somorphismengruppe der freien {G}ruppen.
\newblock {\em Math. Ann.}, 91(3-4):169--209, 1924.

\bibitem[Rey]{reynolds:reducingSystems}
Patrick Reynolds.
\newblock Reducing systems for very small trees.
\newblock arXiv:1211.3378.

\bibitem[Ser80]{SerreTrees}
Jean-Pierre Serre.
\newblock {\em Trees}.
\newblock Springer-Verlag, Berlin-New York, 1980.
\newblock Translated from the French by John Stillwell.

\bibitem[Sko96]{skora}
Richard~K. Skora.
\newblock Splittings of surfaces.
\newblock {\em J. Amer. Math. Soc.}, 9(2):605--616, 1996.

\bibitem[Sta83]{Stallings}
John~R. Stallings.
\newblock Topology of finite graphs.
\newblock {\em Invent. Math.}, 71(3):551--565, 1983.

\bibitem[Vog08]{Vog08}
Karen Vogtmann.
\newblock What is{$\dots$}outer space?
\newblock {\em Notices Amer. Math. Soc.}, 55(7):784--786, 2008.

\bibitem[Whi36]{Whitehead}
J.~H.~C. Whitehead.
\newblock On equivalent sets of elements in a free group.
\newblock {\em Ann. of Math. (2)}, 37(4):782--800, 1936.

\end{thebibliography}

\end{document}